\newtheorem{theorem}{Theorem}
\newtheorem{corollary}[theorem]{Corollary}
\newtheorem{definition}[theorem]{Definition}
\newtheorem{lemma}[theorem]{Lemma}
\newtheorem{proposition}[theorem]{Proposition}
\newtheorem{remark}[theorem]{Remark}
\newenvironment{proof}[1][Proof]{\noindent\textbf{#1.} }{\ \rule{0.5em}{0.5em}}
\newcommand{\ud}{\,\mathrm{d}}
\newcommand{\p}{\ensuremath{\partial}}
\newcommand{\n}{\ensuremath{\nonumber}}
\newcommand{\eps}{\ensuremath{\varepsilon}}
\newcommand{\bigO}{\mathcal{O}}
\DeclareSymbolFont{rmlargesymbols}{OMX}{mdbch}{m}{n}
\DeclareMathSymbol{\rmintop}{\mathop}{rmlargesymbols}{82}
\title{\vspace{-50pt} Validity of Steady Prandtl Layer Expansions}
\author{ \Large Yan Guo\footnote{\url{yan_guo@brown.edu}. Division of Applied Mathematics, Brown University, 182 George Street, Providence, RI 02912, USA.} \hspace{10 mm} Sameer Iyer \footnote{\url{ssiyer@math.princeton.edu}. Department of Mathematics, Princeton University, Fine Hall, Washington Road, Princeton, NJ 08540, USA.}}
\date{October 12, 2018}
\begin{document}

\maketitle

\begin{abstract}
Let the viscosity $\varepsilon \rightarrow 0$ for the 2D steady
Navier-Stokes equations in the region $0\leq x\leq L$ and $0\leq y<\infty $
with no slip boundary conditions at $y=0$. For $L<<1$, we justify the validity of the steady Prandtl layer expansion for scaled Prandtl layers, including the celebrated Blasius boundary layer. Our uniform
estimates in $\varepsilon $ are achieved through a fixed-point scheme: 
\begin{equation*}
[u^{0}, v^0] \overset{\text{DNS}^{-1}}{\longrightarrow }v\overset{\mathcal{L}^{-1}}{%
\longrightarrow }[u^{0}, v^0]  \label{fixedpoint}
\end{equation*}%
for solving the Navier-Stokes equations, where $[u^{0}, v^0]$ are the tangential
 and normal velocities at $x=0,$ DNS stands for $\partial _{x}$ of the vorticity
equation for the normal velocity $v$, and $\mathcal{L}$ the compatibility ODE for $[u^{0}, v^0]$ at $x=0.$
\end{abstract}

\section{Introduction and Notation}

We consider the steady, incompressible Navier-Stokes equations on the two-dimensional domain, $(x,Y) \in \Omega = (0,L) \times (0, \infty)$. Denoting the velocity $\bold{U}^{NS} := (U^{NS}, V^{NS})$, the equations read: 
\begin{align}  \label{intro.NS}
\left.
\begin{aligned}
&\bold{U}^{NS} \cdot \nabla \bold{U}^{NS} + \nabla P^{NS} = \eps \Delta \bold{U}^{NS}  \\
&\nabla \cdot \bold{U}^{NS} = 0 
\end{aligned}
\right\} \text{ in } \Omega
\end{align}

The system above is taken with the no-slip boundary condition on $\{Y = 0\}$: 
\begin{align} \label{noslip.BC}
[U^{NS}, V^{NS}]|_{Y= 0} = [0, 0]. 
\end{align}

In this article, we fix an outer Euler shear flow of the form $[u^0_e(Y), 0, 0]$, (satisfying generic smoothness and decay assumptions). A fundamental question is to describe the asymptotic behavior of solutions to (\ref{intro.NS}) as the viscosity vanishes, that is as $\eps \rightarrow 0$. Generically, there is a mismatch of the tangential velocity at the boundary $\{Y = 0\}$ of the viscous flows, (\ref{noslip.BC}), and inviscid flows. Thus, one cannot expect $[U^{NS}, V^{NS}] \rightarrow [u^0_e, 0]$ in a sufficiently strong norm (for instance, $L^\infty$).  

To rectify this mismatch, it was proposed in 1904 by Ludwig Prandtl that there exists a thin fluid layer of size $\sqrt{\eps}$ near the boundary $Y = 0$ that bridges the velocity of $U^{NS}|_{Y = 0} = 0$ with the nonzero Eulerian velocity. This layer is known as the Prandtl boundary layer. 

In this article, we address Prandtl's classical setup. We work with the scaled boundary layer variable:
\begin{align} \label{BL.variable}
y = \frac{Y}{\sqrt{\eps}},
\end{align}

Consider the scaled Navier-Stokes velocities: 
\begin{align}
U^\eps(x,y) = U^{NS}(x,Y), \hspace{3 mm} V^\eps = \frac{V^{NS}(x,Y)}{\sqrt{\eps}}, \hspace{3 mm} P^\eps(x,y) = P^{NS}(x,Y). 
\end{align}

Equation (\ref{intro.NS}) now becomes: 
\begin{align} \label{scaledNSint}
\begin{aligned}
&U^\eps  U^\eps_x + V^\eps U^\eps_y + P^\eps_x = \Delta_\eps U^\eps  \\
&U^\eps V^\eps_x + V^\eps V^\eps_y + \frac{P^\eps_y}{\eps} = \Delta_\eps V^\eps\\
&U^\eps_x + V^\eps_y = 0
\end{aligned}
\end{align}

We expand the solution in $\eps$ as:
\begin{align}
\begin{aligned} \label{exp.u}
&U^\eps = u^0_e + u^0_p + \sum_{i = 1}^n \sqrt{\eps}^i (u^i_e + u^i_p) + \eps^{N_0} u^{(\eps)} := u_s + \eps^{N_0} u^{(\eps)}, \\
&V^\eps = v^0_p + v^1_e + \sum_{i = 1}^{n-1} \sqrt{\eps}^i (v^i_p + v^{i+1}_e) + \sqrt{\eps}^n v^n_p + \eps^{N_0} v^{(\eps)} := v_s + \eps^{N_0} v^{(\eps)}, \\
&P^\eps = P^0_e + P^0_p + \sum_{i = 1}^n \sqrt{\eps}^i (P^i_e + P^i_p) + \eps^{N_0} P^{(\eps)} := P_s + \eps^{N_0} P^{(\eps)},
\end{aligned}
\end{align}

\noindent where the coefficients are independent of $\eps$. Here $[u^i_e, v^i_e]$ are Euler correctors, and $[u^i_p, v^i_p]$ are Prandtl correctors. These are constructed in the Appendix, culminating in Theorem \ref{thm.construct}. For our analysis, we will take $n = 4$ and $N_0 = 1+$. Let us also introduce the following notation: 
\begin{align}
\begin{aligned} \label{intro.bar.prof}
 \bar{u}^i_p := u^i_p - u^i_p|_{y = 0}, \hspace{3 mm} \bar{v}^i_p := v^i_p - v^1_p|_{y = 0}, \hspace{3 mm} \bar{v}^i_e := v^i_e - v^i_e|_{Y = 0}.
\end{aligned}
\end{align}

The profile $\bar{u}^0_p, \bar{v}^0_p$ from (\ref{intro.bar.prof}) is classically known as the ``boundary layer"; one sees from (\ref{exp.u}) that it is the leading order approximation to the Navier-Stokes velocity, $U^\eps$. We will sometimes use the notation $u_{\parallel} := \bar{u}^0_p$, and $v_{\parallel} := \bar{v}^0_p$. The final layer, 
\begin{align*}
[u^{(\eps)}, v^{(\eps)}, P^{(\eps)}] = [\bold{u}^{(\eps)}, P^{(\eps)}].
\end{align*}

\noindent are called the ``remainders" and importantly, they depend on $\eps$. Controlling the remainders uniformly in $\eps$ is the fundamental challenge in order to establish the validity of (\ref{exp.u}), and the centerpiece of our article. 

Thanks to the elliptic feature of the steady Navier-Stokes equations, the set-up of our program is to assume the remainders $[u^{(\eps)}, v^{(\eps)}]$ are bounded in a suitable sense at the boundaries $\{x = 0\}$ and $\{x = L \}$ and to prove that they remain bounded for $x \in [0, L]$. It is important to note that there are no natural boundary conditions for the Navier-Stokes equations in a channel at $\{x = 0\}, \{x = L\}$, and thus part of the mathematical challenge is to impose boundary conditions for $[u^{(\eps)}, v^{(\eps)}]$ which ensure its solvability for $x \in [0, L]$. 

We begin by briefly discussing the approximations, $[u_s, v_s]$. The particular equations satisfied by each term in $[u_s, v_s]$ is derived and analyzed in Appendices \ref{appendix.derive} - \ref{appendix.Euler}, culminating in Theorem \ref{thm.construct}. We are prescribed the shear Euler flow, $u^0_e$. The profiles $[u^i_p, v^i_p]$ are Prandtl boundary layers. Importantly, these layers are rapidly decaying functions of the boundary layer variable, $y$. At the leading order, $[u^0_p, v^0_p]$ solve the nonlinear Prandtl equation: 
\begin{align}
\begin{aligned} \label{Pr.leading.intro}
&\bar{u}^0_p u^0_{px} + \bar{v}^0_p u^0_{py} - u^0_{pyy} + P^0_{px} = 0, \\
&u^0_{px} + v^0_{py} = 0, \hspace{3 mm} P^0_{py} = 0, \hspace{3 mm} u^0_p|_{x = 0} = U^0_P, \hspace{3 mm} u^0_p|_{y = 0} = - u^0_e|_{Y = 0}.
\end{aligned}
\end{align}

Soon after Prandtl's seminal 1904 paper, Blasius discovered the celebrated self-similar solution to (\ref{Pr.leading.intro}) (with zero pressure). This solution reads 
\begin{align} \label{blasius}
[\bar{u}^0_p, \bar{v}^0_p] = \Big[f'(\eta), \frac{1}{\sqrt{x + x_0}}\{ \eta f'(\eta) - f(\eta) \} \Big], \text{ where } \eta = \frac{y}{\sqrt{x+x_0}},
\end{align}

\noindent where $f$ satisfies
\begin{align} \label{blasius.ODE}
ff'' + f''' = 0, \hspace{3 mm} f'(0) = 0, \hspace{2 mm} f'(\infty) = 1, \hspace{2 mm} \frac{f(\eta)}{\eta} \xrightarrow{n \rightarrow \infty} 1.
\end{align}

\noindent Here, $x_0 > 0$ is a free parameter. It is well known that $f''(\eta)$ has a Gaussian tail, and that the following hold: 
\begin{align*}
0 \le f' \le 1, \hspace{3 mm} f''(\eta) \ge 0, \hspace{3 mm} f''(0) > 0, \hspace{3 mm} f'''(\eta) < 0 . 
\end{align*}

Such a Blasius profile has been confirmed by experiments with remarkable accuracy as the main validation of the Prandtl theory (see \cite{Schlicting} for instance). These profiles are also canonical from a mathematical standpoint in the following sense: the work, \cite{Serrin}, has proven that when $x$ gets large (downstream), solutions to the Prandtl equation, (\ref{Pr.leading.intro}), converge to an appropriately renormalized Blasius profile. Therefore, validating the expansions (\ref{exp.u}) for the Blasius profile is the main objective and motivation in our study.  

It is well known that the Prandtl equations (\ref{Pr.leading.intro}) admit the two parameter scaling invariance: 
\begin{align} \label{scaling.intro}
&[\bar{u}^{\lambda, \sigma}, \bar{v}^{\lambda, \sigma}] =  [\frac{\lambda^2}{\sigma} \bar{u}^0_p(\sigma x, \lambda y), \lambda \bar{v}^0_p(\sigma x, \lambda y)],
\end{align}

\noindent meaning that if $[\bar{u}^0_p, \bar{v}^0_p]$ solve (\ref{Pr.leading.intro}), then so do $[\bar{u}^{\lambda, \sigma}, \bar{v}^{\lambda, \sigma}]$ (with appropriately modified initial data). 

Typically in boundary layer analyses, the central mathematical analysis concerns the linearized Navier-Stokes operator. Such an operator has coefficients $[u_s, v_s]$, which are the approximate Navier-Stokes solutions defined as in (\ref{exp.u}). The unknown that this operator acts on is the ``remainders", $[u^{(\eps)},v^{(\eps)},P^{(\eps)}]$. In vorticity formulation, the operator reads 
\begin{align} 
\begin{aligned} \label{eqn.vort.intro}
-R[q^{(\eps)}] - u^{(\eps)}_{yyy} &+ 2\eps v^{(\eps)}_{xyy} + \eps^2 v^{(\eps)}_{xxx} + v_s \Delta_\eps u^{(\eps)} - u^{(\eps)} \Delta_\eps v_s \\
& = \eps^{N_0} \{u^{(\eps)} \Delta_\eps v^{(\eps)} - v^{(\eps)} \Delta_\eps v^{(\eps)}\} + F_R,
\end{aligned}
\end{align}

\noindent Here, $\Delta_\eps := \p_{yy} + \eps \p_{xx}$, $F_R$ is a forcing term defined in (\ref{forcingdefn}), and where we have defined the Rayleigh operator
\begin{align} \label{rayleigh.quotient}
R[q^{(\eps)}] = \p_y\{ u_s^2 \p_y q^{(\eps)}\} + \eps \p_x \{ u_s^2 q^{(\eps)}_x \}, \hspace{3 mm} q^{(\eps)} := \frac{v^{(\eps)}}{u_s}.
\end{align}

\noindent The boundary condition we take are the following
\begin{align}
\begin{aligned} \label{rene}
&v^{(\eps)}_x|_{x = L} = a^\eps_1(y), v^{(\eps)}_{xx}|_{x = 0} = a^\eps_2(y), v^{(\eps)}_{xxx} = a^\eps_3(y) \\
& v^{(\eps)}|_{x = 0} = v^0(y), \hspace{3 mm} v^0_y + u^0 = h(y) \in C^\infty(e^y), \hspace{3 mm} h(0) = 0, \\
&v^{(\eps)}|_{y = 0} = v^{(\eps)}_y|_{y = 0} = u^{(\eps)}|_{y = 0} = 0, v^{(\eps)}_y|_{y \uparrow \infty} = 0, 
\end{aligned}
\end{align}

\noindent Here, the $a^\eps_i(y)$ are prescribed boundary data which we assume satisfy 
\begin{align} \label{assume.bq.intro}
\| \p_y^{j} a^\eps_i \{ \frac{1}{\eps^{\frac{1}{2}}} \langle y \rangle \langle Y \rangle^m \} \| \le o(1) \text{ for } j = 0,...,4, \text{ and } m \text{ large,}
\end{align}

\noindent which is a quantitative statement that the expansion (\ref{exp.u}) is valid at $\{x = 0\}$ and $\{x = L\}$. 

We are now able to state our main result, so long as we remain vague regarding the norm $\| \cdot \|_{\mathcal{X}}$ that appears below. A discussion of this norm will be in Subsection \ref{subsection.Main}.

\begin{theorem}[Main Theorem] \label{thm.main}  Assume boundary data are prescribed as in  Theorem \ref{thm.construct}, (\ref{rene}), and satisfying (\ref{assume.bq.intro}). Assume $0 < \sigma << 1$ in (\ref{scaling.intro}). Then let $0 < \eps << L << 1$. Take $N_0 = 1+$ and $n = 4$ in (\ref{exp.u}). Then all terms in the expansion (\ref{exp.u}) exist and are regular, $\| u_s, v_s \|_\infty \lesssim 1$. The remainders, $[u^{(\eps)},v^{(\eps})]$ exists uniquely in the space $\mathcal{X}$ and satisfy
\begin{align} \label{unif.x}
\| \bold{u}^{(\eps)} \|_{\mathcal{X}} \lesssim 1.
\end{align}
\noindent The Navier-Stokes solutions satisfy
\begin{align} \label{main.inviscid}  
\| U^{NS} - u^0_e - u^0_p \|_\infty \lesssim \sqrt{\eps} \text{ and } \| V^{NS} - \sqrt{\eps} v^0_p - \sqrt{\eps}v^1_e \|_\infty \lesssim \eps.
\end{align}
\end{theorem}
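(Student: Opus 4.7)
My plan is to establish Theorem \ref{thm.main} via a nonlinear fixed-point argument built on top of uniform-in-$\eps$ estimates for the linearized remainder system (\ref{eqn.vort.intro}) with boundary conditions (\ref{rene}). I take the construction of the profiles $[u_s, v_s]$ from Theorem \ref{thm.construct} as given, so that the coefficients of the linearized operator have the regularity, decay, and positivity properties (in particular, $u_s \approx u^0_e + \bar u^0_p > 0$ away from $y=0$ and vanishes like $y$ as $y \downarrow 0$) needed below. The genuinely Navier--Stokes quadratic terms on the right-hand side of (\ref{eqn.vort.intro}) carry a prefactor $\eps^{N_0}$ with $N_0 = 1+$, so once a linear solvability theory with uniform bounds is in hand, a standard contraction in $\mathcal{X}$ will close the nonlinear problem.

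\textbf{The fixed-point scheme.} Because no physically natural boundary conditions are available at $\{x=0\}$ and $\{x=L\}$, the mechanism indicated in the abstract is to parametrize the unknowns by their Cauchy data $[u^0(y), v^0(y)]$ at $x=0$ and to construct them via the composition $[u^0, v^0] \xrightarrow{\mathrm{DNS}^{-1}} v \xrightarrow{\mathcal{L}^{-1}} [u^0, v^0]$. The $\mathrm{DNS}^{-1}$ step solves $\p_x$ of the vorticity equation (\ref{eqn.vort.intro}) for $v^{(\eps)}$ on all of $\Omega$ given the boundary data (\ref{rene}) and the profile $[u^0, v^0]$. The $\mathcal{L}^{-1}$ step uses the compatibility condition forced by (\ref{eqn.vort.intro}) restricted to $x=0$ to recover $[u^0, v^0]$ from $v$. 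Fixed points of this composition are solutions of the remainder system, so my goal reduces to proving that each step is well defined and contractive on a suitable ball in $\mathcal{X}$.

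\textbf{Linear estimates for the DNS step.} This is where the core analysis lies. Using $q^{(\eps)} := v^{(\eps)}/u_s$, the Rayleigh operator $R[q^{(\eps)}]$ in (\ref{rayleigh.quotient}) is coercive when tested against $q^{(\eps)}$, yielding a weighted $H^1$-type bound on $q^{(\eps)}$ provided the boundary contributions from integration by parts can be controlled by the prescribed data $a^\eps_i$ and $h$. To access higher regularity and handle the remaining terms $u^{(\eps)}_{yyy}$, $v_s \Delta_\eps u^{(\eps)}$, $u^{(\eps)}\Delta_\eps v_s$, and the $\eps$-elliptic pieces, I differentiate in $x$ and test with $q^{(\eps)}_x$, which gives an energy identity whose positive part controls $\|u_s q^{(\eps)}_{xy}\|^2 + \eps \|u_s q^{(\eps)}_{xx}\|^2$ and whose error terms are absorbed using (i) Hardy-type inequalities adapted to the weight $u_s$ near $y = 0$, (ii) rapid decay of $u_s - u^0_e$ and its derivatives in $y$, and (iii) the smallness of $L$ to swallow non-absolutely-signed commutator contributions. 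The scaling smallness $\sigma \ll 1$ in (\ref{scaling.intro}) makes the Blasius derivatives entering the linearized operator small, which is crucial for closing the positivity.

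\textbf{Closing the iteration and deriving the asymptotics.} Once the DNS step is shown to map a ball of $\mathcal{X}$ into itself with norms controlled by the boundary data, the $\mathcal{L}^{-1}$ step is analyzed as an ODE in $y$ at $x=0$: the restriction of (\ref{eqn.vort.intro}) to $\{x=0\}$ together with (\ref{rene}) and (\ref{assume.bq.intro}) determines $[u^0, v^0]$ up to finitely many parameters that are pinned down by the far-field condition $v^{(\eps)}_y|_{y\uparrow\infty} = 0$ and by $h$. Contraction of $\mathcal{L}^{-1}\circ \mathrm{DNS}^{-1}$ is then inherited from the $\eps^{N_0}$-smallness of the Navier--Stokes nonlinearity and the smallness of $L$, giving existence, uniqueness, and the uniform bound (\ref{unif.x}). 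The inviscid estimates (\ref{main.inviscid}) then follow directly from the expansion (\ref{exp.u}), the size of the constructed correctors in Theorem \ref{thm.construct}, and the $\sqrt{\eps}$-scaling between $(U^{NS},V^{NS})$ and the scaled profiles. The hard part will be the uniform-in-$\eps$ coercivity in the DNS step: the Rayleigh multiplier only delivers degenerate positivity near $y = 0$ where $u_s \to 0$, and in the bulk one must fight the non-sign-definite terms $v_s \Delta_\eps u^{(\eps)} - u^{(\eps)}\Delta_\eps v_s$ and the hyperbolic-looking $u^{(\eps)}_{yyy}$. Converting these into absorbable errors while retaining uniform control on $\p_y^k v^{(\eps)}$ for $k=1,2,3$ is the technical crux, and is precisely why both $L$ and $\sigma$ must be taken small.
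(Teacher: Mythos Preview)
Your high-level architecture matches the paper's: construct profiles via Theorem~\ref{thm.construct}, split the remainder problem into a DNS piece (the $\p_x$-differentiated vorticity equation~(\ref{intro.v.sys})) and a compatibility ODE $\mathcal{L}v^0 = F$ at $\{x=0\}$ (system~(\ref{sys.u0.intro})), and close by contraction in~$\mathcal{X}$. That part is fine.

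The gap is in your proposed multiplier scheme for the DNS step. You write that ``the Rayleigh operator $R[q^{(\eps)}]$ is coercive when tested against $q^{(\eps)}$'' and that the remaining terms, in particular $u^{(\eps)}_{yyy}$, are handled by differentiating in $x$ and testing with $q^{(\eps)}_x$. This is precisely the approach the paper explains \emph{fails} in the no-slip setting: $(R[q],q) = \|u_s q_y\|^2$ is degenerate at $y=0$, and the interaction $(u_{yy}, q_y)$ cannot be absorbed (see the discussion around~(\ref{leading.1})). Testing with $q_x$ alone only recovers $\|u_s \nabla_\eps q_x\|$ (Lemma~\ref{Lemma.bx.estimate}) with right-hand side still carrying the full $|||q|||_w$ and $||||v||||_w$ norms; it does not close by itself. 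The paper's actual mechanism is the multiplier $q_{xx}$ applied to DNS, which yields the crucial \emph{favorable boundary contribution} $\|\sqrt{u_{sy}}\,q_{xy}\|_{y=0}^2$ from the interaction $(v_{yyyy}, -q_{xx})$; see~(\ref{90}) and estimate~(\ref{B3.estimate}). This boundary positivity at $y=0$ is what replaces the lost coercivity of $u_s|_{y=0}>0$ from the moving-boundary case. The full scheme is a cascade of five multipliers --- $q_x$, $q_{xx}$, $q_{yy}$ at third order and $\eps^2 v_{xxxx}$, $\eps u_s v_{xxyy}$ at fourth order --- linked via~(\ref{feelsjai}); your two-multiplier sketch omits the ones that carry the decisive positivity.

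A smaller misattribution: the smallness $\sigma \ll 1$ is \emph{not} used in the DNS step but in the $\mathcal{L}^{-1}$ analysis (Lemma~\ref{Lem.double}), where it guarantees the positivity $(\p_y^4 v^0 - \p_y\{u_s^2 q^0_y\}, q^0) \gtrsim [[q^0]]^2$ by making the error terms involving $u_{syyyy}$ and $a_0, a_1$ small. The DNS closure relies on $L \ll 1$, not on $\sigma$.
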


Upon establishing the uniform bound (\ref{unif.x}), the result (\ref{main.inviscid}) follows from the following inequalities: $\| v^{(\eps)} \|_\infty \lesssim  \eps^{-\frac{1}{2}} \| \bold{u}^{(\eps)} \|_{\mathcal{X}}$, and $\|u^{(\eps)} \|_{\infty} \lesssim  \|\bold{u}^{(\eps)} \|_{\mathcal{X}}$. These are established in Lemmas \ref{sister.lemma}, \ref{lemma.nonlinear} together with the definitions in (\ref{defn.norms.ult}).
 
Our main result thus ensures a local in space ($L << 1$) validity for the Prandtl expansion, (\ref{exp.u}). This marks an important first step to study the optimal bound for $\sup L$. Such a study (in progress, \cite{Iyer-Global}) would address the phenomenon of ``boundary layer detachment" (which would correspond to $\sup L < \infty$) versus global in $x$ validity (in the sense of \cite{Iyer2}). 

Regarding our scaling, (\ref{scaling.intro}), it is important to note that $\lambda$ can be arbitrary. This covers rich structures in the Prandtl equation. In particular, when $\lambda^2 = \sigma$, the scaling of $\lambda \rightarrow 0$ is equivalent to letting $x_0 \rightarrow \infty$ in (\ref{blasius}). Letting $\eta_\lambda$ denote the rescaled self-similar variable, one has by definition
\begin{align*}
\eta_\lambda := \frac{\lambda y}{\sqrt{\lambda^2 x + x_0}} = \frac{y}{\sqrt{x + \lambda^{-2} x_0}}.
\end{align*}
For this reason, we interpret our main theorem as being \textit{asymptotic}, that is for large values of $x_0$: in the particular case of $\lambda = \sigma^2$, setting $\sigma$ small is equivalent to taking $x_0$ large. Moreover, in light of \cite{Serrin}, general solutions to the Prandtl equation converge to the Blasius profile as $x_0 \rightarrow \infty$. We thus expect that the validity of (\ref{exp.u}) holds for generic Prandtl data without rescaling, for $x_0 >> 1$. Furthermore, we remark the $L$ may not necessarily need to be small in this case. 

\subsection{Notation}

Before we state the main ideas of the proof, we will discuss our notation. Since we use the $L^2$ norm extensively in the analysis, we use $\| \cdot \|$ to denote the $L^2$ norm. It will be clear from context whether we mean $L^2(\mathbb{R}_+)$ or $L^2(\Omega)$. When there is a potential confusion (for example, when changing coordinates), we will take care to specify with respect to which variable the $L^2$ norm is being taken (for instance, $L^2_y$ means with respect to $\ud y$, whereas $L^2_Y$ will mean with respect to $\ud Y$). Similarly, when there is potential confusion, we will distinguish $L^2$ norms along a one-dimensional surface (say $\{x = 0\}$) by $\| \cdot \|_{x = 0}$. Analogously, we will often use inner products $(\cdot, \cdot)$ to denote the $L^2$ inner product. When unspecified, it will be clear from context if we mean $L^2(\mathbb{R}_+)$ or $L^2(\Omega)$. When there is potential confusion, we will distinguish inner products on a one-dimensional surface (say $\{x = 0\}$) by writing $(\cdot, \cdot)_{x = 0}$. Given a weight function $w$, we use the notation $\| \cdot \|_{L^2(w)} := \| \cdot w \|$, and $L^2(w)$ to refer to the corresponding weighted $L^2$ space. 

We will often use scaled differential operators
\begin{align*}
\nabla_\eps := (\p_x, \sqrt{\eps}\p_y), \hspace{5 mm} \Delta_\eps := \p_{yy} + \eps \p_{xx}. 
\end{align*}

\noindent Define also the integration operator, $I_x[g] := \int_0^x g(x') \ud x'$. For functions $w: \mathbb{R}_+ \rightarrow \mathbb{R}$, we distinguish between $w'$ which means differentiation with respect to its argument versus $w_y$ which refers to differentiation with respect to $y$. 

Regarding unknowns, the central object of study in our paper are the remainders, $[u^{(\eps)}, v^{(\eps)}]$. By a standard homogenization argument (see subsection \ref{subsection.rem}), we may move the inhomogeneous boundary terms $a_i^{\eps}$ to the forcing and consider the homogeneous problem. Specifically, we homogenize $v^{(\eps)}$ to $v$ using the following:
\begin{align}
\begin{aligned} \label{homfin}
&\tilde{v} :=  v^0 + x\{a^\eps_1 - L a^\eps_2 - \frac{L^2}{2}a^\eps_3  \} + x^2 \frac{a^\eps_2}{2} + x^3 \frac{a^\eps_3}{6} =:  v^0 + a^\eps(x,y), \\
&v := v^\eps - \tilde{v} = v^\eps - v^0 - a^\eps, \hspace{3 mm} u := u^\eps + \int_0^x \tilde{v}_y = u^\eps + x v^0 + I_x[a^\eps].
\end{aligned} 
\end{align}

\noindent We call the new unknowns $[u, v]$ (= $\bold{u}$), and these are actually the objects we will analyze throughout the paper.   

When we write $a \lesssim b$, we mean there exists a number $C < \infty$ such that $a \le C b$, where $C$ is independent of small $L, \eps$ but could depend on $[u_s, v_s]$. We write $o_L(1)$ to refer to a constant that is bounded by some unspecified, perhaps small, power of $L$: that is, $a = o_L(1)$ if $|a| \le C L^\delta$ for some $\delta > 0$. 

We will, at various times, require localizations. All such localizations will be defined in terms of the following fixed $C^\infty$ cutoff function: 
\begin{align} \label{basic.cutoff}
\chi(y) := \begin{cases}1 \text{ on } y \in [0,1) \\ 0 \text{ on } y \in (2,\infty) \end{cases} \hspace{3 mm} \chi'(y) \le 0 \text{ for all } y > 0.  
\end{align}

We will use $\| \cdot \|_{loc}$ to mean localized $L^2$ norms. More specifically we take for concreteness $\| \cdot \|_{loc} := \| \cdot \chi(\frac{y}{10}) \|$. We adopt the notation that $\langle a \rangle = 1 + a$. Define the weight 
\begin{align} \label{w0}
w_0 := \langle y \rangle \langle Y \rangle^{m}, \text{ for } m \text{ sufficiently large, universal number.}
\end{align}

We will define now the key norms that appear throughout our analysis:
\begin{definition} \label{defn.norms.intro}   Given a weight function $w = w(y)$, define:  
\begin{align} 
\begin{aligned} \label{defn.norms.ult}
&\| v \|_{X_w} := \eps^{-\frac{3}{16}}||||v||||_w + |||q|||_w, \\
&\| v \|_{Y_w} := ||||v||||_w + \sqrt{\eps}|||q|||_w, \\
& [u^0, v^0 ]_B := \| u^0 \| + \| u^0_y \| + \| u^0_{yy}w \| + \|u^0_{yyy} w \|  + \| q^0_y \| + \| \frac{q^0}{y} \| \\
& \hspace{20 mm} + \| \sqrt{u_s} q^0_{yy} w_0 \| + \| q^0_y \|_{y = 0} + \| v^0_{yyy} w_0 \| + \| v^0_{yyyy} w_0 \|, \\
&\| \bold{u} \|_{\mathcal{X}} (= \| v, u^0, v^0 \|_{\mathcal{X}}) := [u^0, v^0]_B + \eps^{\frac{1}{4}} \| v \|_{X_1} + \eps^{\frac{1}{4}}  \|v \|_{Y_{w_0}}, \\
&|||q|||_w := \| \nabla_\eps q_x \cdot u_s w \| + \| \sqrt{u_s} \{ q_{yyy}, q_{xyy}, \sqrt{\eps} q_{xxy}, \eps q_{xxx} \} w \| + |q|_{\p, 2, w}\\
&||||v||||_w := \| \{v_{yyyy}, \sqrt{\eps} v_{xyyy}, \eps v_{xxyy}, \eps^{\frac{3}{2}}v_{xxxy}, \eps^2 v_{xxxx} \} w \| + |v|_{\p, 3, w}\\
&| q |_{\p,2,w} :=  \|u_s q_{xy}w\|_{x = 0} + \|q_{xy} w\|_{y = 0} + \|\sqrt{\eps}u_s q_{xx}w\|_{x = L} + \|q_{yy}w\|_{y= 0} \\
&| v |_{\p, 3,w} := \|\eps^{\frac{3}{2}}\sqrt{u_s} v_{xxx} w\|_{x = 0} + \|\sqrt{\eps} u_s v_{xyy}w\|_{x = 0} + \|\eps u_s v_{xxy}w\|_{x = L}.
\end{aligned}
\end{align}
\end{definition}

\noindent Note above that we identify the vector $\bold{u}$ with the triple $(v, u^0, v^0)$. We will use the above set of norms with either the choice $w = 1$ or $w = w_0$ (see (\ref{w0}).  We also define now the space $\mathcal{X}$:

\begin{definition} \label{space.X} The space $\mathcal{X}$ is defined via 
\begin{align}
\begin{aligned} \label{sp.X}
\mathcal{X} := \Big\{ &(v, u^0, v^0)  \in L^2(\Omega) \times L^2(\mathbb{R}_+) \times L^2(\mathbb{R}_+): \|v, u^0, v^0 \|_{\mathcal{X}} < \infty,  \\
& v|_{y = 0} = v_y|_{y = 0} = v_{x}|_{x = L} = v_{xx}|_{x = 0} = v_{xxx}|_{x = L} = v|_{y = \infty} = 0,  \\
&v^0(0) = v^0_y(0) = \p_y^k v(\infty) = 0 \text{ for } k \ge 1, \hspace{3 mm} u^0 + v^0_y = h(y), \hspace{3 mm} u^0(0) = 0. \Big\} 
\end{aligned}
\end{align}
\end{definition}

\subsection{Overview of Proof} \label{subsection.Main}

Let us first recap the ideas introduced in \cite{GN}, which treated the case when the boundary $\{y = 0\}$ was moving with velocity $u_b > 0$. First, let us extract: 
\begin{align} \label{leading.1}
\text{Leading order operators in (\ref{eqn.vort.intro})} = - R[q] - u_{yyy}.
\end{align}

Due to the nonzero velocity at the $\{y = 0\}$ boundary, the quantity $\bar{u}|_{y = 0} > 0$. A central idea introduced by \cite{GN} is the coercivity of $R[q]$ over $\| \nabla_\eps q \|$.This coercivity relied on the fact that $q = \frac{v}{u_s} = 1 \notin \text{Ker}(R)$, thanks to the non-zero boundary velocity of $\bar{u}|_{y = 0}$. Extensive efforts without success have been made to extracting coercivity from $R[q]$ in the present, motionless boundary, case. However, it appears that this procedure interacts poorly with the operator $\p_{yyy} u$, producing singularities too severe to handle. In fact, the natural multiplier for the Rayleigh operator is $q$ itself, which produces $(R[q], q) = \| u_s q_y \|^2$. However, due to the degeneracy of $u_s$ at $y = 0$ (which is notably absent when $u_s|_{y = 0} > 0$ as in \cite{GN}) this is too weak of a contribution to control the interaction term $(u_{yy}, q_y)$.

Our main idea is based on the observation that the $x$ derivative of (\ref{leading.1}) produces, at leading order:
\begin{align} \label{leading.2}
- \p_x R[q] + v_{yyyy}.
\end{align}

\noindent Unlike (\ref{leading.1}), these two operators enjoy better interaction properties. To see this on a preliminary level, consider the interaction between $v_{yyyy}$ and the multiplier $-q_{xx}$ (ignoring boundary contributions at $x = 0, x = L$): 
\begin{align} \n
(v_{yyyy}, -q_{xx}) \sim & -(v_{yyyx}, q_{xy}) \\ \n
\sim &  -(u_s q_{yyyx} + 3 u_{sy}q_{yyx} + 3 u_{syy}q_{yx} + u_{syyy}q_x, q_{xy}) \\ \label{90}
\sim & \| \sqrt{u_s} q_{yyx} \|^2 + \frac{3}{2} \| \sqrt{u_{sy}} q_{xy} \|_{y = 0}^2,
\end{align}

\noindent which is a crucial favorable boundary contribution at $\{y = 0 \}$ as $u_{sy}|_{y = 0} \sim \bar{u}^0_{py} > 0$.  

To this end, we split the equation (\ref{eqn.vort.intro}) into two pieces that are linked together. First, we study the boundary trace, $[u^0, v^0] = [u, v]|_{x = 0}$. By evaluating the vorticity equation (\ref{eqn.vort.intro}) at $\{x = 0\}$ and using the relation (\ref{rene}), we obtain the following system for $v^0$:
\begin{align}
\begin{aligned} \label{sys.u0.intro}
&\mathcal{L} v^0 = F_{(v)} + F^a_R + \mathcal{Q} + \mathcal{H}, \\
&\mathcal{L} v^0 :=  v^0_{yyyy} - \p_{y}\{ u_s^2 q^0_y \} - \{ v_s v^0_{yyy} - v^0_y v_{syy} \} + \eps u_{sxx} v^0 + \eps v_{sxx} v^0_y, \\
&F_{(v)} := -2\eps u_s u_{sx}q_x|_{x = 0} - 2\eps v_{xyy}|_{x = 0} - \eps^2 v_{xxx}|_{x = 0} - \eps v_s u_{xx}|_{x = 0}, \\
& v^0(0) = 0, \hspace{3 mm} v^0_y(0) = 0, \hspace{3 mm}  \p_y^k v^0(\infty) = 0 \text{ for } k \ge 1, \\
&v^0_y + u^0 = h(y). 
\end{aligned}
\end{align}

\noindent The $F^a_R + \mathcal{Q} + \mathcal{H}$ terms above contain contributions of $h(y)$, quadratic nonlinearities in $v^0$, and pure forcing terms. We refrain from discussing these terms further in the introduction; the full equations are specified in (\ref{sys.u0.app.unh}). The important point is that the forcing term $F_{(v)}$ in (\ref{sys.u0.intro}) depends on (derivatives of $v)|_{x = 0}$.

Second, we take $\p_x$ of (\ref{eqn.vort.intro}) (call this ``DNS" for Derivative Navier-Stokes) to obtain: 
\begin{align}
\begin{aligned} \label{intro.v.sys}
&\text{DNS}(v) := - \p_x R[q] + \Delta_\eps^2 v + J(v) =  - B_{v^0} + \eps^{N_0} \mathcal{N} + F_{(q)} \\
&v|_{x = 0} = v_x|_{x = L} =  v_{xx}|_{x = 0} = v_{xxx} = 0. \\
&v|_{y = 0} = v|_{y = 0} = 0.  
\end{aligned}
\end{align}

\noindent Here, the $\eps^{N_0}\mathcal{N} + F_{(q)}$ terms are quadratic and forcing terms which shall remain unspecified for the moment. Note the change in notation as we have dropped the superscript $\eps$, and homogenized the boundary conditions on the sides $\{x = 0\}, \{x = L\}$. Above, $B_{v^0}$ is the result of homogenizing the boundary condition $v|_{x = 0} = v^0$ as well as using $u = u^0 - I_x[v_y]$. The operators $J, B_{v^0}$ are defined: 
\begin{align} \label{defn.J.conc}
&J(v) := - v_{sx}I_x[v_{yyy}] - v_s v_{yyy} - \eps v_{sx}v_{xy} \\ \n
& \hspace{20 mm} - \eps v_s v_{xxy} + v_y \Delta_\eps v_s + I_x[v_y] \Delta_\eps v_{sx}, \\ \n
&B_{v^0}:=  v^0_{yyyy} - 2 \p_y \{ u_s u_{sx} q^0_y \} + [v^0_{yyy} \p_x \{(x + 1) v_{s} \} \\  \label{portrait} 
& \hspace{20 mm} - v^0_y \p_x \{(x+1) v_{syy} \}]  - \eps v^0_y \p_{x} \{ (x+1) v_{sxx} \}.
\end{align}

Thus, the approach we take is to analyze (\ref{sys.u0.intro}) in order to control the boundary trace $[u^0, v^0]$ in terms of $v$, and subsequently analyze \ref{intro.v.sys}) to control $v$ in terms of the boundary trace, $[u^0, v^0]$. We may schematize this procedure via: 
\begin{align} \label{diag1}
[u^0, v^0] \xrightarrow{\text{DNS}^{-1}} v \xrightarrow{\mathcal{L}^{-1}} [u^0, v^0]. 
\end{align}

\noindent We then recover a solution to the original Navier-Stokes equation (NS) via a fixed point of (\ref{diag1}). This structure of analysis gives rise to a linked set of inequalities which we summarize here:
\begin{align}
\left.
\begin{aligned} \label{scheme.1}
&[ u^0, v^0 ]_{B}^2 \lesssim  \eps \| v \|_{Y_{w_0}}^2 + \eps^{\frac{1}{2}+\frac{3-}{16} } \| v \|_{X_1}^2 + \text{Data} \\
&\| v \|_{X_1}^2 \lesssim \eps^{-\frac{1}{2}} [u^0, v^0 ]_{B}^2 + \text{Data}  \\
&\| v \|_{Y_{w_0}}^2 \lesssim  \| v \|_{X_1}^2 + [ u^0, v^0]_{B}^2 + \text{Data}.
\end{aligned}
\right\}.
\end{align}

\noindent Above $w_0$ is the specific weight given in (\ref{w0}). It is clear that the above scheme of estimates closes to yield control over $\| \bold{u} \|_{\mathcal{X}}$.

As shown in Section \ref{Subsection.nonlin} that $B_{(v^0)}$ (Lemma \ref{lemma.bb1}), $F_{(v)}$ (Lemma \ref{lemma.bb2}) and the nonlinearity (Lemma \ref{lemma.nonlinear}) can be controlled with a small constant. We therefore turn our attention to the following two linear problems. 

\subsubsection*{ \normalfont \textit{Section 2: Study of linear problem $\mathcal{L} v^0 = F$}}

Let us turn now to the system, (\ref{sys.u0.intro}). The main estimate we prove is:
\begin{align} \label{prop.intro.1.est}
[ u^0, v^0 ]_B & \lesssim \| F_{(v)} w_0\| + \text{ Data}. 
\end{align}

\noindent Upon recalling the terms in $F_{(v)}$ shown in (\ref{sys.u0.intro}) and analyzing the resulting expressions, such an estimate produces the first bound shown in (\ref{scheme.1}).

By evaluating the vorticity equation, (\ref{eqn.vort.intro}) at $\{x = 0 \}$, one obtains a compatibility equation that must be satisfied by the tuple, $[u^0, v^0]$. However, it is important to realize that we have the freedom to prescribe the relationship between these two boundary data. We do so by selecting $u^0 + v^0_y = h(y)$ as shown in (\ref{rene}). This boundary condition is natural from the setup of our program, since both $u^0$ and $v^0_y$ should be individually bounded in Sobolev norms. The selection of this boundary condition results in a fourth order equation $v^0_{yyyy} - \p_y \{ u_s^2 q^0_y \}$, which enjoys similar favorable properties to DNS and similar corresponding quotient estimates as in (\ref{90}).

Estimate (\ref{prop.intro.1.est}) is obtained in two steps. The first step is to apply the multiplier $q^0 = \frac{v}{u_s}$, and the second is to apply the multiplier $v_{yyyy}w_0^2$. The multiplier $q^0$ is the delicate step in which the interaction between the $\p_y^4$ operator and the Rayleigh term $-\p_y \{u_s^2 \p_y  q^0 \}$ must be understood. The key estimate we prove in this direction is the positivity 
\begin{align*}
(\p_y^4 v^0 - \p_y \{ u_s^2 q^0_y \}, q^0) \gtrsim \| \sqrt{u_s} q^0_{yy} \|^2 + \| u_s q^0_y \|^2 + \| q^0_y \|_{y = 0}^2. 
\end{align*}

\noindent It is for this lower bound that we require the assumption that $\sigma << 1$ in (\ref{scaling.intro}). Once this is established, the remaining terms may be treated perturbatively. Overall, the upshot of the selection of boundary condition (\ref{rene}) is to capitalize on similar favorable structures to the DNS analysis. 

\subsubsection*{\normalfont \textit{Section 3 and 4: Study of linear problem $\text{DNS}(v) = F$}}

We now turn our attention to (\ref{intro.v.sys}). The goal is to establish control over the norms $\| \cdot \|_{Y_{w_0}}, \| \cdot \|_{Y_1}, \| \cdot \|_{X_1}$. Consulting (\ref{defn.norms.ult}), the basic building blocks of these norms are the fourth and third order norms, $||||\cdot||||_w, |||\cdot|||_w$. Hence, our discussion will be centered on the control of $|||| \cdot||||_w, |||\cdot|||_w$. Let us also emphasize that we require $L << 1$ to establish these controls and ultimately solve the DNS equation.

Based on the crucial quotient estimate, (\ref{90}), we perform a cascade of five estimates which culminate in the following: 
\begin{align}
\begin{aligned} \label{feelsjai}
&||||v||||_1^2 \lesssim \eps^{\frac{3}{8}}|||q|||_1^2 +\text{ Data,} \\
&|||q|||_1^2 \lesssim ||||v||||_1^2 +\text{ Data}. 
\end{aligned}
\end{align}

Let us discuss the important features of the above scheme. The top (fourth order) bound in (\ref{feelsjai}) consists of two estimates, first using the multiplier $\eps^2 v_{xxxx}$ and second using the multiplier $\eps u_s v_{xxyy}$. These estimates are possible due to carefully designed boundary conditions at $x = 0$ and $x = L$ for $v$ (see (\ref{intro.v.sys})). Our central observation at this level is that the $\eps u_s v_{xxyy}$ estimate is essentially \textit{standalone at the top order}, up to $|||q|||$, thanks to the crucial weight $u_s$. 

The bottom (third order) bound in (\ref{feelsjai}) consists of three delicate estimates, using the multipliers successively $q_x, q_{xx}, q_{yy}$. First, we emphasize that the multipliers at this stage are derivatives of the quotient, $q$. This is because the main coercivity is extracted from the Rayleigh operator, $R[q]$. The key feature we capitalize on in this scheme is that the estimates using multipler $q_x, q_{yy}$ are \textit{standalone up to $o_L(1)$ contributions}. It is important to note that since $q = \frac{v}{u_s}$, despite the presence of $\sqrt{u_s}$ weight in $|||q|||_w$ (see (\ref{defn.norms.ult})), this is still significantly stronger at $\{y = 0\}$ to $|||q|||_w$. In turn, to facilitate estimates near $\{y = 0\}$, we establish careful embedding estimates in (\ref{subsection.careful}).

The weighted analog of the scheme (\ref{feelsjai}) is, for any given $w(y)$ (satisfying reasonable hypotheses): 
\begin{align}
\begin{aligned} \label{feelsjaiw}
&||||v||||_w^2 \lesssim \eps^{\frac{3}{8}} ||| q |||_1^2 + \eps |||q|||_{w}^2 + \sqrt{\eps} |||q|||_{w} |||q|||_{w_y} + \text{ Data}, \\
&|||q|||_w^2 \lesssim o_L(1) ||||v||||_w^2 + o_L(1) \| q_{xx} \|_{w_y}^2 +  \text{ Data}.
\end{aligned}
\end{align}

Apart from the key elements discussed above in the unweighted case, the new features here is a \textit{gain of $\eps$} when going from $||||v||||_w$ to $|||q|||_w$. This crucial gain of $\eps$ is what ultimately enables us to relate the weighted estimate for $\| v \|_{Y_w}$ back to the $\| v \|_{X_1}$ unweighted norm. 

As a final remark, we note that the Appendix of this paper contains the construction of the profiles, $[u_s, v_s]$, culminating in Theorem \ref{thm.construct}. We wish to emphasize that our techniques provide regularity and decay estimates which, to our knowledge, were unknown even for the classical Prandtl equation, (\ref{Pr.leading.intro}).

\subsection{Other Works}

Let us now place this result in the context of the existing literature. To organize the discussion, we will focus on the setting of stationary flows in dimension 2. This setting in particular occupies a fundamental role in the theory, as it was the setting in which Prandtl first formulated and introduced the idea of boundary layers for Navier-Stokes flows in his seminal 1904 paper, \cite{Prandtl}. 

In this context, one fundamental problem is to establish the validity of the expansions (\ref{exp.u}). This was first achieved under the assumption of a moving boundary in \cite{GN} for $x \in [0,L]$, for $L$ sufficiently small. The method of \cite{GN} is to establish a positivity estimate to control $||\nabla_\eps v||_{L^2}$, which crucially used the assumed motion of the boundary. Several generalizations were obtained in \cite{Iyer}, \cite{Iyer2}, \cite{Iyer3}. First, \cite{Iyer} considered flows over a rotating disk, in which geometric effects were seen, \cite{Iyer2} considered flows globally in the tangential variable, and \cite{Iyer3} considered outer Euler flows that are non-shear. All of these works are under the crucial assumption of a moving boundary.

The classical setting of a motionless boundary with the no-slip condition is treated by the present work, the recent work \cite{GI1}, as well as the exciting result of \cite{Varet-Maekawa}. First, let us emphasize that the work \cite{GI1} requires an external forcing. Second, it is our understanding that our present work is mutually exclusive with the work of \cite{Varet-Maekawa}. Our work here, and main concern, treats the classical self-similar Blasius solution which appears to not be covered by \cite{Varet-Maekawa}. On the other hand, our result does not cover a pure shear boundary layer of the form $(U_0(y), 0)$ since such shears are not a solution to the homogeneous Prandtl equation. 

A related question is that of wellposedness of the Prandtl equation (the equation for $\bar{u}^0_p$, as defined in (\ref{intro.bar.prof})). This investigation was by Oleinik in \cite{Oleinik}, \cite{Oleinik1}. In the 2D, stationary setting, it is shown that under local monotonicity assumptions, solutions exist in $[0,L]$. In the case where the pressure gradient is favorable, it is shown that $L$ can be taken arbitrarily large. The recent work of \cite{MD} addresses the related issue of blowup of the Prandtl equation under the assumption of an unfavorable pressure gradient. The regularity results obtained in the present paper can be viewed as an extension of Oleinik's local-in-$x$ result: assuming strong decay at $y \rightarrow \infty$, we can obtain enhanced regularity of Oleinik's solutions. 

For unsteady flows, expansions of the form (\ref{exp.u}) have been verified in the analyticity framework: \cite{Caflisch1}, \cite{Caflisch2}, in the Gevrey setting: \cite{DMM}, for the initial vorticity distribution assumed away from the boundary: \cite{Mae}. The reader should also see \cite{Asano}, \cite{BardosTiti}, \cite{Kelliher}, \cite{HLop}, \cite{LXY}, \cite{Taylor}, \cite{TWang} for related results. There have also been several works (\cite{GGN1}, \cite{GGN2}, \cite{GGN3}, \cite{GN2}, \cite{GrNg1}, \cite{GrNg2}, \cite{GrNg3}) establishing generic invalidity of expansions of the type (\ref{exp.u}) in Sobolev spaces in the unsteady setting. 

In the unsteady setting, there is again the related matter of wellposedness of the Prandtl equation. This was also initiated by Oleinik, who under the monotonicity assumption, $\p_y U(t = 0) > 0$, obtained global-in-time regular solutions on $[0,L] \times \mathbb{R}_+$ for $L$ small, and local-in-time solutions on $[0,L] \times \mathbb{R}_+$ for arbitrarily large by finite $L$. Global-in-time weak solutions were obtained by \cite{Xin} for arbitrary $L$ under the monotonicity assumption and a favorable pressure gradient of the Euler flow: $\p_x P^E(t,x) \le 0$ for $t \ge 0$.

The works mentioned above use the Crocco transform, which is available in the monotonic setting. Still assuming monotonicity, local wellposedness was proven in \cite{AL} and \cite{MW} without using the Crocco transform, and in \cite{KMVW} for multiple monotonicity regions. \cite{MW} introduced a good unknown which enjoys a crucial cancellation, whereas \cite{AL} performed energy estimates on a transformed quantity together with a Nash-Moser iteration.

When the assumption of monotonicity is removed, the wellposedness results are largely in the analytic or  Gevrey setting. The reader should consult \cite{Caflisch1} - \cite{Caflisch2},  \cite{Kuka}, \cite{Lom}, \cite{Vicol}, and \cite{GVM} for some results in this direction. In the Sobolev setting without monotonicity, the equations are linearly and nonlinearly ill-posed (see \cite{GVD} and \cite{GVN}). A finite-time blowup result was obtained in \cite{EE} when the outer Euler flow is taken to be zero, in \cite{KVW} for a particular, periodic outer Euler flow, and in \cite{Hunter} for both the inviscid and viscous Prandtl equations. 

The above discussion is not comprehensive; we refer the reader to the review articles, \cite{E}, \cite{Temam} and references therein for a more thorough review of the wellposedness theory.

\section{$\mathcal{L}^{-1}$ and Boundary Estimates for $[u^0, v^0]$} \label{Section.1}

\subsection{Setup and Basic Inequalities}

\noindent The system we analyze in this section is that for $[u^0, v^0]$. Recall (\ref{sys.u0.app.unh}) and the definition of $w_0$, (\ref{w0}). We thus consider
\begin{align}
\begin{aligned} \label{nawa}
&\mathcal{L} v^0 = F \in L^2(w_0), \\
&\mathcal{L} v^0 := v^0_{yyyy} - \{ u_s v^0_{yy} - u_{syy}v^0 \} - \{ v_s v^0_{yyy} - v^0_y v_{syy} \} \\
& \hspace{20 mm} + \eps u_{sxx} v^0 + \eps v_{sxx} v^0_y, \\
&v^0(0) = v^0_y(0) = 0, \hspace{3 mm} \p_y^k v^0(\infty) = 0 \text{ for } k \ge 1.  
\end{aligned}
\end{align}

\noindent Above, we take $F$ as an abstract forcing term. We also write $\mathcal{L}$ as shown in (\ref{sys.u0.intro}). Define the unknown $q^0 = \frac{v^0}{u_s}$, which satisfies the boundary condition $q^0(0) = 0$. 

We now introduce norms in which we control $[u^0, v^0]$ (recall the definition (\ref{w0})): 
\begin{align} \label{doub.norm}
&[[q^0]] := \| \sqrt{u_s} q^0_{yy} \| + \| u_s q^0_y \| + \| \sqrt{u_{sy}} q^0_{y}\|_{y = 0}, \\ \label{trip.norm}
&[[[v^0]]] := \| u_s v^0_{yy} w_0 \| + \| v^0_{yyyy} w_0 \| + \| \sqrt{u_s} v^0_{yyy} w_0 \|. 
\end{align}

We also now define the $[\cdot]_B$ norms in which we control the solution:
\begin{align} \label{d.norm.B}
[u^0, v^0 ]_B &:= \| u^0 \| + \| u^0_y \| + \| u^0_{yy}w_0 \| + \|u^0_{yyy} w_0 \| + \| q^0_y \| + \| \frac{q^0}{y} \| \\ \n
& + \| \sqrt{u_s} q^0_{yy} w_0 \| + \| q^0_y \|_{y = 0} + \| v^0_{yyy} w_0 \| + \| v^0_{yyyy} w_0 \|.
\end{align}

\noindent We also define the space $B$ via 
\begin{align}
B = \{ [u^0, v^0] \in L^2 \times L^2(\frac{1}{\langle y \rangle}) : u^0 + v^0_y = h(y), \hspace{3 mm} [u^0, v^0]_B < \infty \}
\end{align}

The main result of Section \ref{Section.1} is 
\begin{proposition} \label{prop.L.exist} There exists a unique solution $v^0$ (and thus $u^0$ according to (\ref{rene})) to (\ref{nawa}) such that $[u^0, v^0] \in B$, and the following estimate holds 
\begin{align}
\begin{aligned} \label{mainu0est}
&[ u^0, v^0 ]_B^2 \lesssim  |(F, q^0)| + \| F w_0 \|^2. 
\end{aligned}
\end{align}
\end{proposition}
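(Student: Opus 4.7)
The plan is to establish the a priori bound (\ref{mainu0est}) through two linked energy estimates and then deduce existence and uniqueness from a standard approximation argument. The two estimates parallel the DNS structure sketched in the introduction. As a preliminary, observe that $v^0 = u_s q^0$ gives $u_s v^0_{yy} - u_{syy} v^0 = \p_y(u_s^2 q^0_y)$, so that
\[\mathcal{L} v^0 = v^0_{yyyy} - \p_y(u_s^2 q^0_y) - (v_s v^0_{yyy} - v_{syy} v^0_y) + \eps u_{sxx} v^0 + \eps v_{sxx} v^0_y.\]
The leading pair $v^0_{yyyy} - \p_y(u_s^2 q^0_y)$ is precisely the stationary analogue of the favorable interaction identified in (\ref{90}); the multiplier $q^0$ below extracts its coercivity, and a second multiplier $v^0_{yyyy} w_0^2$ promotes this control to the weighted topology used in $[u^0, v^0]_B$.

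For the $q^0$-estimate I would test $\mathcal{L}v^0 = F$ against $q^0$ and integrate by parts, using $q^0(0) = 0$, $u_s|_{y=0} = 0$ (Prandtl matching), and decay at $y = \infty$. Expanding $v^0_{yyy} = (u_s q^0)_{yyy}$ and tracking boundary terms gives
\[(v^0_{yyyy}, q^0) = \|\sqrt{u_s}\, q^0_{yy}\|^2 + u_{sy}(0)\,|q^0_y(0)|^2 - 2(u_{syy},(q^0_y)^2) - (u_{syyy} q^0, q^0_y),\]
while the Rayleigh term contributes $\|u_s q^0_y\|^2$. The boundary piece is strictly positive since $u_{sy}(0) \geq c_0 > 0$ by $f''(0) > 0$ in (\ref{blasius.ODE}). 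The sign-indefinite tails involving $u_{syy}, u_{syyy}$ are absorbed using $\sigma << 1$ in (\ref{scaling.intro}), which forces $u^i_{s,yy}$ and $u^i_{s,yyy}$ to carry small prefactors; the $u_{syyy}$ term in particular is handled via a Hardy inequality (available since $q^0(0) = 0$). The $v_s$ and $\eps$ terms are similarly perturbative. The outcome is
\[[[q^0]]^2 := \|\sqrt{u_s}\, q^0_{yy}\|^2 + \|u_s q^0_y\|^2 + \|\sqrt{u_{sy}}\, q^0_y\|_{y=0}^2 \lesssim |(F, q^0)|.\]

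Next I would test against $v^0_{yyyy} w_0^2$. The top term produces $\|v^0_{yyyy} w_0\|^2$, and every remaining term in $\mathcal{L}$, after at most one integration by parts, is bounded either by a piece of $[[[v^0]]]$ absorbable with a small constant or by a quantity already controlled by $[[q^0]]$ from the previous step. This yields $[[[v^0]]]^2 \lesssim \|F w_0\|^2 + [[q^0]]^2$. Summing the two estimates gives (\ref{mainu0est}) once the remaining components of $[u^0, v^0]_B$ are recovered: $\|u^0\|, \|u^0_y\|, \|u^0_{yy} w_0\|, \|u^0_{yyy} w_0\|$ follow from the relation $u^0 = h - v^0_y$ combined with differentiation of $v^0 = u_s q^0$ (using the given regularity of $h$), while $\|q^0/y\|$ and $\|q^0_y\|$ follow from $[[q^0]]$ via one-dimensional Hardy. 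Existence is then obtained from a Galerkin approximation in $y$, the coercivity of the principal part and the uniform a priori bound providing the compactness required to pass to the limit; uniqueness is immediate by applying the estimate to $F = 0$.

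\textbf{Expected main obstacle.} The delicate step is the $q^0$-multiplier: one must verify that the positive contributions $\|\sqrt{u_s}\, q^0_{yy}\|^2, \|u_s q^0_y\|^2, |q^0_y(0)|^2$ genuinely dominate the sign-indefinite remainder $-2(u_{syy},(q^0_y)^2) - (u_{syyy} q^0, q^0_y)$, and this is exactly where the hypothesis $\sigma << 1$ together with the boundary condition $q^0(0) = 0$ are used essentially.
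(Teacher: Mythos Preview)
Your two-multiplier scheme ($q^0$ followed by $v^0_{yyyy}w_0^2$) and the recovery of $[u^0,v^0]_B$ from $[[q^0]]+[[[v^0]]]$ via $u^0=h-v^0_y$ and Hardy are exactly the paper's route (Lemmas \ref{Lem.double}, \ref{Lem.triple}, and Corollary \ref{B.dep}). Your identity for $(v^0_{yyyy},q^0)$ is correct and differs from the paper's only by one further integration by parts, $-(u_{syyy}q^0,q^0_y)=\tfrac12(u_{syyyy}q^0,q^0)$.

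One point deserves correction: under the scaling (\ref{scaling.intro}) the coefficients $u_{syy},u_{syyy},u_{syyyy}$ are \emph{not} small when $\sigma\ll 1$; in fact $\|u_{syyyy}\langle y\rangle\|_{L^1}\sim\lambda^4\sigma^{-1}$ blows up. The absorption works for a different reason: an interpolation at scale $\lambda y\sim\delta$ (the paper's Lemma giving $\|q^0_y\|\lesssim\sigma^{2/3}\lambda^{-2}[[q^0]]$) makes the unweighted $\|q^0_y\|$ small relative to $[[q^0]]$, and the powers combine to $\lambda^4\sigma^{-1}\cdot(\sigma^{2/3}\lambda^{-2})^2=\sigma^{1/3}$. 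Without this interpolation the indefinite terms cannot be absorbed, so your sketch should replace ``small prefactors on the coefficients'' with this splitting argument.

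For existence you propose Galerkin, whereas the paper writes $\mathcal{L}=\mathcal{L}_0+K$ with $\mathcal{L}_0=\p_y^4-u_\parallel^\infty\p_y^2$ solved explicitly by variation of parameters, checks that $\mathcal{L}_0^{-1}K$ is compact (the $K$-coefficients decay exponentially, which gains a weight), and invokes Fredholm plus the a~priori bound to exclude kernel. Both routes are standard once (\ref{mainu0est}) is in hand; the Fredholm route is slightly cleaner here because the operator is not self-adjoint and the weighted setting on $\mathbb{R}_+$ makes the Galerkin compactness step require the same exponential-decay observation anyway.
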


The first task is to generate inequalities relating the norms (\ref{doub.norm}), (\ref{trip.norm}), and (\ref{d.norm.B}) to various other quantities that will arise in the analysis. 

\begin{lemma} For any $0 < \sigma << 1$ in (\ref{scaling.intro}), 
\begin{align} \label{leh.2}
&\| q^0_y \| \lesssim \sigma^{2/3} \lambda^{-2} [[q^0]], \\ \label{leh.3}
&|q^0| \le \sigma^{2/3} \lambda^{-2}  \langle y \rangle^{1/2} [[q^0]].
\end{align}
\end{lemma}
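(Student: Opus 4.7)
The plan is to exploit the degeneracy of the base flow at the wall: $u_s(0)=0$ but $c_0:=u_{sy}(0)$ is strictly positive, and under the scaling (\ref{scaling.intro}) one has $c_0\sim \lambda^3/\sigma$, so that the asserted prefactor $\sigma^{2/3}\lambda^{-2}$ in (\ref{leh.2}) is nothing but $c_0^{-2/3}$. Since $u_s(y)\sim c_0 y$ in a fixed neighborhood of the wall, the strategy is to split
\begin{equation*}
\|q^0_y\|^2 = \int_0^{y_0}(q^0_y)^2\,\mathrm{d}y + \int_{y_0}^\infty (q^0_y)^2\,\mathrm{d}y
\end{equation*}
at the threshold $y_0:=c_0^{-1/3}$, which is small because $\sigma\ll 1$. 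The far-region contribution is immediate: $u_s$ is monotone increasing in $y$ with $u_s(y_0)\gtrsim c_0 y_0 = c_0^{2/3}$, so $\int_{y_0}^\infty (q^0_y)^2 \lesssim c_0^{-4/3}\|u_s q^0_y\|^2 \le c_0^{-4/3}[[q^0]]^2$.

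For the near region I plan to apply the classical Hardy inequality to $\phi := u_s q^0_y$, which vanishes at $y=0$. Together with $u_s/y \ge c_0/2$ on $[0,y_0]$, this yields
\begin{equation*}
\int_0^{y_0}(q^0_y)^2 \le \frac{16}{c_0^2}\bigl(\|u_{sy}q^0_y\|^2 + \|u_s q^0_{yy}\|^2\bigr),
\end{equation*}
and the term $\|u_s q^0_{yy}\|^2 \le \|u_s\|_\infty [[q^0]]^2$ is harmless. The main obstacle is the apparently self-referential term $\|u_{sy}q^0_y\|$, which naively admits only the useless bound $c_0\|q^0_y\|$. I resolve this by exploiting the monotonicity $u_{sy}(y)\le u_{sy}(0)=c_0$ (inherited from the Blasius-type property $f'''=-ff''<0$), hence $u_{sy}^2 \le c_0 u_{sy}$. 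Integration by parts using $u_{sy}=\partial_y u_s$, together with the vanishing boundary contributions $u_s(0)=0$ and $q^0_y(\infty)=0$, gives
\begin{equation*}
\int u_{sy}(q^0_y)^2\,\mathrm{d}y = -2\int u_s q^0_y q^0_{yy}\,\mathrm{d}y \le 2\|\sqrt{u_s}q^0_y\|\,\|\sqrt{u_s}q^0_{yy}\|,
\end{equation*}
and the Cauchy-Schwarz interpolation $\|\sqrt{u_s}q^0_y\|^2 = \int u_s (q^0_y)^2 \le \|u_s q^0_y\|\|q^0_y\| \le [[q^0]]\|q^0_y\|$ upgrades this to $\|u_{sy}q^0_y\|^2 \lesssim c_0 [[q^0]]^{3/2}\|q^0_y\|^{1/2}$.

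Substituting back, Young's inequality with exponents $4$ and $4/3$ absorbs $\tfrac14 \|q^0_y\|^2$ onto the left-hand side at the cost of an additional $C c_0^{-4/3}[[q^0]]^2$; combining with the far-region estimate (and noting $1/(c_0^2 y_0^2) = c_0^{-4/3}$ precisely at the balancing choice $y_0 = c_0^{-1/3}$), one obtains $\|q^0_y\|^2 \lesssim c_0^{-4/3}[[q^0]]^2 = \sigma^{4/3}\lambda^{-4}[[q^0]]^2$, which is (\ref{leh.2}). The pointwise bound (\ref{leh.3}) is then a direct consequence: since $v^0(0)=v^0_y(0)=0$ and $u_s\sim c_0 y$ near the wall, L'Hopital's rule gives $q^0(0)=0$, so $q^0(y)=\int_0^y q^0_y\,\mathrm{d}y'$, and Cauchy-Schwarz yields $|q^0(y)|\le \sqrt{y}\,\|q^0_y\| \le \langle y\rangle^{1/2}\|q^0_y\|$, to which (\ref{leh.2}) applies.
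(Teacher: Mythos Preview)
Your argument is correct and yields the same constant $c_0^{-2/3}=\sigma^{2/3}\lambda^{-2}$, but the near-wall mechanism differs from the paper's. The paper does not introduce $\phi=u_s q^0_y$ or Hardy; instead it writes $1=\partial_y\{y\}$ against $(q^0_y)^2\chi^2$ directly, so that a single integration by parts produces $-2(yq^0_y, q^0_{yy}\chi^2)$, and the weight $y$ is traded for $\sqrt{u_s}$ via the linear lower bound $u_s\gtrsim c_0 y$ alone. Your route goes through $\|u_{sy}q^0_y\|$ and then needs the Blasius concavity $u_{sy}\le u_{sy}(0)$ together with a second integration by parts and the interpolation $\|\sqrt{u_s}q^0_y\|^2\le[[q^0]]\,\|q^0_y\|$. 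The paper's path is thus shorter and structurally lighter (only $u_s\gtrsim c_0 y$ is used), while yours makes the dependence on $c_0=u_{sy}(0)$ explicit from the outset and explains cleanly why the exponent $2/3$ appears via the balance $y_0=c_0^{-1/3}$.

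One point to tighten: in your line ``$\|u_s q^0_{yy}\|^2\le\|u_s\|_\infty[[q^0]]^2$ is harmless'', the $L^\infty$ must be taken on $[0,y_0]$, where $u_s(y_0)\sim c_0 y_0=c_0^{2/3}$ and hence $c_0^{-2}\cdot c_0^{2/3}=c_0^{-4/3}$ matches the far-field. A global $\|u_s\|_\infty$ would not suffice in every admissible $(\lambda,\sigma)$ regime (for instance $\lambda^2=\sigma$). Similarly, your use of $0\le u_{sy}\le c_0$ on all of $(0,\infty)$ is exact for the Blasius layer and holds for the full $u_s$ only up to $O(\sqrt{\eps})$ corrections from the Euler and higher Prandtl contributions; these are easily absorbed, but it is worth flagging since the paper's argument avoids this issue entirely.
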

\begin{proof} Fix a $\delta << 1$ to be selected later. We split at scale $\lambda y = \delta$ via 
\begin{align*}
\| q^0_y \| \le & \| q^0_y \chi(\frac{\lambda y}{\delta}) \| + \| q^0_y \{1 - \chi(\frac{\lambda y}{\delta}) \} \| \\
= & \| q^0_y \chi(\frac{\lambda y}{\delta}) \| + \| \frac{1}{u_s} u_s q^0_y \{1 - \chi(\frac{\lambda y}{\delta}) \} \| \\
\lesssim & \| q^0_y \chi(\frac{\lambda y}{\delta}) \| + \frac{\sigma}{\lambda^2 \delta} \| u_s q^0_y \|.
\end{align*}

\noindent Above, we have used that $u_s \gtrsim \frac{\lambda^2 \delta}{\sigma}$ when $\lambda y \ge \delta$ by (\ref{scaling.intro}).

It thus remains to examine the localized contribution, for which we integrate by parts: 
\begin{align} \n
\| q^0_y \chi(\frac{\lambda y}{\delta}) \|^2 = & ( \p_y \{y \} q^0_y, q^0_y \chi(\frac{\lambda y}{\delta})^2) \\  \label{jay}
= & - ( 2 y q^0_y, q^0_{yy} \chi(\frac{\lambda}{\delta} y)^2) - ( 2y q^0_y, q^0_y \frac{\lambda}{\delta} \chi'(\frac{\lambda}{\delta} y) \chi(\frac{\lambda}{\delta}y)) \\ \n
\lesssim & \frac{\sqrt{\sigma \delta}}{\lambda^2} \| q^0_y \chi(\frac{\lambda y}{\delta}) \| \| \sqrt{u_s} q^0_{yy} \| + \frac{\sigma^2}{\lambda^4 \delta^2} \| u_s q^0_y \|^2 \\  \n
\le & \frac{1}{2} \| q^0_y \chi(\frac{\lambda y}{\delta}) \|^2 + \bigO(1) \Big\{ \frac{\sigma \delta}{\lambda^4} \| \sqrt{u_s} q^0_{yy} \|^2 + \frac{\sigma^2}{ \lambda^4 \delta^2} \| u_s q^0_y \|^2 \Big\}.
\end{align}

\noindent Above, for the first term from (\ref{jay}), we used that in the support of the cut-off $\chi(\frac{\lambda}{ \delta} y )$, one has $y \le \frac{\lambda}{\delta}$, so recalling (\ref{scaling.intro}) we obtain
\begin{align*}
y \chi(\frac{\lambda}{ \delta} y ) \le \sqrt{y} \sqrt{\frac{\delta}{\lambda}} \chi(\frac{\lambda}{ \delta} y ) \le \frac{\sqrt{\sigma}}{\lambda^{\frac{3}{2}}} \sqrt{u_s} \frac{\sqrt{\delta}}{\sqrt{\lambda}}. 
\end{align*}

\noindent For the second term from (\ref{jay}), we have used 
\begin{align*}
y \frac{\lambda}{\delta} \chi'(\frac{\lambda}{\delta}y) \sim 1 \text{ and } u_s^2 \chi'(\frac{\lambda}{\delta}y) \gtrsim \frac{\lambda^4 \delta^2}{\sigma^2} \chi'(\frac{\lambda}{\delta}y).
\end{align*}

In summary, we have 
\begin{align*}
\| q^0_y \| \lesssim o(1) \| q^0_y \| + \frac{\sqrt{\sigma \delta}}{\lambda^2} \| \sqrt{u_s} q^0_{yy} \| + \frac{\sigma}{\lambda^2 \delta} \| u_s q^0_y \|. 
\end{align*}

\noindent We optimize above using $\delta = \sigma^{1/3}$ which gives:
\begin{align} \label{ult.qy}
\| q^0_y \| \lesssim \sigma^{2/3} \lambda^{-2}[[q^0]]. 
\end{align}

To conclude the proof, the $q^0$ bound, (\ref{leh.3}), follows via integration 
\begin{align*}
q^0 = \int_0^y q^0_y \le \sqrt{y} \| q^0_y \| \le \sqrt{y} \sigma^{2/3} \lambda^{-2} [[q^0]]. 
\end{align*}
\end{proof}

\begin{lemma} The following estimates hold
\begin{align} \label{paired}
&\| v^0_{yyy} w_0 \| \lesssim \sigma^{1/3} \lambda^{-1} [[[q^0]]], \\ \label{paired2}
&\| v^0_y \| \lesssim(\sigma^{-1/3} \lambda^{1/2} + 1)[[q^0]] \\ \label{paired3}
& \| v^0_{yy} \| \lesssim( \lambda^{\frac{3}{2}} \sigma^{-\frac{1}{3}} + \lambda \sigma^{-\frac{1}{3}} + 1) [[q^0]]. 
\end{align}
\end{lemma}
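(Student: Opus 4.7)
The plan is to reduce the three bounds to algebraic manipulations of $v^0 = u_s q^0$ combined with the two preceding estimates (\ref{leh.2}) and (\ref{leh.3}) and a localization/optimization argument of the same flavor as in the previous lemma. Since $v^0_{yyy} = u_s q^0_{yyy} + 3 u_{sy} q^0_{yy} + 3 u_{syy} q^0_y + u_{syyy} q^0$ and analogous identities hold for $v^0_y, v^0_{yy}$, the right-hand sides involving $[[[q^0]]]$ and $[[q^0]]$ are extracted naturally from quantities arising in a $v^0$-based energy estimate.

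For the top-order bound (\ref{paired}), the plan is to split $\|v^0_{yyy}w_0\|$ at scale $\lambda y = \delta$ via the cutoff $\chi(\lambda y/\delta)$. On the far field $\{\lambda y \ge \delta\}$, the lower bound $u_s \gtrsim \lambda^2\delta/\sigma$ from (\ref{scaling.intro}) lets us insert a $\sqrt{u_s}$ factor at the cost of $\sqrt{\sigma/(\lambda^2\delta)}$, producing a contribution $\lesssim \sqrt{\sigma/(\lambda^2\delta)}\,\|\sqrt{u_s}\,v^0_{yyy}\,w_0\|$. On the near field, write $(v^0_{yyy})^2 = \p_y(y)(v^0_{yyy})^2$ and integrate by parts; the boundary term at $y=0$ vanishes by the $y$-factor and at $\infty$ by the cutoff, leaving $-2\int y\,v^0_{yyy}\,v^0_{yyyy}\,w_0^2\chi^2$, a $w_{0y}$-term, and a $\chi'$-supported term. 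The crucial pointwise inequality
\[
y\,\chi(\lambda y/\delta) \lesssim \sqrt{\sigma\delta/\lambda^4}\,\sqrt{u_s}
\]
(valid because $u_s(y) \gtrsim (\lambda^3/\sigma)y$ in the boundary layer together with $y \le 2\delta/\lambda$ on $\mathrm{supp}\,\chi$) converts the $y$-weight into $\sqrt{u_s}$. Cauchy--Schwarz against $\|v^0_{yyyy}w_0\|$ and $\|\sqrt{u_s}\,v^0_{yyy}\,w_0\|$ (both controlled via the Leibniz expansion of $v^0 = u_s q^0$ by $[[[q^0]]]$), Young's inequality to absorb the self-interaction, and the estimate $u_s \gtrsim \lambda^2\delta/\sigma$ on $\mathrm{supp}\,\chi'$ combine to produce
\[
\|v^0_{yyy}w_0\|^2 \lesssim \bigl(\sqrt{\sigma\delta}/\lambda^2 + \sigma\delta/\lambda^4 + \sigma/(\lambda^2\delta)\bigr)\,[[[q^0]]]^2.
\]
Optimizing $\delta = \sigma^{1/3}$ balances the first and third terms to $\sigma^{2/3}/\lambda^2$ (with the middle term subdominant), yielding (\ref{paired}).

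For (\ref{paired2}) and (\ref{paired3}), expand via Leibniz, $v^0_y = u_s q^0_y + u_{sy}q^0$ and $v^0_{yy} = u_s q^0_{yy} + 2 u_{sy}q^0_y + u_{syy}q^0$. The terms $u_s q^0_y$ and $u_s q^0_{yy}$ sit directly inside $[[q^0]]$ (using $u_s \lesssim 1$), contributing the trailing $+1$. For $u_{sy}q^0_y$, combine (\ref{leh.2}) with the pointwise scaling bound $\|u_{sy}\|_\infty \lesssim \lambda^3/\sigma$ to get $\lambda\sigma^{-1/3}[[q^0]]$. For $u_{sy}q^0$ and $u_{syy}q^0$, apply the pointwise (\ref{leh.3}) $|q^0| \le \sigma^{2/3}\lambda^{-2}\langle y\rangle^{1/2}[[q^0]]$; the remaining integrals reduce to $\|u_{sy}\sqrt{\langle y\rangle}\|$ and $\|u_{syy}\sqrt{\langle y\rangle}\|$, which by the change of variable $z = \lambda y$ scale as $\sqrt{\lambda^4(1+\lambda)/\sigma^2}$ and $\sqrt{\lambda^6(1+\lambda)/\sigma^2}$ respectively. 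This delivers the $\sigma^{-1/3}\lambda^{1/2}$ contribution in (\ref{paired2}) and the $\sigma^{-1/3}(\lambda^{3/2}+\lambda)$ contributions in (\ref{paired3}).

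The main obstacle is (\ref{paired}), specifically the bookkeeping of the $y\leadsto\sqrt{u_s}$ exchange: one must verify the pointwise cost is exactly $\sqrt{\sigma\delta/\lambda^4}$ and that this balances the far-field cost $\sqrt{\sigma/(\lambda^2\delta)}$ precisely at $\delta=\sigma^{1/3}$ to yield the scale-invariant $\sigma^{1/3}\lambda^{-1}$. The lower-order bounds (\ref{paired2})--(\ref{paired3}) are then routine consequences of Leibniz, the scaling (\ref{scaling.intro}), and the previously established (\ref{leh.2})--(\ref{leh.3}).
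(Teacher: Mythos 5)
Your strategy is essentially the same as the paper's: for (\ref{paired}) a localization at scale $\lambda y = \delta$, integration by parts against the $y$-weight, the scaling lower bound $u_s \gtrsim \lambda^2\delta/\sigma$ for the far-field and $\chi'$-supported pieces, and optimization at $\delta = \sigma^{1/3}$; for (\ref{paired2})--(\ref{paired3}) a Leibniz expansion of $v^0 = u_s q^0$ combined with (\ref{leh.2})--(\ref{leh.3}) and scaling bounds on $u_{sy}, u_{syy}$. There are two mild technical divergences worth flagging. First, in (\ref{paired}) you convert $y\,\chi \leadsto \sqrt{\sigma\delta/\lambda^4}\,\sqrt{u_s}$ so as to land the cross-term on $\|\sqrt{u_s}v^0_{yyy}w_0\|\,\|v^0_{yyyy}w_0\|$; the paper instead simply uses $y\,\chi \le \delta/\lambda$ and keeps $\|v^0_{yyy}\chi\|$ as a self-interaction to absorb via Young, which avoids needing the linear-in-$y$ lower bound on $u_s$ near $y=0$ (only the plateau bound $u_s\gtrsim\lambda^2\delta/\sigma$ for $\lambda y\ge\delta$ is invoked). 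Both routes close at $\delta=\sigma^{1/3}$. Second, in (\ref{paired2})--(\ref{paired3}) you bound $\|u_{sy}q^0\|$ via the $L^2$ quantity $\|u_{sy}\langle y\rangle^{1/2}\|$, whereas the paper uses $\|u_{sy}\sqrt{y}\|_\infty$ together with the pointwise bound $|q^0|\lesssim\sqrt{y}\sigma^{2/3}\lambda^{-2}[[q^0]]$. The change of variables you carry out gives $\|u_{sy}\langle y\rangle^{1/2}\| \sim \frac{\lambda^2\sqrt{1+\lambda}}{\sigma}$, producing a prefactor $\sigma^{-1/3}\sqrt{1+\lambda}$ rather than the paper's $\sigma^{-1/3}\lambda^{1/2}$; this is slightly weaker when $\lambda\ll 1$ but still suffices for the intended application (everything is eventually bounded by $C_{\lambda,\sigma}$ in (\ref{B.dep})). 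Also note a small slip: the term $u_{sy}q^0_y$ appears only in the expansion of $v^0_{yy}$, not of $v^0_y$, so it belongs to (\ref{paired3}), not (\ref{paired2}).
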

\begin{proof}  \textit{Proof of (\ref{paired}):} We again fix scale $\lambda y \ge \delta$ and $\lambda y \le \delta$ by introducing the cutoff $\chi(\frac{\lambda}{\delta}y)$: 
\begin{align*}
\| v^0_{yyy} w_0 \| \le & \| v^0_{yyy} w_0 \chi(\frac{\lambda}{\delta}y) \| + \| v^0_{yyy} w_0 \{1 - \chi(\frac{\lambda}{\delta}y) \} \| \\
= & \| v^0_{yyy } w_0 \chi(\frac{\lambda}{\delta}y) \| + \| \frac{1}{\sqrt{u_s}} \sqrt{u_s} v^0_{yyy} w_0 \{1 - \chi(\frac{\lambda}{\delta}y) \} \| \\
\le & \| v^0_{yyy } w_0 \chi(\frac{\lambda}{\delta}y) \| + \frac{\sqrt{\sigma}}{\lambda \sqrt{\delta}} \|  \sqrt{u_s} v^0_{yyy} w_0 \{1 - \chi(\frac{\lambda}{\delta}y) \} \| \\
\lesssim & \|v^0_{yyy} \chi(\frac{\lambda}{\delta}y) \| + \frac{\sqrt{\sigma}}{\lambda \sqrt{\delta}} \|  \sqrt{u_s} v^0_{yyy} w_0 \{1 - \chi(\frac{\lambda}{\delta}y) \} \|.
\end{align*}

\noindent Above, we have used that $u_s \gtrsim \frac{\lambda^2}{\sigma} \lambda y \gtrsim \frac{\lambda^2}{\sigma} \delta$ on the region where $\lambda y \ge \delta$. We have also used for the localized term that $w_0 \lesssim 1$ on the support of $\chi(\frac{\lambda}{\delta}y)$. 

For the first integral above, we integrate by parts 
\begin{align} \n
&( \p_y \{ y \} |v^0_{yyy}|^2, \chi(\frac{\lambda}{\delta}y)^2) \\ \label{firestone.1}
= & - ( 2 y v^0_{yyy}, v^0_{yyyy} \chi(\frac{\lambda}{\delta}y)^2)  - ( y |v^0_{yyy}|^2, \frac{\lambda}{\delta} \chi'(\frac{\lambda}{\delta}y) \chi(\frac{\lambda}{\delta}y))  \\ \n
\lesssim & \frac{\delta}{\lambda} \| v^0_{yyy}  \chi(\frac{\lambda}{\delta}y) \| \| v^0_{yyyy}  \| + \frac{\sigma}{\lambda^2 \delta} \| \sqrt{u_s} v^0_{yyy}  \|^2.
\end{align}

\noindent In the first term of (\ref{firestone.1}), we have used that $y \le \frac{\delta}{\lambda}$ on the support of the cut-off function. For the second term, we have used that $|y \frac{\lambda}{\delta}| \lesssim 1$ on the support of $\chi'(\frac{\lambda}{\delta}y)$. Moreover, we have used by (\ref{scaling.intro}) that $u_s \gtrsim \frac{\lambda^2 \delta}{\sigma}$ when $\lambda y \ge \delta$. We thus take $\delta = \sqrt{\lambda}$. We now optimize the constant $\frac{\delta}{\lambda} + \frac{\sqrt{\sigma}}{\lambda \sqrt{\delta}}$ with a choice of $\delta = \sigma^{1/3}$

\textit{Proof of (\ref{paired2}):} We have, upon recalling (\ref{leh.3}) and (\ref{scaling.intro}),
\begin{align*}
\|v^0_y\| = & \| \{ u_s q^0 \}_y\| \le \|u_{sy} q^0\| + \|u_s q^0_y\| \\
\le & \| u_{sy} \sqrt{y} \|_\infty \sigma^{2/3} \lambda^{-2} [[q^0]] + [[q^0]] \\
\lesssim &( \frac{\lambda^3}{\sigma} \lambda^{-1/2} \sigma^{2/3} \lambda^{-2}   +1)[[q^0]] \\
\lesssim & (\sigma^{-1/3} \lambda^{1/2} + 1) [[q^0]]. 
\end{align*}

\textit{Proof of (\ref{paired3})} We have, upon recalling (\ref{leh.3}) and (\ref{scaling.intro}),
\begin{align*}
\| v^0_{yy} \| \le & \| u_{syy} q^0 \| + 2 \| u_{sy} q^0_y \| + \| u_s q^0_{yy} \| \\
\lesssim & \| u_{syy} \sqrt{y} \|_\infty \sigma^{2/3} \lambda^{-2}[[q^0]] + \| u_{sy} \|_\infty \sigma^{2/3} \lambda^{-2}[[q^0]] + [[q^0]] \\
\lesssim & \Big(\frac{\lambda^4}{\sigma} \lambda^{-1/2} \sigma^{2/3} \lambda^{-2} + \frac{\lambda^3}{\sigma} \sigma^{2/3} \lambda^{-2} + 1 \Big) [[q^0]] \\
= & ( \lambda^{\frac{3}{2}} \sigma^{-\frac{1}{3}} + \lambda \sigma^{-\frac{1}{3}} + 1 ) [[q^0]]. 
\end{align*}
\end{proof}

We will also need the following embedding results for later use: 
\begin{lemma} \label{sister.lemma} The following inequality is valid
\begin{align} \label{sister}
\eps^{\frac{1}{4}} \| v^0 \|_\infty \le C_{\lambda, \sigma} [u^0, v^0]_B. 
\end{align}
\end{lemma}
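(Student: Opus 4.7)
The plan is to apply a standard Sobolev embedding on the half-line, exploiting $v^0(0)=0$:
\begin{equation*}
|v^0(y)|^2 \;=\; 2\int_0^y v^0\,v^0_{y'}\,dy' \;\le\; 2\,\|v^0\|_{L^2(0,y)}\,\|v^0_y\|_{L^2(0,y)},
\end{equation*}
and then to control the two factors using the estimates already established in this section.

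For the second factor, I would use (\ref{paired2}), which gives $\|v^0_y\|\lesssim_{\lambda,\sigma}[[q^0]]$, together with the fact that $[[q^0]]\lesssim [u^0,v^0]_B$ by definition of the $B$-norm (\ref{d.norm.B}). For the first factor, I would write $v^0=u_s q^0$ and combine the pointwise bound (\ref{leh.3}), $|q^0(y')|\le \sigma^{2/3}\lambda^{-2}\langle y'\rangle^{1/2}[[q^0]]$, with the uniform bound $\|u_s\|_\infty\lesssim 1$ from Theorem \ref{thm.main}, to obtain
\begin{equation*}
\|v^0\|_{L^2(0,y)}^2 \;\le\; \int_0^y u_s^2\,|q^0|^2\,dy' \;\lesssim_{\lambda,\sigma}\; [[q^0]]^2\int_0^y\langle y'\rangle\,dy' \;\lesssim_{\lambda,\sigma}\; y^2\,[u^0,v^0]_B^2.
\end{equation*}
Multiplying the two contributions yields the pointwise bound $|v^0(y)|\lesssim_{\lambda,\sigma}\sqrt{y}\,[u^0,v^0]_B$ valid for all $y>0$. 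In the inner Prandtl region $y\le\eps^{-1/2}$ this already reads $|v^0(y)|\lesssim_{\lambda,\sigma}\eps^{-1/4}[u^0,v^0]_B$, which is precisely the claimed (\ref{sister}) on that range.

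For the outer region $y>\eps^{-1/2}$, where $Y=\sqrt{\eps}\,y>1$ and the weight $w_0=\langle y\rangle\langle Y\rangle^m$ becomes effective, the plan is to exploit the key scaling identity
\begin{equation*}
\|w_0^{-1}\|_{L^2(\eps^{-1/2},\infty)}\;\lesssim\;\eps^{1/4},
\end{equation*}
which follows from the change of variables $Y=\sqrt{\eps}y$ and the polynomial structure of $w_0$ for $m$ large. Writing $v^0(y)=v^0(\eps^{-1/2})+\int_{\eps^{-1/2}}^y v^0_{y'}\,dy'$, the first term is already controlled by the inner estimate, and the second is to be handled by Cauchy--Schwarz against a suitably weighted bound on $v^0_y$. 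That weighted bound will be produced by iterated integration starting from $v^0_{yyyy}w_0,\,v^0_{yyy}w_0\in L^2$ (both in $[u^0,v^0]_B$), using the decay conditions $\partial_y^k v^0(\infty)=0$ for $k\ge 1$ to anchor each step, and finally using the relation $v^0_y=h-u^0$ together with the exponential decay of $h\in C^\infty(e^y)$ to absorb the boundary terms.

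The main obstacle I expect is precisely the outer region: the weighted norm $\|v^0_y\,w_0\|$ is not directly present in $[u^0,v^0]_B$, so it must be produced by a careful cascade from the weighted higher derivatives. The delicate balance is to choose the weight in the final Cauchy--Schwarz pairing to exactly match the $\eps^{1/4}$ gain from $\|w_0^{-1}\|_{L^2(\eps^{-1/2},\infty)}$, so that the outer contribution lines up seamlessly with the inner bound.
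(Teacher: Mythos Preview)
Your inner-region argument is correct and gives $|v^0(y)|\lesssim_{\lambda,\sigma}\sqrt{y}\,[u^0,v^0]_B$ for all $y$, which settles $y\le\eps^{-1/2}$. The outer region, however, is only sketched, and the specific cascade you propose --- producing $\|v^0_y\,w_0\|\lesssim[u^0,v^0]_B$ by iterated integration from $\|v^0_{yyy}w_0\|$, $\|v^0_{yyyy}w_0\|$ --- is not $\eps$-neutral: each Hardy step gains a factor of $y^{-1}$, not $\langle Y\rangle^{-1}$, so descending two orders to reach $v^0_y$ while retaining the full weight $w_0$ costs powers of $\eps^{-1/2}$ that swamp the $\eps^{1/4}$ you extract from $\|w_0^{-1}\|_{L^2}$. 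A workable variant would pair against the lighter weight $\langle y\rangle$ in the Cauchy--Schwarz step (same $\eps^{1/4}$ gain), then use a single Hardy step $\|v^0_y\langle y\rangle\|\lesssim\|v^0_{yy}\,y^2\|\le\eps^{-1/2}\|v^0_{yy}\langle y\rangle\langle Y\rangle\|$ --- but at that point you have rebuilt the paper's argument through a longer route.

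The paper skips the inner/outer split entirely. Writing
\[
|v^0(y)|^2 \;=\; 2\int_0^y \frac{v^0}{y'}\cdot y'v^0_{y'}\,dy' \;\le\; 2\,\Big\|\frac{v^0}{y}\Big\|\,\|y\,v^0_y\|
\]
and applying Hardy at the origin (using $v^0(0)=0$) and at infinity (using $v^0_y(\infty)=0$) converts this to $\|v^0\|_\infty\lesssim\|v^0_y\|^{1/2}\|y^2v^0_{yy}\|^{1/2}$. The single scaling identity $y^2\le\eps^{-1/2}\langle y\rangle\langle Y\rangle$ then places the second factor under a $w_0$-controlled norm, and the square root halves the $\eps^{-1/2}$ cost to exactly $\eps^{-1/4}$. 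The whole proof is four lines; the weight is built into the Sobolev embedding from the start rather than imposed after a region split.
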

\begin{proof} We compute by Sobolev interpolation and Hardy's inequality (as $v^0(0) = 0$),  
\begin{align*}
\| v^0 \|_\infty \le & \| \frac{v^0}{y} \|^{\frac{1}{2}} \| y v^0_y \|^{\frac{1}{2}}  \lesssim  \| v^0_y \|^{\frac{1}{2}} \| y v^0_y \|^{\frac{1}{2}}  \lesssim  \| v^0_y \|^{\frac{1}{2}} \| y^2 v^0_{yy} \|^{\frac{1}{2}} \\
\lesssim & \eps^{-\frac{1}{4}} \| v^0_y \|^{\frac{1}{2}} \| \langle y \rangle \langle Y \rangle v^0_{yy} \|^{\frac{1}{2}}  \lesssim  \eps^{-\frac{1}{4}}[u^0, v^0]_B. 
\end{align*}
\end{proof}

For later use, we shall record the following: 
\begin{corollary} For a constant $C = C_{\lambda, \sigma}$ depending on the parameters $(\lambda, \sigma)$, 
\begin{align} \label{B.dep}
[u^0, v^0]_B \le C_{\lambda, \sigma} \Big( [[q^0]] + [[[q^0]]]  \Big). 
\end{align}
\end{corollary}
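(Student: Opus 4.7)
The plan is to bound each of the ten summands of $[u^0,v^0]_B$ individually, grouping them by whether they involve $q^0$, $v^0$, or $u^0$, and reducing each to pieces of $[[q^0]]$, $[[[v^0]]]$, or a previously established lemma. Note that the $[[[q^0]]]$ on the right-hand side of the stated corollary should be read as the already-defined $[[[v^0]]]$ since this is the only triple-bracket norm in the paper and is the quantity that appears on the right of the precursor (\ref{paired}).

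First, the four $q^0$-based terms are handled directly. The bound on $\|q^0_y\|$ is exactly (\ref{leh.2}). Hardy's inequality, applicable because $q^0(0) = 0$, gives $\|q^0/y\| \lesssim \|q^0_y\|$. The trace $\|q^0_y\|_{y=0}$ appears inside $[[q^0]]$ up to division by $\sqrt{u_{sy}(0)}$, a positive constant determined by the Blasius profile and therefore absorbable into $C_{\lambda,\sigma}$.

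Second, the two $v^0$-based terms $\|v^0_{yyy}w_0\|$ and $\|v^0_{yyyy}w_0\|$ are delivered by (\ref{paired}) and by the definition of $[[[v^0]]]$ respectively. For the four $u^0$-based terms, I use the compatibility relation $u^0 = h - v^0_y$ from (\ref{rene}) and differentiate: $\partial_y^k u^0 = h^{(k)} - \partial_y^{k+1} v^0$ for $k = 0,1,2,3$. Each term in $[u^0, v^0]_B$ involving $u^0$ therefore reduces to the corresponding $v^0$-derivative norm, treated by (\ref{paired2}), (\ref{paired3}), (\ref{paired}), or included directly in $[[[v^0]]]$, plus a fixed contribution from $h$, finite because $h \in C^\infty(e^y)$ by (\ref{rene}) and absorbed into $C_{\lambda,\sigma}$.

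The single genuinely new estimate, and the main obstacle, is $\|\sqrt{u_s}\, q^0_{yy}\, w_0\|$, where a singular factor $(u_s)^{-1/2}$ at $y = 0$ clashes with the decay weight $w_0$ at infinity. I plan to handle it via the identity
\[ u_s q^0_{yy} = v^0_{yy} - 2 u_{sy} q^0_y - u_{syy} q^0 \]
combined with the scale separation $\chi(\lambda y / \delta)$, $\delta = \sigma^{1/3}$, that pervades the earlier lemmas. On the outer region $\{\lambda y \ge \delta\}$, where $u_s \gtrsim \lambda^2 \delta/\sigma$, the singular factor $(u_s)^{-1/2}$ becomes a bounded constant, reducing $\|\sqrt{u_s}\, q^0_{yy}\, w_0\|_{\mathrm{outer}}$ to a constant multiple of $\|u_s v^0_{yy} w_0\|$ (a piece of $[[[v^0]]]$), plus lower-order $u_{sy} q^0_y$ and $u_{syy} q^0$ pieces handled by (\ref{leh.2}) and (\ref{leh.3}). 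On the inner region $\{\lambda y \le \delta\}$, the weight $w_0 \lesssim 1$, so $\|\sqrt{u_s}\, q^0_{yy}\, w_0\|_{\mathrm{inner}} \lesssim \|\sqrt{u_s}\, q^0_{yy}\|$, which is a direct piece of $[[q^0]]$. Summing the two regions and collecting $(\lambda,\sigma)$-dependent constants yields the corollary.
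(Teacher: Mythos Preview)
Your proposal is correct and fills in exactly the details the paper omits; the corollary is stated without proof, and the intended argument is precisely to bound each summand of $[u^0,v^0]_B$ using the preceding lemmas (\ref{leh.2})--(\ref{paired3}) together with the identity $u^0 = h - v^0_y$. Your treatment of the only genuinely new term, $\|\sqrt{u_s}\,q^0_{yy}\,w_0\|$, via a near/far split (using $w_0 \lesssim 1$ near $y=0$ to invoke $[[q^0]]$, and $u_s \gtrsim c$ away from $y=0$ to reduce to $\|u_s v^0_{yy} w_0\| \in [[[v^0]]]$ plus rapidly-decaying lower-order pieces) is the natural route and works as you describe; a cutoff at scale $y \sim 1$ is already sufficient and slightly cleaner than $\lambda y \sim \delta$.

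One minor caveat: you write that the $h$-contributions are ``absorbed into $C_{\lambda,\sigma}$,'' but since the $h$-terms are additive rather than multiplicative this cannot literally be correct for a homogeneous inequality (e.g.\ if $v^0 = 0$ then $[[q^0]] = [[[v^0]]] = 0$ yet $\|u^0\| = \|h\|$). This is really a sloppiness in the paper's own statement of the corollary, which should carry an additive $C(h)$ on the right; when the corollary is actually applied in Proposition~\ref{prop.L.exist} and downstream, the $h$-dependent pieces reappear via $\mathcal{H}$ in the forcing, so nothing is lost. Your identification of $[[[q^0]]]$ as a typo for $[[[v^0]]]$ is also correct.
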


\subsection{Estimates for $[[q^0]]$ and $[[[q^0]]]$}

Define the following: 
\begin{align} \label{def.a0}
a_0 := &\frac{3}{2}u_{sy}v_s - \frac{3}{2} u_s v_{sy},  \hspace{3 mm} a_1 := \frac{1}{2} u_{sy}v_{syy} - \frac{1}{2} u_{syyy} v_s. 
\end{align}

\begin{lemma} The following estimates are valid
\begin{align} \label{inspectT1}
&\| u_{syyyy} \langle y \rangle\|_1 \lesssim \lambda^4 \sigma^{-1}, \\ \label{inspectionT2}
&\|a_0\|_\infty + \| a_1 \langle y \rangle \|_1 \lesssim \lambda^4 \sigma^{-1}. 
\end{align}
\end{lemma}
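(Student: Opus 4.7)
The plan is to reduce both estimates to direct computations on the leading Prandtl profile $[u^0_p, v^0_p]$, using the scaling (\ref{scaling.intro}) and the Gaussian decay of Blasius derivatives furnished by Theorem \ref{thm.construct}. From (\ref{exp.u}), $u_s$ and $v_s$ decompose into $[u^0_p, v^0_p]$, Euler profiles $u^j_e, v^j_e$ depending on $Y = \sqrt\eps y$ (which pick up a $\sqrt\eps$ under each $\p_y$), and $\sqrt{\eps}^i$-weighted higher correctors carrying explicit $\eps$-smallness. Modulo such subleading contributions, all estimates reduce to bounds on expressions built from $[u^0_p, v^0_p]$ alone.

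Under (\ref{scaling.intro}) and (\ref{blasius}), writing $\tilde\eta = \lambda y/\sqrt{\sigma x + x_0}$, the chain rule yields the clean formulas
\[
\p_y^k u^0_p = \frac{\lambda^{k+2}}{\sigma (\sigma x + x_0)^{k/2}} f^{(k+1)}(\tilde\eta), \qquad \p_y^k v^0_p = \frac{\lambda^{k+1}}{(\sigma x + x_0)^{(k+1)/2}} g_k(\tilde\eta),
\]
where each $g_k$ is a universal polynomial in $f, f', \ldots$ with Gaussian tails. For (\ref{inspectT1}), take $k=4$: then $|u^0_{pyyyy}| \lesssim \lambda^6 \sigma^{-1}|f^{(5)}(\tilde\eta)|$ modulo harmless $\bigO(1)$ factors (recall $x_0, L = \bigO(1)$). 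Substituting $\tilde\eta \leftrightarrow y$ in $\int|u^0_{pyyyy}|\langle y\rangle\,\ud y$ converts $\ud y$ into $\frac{\sqrt{\sigma x + x_0}}{\lambda}\,\ud\tilde\eta$, and since $\lambda \ll 1$ one has $\langle y\rangle \lesssim \lambda^{-1}\langle\tilde\eta\rangle$. The two factors of $\lambda^{-1}$ together reduce $\lambda^6/\sigma$ to $\lambda^4/\sigma$, times the finite integral $\int|f^{(5)}(\tilde\eta)|\langle\tilde\eta\rangle\,\ud\tilde\eta$ controlled by Gaussian decay.

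For $\|a_0\|_\infty$ in (\ref{inspectionT2}), the same scalings show each of $u^0_{py}v^0_p$ and $u^0_p v^0_{py}$ is pointwise $\lesssim \lambda^4\sigma^{-1}$ times a bounded, Gaussian-tailed function of $\tilde\eta$; in fact, the Blasius ODE (\ref{blasius.ODE}) gives the explicit identity $u^0_{py}v^0_p - u^0_p v^0_{py} \propto -ff''/(\sigma x + x_0)$, whose sup-norm bound is immediate from Theorem \ref{thm.construct}. The $\|a_1\langle y\rangle\|_1$ estimate follows the same pattern as (\ref{inspectT1}): $a_1$ carries a common prefactor $\lambda^6\sigma^{-1}$ from its two bilinear terms, and the $\tilde\eta$-change of variables with $\langle y\rangle \lesssim \lambda^{-1}\langle\tilde\eta\rangle$ converts this to the stated $\lambda^4\sigma^{-1}$.

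The main technical care lies in controlling the Euler--Prandtl cross-terms entering $u_s v_{sy}$, $u_{sy}v_s$, $u_s v_{syy}$, etc.; for instance $u^0_e\, v^0_{py}$ is \emph{a priori} only $\bigO(\lambda^2)$. One exploits the strong Gaussian $\tilde\eta$-localization of all $y$-derivatives of the Prandtl profile (forcing the effective support to $y \lesssim \sqrt{\sigma x + x_0}/\lambda$, which amplifies the base decay) together with the fact that every Euler $y$-derivative produces an explicit $\sqrt\eps$ from $Y = \sqrt\eps y$. Combined with the $Y$-decay of the Euler profiles supplied by Theorem \ref{thm.construct}, these terms are genuinely $\bigO(\sqrt\eps)$ or smaller, hence absorbed for $\eps \ll L \ll 1$.
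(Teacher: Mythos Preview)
Your scaling computation for the pure Prandtl contributions is correct and mirrors the paper's approach: the paper also writes $\|u_{\parallel yyyy}\langle y\rangle\|_1 = \frac{\lambda^6}{\sigma}\cdot\lambda^{-2} = \lambda^4\sigma^{-1}$ by direct change of variables, and treats the leading bilinear pieces of $a_0,a_1$ in the same way. The explicit Blasius identity $u^0_{py}v^0_p - u^0_p v^0_{py} = -ff''/(x+x_0)$ is a nice touch, though not strictly needed.

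The gap is in your last paragraph. The cross-term you flag, $u^0_e\,v^0_{py}$ inside $u_s v_{sy}$, is indeed only $\bigO(\lambda^2)$, and none of the three mechanisms you invoke actually applies to it. There is no $y$-derivative on $u^0_e$, so no $\sqrt\eps$ is produced; $u^0_e$ itself does not decay in $Y$ (only its derivatives do); and Gaussian localization of $v^0_{py}$ to $y\lesssim \sqrt{\sigma x+x_0}/\lambda$ cannot improve a sup-norm bound, since on that support $u^0_e(Y)\approx u^0_e(0)\neq 0$. So your claim that such terms are ``genuinely $\bigO(\sqrt\eps)$'' is false as written.

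The correct fix, which the paper implements via the decomposition (\ref{expand}), is to regroup: write $u_s = u_\parallel + \bar{u}^0_e + \bigO(\sqrt\eps)$ with $u_\parallel=\bar{u}^0_p=u^0_p+u^0_e(0)$ and $\bar{u}^0_e := u^0_e - u^0_e(0)$. The constant $u^0_e(0)$ is thereby absorbed into the Prandtl part, so the Prandtl--Prandtl piece $u_\parallel v_{\parallel y}$ scales as $(\lambda^2/\sigma)\cdot\lambda^2=\lambda^4/\sigma$. The remaining cross-term $\bar{u}^0_e\,v_{\parallel y}$ now carries $|\bar u^0_e|\lesssim Y=\sqrt\eps\,y$; on the Prandtl support $y\lesssim 1/\lambda$ this yields $|\bar u^0_e\,v_{\parallel y}|\lesssim \sqrt\eps\lambda$, and for $Y\gtrsim 1$ the Gaussian tail of $v_{\parallel y}$ gives $\eps^\infty$. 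The paper performs exactly this $\chi(Y)$-splitting for the analogous term $u_{\parallel y}\bar v^1_e$. Once you use this regrouping, your argument goes through.
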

\begin{proof} We decompose the profiles 
\begin{align}
\begin{aligned} \label{expand}
&u_s = u_\parallel + \bar{u}^0_e + \sqrt{\eps}u_e + \sqrt{\eps} u_p, \\
&v_s = v_\parallel + \bar{v}^1_e + \sqrt{\eps} v_p + \sqrt{\eps} v_e.
\end{aligned}
\end{align}

\noindent The chief properties are that $u_e, v_e$ and $u_p$ decay rapidly in their arguments, whereas $v_p$ is bounded. 

Using the decompositions (\ref{expand}), we have 
\begin{align*}
\| u_{syyyy} \langle y \rangle \|_2 \le &\| u_{\parallel yyyy} \langle y \rangle \|_1 + \| \eps^2 \bar{u}^0_{eYYYY} \langle y \rangle \|_1 \\
& + \| \eps^{5/2} u_{eYYYY} \langle y \rangle \|_1 + \| \sqrt{\eps} u_{pyyyy} \langle y \rangle \|_1 \\
\lesssim & \lambda^4 \sigma^{-1} + \sqrt{\eps}.
\end{align*}

\noindent Above, we have used the scaling 
\begin{align*}
\| u_{\parallel yyyy} \langle y \rangle \|_1 = & \| \p_y^4 \{ \frac{\lambda^2}{\sigma} u_p(\sigma x , \lambda y) \} \langle y \rangle \|_1 \\
= \frac{\lambda^6}{\sigma} \lambda^{-2} = \frac{\lambda^4}{\sigma}.
\end{align*}

Recall the definition of $a_1$ in (\ref{def.a0}). Recall further the expansions given in (\ref{expand}). 
\begin{align*}
\| v_s u_{syyy} \langle y \rangle \| =& \| [v_\parallel + \bar{v}^1_e + \sqrt{\eps}v_p + \sqrt{\eps}v_e] \\
& \times[u_{\parallel yyy} + \eps^{3/2}\bar{u}^0_{eYYY} + \eps^2 u_{eYYY} + \sqrt{\eps} u_{pyyy}] \langle y \rangle \|_1 \\
\le &  \| v_{\parallel} u_{\parallel yyy} \langle y \rangle \|_1 + \|  \eps^{3/2} \bar{v}^1_e \bar{u}^0_{eYYY} \langle y \rangle \|_1 + \sqrt{\eps} \\
\lesssim & \lambda^4 \sigma^{-1} + \sqrt{\eps}.
\end{align*}

\noindent Note above that 
\begin{align*}
\| \bar{v}^1_e u_{\parallel yyy} \langle y \rangle \| \le & \|  \bar{v}^1_e u_{\parallel yyy} \langle y \rangle \chi(Y) \|_1 + \| \bar{v}^1_e u_{\parallel yyy} \langle y \rangle \{1 - \chi(Y) \}  \|_1 \\
\lesssim & \sqrt{\eps} \| u_{\parallel yyy} \langle y \rangle^2 \|_1 + \eps^\infty,
\end{align*}

\noindent since $\bar{v}^1_e \lesssim \sqrt{\eps}$ for $Y \lesssim 1$ while $u_{\parallel yy} \lesssim \eps^\infty$ for $Y \gtrsim 1$. 

Next, 
\begin{align*}
\| u_{sy} v_{syy} \langle y \rangle\|_1 =& \| [u_{\parallel y} + \sqrt{\eps} \bar{u}^0_{eY} + \eps u_{eY} + \sqrt{\eps} u_{py}] \\
& \times [v_{\parallel yy} + \eps \bar{v}^1_{eYY} + \sqrt{\eps}v_{pyyy} + \eps^{3/2}v_{eYY}] \langle y \rangle\|_1 \\
\lesssim & \| u_{\parallel y} v_{\parallel yy} \langle y \rangle\|_1  + \| \eps^{3/2} \bar{u}^0_{eY} \bar{v}^1_{eYY} \langle y \rangle \|_1 + \sqrt{\eps} \\
\lesssim & \lambda^4 \sigma^{-1} + \sqrt{\eps}.
\end{align*}

\noindent The above computations account for all of the terms in $a_1$. 

We move now to the pointwise bound of $a_0$, from whose definition we obtain  
\begin{align*}
|a_0| \lesssim & |u_{sy}v_s| + |u_s v_{sy}| \\
\lesssim & |[ u_{\parallel y} + \sqrt{\eps} \bar{u}^0_{eY} +  \eps u_{eY} + \sqrt{\eps}u_{py}]| \times |[ v_{\parallel} + \bar{v}^1_e + \sqrt{\eps} v_p + \sqrt{\eps}v_e ] | \\
\lesssim & |u_{\parallel y}| |v_{\parallel} + \bar{v}^1_e| + \sqrt{\eps} \\
\lesssim &  \lambda^4 \sigma^{-1} + |u_{\parallel y} \bar{v}^1_e| \chi(Y) + |u_{\parallel y} \bar{v}^1_e| \{1 - \chi(Y) \} \\
\lesssim & \lambda^4 \sigma^{-1} +  \sqrt{\eps} + \eps^\infty. 
\end{align*}
\end{proof}

We will use these estimates to prove the following lemma. 

\begin{lemma} \label{Lem.double} Let $v^0$ be a solution to (\ref{nawa}). Let $\sigma << 1$ in (\ref{scaling.intro}). Then the following estimate holds 
\begin{align} \label{estimate.lemma.q0.1}
[[q^0]]^2 \lesssim  |(F, q^0)|. 
\end{align}

\end{lemma}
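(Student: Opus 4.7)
The plan is to test the equation $\mathcal{L} v^0 = F$ against the multiplier $q^0 = v^0/u_s$, and show that integration by parts produces $[[q^0]]^2$ on the left-hand side modulo errors that are small under the hypothesis $\sigma \ll 1$. The starting observation is the algebraic identity $u_s v^0_{yy} - u_{syy} v^0 = \partial_y (u_s^2 q^0_y)$, which follows from $v^0 = u_s q^0$. One integration by parts, using $q^0(0) = 0$ and decay at infinity, then gives $-(u_s v^0_{yy} - u_{syy} v^0, q^0) = \|u_s q^0_y\|^2$, which is the second summand of $[[q^0]]^2$.

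The main positivity comes from $(v^0_{yyyy}, q^0)$. Two integrations by parts, combined with $q^0(0) = 0$ and the identity $v^0_{yy}(0) = 2 u_{sy}(0) q^0_y(0)$ (since $u_s|_{y=0} = 0$), yield $(v^0_{yyyy}, q^0) = (v^0_{yy}, q^0_{yy}) + 2 u_{sy}(0) |q^0_y(0)|^2$. Expanding $v^0_{yy} = u_s q^0_{yy} + 2 u_{sy} q^0_y + u_{syy} q^0$ and performing a further integration by parts on the cross term $2(u_{sy} q^0_y, q^0_{yy}) = -(u_{syy}, |q^0_y|^2) - u_{sy}(0)|q^0_y(0)|^2$, one arrives at
\begin{equation*}
(v^0_{yyyy}, q^0) = \|\sqrt{u_s}\, q^0_{yy}\|^2 + u_{sy}(0)|q^0_y(0)|^2 + (u_{syy}\, q^0, q^0_{yy}) - (u_{syy}, |q^0_y|^2).
\end{equation*}
The first two terms are exactly the remaining summands of $[[q^0]]^2$, while the last two are error terms.

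The $v_s$-convection term $-(v_s v^0_{yyy} - v^0_y v_{syy}, q^0)$ is handled by substituting $v^0 = u_s q^0$ and performing repeated integration by parts; after collecting, the non-divergence contributions are captured by the coefficients $a_0, a_1$ introduced in (\ref{def.a0}), for which (\ref{inspectionT2}) provides the necessary $L^\infty$ and weighted $L^1$ bounds. The remaining $\varepsilon$-contributions $\varepsilon (u_{sxx} v^0, q^0) + \varepsilon (v_{sxx} v^0_y, q^0)$ carry the prefactor $\varepsilon$ and are absorbed using (\ref{leh.3}), (\ref{paired2}) and the uniform bounds on $u_{sxx}, v_{sxx}$.

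The main obstacle is controlling the three error groups: $(u_{syy}\, q^0, q^0_{yy})$, $(u_{syy}, |q^0_y|^2)$, and the $a_0, a_1$-contributions. Here the smallness $\sigma \ll 1$ is essential. For $(u_{syy}, |q^0_y|^2)$, the pointwise scaling $\|u_{syy}\|_\infty \lesssim \lambda^3 \sigma^{-1}$ combined with (\ref{leh.2}) produces a prefactor $\sigma^{1/3}$ times $[[q^0]]^2$. The cross error $(u_{syy}\, q^0, q^0_{yy})$ is bounded by Cauchy--Schwarz against $\|\sqrt{u_s}\, q^0_{yy}\|$, using the pointwise bound (\ref{leh.3}) and a $\sqrt{y}$-factor, yielding again a $\sigma^{2/3}$ prefactor. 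The $a_0, a_1$ terms are similarly controlled by (\ref{inspectionT2}) together with (\ref{leh.2})--(\ref{leh.3}). Under $\sigma \ll 1$, all errors can be absorbed into the positive left-hand side, and testing against $F$ gives the right-hand side $|(F, q^0)|$, completing the proof of (\ref{estimate.lemma.q0.1}).
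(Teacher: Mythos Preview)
Your proposal is correct and follows essentially the same approach as the paper: test $\mathcal{L} v^0 = F$ against $q^0$, exploit the identity $u_s v^0_{yy} - u_{syy} v^0 = \partial_y(u_s^2 q^0_y)$, and integrate by parts to extract $[[q^0]]^2$ with error terms controlled via (\ref{leh.2})--(\ref{leh.3}) and (\ref{inspectionT2}). The one minor difference is that the paper carries the integration by parts on the cross term $(u_{syy} q^0, q^0_{yy})$ one step further, reducing it to $\tfrac{1}{2}(u_{syyyy} q^0, q^0) - (u_{syy} q^0_y, q^0_y)$ so that only $\|q^0_y\|$ and the $L^1$ bound (\ref{inspectT1}) are needed; your direct Cauchy--Schwarz with the $\sqrt{u_s}$ weight also works, though the resulting prefactor is $\sigma^{1/6}$ rather than the $\sigma^{2/3}$ you state (and $\|u_{syy}\|_\infty \lesssim \lambda^4 \sigma^{-1}$, not $\lambda^3 \sigma^{-1}$).
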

\begin{proof} We use the expression in (\ref{sys.u0.intro}). First, 
\begin{align} \n
(v^0_{yyyy} - \{ u_s^2 q^0_y \}_y , q^0) =& (u_s q^0_{yy}, q^0_{yy}) + (u_s^2 q^0_y, q^0_y) + (u_{sy} q^0_y, q^0_y)_{y = 0} \\ \label{id1}
& - 2(u_{syy} q^0_y, q^0_y) + \frac{1}{2}(u_{syyyy}q^0, q^0) \\ \n
\gtrsim & [[q^0]]^2. 
\end{align}

\noindent Above, we have used (\ref{scaling.intro}), (\ref{leh.2}) and (\ref{leh.3}) paired with (\ref{inspectT1}) and (\ref{inspectionT2}) to estimate the last two terms by
\begin{align*}
|(\ref{id1}.4)| + |(\ref{id1}.5)| \lesssim \frac{\lambda^4}{\sigma} (\sigma^{\frac{2}{3}} \lambda^{-2}[[q^0]])^2 = \sigma^{\frac{1}{3}}[[q^0]]^2 = o(1) [[q^0]]^2,
\end{align*} 

\noindent upon invoking the assumption that $\sigma << 1$. 

To prove the identity (\ref{id1}) we record 
\begin{align*}
(v^0_{yyyy}, q^0) = &- (v^0_{yyy}, q^0_y) \\
 = & (v^0_{yy}, q^0_{yy}) + (v^0_{yy}, q^0_y)_{y = 0} \\
= & (\p_{yy} \{ u_s q^0 \}, q^0_{yy}) + (2 u_{sy} q^0_y, q^0_y)_{y = 0} \\
= & (u_s q^0_{yy} + 2 u_{sy}q^0_y + u_{syy}q^0, q^0_{yy}) + (2 u_{sy}q^0_y, q^0_y)_{y = 0}\\
= & (u_s q^0_{yy}, q^0_{yy}) - (u_{syy} q^0_y, q^0_y) - (u_{sy} q^0_y, q^0_y)_{y = 0} \\
& - (u_{syy} q^0_y, q^0_y) - (u_{syyy} q^0, q^0_y) + (2u_{sy} q^0_y, q^0_y)_{y = 0} \\
= & (u_s q^0_{yy}, q^0_{yy}) + (u_{sy}q^0_y, q^0_y)_{y = 0} - (2 u_{syy} q^0_y, q^0_y) + \frac{1}{2} (u_{syyyy} q^0, q^0). 
\end{align*}

For the next term from (\ref{sys.u0.intro}), we record the integration by parts identity and estimate due to (\ref{leh.2}), (\ref{leh.3}), and (\ref{inspectionT2})
\begin{align} \label{eqohboy}
|-(\{ v_s v^0_{yyy} - v^0_y v_{syy} \}, q^0)| = & |(a_0 q^0_y, q^0_y) + (a_1 q^0, q^0)| \\ \n
\lesssim & \lambda^4 \sigma^{-1} ( \lambda^{-2} \sigma^{\frac{2}{3}} [[q^0]])^2 \\ \n
= & \sigma^{\frac{1}{3}}[[q^0]]^2 = o(1) [[q^0]]^2, 
\end{align}

\noindent upon invoking the assumption that $\sigma << 1$. 

 To prove the equality in (\ref{eqohboy}), we record the following integrations by parts: 
\begin{align} \n
- (v_s v^0_{yyy}, q^0) = & (v_{sy} v^0_{yy}, q^0) + (v_s v^0_{yy}, q^0_y) \\ \n
= & ( v_{sy} [u_s q^0_{yy} + 2 u_{sy} q^0_y + u_{syy}q^0], q^0) \\ \n
& + (v_s [u_s q^0_{yy} + 2 u_{sy}q^0_y + u_{syy}q^0], q^0_y) \\ \n
= & -((v_{sy}u_s)_yq^0_y, q^0) - (u_s v_{sy} q^0_y, q^0_y) - ((u_{sy}v_{sy})_y q^0, q^0)\\ \n
& + (v_{sy}u_{syy} q^0, q^0)- \frac{1}{2}((u_s v_s)_y q^0_y, q^0_y) + 2 (u_{sy}v_s q^0_y, q^0_y) \\ \n
& - \frac{1}{2} ((u_{syy}v_s)_y q^0, q^0) \\ \n
= & \frac{1}{2} ( (v_{sy}u_s)_{yy} q^0, q^0) - (u_s v_{sy}q^0_y, q^0_y) - ((u_{sy}v_{sy})_y q^0, q^0) \\ \n
& + (v_{sy}u_{syy}q^0, q^0) - \frac{1}{2}((u_s v_s)_y q^0_y, q^0_y) + 2 (u_{sy}v_s q^0_y, q^0_y) \\ \n
& - \frac{1}{2}((u_{syy}v_s)_y q^0, q^0) \\ \label{combine1pnb}
= & \frac{1}{2} (\{ u_s v_{syyy} - v_s u_{syyy} \}q^0, q^0) + \frac{3}{2} ( \{ u_{sy}v_s - v_{sy}u_s \} q^0_y, q^0_y)
\end{align}

\noindent We record the second integration by parts: 
\begin{align} \n
(v_{syy}v^0_y, q^0) = & (v_{syy} u_{sy}q^0, q^0) + (v_{syy} u_s q^0_y, q^0) \\ \label{combine2pnb}
= & (v_{syy} u_{sy} q^0, q^0) - \frac{1}{2} ( \p_y \{ u_s v_{syy} \} q^0, q^0). 
\end{align}

\noindent Combining (\ref{combine1pnb}) and (\ref{combine2pnb}) with the definition of $a_0, a_1$ given in (\ref{def.a0}) proves the equality in (\ref{eqohboy}).

We now treat the final two terms in (\ref{sys.u0.intro}). First, we insert (\ref{leh.3}) to obtain
\begin{align*}
|(\eps u_{sxx} u_s q^0, q^0)| \le & (\eps u^i_{pxx} u_s q^0, q^0) + (\eps^{3/2} u^i_{exx} u_s q^0, q^0) \\
\lesssim & (\| \eps u^i_{pxx} \langle y \rangle \|_1 + \eps^{3/2} \| u^i_{exx} \langle y \rangle \|_1) [[q^0]]^2 \\
\lesssim & \sqrt{\eps}[[q^0]]^2. 
\end{align*}

A similar estimate is available for the final term upon integrating by parts: 
\begin{align*}
(\eps v_{sxx} v^0_y, q^0) =& (\eps v_{sxx} u_{sy}q^0, q^0) + (\eps v_{sxx} u_s q^0_y, q^0) \\
= & (\eps v_{sxx} u_{sy} q^0, q^0) - (\frac{\eps}{2} \p_y \{v_{sxx} u_s \} q^0, q^0). 
\end{align*}

The right-hand side clearly contributes $|(F, q^0)|$. This completes the proof. 
\end{proof}

\begin{lemma} \label{Lem.triple} Let $v^0$ be a solution to (\ref{nawa}). Then the following estimate holds
\begin{align} \label{estimate.triple}
 [[[q^0]]]^2 \lesssim [[q^0]]^2 + \|F w_0 \|^2.  
\end{align}
\end{lemma}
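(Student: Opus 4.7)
The plan is to test the equation $\mathcal{L} v^0 = F$ from \eqref{nawa} against the multiplier $v^0_{yyyy} w_0^2$. This multiplier is designed so that pairing against the leading $v^0_{yyyy}$ produces $\|v^0_{yyyy} w_0\|^2$ directly, while pairing against the Rayleigh-type term $-u_s v^0_{yy}$ and integrating by parts once in $y$ generates the favorable contribution
\begin{align*}
-(u_s v^0_{yy},\, v^0_{yyyy} w_0^2) = \|\sqrt{u_s}\,v^0_{yyy} w_0\|^2 + (u_{sy} v^0_{yy},\, v^0_{yyy} w_0^2) + 2(u_s v^0_{yy} w_0 w_0',\, v^0_{yyy}).
\end{align*}
Boundary contributions at $y=0$ and $y=\infty$ vanish by $u_s(0)=0$ and $v^0_{yyy}(\infty)=0$ respectively. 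The third piece of $[[[v^0]]]$, namely $\|u_s v^0_{yy} w_0\|$, is then recovered algebraically from the equation by reading it as $u_s v^0_{yy} = v^0_{yyyy} - F + (\textrm{lower order})$ and taking $L^2(w_0)$ norms of both sides, once the first two pieces are in hand.

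The remaining interactions are treated perturbatively. The forcing contributes $|(F, v^0_{yyyy} w_0^2)| \le \|F w_0\|^2 + \tfrac{1}{4}\|v^0_{yyyy} w_0\|^2$ via Cauchy--Schwarz with absorption. The transport term $(v_s v^0_{yyy}, v^0_{yyyy} w_0^2)$ is handled by one further integration by parts in $y$, converting it into a weighted integral of $|v^0_{yyy}|^2$ whose coefficients $v_{sy}$ and $v_s\, w_0'/w_0$ are bounded uniformly in $\eps$, since $|w_0'/w_0| \lesssim 1/\langle y \rangle + \sqrt{\eps}/\langle Y \rangle = O(1)$. The remaining lower-order interactions $(u_{syy} v^0, \cdot)$, $(v_{syy} v^0_y, \cdot)$ together with the $\eps$-small $(\eps u_{sxx} v^0, \cdot)$ and $(\eps v_{sxx} v^0_y, \cdot)$ are controlled by placing the profile coefficients in $L^\infty$ and invoking the already-established bounds \eqref{paired2}--\eqref{paired3}, which control $\|v^0_y\|$ and $\|v^0_{yy}\|$ in terms of $[[q^0]]$.

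The main obstacle will be the appearance of $\|v^0_{yyy} w_0\|$ \emph{without} the $\sqrt{u_s}$ weight, both in the IBP of the transport term and within the lower-order remainders. Because $v_s$ does not decay as $y \to \infty$, a naive Cauchy--Schwarz would produce an un-absorbable $\|v^0_{yyy} w_0\|\,\|v^0_{yyyy} w_0\|$, and $\|v^0_{yyy} w_0\|$ is not itself part of $[[q^0]]$. The remedy is the embedding \eqref{paired}, $\|v^0_{yyy} w_0\| \lesssim \sigma^{1/3}\lambda^{-1}[[[v^0]]]$, whose small prefactor (under the standing assumption $\sigma << 1$ from \eqref{scaling.intro}) permits absorption of the resulting $[[[v^0]]]^2$ contributions back into the LHS. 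Summing the three pieces of the triple norm then yields $[[[v^0]]]^2 \lesssim [[q^0]]^2 + \|F w_0\|^2$, which is \eqref{estimate.triple}.
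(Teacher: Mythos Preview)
Your proposal is correct and follows essentially the same approach as the paper: test \eqref{nawa} against $v^0_{yyyy} w_0^2$, extract $\|v^0_{yyyy} w_0\|^2$ and $\|\sqrt{u_s}\,v^0_{yyy} w_0\|^2$ from the leading terms, recover $\|u_s v^0_{yy} w_0\|$ algebraically from the equation, and invoke \eqref{paired}--\eqref{paired3} to handle the lower-order and cross terms. The only minor variation is that for the transport term $(v_s v^0_{yyy}, v^0_{yyyy} w_0^2)$ you stop after one integration by parts and absorb the resulting $\|v^0_{yyy} w_0\|^2$ directly via \eqref{paired} under $\sigma \ll 1$, whereas the paper integrates by parts once more to express everything in terms of $\|v^0_{yy}\|$ and $\|v^0_{yyyy} w_0\|$; both routes are valid under the standing hypotheses.
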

\begin{proof}  We take the inner product of $v^0_{yyyy} w_0^2$ with (\ref{nawa}). Clearly, the $v^0_{yyyy}$ term in $\mathcal{L}$ produces coercivity over $\| v^0_{yyyy} w_0 \|^2$. 

According to (\ref{nawa}), the next term from $\mathcal{L}$ is 
\begin{align*}
-(u_s v^0_{yy}, v^0_{yyyy} w_0^2) = & (u_s w_0^2 v^0_{yyy}, v^0_{yyy}) + (  \{ u_s w_0^2 \}_y v^0_{yy}, v^0_{yyy})  \\
= & (u_s w_0^2 v^0_{yyy}, v^0_{yyy}) + (u_{sy} w_0^2 v^0_{yy}, v^0_{yyy}) + (u_s  \{ w_0^2 \}_y v^0_{yy}, v^0_{yyy}) \\
= & (u_s w_0^2 v^0_{yyy}, v^0_{yyy}) + (u_{sy} w_0^2 v^0_{yy}, v^0_{yyy}) - \frac{1}{2} (u_{sy} \{ w_0^2 \}_y v^0_{yy}, v^0_{yy}) \\
& - \frac{1}{2} (u_s  \{w_0^2 \}_{yy} v^0_{yy}, v^0_{yy}) \\
\gtrsim & \| \sqrt{u_s} v^0_{yyy} w_0 \|^2 - \| u_{sy} w_0 \|_\infty \| v^0_{yy} \| \| v^0_{yyy} w_0 \| \\
& - \| u_{sy}  \{ w_0^2 \}_y \|_\infty \| v^0_{yy} \|^2 - \| u_s  \|_\infty \| \{ w_0^2 \}_{yy} v^0_{yy} \|^2 \\ 
\gtrsim & \| \sqrt{u_s} v^0_{yyy} w_0 \|^2 - \| u_{sy} w_0 \|_\infty \| v^0_{yy} \| \| v^0_{yyy} w_0 \| \\
& - \| u_{sy}  \{ w_0^2 \}_y \|_\infty \| v^0_{yy} \|^2 - o(1) \| v^0_{yy} w_0 \|^2 - [[q^0]]^2. 
\end{align*}

The next term from $\mathcal{L}$ in (\ref{nawa}) is $u_{syy}v^0$, which we combine with $\eps u_{sxx}v^0$ (the sixth term in $\mathcal{L}$ in (\ref{nawa})) to produce $v^0 \Delta_\eps u_s$. We treat this via: 
\begin{align*}
(v^0 \Delta_\eps u_s, v^0_{yyyy} w_0^2) = & (\Delta_\eps u_s u_s q^0, v^0_{yyyy}w_0^2) \\
= & (u_{syy} u_s q^0, v^0_{yyyy} w_0^2) + (\eps u_{sxx} u_s q^0, v^0_{yyyy}w_0^2) \\
\le & \| \Delta_\eps u_s w_0 \sqrt{\langle y \rangle} \|_1 [[q^0]]  \| v^0_{yyyy} w_0 \| \\
\lesssim & [[q^0]] \| v^0_{yyyy} w_0 \|. 
\end{align*}

Next, we integrate by parts, using that $v_{sy}|_{y = 0} = 0$, to obtain 
\begin{align*}
-(v_s v^0_{yyy}, v^0_{yyyy} w_0^2) = & \frac{1}{2}(\p_y \{ v_s w_0^2 \} v^0_{yyy}, v^0_{yyy}) \\
= & - \frac{1}{2} (\p_{yy} \{ v_s w_0^2 \} v^0_{yyy}, v^0_{yy}) - \frac{1}{2}(\p_y \{ v_s w_0^2 \} v^0_{yyyy}, v^0_{yy}) \\
\lesssim & \| \p_{yy} \{ v_s w_0^2 \} \|_\infty \| v^0_{yyy} \| \| v^0_{yy} \| + \| v^0_{yyyy}w_0 \| \| v^0_{yy} \| \|v_{sy}w_0 + v_s w_{0y} \|_\infty \\
\lesssim & o(1) [[[q^0]]]^2 + [[q^0]]^2. 
\end{align*}

Next, we arrive at 
\begin{align*}
|(v_{syy}v^0_y, v^0_{yyyy}w_0^2)| \le & ( \| v_{syy} u_s q^0_y w_0 \| + \| v_{syy} u_{sy} q^0 w_0 \| ) \| v^0_{yyyy} w_0 \| \\
\le & ( \|v_{syy} w_0 \|_\infty \| u_s q^0_y \| + \| q^0 \langle y \rangle^{-\frac{1}{2}} \|_\infty \| v_{syy} u_{sy} w_0 \|) \| v^0_{yyyy} w \| \\
\lesssim & [[q^0]] [[[q^0]]]. 
\end{align*}

Next, we arrive at 
\begin{align*}
|(\eps v_{sxx} v^0_y, v^0_{yyy} w_0^2)| = & |(\eps v_{sxx} \{ u_s q^0_y + u_{sy} q^0 \}, v^0_{yyy} w_0^2 )| \\
\le & ( \sqrt{\eps} \| \sqrt{\eps} w_0 v_{sxx} \|_\infty \| u_s q^0_y \| + \eps^{\frac{1-}{2}}  \| v_{sxx} u_{sy} w_0 y \|_\infty [[q^0]]  ) \| v^0_{yyy} w_0 \|
\end{align*}

The remaining step is to estimate $\| u_s v^0_{yy} w_0\|$ by using (\ref{nawa}). Indeed, upon rearranging, 
\begin{align*}
\| u_s v^0_{yy} w_0 \| \le & \| v^0_{yyyy} w_0 \| + \| F w_0 \| + \| u_{syy} w_0 \langle y \rangle \| \| \frac{v^0}{\langle y \rangle} \| + \| v_s v^0_{yyy} w_0 \| \\
& + \| v_{syy} w_0 \|_\infty \| v^0_y \| + \| \eps u_{sxx} w_0 \langle y \rangle \|_\infty \| \frac{v^0}{\langle y \rangle} \| + \| \eps v_{sxx} w_0 \|_\infty \| v^0_y \| \\
\lesssim & \| v^0_{yyyy} w_0 \| + \| v^0_{yyy} w_0 \| + \| F w_0 \| + [[q^0]] \\
\lesssim & o(1) [[[q^0]]] + \| F w_0 \| + [[q^0]]. 
\end{align*}

To conclude the proof, the right-hand side clearly contributes $|(F, v^0_{yyyy}w_0^2)| \lesssim \| Fw_0 \| \| v^0_{yyyy}w_0\|$.  
\end{proof}

\subsection{Existence and Uniqueness}

We now establish existence and uniqueness for the system (\ref{nawa}). First, consider the operator: 
\begin{align} 
\begin{aligned} \label{first}
&\mathcal{L}_0 v^0 = F, \hspace{3 mm} v^0(0) = v^0_y(0) = \p_y^k v^0(\infty) = 0 \text{ for } k \ge 1, \\
&\mathcal{L}_0 := v^0_{yyyy} - u_\parallel^\infty v^0_{yy}.
\end{aligned}
\end{align}

\begin{lemma} Assume $F \in C^\infty_0$. There exists a unique solution $v^0$ to the problem (\ref{first}). Moreover, $v^0$ is given by the expression $v^0 = C_1 + C_2 e^{-\sqrt{u_\parallel^\infty}y} + u_p[F]$, where $u_p[F]$ is the particular solution defined below. 
\end{lemma}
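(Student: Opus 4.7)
My plan is to exploit the constant-coefficient structure of $\mathcal{L}_0$. First, observe the factorization $\mathcal{L}_0 = (\partial_y^2 - u_\parallel^\infty)\partial_y^2$, whose characteristic roots are $0$ (double) and $\pm\sqrt{u_\parallel^\infty}$. Hence the four-dimensional kernel of $\mathcal{L}_0$ is spanned by $\{1,\, y,\, e^{\sqrt{u_\parallel^\infty}\,y},\, e^{-\sqrt{u_\parallel^\infty}\,y}\}$. The far-field conditions $\partial_y^k v^0(\infty) = 0$ for $k \ge 1$ immediately rule out the mode $y$ (whose first derivative equals $1$) and the mode $e^{\sqrt{u_\parallel^\infty}\,y}$ (whose derivatives grow), leaving exactly the two admissible homogeneous modes $1$ and $e^{-\sqrt{u_\parallel^\infty}\,y}$ that appear in the claimed formula.

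To produce $u_p[F]$, I would reuse the same factorization: set $w := \partial_y^2 u_p[F]$ and first solve the second-order problem $w_{yy} - u_\parallel^\infty w = F$ subject to $w(\infty) = 0$. Since $F \in C_0^\infty$, an explicit choice is the exponential Green's function
$$w(y) = -\frac{1}{2\sqrt{u_\parallel^\infty}}\int_0^\infty e^{-\sqrt{u_\parallel^\infty}\,|y-y'|}\, F(y')\,dy',$$
which decays exponentially together with all derivatives as $y \to \infty$. Integrating $w$ twice from $+\infty$ (well-defined thanks to this exponential decay) produces a specific antiderivative $u_p[F]$ obeying $\partial_y^k u_p[F](\infty) = 0$ for $k \ge 1$. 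Any remaining integration freedom is harmlessly absorbed into $C_1, C_2$ of the final ansatz.

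With this construction the ansatz $v^0 = C_1 + C_2 e^{-\sqrt{u_\parallel^\infty}\,y} + u_p[F]$ automatically satisfies $\mathcal{L}_0 v^0 = F$ and all far-field conditions, so it remains only to impose $v^0(0) = v^0_y(0) = 0$, which reduces to the linear system
$$C_1 + C_2 = -u_p[F](0), \qquad -\sqrt{u_\parallel^\infty}\, C_2 = -u_p[F]_y(0).$$
The $2 \times 2$ coefficient matrix has determinant $-\sqrt{u_\parallel^\infty} \neq 0$, so $(C_1, C_2)$ is uniquely determined, giving existence. Uniqueness follows from the same determinant computation: two solutions differ by an element of the admissible kernel $\mathrm{span}\{1,\,e^{-\sqrt{u_\parallel^\infty}\,y}\}$ satisfying $v(0) = v_y(0) = 0$, which forces both coefficients to vanish. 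Since the argument is purely algebraic once $u_p[F]$ is in hand, I foresee no genuine obstacle; the only bookkeeping is ensuring the normalization of the antiderivatives used to construct $u_p[F]$ is kept consistent with the two free constants $C_1, C_2$.
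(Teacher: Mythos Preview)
Your proof is correct and follows the same overall strategy as the paper: identify the four-dimensional kernel, discard the two modes violating the far-field condition, build a particular solution with good decay, and solve a $2\times 2$ system for $C_1,C_2$. The genuine difference lies in how the particular solution is produced. The paper uses the classical variation-of-parameters formula, writing out the $4\times 4$ Wronskian matrix $\bold{W}(y)$ and the modified matrices $\bold{W}_i(y)$, then setting $c_i[F](y)=-\int_y^\infty W_i(z)/W(z)\,dz$ and $u_p[F]=\sum_i c_i[F]\,v^0_i$. You instead exploit the factorization $\mathcal{L}_0=(\partial_y^2-u_\parallel^\infty)\partial_y^2$: first invert the second-order piece via the explicit exponential Green's function, then integrate twice from $+\infty$. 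Your route is shorter and avoids the $4\times 4$ determinant bookkeeping, at the cost of being specific to operators that factor so cleanly; the paper's Wronskian approach is more mechanical but would transfer verbatim to any constant-coefficient fourth-order operator. Both yield a $u_p[F]$ with $\partial_y^k u_p[F](\infty)=0$ for $k\ge 1$, which is all that is needed.
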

\begin{proof} The characteristic equation is $r^4 - u_\parallel^\infty r^2 = 0$. The roots thus correspond to the basis solutions $\{v^0_1, v^0_2, v^0_3, v^0_4\} := \{1, y, e^{ry}, e^{-r y} \}$ where $r = \sqrt{u_\parallel^\infty}$. 

\begin{align*}
&\bold{W}(y) = 
\begin{bmatrix}
    1 & y & e^{r y} & e^{-ry} \\
    0 & 1 & r e^{r y} & - r e^{-ry}  \\
    0 & 0 & r^2 e^{r y} & r^2 e^{-ry} \\
    0 & 0 & r^3 e^{ry} & -r^3 e^{-ry}
  \end{bmatrix} \hspace{10 mm} 
\bold{W}_1(y) = 
\begin{bmatrix}
    0 & y & e^{r y} & e^{-ry} \\
    0 & 1 & r e^{r y} & - r e^{-ry}  \\
    0 & 0 & r^2 e^{r y} & r^2 e^{-ry} \\
    F & 0 & r^3 e^{ry} & -r^3 e^{-ry}
\end{bmatrix} \\
&\bold{W}_2(y) = 
\begin{bmatrix}
    1 & 0 & e^{r y} & e^{-ry} \\
    0 & 0 & r e^{r y} & - r e^{-ry}  \\
    0 & 0 & r^2 e^{r y} & r^2 e^{-ry} \\
    0 & F & r^3 e^{ry} & -r^3 e^{-ry}
\end{bmatrix} \hspace{8 mm} 
\bold{W}_3(y) = 
\begin{bmatrix}
    1 & y & 0 & e^{-ry} \\
    0 & 1 & 0 & - r e^{-ry}  \\
    0 & 0 & 0 & r^2 e^{-ry} \\
    0 & 0 & F & -r^3 e^{-ry}
\end{bmatrix} \\
&\bold{W}_4(y) = 
\begin{bmatrix}
    1 & y & e^{r y} & 0 \\
    0 & 1 & r e^{r y} & 0  \\
    0 & 0 & r^2 e^{r y} & 0 \\
    0 & 0 & r^3 e^{ry} & F
\end{bmatrix}
\end{align*}

\noindent Let $W(y) = |\bold{W}|$ and $W_i(y) = |\bold{W}_i|$. Define 
\begin{align*}
c_i[F](y) = - \int_y^\infty \frac{W_i(z)}{W(z)} \ud z
\end{align*}

\noindent As $F$ has compact support, it is clear that $c_i$ and its derivatives decay rapidly at $y = \infty$. The full solution is thus given by $v^0 = C_1 + C_2 e^{-ry} + u_p[F]$, where $u_p[F]$ is the particular solution $u_p[F] := \sum c_i[F] v^0_i$. We achieve the boundary conditions by solving $C_1 + C_2 + u_p[F](0) = 0$ and $C_1 - r C_2 + \p_y u_p[F](0) = 0$. 
\end{proof}

We now quantify the space in which $v^0$ lives. To do so, define 
\begin{align*}
&\| v^0 \|_{T} := \| v^0_{yyyy}  \| + \| v^0_{yyy} \| + \| v^0_{yy}  \| + \| v^0_y  \| + \| \frac{v^0}{y} \|, \\
&\| v^0 \|_{T_s} := \| v^0_{yyyy} e^{sy} \| + \| v^0_{yyy} e^{sy} \| + \| v^0_{yy} e^{sy} \| + \| v^0_y \|, \\
&\|v^0 \|_{\tilde{T}} := \| v^0_{yyy} \| + \| v^0_{yy}  \| + \| v^0_y  \| + \| \frac{v^0}{y} \|
\end{align*}

\begin{lemma} Let $F \in C^\infty_0$. Then $v^0 \in T, T_s$ and the following estimate is valid 
\begin{align*}
\| v^0 \|_{T} \lesssim \| F w_0 \|, \text{ and } \| v^0 \|_{T_s} \lesssim \| F e^{sy} \|,
\end{align*}

\noindent for $0 < s < r$. 

\end{lemma}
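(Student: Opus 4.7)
\noindent\textit{Proof plan.} The strategy is a cascade of energy identities obtained by testing (\ref{first}) against judicious multipliers. The Dirichlet--Neumann pair $v^0(0) = v^0_y(0) = 0$ together with the decay of $v^0_y, v^0_{yy}, v^0_{yyy}$ at infinity will force every boundary term at $y = \infty$ to drop and simplify those at $y = 0$.

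First, I multiply (\ref{first}) by $v^0$ and integrate by parts twice. All boundary contributions vanish, producing the positive identity
\begin{equation*}
\|v^0_{yy}\|^2 + u_\parallel^\infty \|v^0_y\|^2 = (F, v^0).
\end{equation*}
Hardy's inequality $\|v^0/y\| \le 2\|v^0_y\|$, valid since $v^0(0) = 0$, combined with Cauchy--Schwarz and Young's inequality, then yields $\|v^0_y\| + \|v^0_{yy}\| + \|v^0/y\| \lesssim \|Fy\| \lesssim \|Fw_0\|$. Next, $\|v^0_{yyyy}\| \le u_\parallel^\infty \|v^0_{yy}\| + \|F\|$ is read directly from the equation. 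Finally, $\|v^0_{yyy}\|$ follows from the interpolation identity
\begin{equation*}
\|v^0_{yyy}\|^2 = -(v^0_{yy}, v^0_{yyyy}) + v^0_{yy}(0) v^0_{yyy}(0),
\end{equation*}
combined with the one-sided trace $|f(0)|^2 \le \|f\|\|f'\|$ applied to both $f = v^0_{yy}$ and $f = v^0_{yyy}$, followed by Young absorption. This delivers $\|v^0\|_T \lesssim \|Fw_0\|$.

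For the $T_s$-bound with $0 < s < r := \sqrt{u_\parallel^\infty}$, the $\|v^0_y\|$ piece is already controlled by the previous step, since the pointwise estimate $y \le C_s e^{sy}$ gives $\|Fy\| \lesssim \|F e^{sy}\|$. The three weighted norms $\|v^0_{yy} e^{sy}\|, \|v^0_{yyy} e^{sy}\|, \|v^0_{yyyy} e^{sy}\|$ are obtained by re-running the above argument with the weight $e^{2sy}$ inserted into each multiplier. Every time a derivative falls on the weight a factor of $s$ or $s^2$ is produced, yielding commutator terms of the form $s\, \|v^0_y e^{sy}\| \|v^0_{yy} e^{sy}\|$ and $s^2 \|v^0_y e^{sy}\|^2$. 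These are dominated by the coercive contribution as long as $u_\parallel^\infty - s^2 > 0$, which is precisely the strict inequality $s < r$.

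The main obstacle I anticipate is the $y = 0$ boundary term $v^0_{yy}(0) v^0_{yyy}(0)$ arising in the interpolation for $\|v^0_{yyy}\|$: only $v^0$ and $v^0_y$ are pinned at the boundary, so higher traces must be absorbed via one-sided Sobolev embedding rather than killed outright by the boundary data. A secondary point is that in the weighted case the strict inequality $s < r$ is both used and sharp, as any $s \ge r$ would leave the weight-derivative commutators uncontrolled and destroy coercivity.
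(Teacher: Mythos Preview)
Your unweighted argument for the $T$-norm is correct and essentially matches the paper's: the paper squares the equation to obtain $\|v^0_{yyyy}\|^2 + 2u_\parallel^\infty\|v^0_{yyy}\|^2 + (u_\parallel^\infty)^2\|v^0_{yy}\|^2 = \|F\|^2$ in one shot and then tests against $v^0$ for $\|v^0_y\|$, whereas you cascade (test against $v^0$, read $\|v^0_{yyyy}\|$ from the equation, interpolate for $\|v^0_{yyy}\|$). These are minor tactical variants of the same energy method, and your explicit treatment of the trace $v^0_{yy}(0)v^0_{yyy}(0)$ via one-sided Sobolev is in fact more careful than the paper's ``immediate'' claim.

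There is, however, a genuine gap in your weighted argument. Your plan is to insert $e^{2sy}$ into the multiplier $v^0$ and claim the commutators are dominated when $u_\parallel^\infty - s^2 > 0$. The problem is not coercivity but the boundary at $y=\infty$: the boundary conditions in (\ref{first}) only force $\partial_y^k v^0(\infty)=0$ for $k\ge 1$, so $v^0$ itself tends to a generically nonzero constant $C_1$ (explicitly, $v^0 = C_1 + C_2 e^{-ry} + u_p[F]$ from the preceding lemma). Hence when you integrate $(v^0_{yyyy}, v^0 e^{2sy})$ by parts, the first boundary term $v^0_{yyy}\cdot v^0\cdot e^{2sy}\big|_{y\to\infty}\sim C_1 e^{(2s-r)y}$ fails to vanish once $s\ge r/2$. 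Your proposed step~1 with weight therefore breaks down on half of the claimed range $(0,r)$.

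The paper avoids this by squaring the equation with weight: $\|(\mathcal{L}_0 v^0)e^{sy}\|^2 = \|Fe^{sy}\|^2$. The single integration by parts needed to expand the cross term only involves $v^0_{yy},v^0_{yyy},v^0_{yyyy}$, all of which decay like $e^{-ry}$, so every boundary product at infinity is $\mathcal{O}(e^{2(s-r)y})$ and vanishes for the full range $s<r$. You can repair your argument by replacing the weighted multiplier $v^0 e^{2sy}$ with $v^0_{yy}e^{2sy}$ (or by squaring), which yields
\[
\|v^0_{yyy}e^{sy}\|^2 + (r^2-2s^2)\|v^0_{yy}e^{sy}\|^2 = -(F,v^0_{yy}e^{2sy}) - v^0_{yyy}(0)v^0_{yy}(0) + s(v^0_{yy}(0))^2,
\]
and then the substitution $u=v^0_{yy}e^{sy}$ (so $\|v^0_{yyy}e^{sy}\|^2 = \|u'\|^2 + s^2\|u\|^2 + s(v^0_{yy}(0))^2$) upgrades the coercive coefficient to exactly $r^2-s^2$, recovering the full range $s<r$ that you anticipated.
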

\begin{proof} We square and integrate the equation $\| \mathcal{L}_0 v^0\|^2 = \| F \|^2$. It is immediate to see that 
\begin{align*}
\| \mathcal{L}_0 v^0 \|^2 = \| v^0_{yyyy} \|^2 + 2 u_\parallel^\infty \| v^0_{yyy} \|^2 + |u_\parallel^\infty|^2 \| v^0_{yy}\|^2. 
\end{align*}

\noindent Next, one takes inner product with $v^0$ to obtain control over $\|v^0_y\|^2$, whereas on the right hand side one uses Hardy inequality via $|(F, v^0)| \lesssim \| F \langle y \rangle \| \| v^0_y \|$. We may repeat the first step with weights $e^{sy}$, and all integrations by parts are justified since $s < r$. 
\end{proof}

We now remove the compact support assumption on $F$. 

\begin{lemma} Let $F \in L^2(w_0)$. Then there exists a solution $v^0 \in T$ satisfying $\| v^0 \|_T \lesssim \|F w_0 \|$. Let $F \in L^2(e^{sy})$. Then $v^0 \in T_s$ satisfying the estimate $\| v^0 \|_{T_s} \lesssim \| F e^{sy} \|$.
\end{lemma}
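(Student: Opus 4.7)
The plan is a density/approximation argument exploiting the linearity of $\mathcal{L}_0$ together with the a priori estimates proved in the previous lemma. First, given $F \in L^2(w_0)$, I would pick an approximating sequence $F_n \in C^\infty_0(\mathbb{R}_+)$ with $\|(F - F_n)w_0\| \to 0$ (such a sequence exists because $C^\infty_0$ is dense in the weighted $L^2$ space, and one can even require $\|F_n w_0\| \le 2\|F w_0\|$ for all $n$). The previous lemma supplies solutions $v^0_n$ to $\mathcal{L}_0 v^0_n = F_n$ satisfying the homogeneous boundary conditions together with the uniform bound $\|v^0_n\|_T \lesssim \|F_n w_0\| \lesssim \|F w_0\|$.

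Next, because $\mathcal{L}_0$ is linear and $v^0_n - v^0_m$ satisfies (\ref{first}) with forcing $F_n - F_m$ and the same homogeneous boundary data, the previous lemma applied to the difference gives
\begin{equation*}
\|v^0_n - v^0_m\|_T \lesssim \|(F_n - F_m) w_0\| \longrightarrow 0 \quad \text{as } n, m \to \infty.
\end{equation*}
Thus $\{v^0_n\}$ is Cauchy in the norm $\|\cdot\|_T$, so it converges to some $v^0$ with $\|v^0\|_T \lesssim \|Fw_0\|$. To pass to the limit in the equation, I would observe that convergence in $\|\cdot\|_T$ gives $v^0_{n,yyyy} \to v^0_{yyyy}$ and $v^0_{n,yy} \to v^0_{yy}$ in $L^2$, so $\mathcal{L}_0 v^0_n = v^0_{n,yyyy} - u_\parallel^\infty v^0_{n,yy}$ converges to $\mathcal{L}_0 v^0$ in $L^2$, and the right-hand side converges to $F$ in $L^2(w_0) \subset L^2_{\mathrm{loc}}$. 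For the boundary conditions at $y=0$, control of $v^0_{n,yyy}, v^0_{n,yy}, v^0_{n,y}$ and $v^0_n/y$ in $L^2$ together with the equation gives enough Sobolev regularity for $v^0_n|_{y=0}$ and $v^0_{n,y}|_{y=0}$ to pass to the limit; at $y = \infty$, the finiteness of $\|v^0\|_T$ forces the decay $\p_y^k v^0(\infty) = 0$ for $k \ge 1$.

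For the weighted version with $F \in L^2(e^{sy})$ (with $0 < s < r$), the argument is identical: approximate $F$ by $F_n \in C^\infty_0$ with $\|(F - F_n) e^{sy}\| \to 0$, apply the previous lemma to obtain $\|v^0_n\|_{T_s} \lesssim \|F_n e^{sy}\|$, and again use linearity to see that $v^0_n - v^0_m$ is Cauchy in $\|\cdot\|_{T_s}$. The only point that requires care is that the integrations by parts justifying the $T_s$ estimate in the previous lemma were valid precisely because $s < r$, so the same restriction is needed here; with this in hand, the limit $v^0$ lies in $T_s$ with the desired bound.

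The routine part of this argument is the extraction of the limit and passage to the limit in $\mathcal{L}_0 v^0 = F$. The one spot that genuinely needs to be checked is that the uniform $T$-bound on the approximants does encode the boundary conditions in a way stable under limits; this is where the control of $\|v^0/y\|$ in the norm $\|\cdot\|_T$ is essential, since combined with the pointwise trace embedding from $\|v^0_y\|, \|v^0_{yy}\|$ it ensures that $v^0(0) = v^0_y(0) = 0$ pass to the limit.
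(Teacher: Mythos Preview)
Your proposal is correct and is exactly the ``straightforward density argument'' the paper invokes in a single sentence for its proof. You have simply spelled out the details of that argument: approximate $F$ by $C^\infty_0$ data, apply the preceding lemma to obtain uniform $T$ (respectively $T_s$) bounds, use linearity to show the approximants are Cauchy, and pass to the limit in the equation and boundary conditions.
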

\begin{proof} This follows from a straightforward density argument. 
\end{proof}

The final step is to add on the perturbations from $\mathcal{L}$ to $\mathcal{L}_0$. To do so, write $\mathcal{L} = \mathcal{L}_0 + K$, where 
\begin{align*}
K = (u_s - u_s^\infty) v^0_{yy} - u_{syy}v^0 - v_s v^0_{yyy} - v_{syy} v^0_y - \eps u_{sxx}v^0 + \eps v_{sxx} v^0_y
\end{align*}

\begin{lemma} \label{lemma.fin.exist} Let $F \in L^2(w_0)$. Assume the operator $\mathcal{L}$ satisfies the \textit{a-priori} bound $\| \mathcal{L} v^0 \| \gtrsim \| v^0 \|_{T}$. Then there exists a unique solution $v^0 \in T$ which satisfies the bound $\| v^0 \|_T \lesssim \| F w_0 \|$.
\end{lemma}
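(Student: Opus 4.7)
The plan is to invoke the \emph{method of continuity}, deforming $\mathcal{L}$ to the constant-coefficient model operator $\mathcal{L}_0$ for which invertibility from $T$ onto $L^2(w_0)$ was just established. Set $\mathcal{L}_t := \mathcal{L}_0 + tK$ for $t \in [0,1]$ (so $\mathcal{L}_1 = \mathcal{L}$), and define
\[
S := \bigl\{\, t \in [0,1] : \mathcal{L}_t : T \to L^2(w_0) \text{ is an isomorphism with } \|v^0\|_T \lesssim \|\mathcal{L}_t v^0 \cdot w_0\| \,\bigr\}.
\]
I would show that $S$ is nonempty, open, and closed in $[0,1]$; connectedness then forces $1 \in S$, which is exactly the claim of the lemma.

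Nonemptiness is immediate: $0 \in S$ by the preceding lemma. For openness at $t_0 \in S$, I would rewrite $\mathcal{L}_t v^0 = F$ as the fixed-point equation $v^0 = \mathcal{L}_{t_0}^{-1}\bigl( F + (t_0 - t) K v^0 \bigr)$. A term-by-term check, using the boundedness and decay of $u_s, v_s$ from Theorem \ref{thm.construct}, shows that $K : T \to L^2(w_0)$ is bounded (e.g.\ $\|(u_s - u_s^\infty) v^0_{yy} w_0\| \lesssim \|(u_s - u_s^\infty) w_0\|_\infty \|v^0_{yy}\|$, and analogously for $u_{syy} v^0$, $v_s v^0_{yyy}$, $v_{syy} v^0_y$, $\eps u_{sxx} v^0$, $\eps v_{sxx} v^0_y$). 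For $|t - t_0|$ small the map is then a contraction on $T$ and produces the required solution with the uniform bound.

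For closedness, the decisive point is that the a-priori estimate on $S$ holds \emph{uniformly in $t$}. Inspecting the proofs of Lemmas \ref{Lem.double} and \ref{Lem.triple}, every perturbative term arising from $K$ is absorbed using the smallness of $\sigma$ or $\eps$, with no reliance on the sign or size of $t$; replacing $K$ by $tK$ with $t \in [0,1]$ only shrinks these perturbations, so the estimate $\|v^0\|_T \lesssim \|\mathcal{L}_t v^0 \cdot w_0\|$ holds with a $t$-independent constant. Given $t_n \in S$ with $t_n \to t_\infty$ and any $F \in L^2(w_0)$, I would solve $\mathcal{L}_{t_n} v^0_n = F$, extract a weakly convergent subsequence in $T$ by the uniform bound, and pass to the limit in the equation; the a-priori estimate then yields both uniqueness and the quantitative bound $\|v^0\|_T \lesssim \|Fw_0\|$ at $t = t_\infty$.

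The main technical hurdle will be verifying boundedness of $K : T \to L^2(w_0)$ with a norm that does not degenerate, since this controls the contraction radius in the openness step and must be compatible with the polynomial weight $w_0 = \langle y \rangle \langle Y \rangle^m$. Concretely, one must pair the rapid decay of the boundary-layer contributions to $u_s, v_s$ (in $y$) with the polynomial decay of the Eulerian corrections (in $Y$), which is guaranteed by the construction of Theorem \ref{thm.construct}. Once this boundedness is in hand, the continuity argument runs with no further surprises, and the estimate $\|v^0\|_T \lesssim \|Fw_0\|$ is just the statement that $1 \in S$.
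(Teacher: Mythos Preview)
Your method-of-continuity approach is different from the paper's Fredholm argument and can in principle be made to work, but the proposal as written has two gaps.

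First, your definition of $S$ requires ``$\mathcal{L}_t : T \to L^2(w_0)$ is an isomorphism,'' but for $v^0 \in T$ the norm only controls $v^0_{yyyy}$ in \emph{unweighted} $L^2$, so $\mathcal{L}_0 v^0 = v^0_{yyyy} - u_\parallel^\infty v^0_{yy}$ need not lie in $L^2(w_0)$ at all; $\mathcal{L}_0$ is not even a map $T \to L^2(w_0)$, and hence $0 \notin S$ under your definition. The repair is to phrase $S$ in terms of \emph{solvability} of $\mathcal{L}_t v^0 = F$ for $F \in L^2(w_0)$ with the bound $\|v^0\|_T \lesssim \|Fw_0\|$. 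Your openness step then genuinely requires $K : T \to L^2(w_0)$, and this holds only because every coefficient of $K$ carries exponential decay in $y$ (rate $\gtrsim\sqrt{\eps}$ from the Euler contributions, much faster from Prandtl). This is exactly the structural fact the paper exploits---but for compactness of $\mathcal{L}_0^{-1}K$ on $\tilde{T}$ rather than for boundedness into $L^2(w_0)$.

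Second, your argument for the uniform a-priori bound is not correct as stated. In Lemmas~\ref{Lem.double}--\ref{Lem.triple} the coercive piece is $v^0_{yyyy} - \p_y\{u_s^2 q^0_y\}$, i.e.\ the \emph{full} Rayleigh part of $\mathcal{L}$; the splitting $\mathcal{L}_t = \mathcal{L}_0 + tK$ does not exhibit $K$ as ``the perturbative terms of those lemmas,'' so the claim that $tK$ only shrinks perturbations is a non sequitur. The interpolated second-order part is $-(1-t)u_\parallel^\infty v^0_{yy} - t\,\p_y\{u_s^2 q^0_y\}$, and you must actually redo the $q^0$-multiplier computation for this operator. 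It does work---pairing with $q^0$ produces $(1-t)u_\parallel^\infty\|\sqrt{u_s}\,q^0_y\|^2 + t\|u_s q^0_y\|^2$, both favorable---but this needs to be said. The paper's Fredholm route is cleaner precisely here: once $\mathcal{L}_0^{-1}K$ is compact, the alternative reduces everything to uniqueness for the homogeneous problem, so the a-priori bound is invoked only at $t=1$.
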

\begin{proof} We note first that $\mathcal{L}_0^{-1}K$ is a compact operator on $\tilde{T}$. Indeed, letting $v^0 \in \tilde{T}$, we see that $Kv^0 \in L^2 e^{sy}$ for some $0 < s$. Thus, we may apply $\mathcal{L}_0^{-1}$ which brings $\mathcal{L}_0^{-1}Kv^0$ into $T_s$, which is compactly embedded in $\tilde{T}$. We thus apply the Fredholm alternative so that we must rule out nontrivial solutions to the homogeneous problem $\mathcal{L}_0 v^0 = - K v^0$. Since $v^0 \in \tilde{T}$, we bootstrap to conclude $v^0 \in T_s$. We may subsequently apply the assumed \textit{a-priori} bound on $\mathcal{L}$ to conclude that $v^0 = 0$ is the only solution. 
\end{proof}

\begin{proof}[Proof of Proposition \ref{prop.L.exist}]
Estimate (\ref{mainu0est}) is obtained by combining (\ref{B.dep}) with (\ref{estimate.lemma.q0.1}) and (\ref{estimate.triple}). Together with Lemma \ref{lemma.fin.exist} (whose hypotheses are verified by estimate (\ref{mainu0est})), this concludes the proof of the proposition. 
\end{proof}

\section{Formulation of DNS} \label{Section.2}

\subsection{Solvability of DNS}

The main object of study in this section, motivated by (\ref{spec.nl}), will be the following system: 
\begin{align}  
\begin{aligned} \label{eqn.dif.1}
&-\p_x R[q] + \Delta_\eps^2 v + J(v)= F  \\ 
&v_{xxx}|_{x = L} = v_x|_{x = L} = 0 \text{ and } v|_{x = 0} = v_{xx}|_{x = 0} = 0, \\
&v|_{y = 0} = v_y|_{y = 0} = 0. 
\end{aligned}
\end{align} 

\noindent The above $F$ serves as an abstract forcing for this section. 

Recall that $q = \frac{v}{u_s}$ from (\ref{rayleigh.quotient}). Define: 
\begin{align} \label{Ix.defn}
\tilde{u} := u - u^0 = \int_0^x -v_y(x', y) \ud x' := I_x[-v_y]. 
\end{align}

We will record now identities regarding the boundary conditions for $q$: 
\begin{align} \label{BTBC}
\begin{aligned}
&q_x|_{x = L} = -\frac{u_{sx}}{u_s}q|_{x = L}, \hspace{3 mm} q_{xx}|_{x = 0} = -2 \frac{u_{sx}}{u_s} q_x|_{x = 0}.
\end{aligned}
\end{align}

Define our ambient function space via: 
\begin{align*}
H^4_0 := \{v \in H^4: (\ref{eqn.dif.1}) \text{ is satisfied.} \} 
\end{align*}

We want to establish existence for $v$ as a solution to the system (\ref{eqn.dif.1}). We will define now several function spaces which will enable us to state the existence theorem. 
\begin{definition}[Function Spaces] Fix any weight, $w(y) \in C^\infty(\mathbb{R}_+)$. 
\begin{align*}
&\| v \|_{L^2(w)} := \|v \cdot w \|, \| v \|_{\dot{H}^k_\eps(w)} := \| \nabla_\eps^k v \|_{L^2(w)}, \| v \|_{H^k_\eps(w)} := \sup_{0 \le j \le k} \| v \|_{\dot{H}^j_\eps(w)}, \\
&\| v\|_{\dot{H}^4_{\eps, d}(w)} = \|\{v_{yyyy}, \eps v_{xxyy}, \eps^{\frac{3}{2}} v_{xxxy}, \eps^2 v_{xxxx} \} \cdot w\| + \| \sqrt{u_s} \sqrt{\eps} v_{xyyy} \cdot w\|, \\
&\|v \|_{H^4_{\eps, d}(w)} = \| v \|_{\dot{H}^4_{\eps, d}(w)} + \| v \|_{H_\eps^3(w)}.
\end{align*}

We adopt the convention that $\| v \|_{H^0_\eps(w)} := \| \sqrt{\eps} v \cdot w \|$, and that when $w$ is left unspecified, $w = 1$.  The relevant class of test functions is $C^\infty_V := \{ \phi \in C^\infty : \phi(0) = 0$  and  $\p_x \phi = 0$ in a neighborhood of $x = 0$, and are compactly supported in $y \}$. The following spaces are defined: $H^2_\eps(w) := \overline{C^\infty_V}^{\| \cdot \|_{H^2_\eps(w)}}$, and $X_w := \{ v \in H^4_\eps: \| v \|_{X_w} < \infty \}$.

\end{definition}

We now define notation for several operators: 
\begin{align}\n
&J^0(v) :=  \p_{x} ( -[u_s - u_s(\infty)] v_{yy} + u_{syy}v ) + \eps \p_x ( - [u_s - u_s(\infty)] v_{xx} \\ \label{defn.J0} & \hspace{15 mm} + u_{sxx}v ) + v_s ( I_x[-v_{yyy}] - \eps v_{xy}  ) - I_x[-v_y] \Delta_\eps v_s \\ \label{defn.Dv}
&\mathcal{D}_N(v) := \Delta_\eps^2 v - u_s(\infty) \chi(\frac{y}{N}) \Delta_\eps v_{x}, \hspace{3 mm} \mathcal{D}(v) := \mathcal{D}_\infty(v). 
\end{align}

We now prove the following result: 
\begin{proposition} \label{result.v.exist} Let $F \in L^2(w_0)$. Assume the \textit{a-priori estimate}: 
\begin{align} \label{aprior.assume}
\| v \|_{X_1} \lesssim C_\eps \| F \| \text{ for solutions } v \in X_1 \text{ to } (\ref{eqn.v.2}) 
\end{align}
Then there exists a unique solution $v \in X_{1}$  to the problem: 
\begin{align}  \label{eqn.v.2}
\begin{aligned}
&\Delta_\eps^2 v -\p_x R[q] +J(v)= F, \\
&v_x|_{x = L} = v_{xxx}|_{x = L} = 0, \hspace{3 mm} v|_{x = 0} = v_{xx}|_{x = 0} = 0, \\
&v|_{y = 0} = v_y|_{y = 0} = 0, \hspace{3 mm} v|_{y \rightarrow \infty} = 0. 
\end{aligned}
\end{align}

\noindent  Moreover, for any $w$ satisfying $|\p_y^k w| \lesssim |w|$, this $v$ satisfies the following estimates: 
\begin{align} \label{sum.vxxxx.lap}
&\| \{ \eps v_{xxyy}, \eps^{\frac{3}{2}}v_{xxxy}, \eps^2 v_{xxxx} \} w \|^2 - \eps |||q|||_w^2 \lesssim |(F, \eps^2 v_{xxxx}w)|, \\ \label{sum.vxxyy.lap}
&\| \{ \sqrt{\eps}v_{xyyy}, \eps v_{xxyy}, \eps^{\frac{3}{2}} v_{xxxy} \} \sqrt{u_s}w  \|^2 - \eps |||q|||_1^2 \\ \n
& \hspace{10 mm} - |||q |||_{\sqrt{\eps}w}^2 \lesssim |(f, \eps u_s v_{xxyy}w )|. 
\end{align}

\end{proposition}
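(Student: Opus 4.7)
The plan is to decouple the proposition into two parts: the two multiplier identities (\ref{sum.vxxxx.lap})--(\ref{sum.vxxyy.lap}) for sufficiently regular $v$, and the construction of such a $v$ via an approximation scheme closed using the a priori bound (\ref{aprior.assume}).

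For (\ref{sum.vxxxx.lap}), I would test (\ref{eqn.v.2}) against the multiplier $\eps^2 v_{xxxx} w^2$ and expand $\Delta_\eps^2 = \p_y^4 + 2\eps \p_x^2 \p_y^2 + \eps^2 \p_x^4$. The diagonal piece gives $\|\eps^2 v_{xxxx} w\|^2$ directly; the cross term $(2\eps v_{xxyy}, \eps^2 v_{xxxx} w^2)$ produces $\|\eps^{3/2} v_{xxxy} w\|^2$ after two integrations by parts in $y$, with boundary terms vanishing via $v_y|_{y=0} = 0$ and decay at $y = \infty$; and the pure $\p_y^4$ term, after two integrations by parts in $x$ (where $v|_{x=0} = v_{xx}|_{x=0} = v_x|_{x=L} = v_{xxx}|_{x=L} = 0$ annihilates every side-boundary contribution) followed by two more in $y$, yields $\|\eps v_{xxyy} w\|^2$. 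The Rayleigh piece $-(\p_x R[q], \eps^2 v_{xxxx} w^2)$ is rewritten via $v = u_s q$ and estimated by $\eps |||q|||_w^2$ plus a small fraction of $\|\eps^2 v_{xxxx} w\|^2$ which is absorbed on the LHS; the transport remainder $J(v)$ is harmless thanks to $L \ll 1$.

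For (\ref{sum.vxxyy.lap}), I would instead test against $\eps u_s v_{xxyy} w^2$: this is the DNS analogue of the key quotient identity (\ref{90}), and as emphasized in the discussion after (\ref{feelsjai}) it is standalone at top order because of the $u_s$ weight. The bulk of the work is in $(\Delta_\eps^2 v, \eps u_s v_{xxyy} w^2)$; careful integration by parts in $y$ yields the target $\|\{\sqrt{\eps} v_{xyyy}, \eps v_{xxyy}, \eps^{3/2} v_{xxxy}\} \sqrt{u_s} w\|^2$, while the boundary trace generated at $\{y = 0\}$ is favorable in sign because $u_{sy}|_{y=0} > 0$, exactly as in (\ref{90}). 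The Rayleigh contribution, after substituting $v = u_s q$, generates third-order quotient quantities that fit inside $|||q|||_{\sqrt{\eps}w}^2 + \eps |||q|||_1^2$, and the $J(v)$ commutators are absorbed using the embeddings already proved in Section \ref{Section.1}.

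For existence, I would follow the template of Lemma \ref{lemma.fin.exist}. First I would solve a regularized problem built from the operator $\mathcal{D}_N$ of (\ref{defn.Dv}): the added term $-u_s(\infty)\chi(y/N)\Delta_\eps v_x$ endows the associated bilinear form with enough coercivity on $H^2_\eps$ that a Galerkin scheme, in a basis diagonalizing $\p_x^4$ on $(0, L)$ under the mixed boundary conditions of (\ref{eqn.v.2}), yields a unique weak solution $v^N \in H^4_{\eps, d}$. The remaining pieces $-\p_x R[q] + J(v) + u_s(\infty)\chi(y/N)\Delta_\eps v_x$ are compact relative to $\mathcal{D}_N$, so Fredholm's alternative reduces solvability to uniqueness of the homogeneous problem, and that is exactly what (\ref{aprior.assume}) delivers. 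The $N$-uniform control coming from (\ref{sum.vxxxx.lap})--(\ref{sum.vxxyy.lap}) together with the parallel third-order bounds on $|||q|||_1$ developed elsewhere in this section then lets us pass $N \to \infty$ and obtain $v \in X_1$. The main obstacle I anticipate is bookkeeping inside (\ref{sum.vxxyy.lap}): the degeneracy of $u_s$ at $y = 0$ and upstream introduces commutators carrying a singular $1/u_s$ factor whenever one rewrites $v = u_s q$, so deciding which error lands inside $\eps|||q|||_1^2$, which inside $|||q|||_{\sqrt{\eps}w}^2$, and which must be absorbed back on the LHS is delicate and determines the exact shape of the estimate.
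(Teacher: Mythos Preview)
Your overall architecture is correct and close to the paper's, but two points of comparison are worth recording.

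First, the paper does \emph{not} prove (\ref{sum.vxxxx.lap})--(\ref{sum.vxxyy.lap}) by testing the full equation (\ref{eqn.v.2}). In Lemma \ref{alread} these two identities are established for the pure bi-Laplacian problem $\Delta_\eps^2 v = f$, via an explicit Fourier sine expansion in $x$ after even reflection across $x = L$ (only odd modes survive, which is exactly what encodes $v|_{x=0} = v_{xx}|_{x=0} = 0$ and $v_x|_{x=L} = v_{xxx}|_{x=L} = 0$). The Rayleigh and $J(v)$ contributions against the same multipliers are handled separately, later, in the trace estimates of Section~4.2 (see the lemmas yielding (\ref{beat.2}) and (\ref{beat.1}), where ``Step 2'' simply cites back to (\ref{sum.vxxxx.lap})--(\ref{sum.vxxyy.lap}) for the $\Delta_\eps^2$ piece). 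So your plan front-loads work that the paper defers; that is fine, but you should be aware that the proposition as stated is really a bi-Laplacian identity plus weight-commutator errors, and the $-\eps|||q|||_w^2$, $-|||q|||_{\sqrt{\eps}w}^2$ corrections come from those weight commutators, not from the Rayleigh term.

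Second, your mechanism for (\ref{sum.vxxyy.lap}) is misidentified. The favorable sign at $\{y=0\}$ coming from $u_{sy}|_{y=0} > 0$ is the engine of the \emph{quotient} estimates (Section~4.1, multipliers $q_{xx}, q_x, q_{yy}$; cf.\ (\ref{90}) and (\ref{bdry.g})). For the multiplier $\eps u_s v_{xxyy} w^2$ here, the boundary terms at $y=0$ simply vanish because $u_s|_{y=0} = 0$ kills them; the positivity of $\|\sqrt{u_s}\{\sqrt{\eps}v_{xyyy}, \eps v_{xxyy}, \eps^{3/2}v_{xxxy}\}w\|^2$ comes out cleanly from integration by parts with the $u_s$ weight, as the paper's computation in Lemma \ref{alread} shows. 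Your argument would go through once you do the calculation, but the reason is different from the one you cite.

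On existence, the paper proceeds in three layers rather than your merged scheme: invert $\Delta_\eps^2$ via the Fourier construction; then show $\mathcal{D}^{-1}$ exists by treating $\chi_N(y) u_s(\infty) \Delta_\eps \Delta_\eps^{-2}$ as compact on $L^2$, applying Fredholm, and passing $N \to \infty$; finally write the full problem as $\mathcal{D}(v) + J^0(v) = F$ with $\chi_N J^0 \circ \mathcal{D}^{-1}$ compact, invoke Fredholm once more, and kill the homogeneous solution with (\ref{aprior.assume}). No $N \to \infty$ limit is taken at the last stage. Your version is a legitimate reorganization of the same ingredients.
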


The first step is to invert the highest-order operator, $\Delta_\eps^2$. In so doing, the first point is the existence of a finite-energy solution:
\begin{lemma} \label{alread}  Given $F \in L^2$, there exists a unique $H^4_\eps$ solution to $\Delta_\eps^2 v = F$ with boundary conditions from (\ref{eqn.v.2}).
\end{lemma}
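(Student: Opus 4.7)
The plan is to invert $\Delta_\eps^2$ by Lax--Milgram on the natural energy space and then upgrade to $H^4_\eps$ by standard elliptic boundary regularity. Let $V$ be the completion, in the norm $\|v\|_V := \|\Delta_\eps v\|_{L^2(\Omega)}$, of the class of $C^\infty$ functions $\phi$ on $\overline{\Omega}$ which are compactly supported in $y$ and satisfy the essential conditions $\phi|_{y=0}=\phi_y|_{y=0}=0$, $\phi|_{x=0}=0$, $\phi_x|_{x=L}=0$. Expanding $\|\Delta_\eps \phi\|^2 = \|\phi_{yy}\|^2 + 2\eps(\phi_{yy},\phi_{xx}) + \eps^2\|\phi_{xx}\|^2$ and integrating the cross term by parts first in $y$ (using $\phi_y|_{y=0}=0$ and $y$-decay) and then in $x$ (using $\phi_y|_{x=0}=0$, which follows from $\phi|_{x=0}=0$, and $\phi_{xy}|_{x=L}=0$, which follows from $\phi_x|_{x=L}=0$), all boundary contributions vanish and one obtains
\[
\|\Delta_\eps \phi\|^2 = \|\phi_{yy}\|^2 + 2\eps\|\phi_{xy}\|^2 + \eps^2\|\phi_{xx}\|^2.
\]
Coupled with Poincaré in $x$ anchored at $x=0$ and Hardy in $y$ anchored at $y=0$, this shows $\|\cdot\|_V$ is a genuine norm that controls the full $H^2_\eps$ norm (with constants depending on $\eps$ and $L$, which are fixed at this stage).

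With this in hand, $a(v,\phi):=(\Delta_\eps v,\Delta_\eps \phi)$ is symmetric, continuous, and coercive on $V$. The functional $\ell(\phi):=(F,\phi)$ is continuous because the Poincaré chain above yields $\|\phi\|_{L^2(\Omega)}\lesssim_{L,\eps}\|\phi\|_V$, hence $|\ell(\phi)|\lesssim_{L,\eps}\|F\|\,\|\phi\|_V$. Riesz representation then produces a unique $v\in V$ with $a(v,\phi)=\ell(\phi)$ for every $\phi\in V$, which is exactly the weak formulation of $\Delta_\eps^2 v = F$ together with the essential conditions; uniqueness is immediate from coercivity.

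To upgrade regularity and to recover the natural conditions $v_{xx}|_{x=0}=0$ and $v_{xxx}|_{x=L}=0$, I would apply the standard difference-quotient / flattening argument for fourth-order elliptic operators. After the anisotropic rescaling $\tilde x := \sqrt\eps\,x$, $\Delta_\eps^2$ becomes the classical biharmonic $\tilde\Delta^2$, and the boundary configuration (clamped on $\{y=0\}$, Navier-type on $\{x=0\}$, free-type on $\{x=L\}$) admits the usual $H^4$ regularity theory. Once $v\in H^4_\eps$ is known, integrating $a(v,\phi)=(F,\phi)$ by parts for smooth $\phi\in V$ produces, in addition to the volume identity $(\Delta_\eps^2 v,\phi)=(F,\phi)$, precisely the unmatched natural boundary integrals
\[
\eps^2\int_0^\infty v_{xx}(0,y)\phi_x(0,y)\,\ud y \;-\; \eps^2\int_0^\infty v_{xxx}(L,y)\phi(L,y)\,\ud y = 0,
\]
which, valid for all admissible traces of $\phi_x(0,\cdot)$ and $\phi(L,\cdot)$, force the two natural conditions.

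The principal technical obstacle is the unbounded $y$-direction: both coercivity and continuity of $\ell$ rely on Hardy/Poincaré inequalities applied on all of $(0,\infty)$, and the regularity theory must also be implemented up to $y=\infty$. A clean way to finesse this is to first solve on the truncated strip $\Omega_N := (0,L)\times(0,N)$ with the additional essential conditions $v|_{y=N}=v_y|_{y=N}=0$, where Steps 1--3 apply verbatim on a bounded rectangle. One then establishes an $H^4_\eps$ bound on $v_N$ that depends on $\|F\|_{L^2(\Omega)}$ but not on $N$, and passes to the limit $N\to\infty$ by weak compactness, with the limit inheriting the prescribed conditions (including the Hardy-sense decay $v|_{y\to\infty}=0$).
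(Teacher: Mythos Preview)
Your approach differs from the paper's and is mostly sound, but the regularity step deserves more care than you give it.

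The paper exploits the specific structure of the $x$-boundary conditions: the pair $v|_{x=0}=v_{xx}|_{x=0}=0$ is compatible with odd reflection across $x=0$, and $v_x|_{x=L}=v_{xxx}|_{x=L}=0$ is compatible with even reflection across $x=L$. Accordingly the paper expands in the sine basis $\sin(n\pi x/(2L))$ with $n$ odd, reducing $\Delta_\eps^2 v=F$ to a decoupled family of fourth-order ODEs in $y$ with clamped data at $y=0$. Lax--Milgram on each mode gives $H^2_y$, and the full $H^4_\eps$ bound is obtained by hand: multiplying by $v_{xxxx}$ controls $\|\eps v_{xxyy},\eps^{3/2}v_{xxxy},\eps^2 v_{xxxx}\|$, the mixed term $\|v_{xyyy}\|$ is handled via a trace inequality at $y=0$, and $\|v_{yyyy}\|$ is read off from the equation. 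No corner-regularity black box is invoked.

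Your route --- direct Lax--Milgram on $V$ with $a(v,\phi)=(\Delta_\eps v,\Delta_\eps\phi)$, followed by elliptic regularity --- is a legitimate alternative, and your coercivity identity is correct. But the step ``the boundary configuration \dots\ admits the usual $H^4$ regularity theory'' is not as routine as you suggest. On polygons, biharmonic $H^4$ regularity near corners depends on the specific pair of conditions meeting there and is \emph{not} automatic (see e.g.\ Blum--Rannacher, cited in the paper). It does hold here, but the reason is precisely the reflection compatibility above: after reflecting, the lateral boundaries disappear and one is on a periodic strip with only the clamped edge $\{y=0\}$, where regularity is straightforward. You should either invoke this reflection explicitly (at which point you are essentially doing the paper's argument without Fourier), or cite the precise corner-singularity result covering the clamped/simply-supported corner at $(0,0)$ and the clamped/guided corner at $(L,0)$ at right angles. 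A small terminological point: the condition $v_x=v_{xxx}=0$ at $x=L$ is not ``free'' (which would be $v_{xx}=v_{xxx}=0$) but a sliding/guided condition; this matters for identifying which corner theory applies.
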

\begin{proof} Fix $f^m \in C^\infty_C$ such that $\| F - f_m \| \xrightarrow{L^2} 0$. Let $ \tilde{f}^m$ denote the even extension over $x = L$, which satisfies $\tilde{f}^m(0) = \tilde{f}^m(2L) = 0$. We may now expand $\tilde{f}^m$ periodically in a Fourier sine series: $\bar{f}^m = \sum_n \sin( n \frac{\pi}{2L} x)$. Since $\bar{f}$ is even across $x = L$, only the $n$-odd coefficients remain. We now solve the equation $\Delta_\eps^2 \bar{v}^m = \bar{f}^m$ on $\mathbb{H}$. Thus: 
\begin{align*}
\bar{f}^m = \sum_{n \text{ odd}} f^m_n(y) \sin( n \frac{\pi}{2L} x), \hspace{3 mm} \bar{v}^m = \sum_{n \text{ odd}} v^m_n(y) \sin(n \frac{\pi}{2L}x).
\end{align*}

\noindent  We thus obtain the following ODEs: 
\begin{align} \label{Fourier.1}
(v^m_n)'''' + 2\eps n^2 (v^m_n)'' + \eps^2 n^4 (v^m_n) = f^m_n \text{ for } n \neq 0 \text{ and } n \text{ odd. }
\end{align}

\noindent  Note that $f^m_{n = 0} = 0$ since $\bar{f}^m$ is odd. For each fixed $n$, we solve the above ODE using Lax Milgram. Precisely, define the bilinear form: 
\begin{align*}
B_n[v, \phi] := (v'', \phi'') + 2\eps ( n v', n \phi') + \eps^2 ( n^2 v, n^2 \phi): H^2_y \times H^2_y \rightarrow \mathbb{R}.
\end{align*}

First, for $n \neq 0$, $B_n$ is coercive over $H^2_y$ since $B_n[v, v] = |v''| + 2\eps n^2 |v'|^2 + \eps^2 n^2 |v|^2$. Similarly, $B_n$ is bounded on $H^2_y \times H^2_y$. Summing in $n$ yields the estimate $\| \bar{v}^m \|_{H^2_\eps} \lesssim \| \bar{f}^m \|$. 

We now estimate $\| v^m_{xyyy} \|$. Integration by parts in $y$ and appealing to the trace theorem in $\mathbb{R}_+$ produces: 
\begin{align*}
n^2 \|(v^m_{n})_{yyy}\|^2 =& (n^2 v^n_{yy}, v^n_{yyyy}) + (n^{\frac{1}{2}} v^n_{yyy}(0), n^{\frac{3}{2}} v^n_{yy}(0)) \\
\le &  \|n^2 v^n_{yy}\| \|v^n_{yyyy}\| + |n^{\frac{1}{2}} v^n_{yyy}(0)| |n^{\frac{3}{2}}v^n_{yy}(0)| \\
\le &  \|n^2 v^n_{yy}\| \|v^n_{yyyy}\| + \|n v^n_{yyy}\|^{\frac{1}{2}}\|v^n_{yyyy}\|^{\frac{1}{2}} \|n^2 v^n_{yy}\|^{\frac{1}{2}}\|n v^n_{yyy}\|^{\frac{1}{2}}.
\end{align*}

\noindent  Taking summation over $n$ gives and applying Young's inequality for products with exponents $\frac{1}{4} + \frac{1}{(4/3)} = 1$: 
\begin{align*}
\| v^m_{xyyy} \|^2 \lesssim& \| v^m_{xxyy} \| \| v^m_{yyyy} \| + \| v^m_{xyyy}\|^{\frac{1}{2}} \| v^m_{yyyy}\|^{\frac{1}{2}} \| v^m_{xxyy}\|^{\frac{1}{2}} \| v^m_{xyyy}\|^{\frac{1}{2}} \\
\lesssim & \| v^m_{xxyy} \| \| v^m_{yyyy} \| +  \kappa (\| v^m_{xyyy}\|^{\frac{1}{2}})^4 + N_\kappa ( \| v^m_{yyyy}\|^{\frac{1}{2}} \| v^m_{xxyy}\|^{\frac{1}{2}} \| v^m_{xyyy}\|^{\frac{1}{2}})^{\frac{4}{3}}.
\end{align*}

Multiplying by $v^m_{xxxx}$ produces the bound: $\| \eps v^m_{xxyy}, \eps^{\frac{3}{2}}v^m_{xxxy}, \eps^2 v^m_{xxxx} \|^2 \lesssim \| \bar{f}^m \|^2$. We use the equation to estimate $\| v^m_{yyyy} \|$. This then concludes the full $H^4_\eps$ bound. 

That $\bar{v}^m$ is in $C^\infty(\mathbb{H})$ follows by multiplying (\ref{Fourier.1}) by factors of $n^{j}$, summing in $n$, and using that $f^m$ is smooth to ensure summability of the right-hand side $\sum_n n^{2j} \|f^m_n\|^2 < \infty$.

That $v^m(0) = v^m_{xx}(0) = 0$ is guaranteed by the fact that $v^m$ is a Fourier sine series and $v^m_x(L) = v^m_{xxx}(L) = 0$ is guaranteed by the fact that only odd $n$ coefficients are nonzero.

We turn now to the estimate (\ref{sum.vxxxx.lap}). Integrating by parts produces: 
\begin{align} \n
&( \Delta_\eps^2 v^m, \eps^2 v^m_{xxxx} w^2) = \| \{ \eps v^m_{xxyy}, 2\eps^{\frac{3}{2}} v^m_{xxxy}, \eps^2 v^m_{xxxx}\} w \|^2 \\ \n
& -  4 \|\eps v^m_{xxy} \sqrt{ ( |w_{y}|^2 + w w_{yy} )}\|^2  +  \| \eps v^m_{xx} \sqrt{( \p_{yy} \{ w w_{yy} \} + \p_{yy} \{ |w_{y}|^2 \} )} \|^2 \\ \n
& -  2 \| \eps^{\frac{3}{2}} v^m_{xxx} \sqrt{ ( |w_{y}|^2 + w w_{yy} )} \|^2.
\end{align}

On the right-hand side, we have $(f^m, \eps^2 v^m_{xxxx}w^2)$. As $f^m \rightarrow f$ in $L^2$, and $v^m_{xxxx} \rightharpoonup v_{xxxx}$ weakly in $L^2$, we may pass to the limit in the inner product. From here, (\ref{sum.vxxxx.lap}) follows immediately. 

We turn now to (\ref{sum.vxxyy.lap}): 
We integrate by parts the $\Delta_\eps^2$ terms: 
\begin{align*}
( \Delta_\eps^2 v^m, \eps v^m_{xxyy} u_s w^2) = & - ( \eps v^m_{xyy} w^2, \p_x \{ u_s v^m_{yyyy} \}) + 2 \| \eps v^m_{xxyy} \sqrt{u_s} w \|^2 \\
& - ( \eps^3 v^m_{xxx} w^2, \p_x \{ u_s v^m_{xxyy} \}) \\
= & - ( \eps u_s v^m_{xyy}, v^m_{xyyyy} w^2) - ( \eps u_{sx} v^m_{xyy},v^m_{yyyy}w^2) \\
& + 2 \| \eps v^m_{xxyy} \sqrt{u_s} w \|^2 -  (\eps^3 u_{sx} v^m_{xxx}, v^m_{xxyy} w^2) \\
& - ( \eps^3 u_s v^m_{xxx}, v^m_{xxxyy} w^2) \\
= & \| \sqrt{\eps} v^m_{xyyy} w \sqrt{u_s}\|^2 + (\eps v^m_{xyyy}, v^m_{xyy} \p_y \{w^2 u_s \}) \\
& + ( \eps v^m_{yyy}, v^m_{xyyy} w^2 u_{sx}) + (\eps v^m_{yyy},v^m_{xyy} \p_y \{ w^2 u_{sx} \}) \\
& + 2\|\eps v^m_{xxyy} \sqrt{u_s} w\|^2 + (\eps^3 u_{sx} v^m_{xxy}, v^m_{xxxy} w^2) \\
& + ( 2 \eps^3 u_{sx} v^m_{xxy},v^m_{xxx} ww_{y}) + (\eps^3 u_{sxy} v^m_{xxy}, v^m_{xxx} w^2) \\
& + \| \eps^{\frac{3}{2}} \sqrt{u_s} v^m_{xxxy} w\|^2 + ( \eps^3 u_{sy} v^m_{xxx},v^m_{xxxy} w^2)  \\
& + (2\eps^3 u_s v^m_{xxx},v^m_{xxxy} ww_{y}) \\
\gtrsim & | \sqrt{u_s} \{ \sqrt{\eps} v^m_{xyyy}, 2\eps v^m_{xxyy}, \eps^{\frac{3}{2}}v^m_{xxxy} \} w \|^2 -  \eps |||q^m|||_1^2 \\
& - |||q^m |||_{\sqrt{\eps}w}^2
\end{align*}

\noindent  We have used the bound $|w_y| \lesssim |w|$ and Young's inequality for products to perform the above estimate. We again pass to the limit as $m \rightarrow \infty$ in the same manner as for (\ref{sum.vxxxx.lap}).
\end{proof}


\begin{lemma}   Let $F \in L^2(\langle y \rangle^m)$ for some $1 \le m < \infty$. Then $\| v \|_{H^4_\bold{d}(\langle y \rangle^m)} \le C_\eps \| F \|_{L^2(\langle y \rangle^m)}$. In particular, in the case when $F \in L^2 \cap L^2(w_0)$, $v = (\Delta_\eps^2)^{-1}F \in X_{1} \cap Y_{w_0}$.
\end{lemma}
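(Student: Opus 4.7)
The plan is to leverage the existence already provided by Lemma \ref{alread} (which yields an $H^4_\eps$ solution for any $F \in L^2 \supset L^2(\langle y \rangle^m)$) and to upgrade to the weighted estimate via the multiplier estimates already established in the same lemma, now applied with the polynomial weight $w = \langle y \rangle^m$. Since $|\p_y^k \langle y \rangle^m| \lesssim \langle y \rangle^m$ for every $k \ge 0$, this weight meets the hypothesis ``$|\p_y^k w| \lesssim |w|$'' required by (\ref{sum.vxxxx.lap}) and (\ref{sum.vxxyy.lap}), so both estimates are available directly.

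First I would approximate $F$ in $L^2(\langle y \rangle^m)$ by a sequence $F_n \in C^\infty_c(\Omega)$, producing by Lemma \ref{alread} a sequence of smooth $H^4_\eps$ solutions $v_n$ of $\Delta_\eps^2 v_n = F_n$ with the correct boundary conditions. Because $F_n$ is compactly supported and the Fourier representation in the proof of Lemma \ref{alread} gives exponential decay of each Fourier mode, each $v_n$ is a priori in $H^4_{\eps,d}(\langle y \rangle^m)$; the task is to obtain an $n$-uniform bound. To that end I would combine (\ref{sum.vxxxx.lap}) and (\ref{sum.vxxyy.lap}) with $w = \langle y \rangle^m$, which yields
\begin{align*}
\| v_n \|_{\dot H^4_{\eps,d}(\langle y \rangle^m)}^2 \lesssim |(F_n,\eps^2 v_{n,xxxx}\langle y \rangle^{2m})| + |(F_n,\eps u_s v_{n,xxyy}\langle y \rangle^{2m})| + \eps |||q_n|||_{\langle y \rangle^m}^2 + |||q_n|||_{\sqrt{\eps}\langle y \rangle^m}^2.
\end{align*}
The first two terms are controlled by Cauchy--Schwarz against $\| F_n \langle y \rangle^m \|$ with the leading-order norms absorbed back, producing the desired $C_\eps \| F_n \langle y \rangle^m\|^2$ contribution (with $\eps$-dependent constants from $u_s$ and $\Delta_\eps$).

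The remaining task, which I expect to be the main obstacle, is to bound the residual weighted quotient norms $|||q_n|||_{\langle y \rangle^m}$. Here $q_n = v_n/u_s$ degenerates at $y = 0$, so one must carefully exploit the boundary vanishing $v_n|_{y=0} = v_{n,y}|_{y=0} = 0$ and the structure $u_s \sim y$ near $y = 0$. I would estimate $|||q_n|||_{\langle y \rangle^m}$ by a separate lower-order energy estimate: test $\Delta_\eps^2 v_n = F_n$ against $v_n \langle y \rangle^{2m}$ to obtain $\|\nabla_\eps^2 v_n \langle y \rangle^m \| \le C_\eps \|F_n \langle y \rangle^m\|$ (justified by integration by parts, where the commutators from the weight are absorbable since $|\p_y^k \langle y \rangle^m| \lesssim \langle y \rangle^m$, and no boundary terms at $y = 0$ arise thanks to the homogeneous boundary conditions). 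Combined with Hardy-type bounds converting negative powers of $u_s$ into powers of $\sqrt{u_s}$ against $v_n$ derivatives that vanish at $y = 0$, this controls $|||q_n|||_{\langle y \rangle^m}$ by $C_\eps \| F_n \langle y \rangle^m\|$ at $\eps$-dependent cost.

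Finally, extract a weakly convergent subsequence $v_n \rightharpoonup v$ in $H^4_{\eps,d}(\langle y \rangle^m)$, pass to the limit in the equation and in the boundary conditions (using weak lower semicontinuity and the uniform bound), and note that by uniqueness from Lemma \ref{alread} the limit coincides with $(\Delta_\eps^2)^{-1}F$. For the final assertion, apply the above with $m$ chosen so that $\langle y \rangle^m \gtrsim w_0$ (recall $w_0 = \langle y \rangle \langle Y \rangle^m$, so such an $m$ exists with $\eps$-dependent constants), yielding $v \in Y_{w_0}$; membership in $X_1$ is the already-obtained unweighted version. Uniqueness in these spaces follows from the linearity of $\Delta_\eps^2$ together with the $L^2$ uniqueness from Lemma \ref{alread}.
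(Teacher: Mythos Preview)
Your route through the $\eps$-uniform estimates (\ref{sum.vxxxx.lap})--(\ref{sum.vxxyy.lap}) is an unnecessary detour, and it creates a gap. Those two estimates were engineered for the DNS problem to produce $\eps$-\emph{independent} bounds, which is why they leave the quotient norms $|||q|||_w$ as residuals. Here the constant is allowed to be $C_\eps$, so you are free to ignore the $\eps$-scaling entirely and treat $\Delta_\eps^2$ as a fixed fourth-order elliptic operator.

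The concrete gap is at the step ``test against $v_n\langle y\rangle^{2m}$ to get $\|\nabla_\eps^2 v_n\langle y\rangle^m\|$, then Hardy gives $|||q_n|||_{\langle y\rangle^m}$.'' The norm $|||q|||_w$ is \emph{third}-order in $q$ (it contains $\|\sqrt{u_s}\,q_{yyy}w\|$, $\|\sqrt{u_s}\,q_{xyy}w\|$, boundary traces of $q_{xy},q_{yy}$, etc.), which via the expansions in (\ref{systemat1}) corresponds roughly to a weighted $H^3_\eps$ norm of $v$. A second-order weighted bound on $v$ together with Hardy does not reach this; you would need a separate third-order weighted estimate (e.g.\ test against $\Delta_\eps v\cdot\langle y\rangle^{2m}$) before your absorption argument can close. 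As written, the scheme is circular: you need (weighted) $H^3$ to feed into (\ref{sum.vxxxx.lap})--(\ref{sum.vxxyy.lap}), but those were supposed to produce the top-order bound in the first place.

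The paper's intended argument (``standard polynomial-type weighted estimates'') bypasses $q$ altogether. Observe that for $w=\langle y\rangle^j$ one has $|\p_y w|\lesssim \langle y\rangle^{j-1}$, so every weight-commutator in the direct biharmonic energy identities (exactly the ones displayed in the proof of Lemma~\ref{alread}) carries one \emph{fewer} power of $\langle y\rangle$. Starting from the unweighted $H^4_\eps$ bound already established there, one bootstraps the weight $\langle y\rangle^0\to\langle y\rangle^1\to\cdots\to\langle y\rangle^m$ in $m$ steps: at each step the commutator terms are already controlled by the previous one (with $C_\eps$ constants). This yields $\|v\|_{H^4_{\eps,d}(\langle y\rangle^m)}\le C_\eps\|F\langle y\rangle^m\|$ with no reference to $q$. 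The ``in particular'' clause then follows from the qualitative Lemma~\ref{lemma.ratio}, which converts $H^4$ control of $v$ into finiteness of the $q$-based norms in $X_1,Y_{w_0}$ at $\eps$-dependent cost, together with $w_0\lesssim_\eps\langle y\rangle^{m+1}$.
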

\begin{proof} This follows from standard polynomial-type weighted estimates, and we omit the proof. 
\end{proof}

We will now study the perturbation in two steps. 

\begin{lemma}   The map $\mathcal{D}^{-1}: L^2 \rightarrow H^4$ is well-defined. 
\end{lemma}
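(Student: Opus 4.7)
The plan is to construct $\mathcal{D}^{-1}$ by first inverting the truncated operators $\mathcal{D}_N$ via the Fredholm alternative built on top of Lemma \ref{alread}, and then passing to the limit $N \to \infty$. The truncation is essential because the perturbation $-u_s(\infty)\Delta_\eps v_x$ is third-order and therefore not relatively compact with respect to $\Delta_\eps^2$ on the unbounded strip $[0,L]\times \mathbb{R}_+$; the cutoff $\chi(y/N)$ localizes it to $y\in[0,2N]$, restoring relative compactness.

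For each fixed $N<\infty$, I would rewrite $\mathcal{D}_Nv=F$ as $(I+T_N)v=(\Delta_\eps^2)^{-1}F$, where $T_Nv:=-u_s(\infty)(\Delta_\eps^2)^{-1}[\chi(y/N)\Delta_\eps v_x]$, interpreted on the ambient space $H^4_0$ constructed in Lemma \ref{alread}. The map $v\mapsto \chi(y/N)\Delta_\eps v_x$ sends $H^4_0$-bounded sets into $L^2$-bounded sets supported in the bounded region $[0,L]\times[0,2N]$; by Rellich--Kondrachov and the compactness of $H^4\hookrightarrow H^3$ on bounded subdomains, these images are precompact in $L^2$. Composition with the bounded operator $(\Delta_\eps^2)^{-1}:L^2\to H^4_0$ yields that $T_N$ is compact on $H^4_0$, so the Fredholm alternative reduces the existence of $\mathcal{D}_N^{-1}$ to injectivity of the homogeneous problem.

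For the uniqueness step, I would test $\mathcal{D}_Nv=0$ against $v$ itself and exploit the full set of boundary conditions in (\ref{eqn.dif.1}) --- namely $v|_{y=0}=v_y|_{y=0}=0$, $v|_{x=0}=v_{xx}|_{x=0}=0$, and $v_x|_{x=L}=v_{xxx}|_{x=L}=0$, together with decay at $y=\infty$ --- to justify the integrations by parts producing $\|\Delta_\eps v\|^2 = u_s(\infty)(\chi(y/N)\Delta_\eps v_x,v)$. Integrating by parts in $x$ on the right-hand side converts it into a sum of boundary integrals at $\{x=0,L\}$, most of which vanish by the prescribed data, plus a residual $O(1/N)$ contribution from $\chi'(y/N)$; Hardy and trace inequalities (legitimate thanks to the Dirichlet-type data) absorb the remaining pieces, forcing $\|\Delta_\eps v\|=0$ and hence $v\equiv 0$.

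Finally, to recover $\mathcal{D}^{-1}$ itself, the same energy identity applied to $\mathcal{D}_N v^N=F$ gives uniform-in-$N$ bounds $\|v^N\|_{H^4_\eps}\lesssim C_\eps\|F\|$; extracting a weakly convergent subsequence $v^N\rightharpoonup v$ in $H^4_0$ and using $\chi(y/N)\to 1$ pointwise identifies $\mathcal{D} v=F$ distributionally, with the boundary conditions preserved by trace continuity under weak convergence. The main obstacle is the uniqueness step: because the perturbation has the same differential order as the weakest term controlled by the Laplacian energy estimate, it cannot simply be absorbed, and careful bookkeeping of boundary contributions from the repeated integrations by parts --- along with nontrivial use of the specific boundary conditions at $x=0,L$ --- is needed to close the argument.
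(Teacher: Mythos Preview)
Your overall architecture --- compactify via the cutoff $\chi(y/N)$, apply the Fredholm alternative on top of $(\Delta_\eps^2)^{-1}$, and then send $N\uparrow\infty$ --- is exactly the paper's route. The gap is in the uniqueness step: testing the homogeneous equation against $v$ does not close.

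With the multiplier $v$ and the boundary conditions in (\ref{eqn.dif.1}) one indeed obtains $(\Delta_\eps^2 v,v)=\|v_{yy}\|^2+2\eps\|v_{xy}\|^2+\eps^2\|v_{xx}\|^2$, but the perturbation term does \emph{not} reduce to boundary contributions that ``mostly vanish.'' Carrying out the integrations by parts gives
\[
u_s(\infty)\,(\chi_N\Delta_\eps v_x,\,v)
= u_s(\infty)\Big[-\tfrac12\|\sqrt{\chi_N}\,v_y\|_{x=L}^2
+\tfrac{\eps}{2}\|\sqrt{\chi_N}\,v_x\|_{x=0}^2
+\eps\,(\chi_N v_{xx}(L),\,v(L))\Big]
+O\!\big(\tfrac1N\big).
\]
The second term has the wrong sign, and a trace/Poincar\'e bound only gives $\eps\|v_x\|_{x=0}^2\lesssim (L/\eps)\,\eps^2\|v_{xx}\|^2$; since the paper works in the regime $\eps\ll L$, the prefactor $L/\eps$ is large and this cannot be absorbed into the left-hand side. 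The third term is sign-indefinite and, via the trace at $x=L$, drags in $\|v_{xxx}\|$, which $\|\Delta_\eps v\|$ does not control. So ``Hardy and trace inequalities absorb the remaining pieces'' is precisely where the argument breaks.

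The paper's fix is to test against $v_{xx}$ instead. Then the bilaplacian yields $-\|v_{xyy}\|^2-2\eps\|v_{xxy}\|^2-\eps^2\|v_{xxx}\|^2$ with \emph{all} boundary terms vanishing (thanks to $v|_{x=0}=v_{xx}|_{x=0}=0$ and $v_x|_{x=L}=v_{xxx}|_{x=L}=0$), while the perturbation produces only the favorable boundary contributions $-\tfrac{u_s(\infty)}{2}\|\sqrt{\chi_N}v_{xy}\|_{x=0}^2-\tfrac{u_s(\infty)\eps}{2}\|\sqrt{\chi_N}v_{xx}\|_{x=L}^2$ plus an $O(1/N)$ remainder. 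Coercivity over $\|v_{xyy},\sqrt{\eps}v_{xxy},\eps v_{xxx}\|^2$ then forces $v=0$ by Poincar\'e. Swapping your multiplier $v$ for $v_{xx}$ repairs the proof with the rest of your outline intact.
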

\begin{proof}
Consider the map $\mathcal{D}_N(v) = F \in L^2$. By calling $v_0 = \Delta_\eps^2 v$, we may rewrite the equation as $v_0 + \chi_N(y) u_s(\infty) \Delta_\eps \Delta_\eps^{-2} v_0 = F$. We will study this as an equality in $L^2$, and it is clear that $\chi_N(y) u_s(\infty) \Delta_\eps \Delta_\eps^{-2}$ is a compact operator on $L^2$ due to the cutoff function. Therefore, by the Fredholm alternative, to establish solvability of $\mathcal{D}_N$, we must prove uniqueness of the homogeneous solution. This follows by performing an energy estimate:
\begin{align*}
(\Delta_\eps^2 v, v_{xx}) - u_s(\infty)( \chi_N(y) \Delta_\eps v_x, v_{xx}) = (F, v_{xx})
\end{align*}

\noindent  The Bilaplacian term produces the quantities $-\| v_{xyy}, 2\sqrt{\eps} v_{xxy}, \eps v_{xxx} \|^2$.

 Next, assuming $N = \eps^\infty$, we have: 
\begin{align*}
- u_s(\infty)( \chi_N \Delta_\eps v_x, v_{xx}) = &- \frac{u_s(\infty)}{2} [ |v_{xy}(0) \sqrt{\chi_N}|^2 + |\sqrt{\eps}v_{xx}(L) \sqrt{\chi_N}|^2 \\
& + \frac{1}{N}(v_{xy}, v_{xx} \chi_N').
\end{align*}

\noindent  Thus, the operator is coercive over the quantities $\| v_{xyy}, 2\sqrt{\eps} v_{xxy}, \eps v_{xxx} \|^2 +  [ |v_{xy}(0) \sqrt{\chi_N}|^2 + |\sqrt{\eps}v_{xx}(L) \sqrt{\chi_N}|^2$. By Poincare inequalities this implies that $v = 0$ if $F = 0$. Passing to the limit as $N \uparrow \infty$, we find that $\mathcal{D}$ is invertible from $H^4 \rightarrow L^2$. 
\end{proof}

\vspace{1 mm}

\begin{proof}[Proof of Proposition \ref{result.v.exist}] We will now consider the full equation (\ref{eqn.v.2}), which may be written as $\mathcal{D}(v) + J^0(v) = F \in L^2$. Again, standard arguments show that $\chi_N J^0 \circ \mathcal{D}^{-1}$ is a compact operator on $L^2$ or $L^2(e^Y)$. By the Fredholm alternative, it suffices to show uniqueness for the homogeneous solution to (\ref{eqn.v.2}). For this, we apply the assumed \textit{a-priori} estimate, (\ref{aprior.assume}) to conclude. 
\end{proof}

\subsection{Basic Estimates} \label{subsection.careful}

First, we urge the reader to recall the definitions in (\ref{defn.norms.ult}). For the weight, $w$, we will take
\begin{align} \label{weight}
&w = \text{ either } 1 \text{ or } w_0,
\end{align}

\noindent where $w_0$ is defined in (\ref{w0}). For both of these choices, the following elementary inequalities hold: 
\begin{align} \label{weight.prop.1}
|w_y| \lesssim \sqrt{\eps}|w| + 1, \hspace{5 mm} |w_y| \lesssim |w|.
\end{align}

\begin{lemma}[Hardy-type inequalities]   Let $f$ satisfy $f|_{y = 0} = 0$ and $f|_{y \rightarrow \infty} = 0$.  Then: 
\begin{align} \label{hardy.orig.w}
\| \frac{f}{ y } w \| \lesssim \|f_y w \| + \| \sqrt{\eps} f w \|. 
\end{align}
\end{lemma}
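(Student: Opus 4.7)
The natural approach is to apply the standard one-dimensional Hardy inequality to the composite function $g := fw$. Since $f(0)=0$ and both choices $w = 1$ and $w = w_0 = \langle y\rangle\langle Y\rangle^m$ are bounded near the origin (with $w(0) = 1$), one has $g(0)=0$, and $g$ decays at infinity thanks to $f(\infty)=0$. Integrating by parts using $\partial_y(1/y) = -1/y^2$, the boundary terms vanish (near $0$ since $g^2 = O(y^2)$, and at infinity by decay), leaving
\begin{align*}
\Big\|\frac{g}{y}\Big\|^2 = 2\int_0^\infty \frac{g g_y}{y}\,dy \le 2 \Big\|\frac{g}{y}\Big\|\,\| g_y\|,
\end{align*}
which is the classical Hardy bound $\|g/y\| \le 2\|g_y\|$. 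Expanding $g_y = f_y w + f w_y$ by the product rule reduces everything to controlling $\|f w_y\|$.

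For this step I would invoke the hypothesis (\ref{weight.prop.1}), namely $|w_y| \lesssim \sqrt{\eps}|w| + 1$. This yields
\begin{align*}
\|f w_y\| \lesssim \sqrt{\eps}\,\| fw \| + \big\| f \cdot \mathbf{1}_{\{|w_y|\gtrsim 1\}}\big\|,
\end{align*}
so the desired $\sqrt{\eps}\|fw\|$ piece already appears on the right-hand side, and the remaining task is to absorb the residual term. For the trivial case $w=1$ there is nothing to do since $w_y\equiv 0$. For $w = w_0$, I would verify using the scaling $Y = \sqrt{\eps} y$ that $|w_{0,y}| \lesssim \sqrt{\eps}\,w_0$ throughout the outer region $Y \gtrsim 1$ (equivalently $y \gtrsim 1/\sqrt{\eps}$), so the residual is actually supported in a compact-in-$Y$ zone near the boundary. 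In that localized zone one has $w_0 \sim 1$, and Hardy's inequality in reverse (using $f(0)=0$) gives $\|f \cdot \mathbf{1}_{\{|w_y|\gtrsim 1\}}\| \lesssim \|f_y \cdot y\,\mathbf{1}_{\{\cdot\}}\| \lesssim \|f_y w\|$.

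The main obstacle is this last step: carefully splitting the support of $|w_y|$ into the outer region where $|w_y| \lesssim \sqrt{\eps} w$ (controlled by $\sqrt{\eps}\|fw\|$) and a localized inner region where $|w_y|$ is merely bounded, and in the latter region extracting an estimate of $\|f\|_{L^2_{\mathrm{loc}}}$ by $\|f_y w\|$ using $f(0)=0$. Once this decomposition is in place, combining the pieces gives $\|fw/y\| \lesssim \|f_y w\| + \sqrt{\eps}\|fw\|$, as claimed.
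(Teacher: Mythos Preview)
Your approach has a genuine gap in the treatment of the inner region. First, the claim that ``$w_0 \sim 1$'' on $\{Y \lesssim 1\}$ is false: in that zone $\langle Y\rangle^m \sim 1$, so $w_0 = \langle y\rangle\langle Y\rangle^m \sim \langle y\rangle$, which ranges all the way up to $\eps^{-1/2}$. Second, and more seriously, the ``reverse Hardy'' inequality $\|f\,\mathbf{1}_{y \le R}\| \lesssim \|y f_y\,\mathbf{1}_{y \le R}\|$ (with $R \sim \eps^{-1/2}$) is simply not true. Take $f$ linear on $[0,1]$ and identically $1$ on $[1,R]$: the left side is $\sim R^{1/2}$, while $f_y$ is supported in $[0,1]$ and the right side is $\sim 1$.

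The structural problem is that applying classical Hardy to $g = fw_0$ forces you to control $\|f w_{0,y}\|$, and the explicit derivative $w_{0,y} = \langle Y\rangle^m + m\sqrt{\eps}\langle y\rangle\langle Y\rangle^{m-1}$ produces the term $\|f\langle Y\rangle^m\| = \|f w_0/\langle y\rangle\|$. This is pointwise comparable to the left-hand side $\|fw_0/y\|$ with constant $1$, so it cannot be absorbed. The paper sidesteps this by \emph{not} reducing to classical Hardy: instead it integrates by parts via $1 = \partial_y\{y\}$ directly inside the weighted integral. When $\partial_y$ lands on the $\langle Y\rangle^{2m}$ factor it produces the prefactor $\sqrt{\eps}$ (which becomes the $\|\sqrt{\eps}fw\|$ term), and when it lands on $f^2$ the leftover factor of $y$ recombines with $\langle Y\rangle^m$ to give back $w_0$. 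This gives the estimate without any absorption step.
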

\begin{proof} The case of $w = 1$ follows from the standard Hardy inequality. We thus consider $w =  w_0$. We integrate by parts in $y$ in the following manner:  
\begin{align*}
\| \frac{f}{ y } w_0 \|^2 = &  (\p_y \{ y  \} f, f \langle Y \rangle^{2m}) \\
= & 2m(f  y , f \langle Y \rangle^{2m-1} \sqrt{\eps} ) - 2(f  y  , f_y \langle Y \rangle^{2m}) \\
\lesssim & \| f  \langle Y \rangle^m\| \Big[ \| \sqrt{\eps}f  y  \langle Y \rangle^{m-1}\| + \| f_y  y \langle Y \rangle^m\| \Big].
\end{align*}
\end{proof}

\begin{lemma}   Let $v \in H^4_0$, let $q = \frac{v}{u_s}$, and let $w$ satisfy $|w_y| \lesssim |w|$. 
\begin{itemize}

\item[1.] The following Poincare type inequalities hold: 
\begin{align} \label{Poincare}
&\| \p_y^{j} \p_x^{j_2 }v w\| \lesssim L \| \p_y^j \p_y^{j_2 + 1} v w \| \text{ for } j_2 = 0,1,2,3, \\ \label{Poincare.2}
&\| u_s^k \p_{y}^j q w \| \lesssim L \| u_s^k \p_y^j q_x w \| \text{ for } k = 0, 1, \\ \label{Poincare.3}
&\| \sqrt{\eps} q_x w \| \lesssim L \| \sqrt{\eps} q_{xx}w \|. 
\end{align}

\item[2.] The following quantities are controlled by the triple norm: \begin{align} \label{systemat1}
\begin{aligned}
&\| \{q_{yy}, q_{xy}, \sqrt{\eps} q_{xx} \} \cdot w \| + \| \nabla_\eps q \cdot w \|  + \| \frac{q_x}{y} w \|  \\
&\hspace{5 mm}  + \| \{v_{yyy}, v_{xyy}, \sqrt{\eps}v_{xxy}, \eps v_{xxx} \} \cdot w \|  \\
& \hspace{5 mm} + \| \{v_{yy}, v_{xy}, \sqrt{\eps} v_{xx} \} \cdot w \|  + \| \nabla_\eps v \cdot w \|  \\
&\lesssim |||q|||_w. 
\end{aligned} 
\end{align}

\item[3.] The following quantities can be controlled with a pre-factor of $o_L(1)$:
\begin{align} 
\begin{aligned} \label{small.guys}
&\| v_{yy}  \cdot w \| + \| \sqrt{\eps} \{ q_x, v_x\} \cdot w \| \le L |||q|||_{w}, \\
&\| q_{yy} w \| \lesssim \sqrt{L} |||q|||_{w}. 
\end{aligned}
\end{align}

\item[4.] Fix any $\delta > 0$. The following interpolation estimate holds: 
\begin{align} \label{interp.1}
&\| \nabla_\eps q_x \cdot w \|  \le \delta |||q|||_w + N_\delta \| u_s \nabla_\eps q_x \cdot  w \|.
\end{align}

We will often (for the sake of concreteness) apply the above interpolation with the following choices of $\delta$:
\begin{align} \label{interp.L}
\| \nabla_\eps q_x w \| \lesssim L^{\frac{\alpha}{2}} |||q|||_w + L^{-\alpha} \| u_s \nabla_\eps q_x w \|.
\end{align}

\item[5.] The following boundary estimates are valid: 
\begin{align} \label{pff}
&\|q_x w\|_{x = L} + \|\sqrt{\eps} q_x \cdot w\|_{x = 0} + \|v_{yy} \cdot w\|_{x = L}  \\ \n
& \hspace{2 mm} + \|\sqrt{\eps} q_{xx} w\|_{x = 0} + \|\eps \sqrt{u_s} q_{xx} w\|_{x = L} + \|q_{xy}w\|_{x = L} \lesssim \sqrt{L} ||| q |||_w  \\ \label{gp.2.5}
&\|\eps^{\frac{1}{4}} q_{xx} \cdot w\|_{x = 0} + \| \eps^{\frac{1}{4}}\frac{q_x}{\langle y \rangle} w \|_{x = 0} \lesssim |||q|||_1 + \frac{\eps^{\frac{1}{4}}}{L} |||q|||_w  \\ \label{gp.3}
&\|\sqrt{u_s} q_{xyy} \cdot w\|_{x = L} \lesssim  \sqrt{L} |||q|||_{w}.
\end{align}

\end{itemize}

\end{lemma}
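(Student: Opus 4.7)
This collection of estimates is really five families of bounds, but they are all consequences of three unifying tools, which I would set up once and then apply repeatedly. The three tools are: (i) the fundamental theorem of calculus / Cauchy–Schwarz against the homogeneous boundary conditions encoded in $H^4_0$, (ii) the identities $v = u_s q$, $q_x|_{x=L} = -\frac{u_{sx}}{u_s} q|_{x=L}$ and $q_{xx}|_{x=0} = -2\frac{u_{sx}}{u_s} q_x|_{x=0}$ from (\ref{BTBC}), and (iii) a cutoff splitting at scale $\lambda y \sim \delta$ so that $u_s \gtrsim \lambda^2 \delta/\sigma$ on the outer region, exactly as used in the proof of (\ref{leh.2}) in Section~\ref{Section.1}.

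For the Poincaré-type bounds (\ref{Poincare})–(\ref{Poincare.3}), I would just integrate from the relevant homogeneous boundary: $v|_{x=0}=0$ for $j_2=0$, $v_x|_{x=L}=0$ for $j_2=1$, $v_{xx}|_{x=0}=0$ for $j_2=2$, and $v_{xxx}|_{x=L}=0$ for $j_2=3$, then Cauchy–Schwarz in $x$ gives the factor of $L$. The quotient versions (\ref{Poincare.2})–(\ref{Poincare.3}) follow the same way after noting $q|_{x=0}=0$ (since $v|_{x=0}=0$) and writing $u_s q = v$, with (\ref{BTBC}) handling the $q_x$ trace at $x=L$ (whose prefactor $u_{sx}/u_s$ contributes only $o_L(1)$ after Poincaré in $q$ is used to absorb the boundary contribution).

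For the embedding (\ref{systemat1}), the second-derivative $v$-norms and $v$ itself follow directly from Poincaré in (\ref{Poincare}), while for the third derivatives I would expand $\p_y^3 v = \p_y^3(u_s q)$ by Leibniz and observe that each term is controlled by $|||q|||_w$ using $\|\sqrt{u_s}q_{yyy}w\|$ on the worst piece (and boundedness of $u_s$ and its derivatives). The quotient $q$-norms $q_{yy}$, $q_{xy}$, $\sqrt{\eps}q_{xx}$ and $\nabla_\eps q$ are recovered by integrating from the traces $\|q_{xy}w\|_{y=0}, \|q_{yy}w\|_{y=0}$ contained in $|q|_{\p,2,w}$ (or from $x=0$ after noting $q_{yy}|_{x=0}=0$, etc.), together with the crucial observation that $\|\nabla_\eps q_x \cdot u_s w\|$ sits inside $|||q|||_w$ — this is where the interpolation (\ref{interp.1}) becomes essential. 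The Hardy-type term $\|q_x/y \cdot w\|$ uses (\ref{hardy.orig.w}) with $q_x|_{y=0} = 0$ (inherited from $v|_{y=0}=v_y|_{y=0}=0$).

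The small-factor estimates (\ref{small.guys}) come from combining (\ref{systemat1}) with the Poincaré bounds in (\ref{Poincare})–(\ref{Poincare.3}): for instance $\|v_{yy}w\| \le L\|v_{xyy}w\| \lesssim L |||q|||_w$, and $\|q_{yy}w\| \lesssim \sqrt{L}\|q_{xyy}w\|$ after using $q_{yy}|_{x=0}=0$ and then applying (\ref{interp.1}) to pass from the unweighted $q_{xyy}$ to $\|\sqrt{u_s}q_{xyy}w\| \subset |||q|||_w$, picking up the $\sqrt{L}$. The interpolation (\ref{interp.1}) itself is the technical heart: I would use the cutoff $\chi(\lambda y/\delta)$, bound the outer region trivially by $\delta^{-1}\|u_s \nabla_\eps q_x w\|$, and on the inner region integrate by parts in $y$ with $q_{xy}|_{y=0}$ absorbed by $|q|_{\p,2,w}$ (exactly the structure of (\ref{jay})), then optimize $\delta$. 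The boundary estimates (\ref{pff})–(\ref{gp.3}) are one-dimensional trace inequalities $|f(x_0)|^2 \lesssim \frac{1}{L}\|f\|^2 + \|f\|\|f_x\|$ in the appropriate slice, combined with Poincaré to turn the $\|f\|^2/L$ term into $L|||q|||^2$; the only subtlety is (\ref{gp.2.5}), where the $\eps^{1/4}$ factor forces a split between $\|\cdot\|_{X_1}$-controlled bulk and the $w$-weighted tail, giving the characteristic sum $|||q|||_1 + \eps^{1/4}L^{-1}|||q|||_w$.

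\textbf{Main obstacle.} The delicate step is the interpolation (\ref{interp.1}) and its downstream uses in (\ref{small.guys}) and (\ref{gp.3}): one must trade the degenerate $u_s$ weight for smallness via the cutoff scale, and this is exactly where the interaction between the boundary-layer scaling (\ref{scaling.intro}) and the $\sqrt{u_s}$-weighted terms of $|||q|||_w$ is tight. The rest of the lemma, while lengthy, is a systematic application of Poincaré, trace, and $v = u_s q$.
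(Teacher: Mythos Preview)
Your plan is correct and essentially matches the paper's approach: Poincar\'e from the homogeneous side conditions, the identities (\ref{BTBC}), and a cutoff splitting at scale $y\sim\xi$ combined with the $\partial_y\{y\}$ device (your reference to (\ref{jay})) are exactly the three tools the paper uses.

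One imprecision worth flagging: for the second-order $q$ bounds in (\ref{systemat1}) and for (\ref{interp.1}), you propose to ``integrate from the traces $\|q_{xy}w\|_{y=0},\|q_{yy}w\|_{y=0}$'' and absorb the resulting boundary terms into $|q|_{\partial,2,w}$. A naive FTC-from-$y=0$ argument does not close here, because $|||q|||_w$ contains only $\|\sqrt{u_s}\,q_{yyy}w\|$ and $\|\sqrt{u_s}\,q_{xyy}w\|$, and the integrand $\int_0^y q_{yyy}$ is not controlled by these once $u_s\sim y$ degenerates. The paper instead runs the $\partial_y\{y\}$ integration-by-parts (exactly the computation (\ref{jay}) you cite) on the localized piece; the crucial point is that this produces \emph{no} boundary term at $y=0$ (the factor $y$ kills it) and the factor $y$ in the bulk supplies the missing $\sqrt{u_s}$ via $y\lesssim\sqrt{y}\sqrt{\xi}\lesssim\sqrt{u_s}\sqrt{\xi}$ on the support of $\chi(y/\xi)$. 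Combined with Poincar\'e in $x$ (using $q_{yy}|_{x=0}=0$ and the $\tilde u_s$ comparison from Step~1), this gives e.g.\ $\|q_{yy}w\chi\|^2\lesssim L\|\sqrt{u_s}q_{xyy}w\|\,\|\sqrt{u_s}q_{yyy}w\|+\tfrac{L^2}{\xi}\|\sqrt{u_s}q_{xyy}w\|^2$, and optimizing $\xi=L$ yields the $\sqrt{L}$ in (\ref{small.guys}). Your ``Poincar\'e in $x$ then (\ref{interp.1})'' route for $\|q_{yy}w\|$ also stumbles because (\ref{interp.1}) is stated for $\nabla_\eps q_x=(q_{xy},\sqrt{\eps}q_{xx})$, not for $q_{xyy}$; the paper does not need a separate interpolation at that level. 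Otherwise your outline is on target.
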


\begin{proof} \textit{Step 1: Proof of (\ref{Poincare}) - (\ref{Poincare.3}):} Fix a function $\tilde{u}_s$ that is a function of $y$ only, and such that $C_0 \tilde{u}_s \le u_s \le C_1 \tilde{u}_s$ for all $(x,y) \in \Omega$. For any function $g$ satisfying $g|_{x = 0} = 0$ or $g|_{x = L}$, a Poincare inequality gives: 
\begin{align*}
\|u_s g w \| \lesssim \| \tilde{u}_s g w \| \lesssim L \| \tilde{u}_s g_x w \| \lesssim L \| u_s g_x w \|. 
\end{align*}

\noindent  We will apply the above with $g = \p_y^j q, \p_y^{j_1} \p_y^{j_2} v$ for $j_2 = 0,1,2,3$. We turn now to the following Poincare-type inequality in the $x$-direction: 
\begin{align} 
\begin{aligned} \label{pc.x}
\| \sqrt{\eps} q_x \cdot w \| = & \| \sqrt{\eps} \Big(q_x(L) + \int_L^x q_{xx} \Big) \cdot w \| \\
= & \| \sqrt{\eps} \frac{u_{sx}}{u_s} q|_{L}(y) \cdot w \| + \| \sqrt{\eps} \int_L^x q_{xx} \ud x' \cdot w \| \\ 
= & \| \sqrt{\eps} \frac{u_{sx}}{u_s} \int_0^L q_x \ud x' \cdot w \| + \| \sqrt{\eps} \int_L^x q_{xx} \ud x' \cdot w \| \\
\lesssim & o_L(1) \| \sqrt{\eps} q_{x} \cdot w \| + L \| \sqrt{\eps} q_{xx} \cdot w \|. 
\end{aligned}
\end{align}

\noindent  By absorbing the $\| \sqrt{\eps} q_x \cdot w \|$ to the left-hand side, we obtain the desired estimate.

\textit{Step 2: Proof of (\ref{systemat1}):} We will work systematically through (\ref{systemat1}). Let us start with the $\nabla_\eps^2 q$ terms. For this, let $\xi > 0$ a free parameter, and we will compute the localized quantity: 
\begin{align*}
\| q_{yy} w \chi(\frac{y}{\xi}) \|^2 = & ( \p_y \{ y \}, q_{yy}^2 w^2 \chi(\frac{y}{\xi})^2) \\
= & - ( 2y q_{yy}, q_{yyy} w^2 \chi(\frac{y}{\xi})^2) - ( y q_{yy}^2, 2 w w_{y} \chi(\frac{y}{\xi})^2) \\
& - (y q_{yy}^2, w^2 \frac{1}{\xi} \chi'(\frac{y}{\xi})) \\
\le & L \| \sqrt{u_s} q_{xyy} w \chi(\frac{y}{\xi}) \| \| \sqrt{u_s} q_{yyy} w \| \\
& + L^2\| \sqrt{u_s} q_{xyy} \|^2 \sup_{y \le \xi} |w w_y| + \frac{L^2}{\xi} \| \sqrt{u_s} q_{xyy} w \|^2 \\
\le & (L + \frac{L^2}{\xi}) |||q|||_{w}^2.
\end{align*}

\noindent  We have used (\ref{Poincare.2}). Inserting this below gives:
\begin{align} \n
\| q_{yy} \cdot w \| & \le \| q_{yy} \cdot w [1- \chi(\frac{y}{\xi})] \| + \| q_{yy} \cdot w \chi(\frac{y}{\xi}) \| \\ \n
& \le \frac{L}{\sqrt{\xi}}  \| \sqrt{u_s} q_{xyy} w \| + (\frac{L}{\sqrt{\xi}} + \sqrt{L}) |||q|||_{w}^2 \\ \label{chi}
& \le \sqrt{L} |||q|||_{w}^2 \text{ for } \xi = L. 
\end{align}

A similar bound can be performed for the remaining components of $\nabla_\eps^2 q$. However, we must forego the pre-factor of $o_L(1)$ for these terms. Let $g$ be generic for now. For the far-field component, estimate $\| g \cdot w [1-\chi(\frac{y}{\xi})] \| \le \frac{1}{\xi} \| u_s g w \|$. For the localized component: 
\begin{align*}
\| g \cdot w \chi(\frac{y}{\xi}) \|^2 = & - ( y, \p_y \{ g^2 w^2 \chi(\frac{y}{\xi}) \}) \\
= & - ( 2 y g, g_y w^2 \chi(\frac{y}{\xi}) ) - ( 2 y g^2, w w_y \chi(\frac{y}{\xi}) \\
& - ( y g^2, w^2 \chi'(\frac{y}{\xi}) \xi^{-1}) \\
\lesssim & \sqrt{\xi} \bigO(\sqrt{\text{LHS}})\| u_s g_y w \| + \sup_{y \le \xi} |w w_y| \sqrt{\xi} \| g \|^2 \\
& + \xi^{-1}  \| u_s g w \|^2.  
\end{align*}

\noindent  Accumulating these estimates gives: 
\begin{align} \label{chi2}
\| g w\| \le \xi \| u_s g_y w\|^2 + \xi^{-1} \| u_s g w \|^2 + \sup_{y \le \xi} |w w_y| \sqrt{\xi} \| g \|^2.
\end{align}

\noindent We will apply the above computation to $g = q_{xy}$ and $g = \sqrt{\eps} q_{xx}$ and take $\xi = 1$. Next, applying (\ref{hardy.orig.w}) with $f = q_x$ gives: 
\begin{align} \label{w.Hardy}
\| \frac{q_x}{y} w \| \lesssim  \| q_{xy}w \| + \| \sqrt{\eps} q_x w \|. 
\end{align}

\noindent  Upon using (\ref{Poincare.3}), this concludes all of the $q$ terms from (\ref{systemat1}). 

We now move to $v$ terms from (\ref{systemat1}), for which we expand: 
\begin{align*}
&v_x = u_s q_x + u_{sx} q, \hspace{3 mm} v_y = u_s q_y + u_{sy} q, \\
&v_{xy} = u_{sxy} q + u_{sx} q_y + u_{sy} q_x + u_{s} q_{xy}, \\
&v_{yy} = u_{syy} q + 2u_{sy} q_y + u_s q_{yy}, \\
&v_{xx} = u_{sxx} q + 2 u_{sx} q_x + u_s q_{xx}, \\
&v_{yyy} = u_s q_{yyy} + u_{syyy} q + 3 u_{syy} q_y + 3 u_{sy} q_{yy} \\
&v_{xyy} = u_s q_{xyy} + u_{sxyy} q + u_{syy} q_x + u_{sx} q_{yy} + 2u_{sxy} q_y + 2 u_{sy} q_{xy} \\
&v_{xxy} = u_s q_{xxy} + u_{sxxy} q + u_{sxx} q_y + u_{sy} q_{xx} + 2u_{sxy} q_x + 2u_{sx} q_{xy} \\
&v_{xxx} = u_{sxxx} q + u_s q_{xxx} + 3 u_{sxx} q_x + 3 u_{sx} q_{xx}.
\end{align*}

We turn to the third order terms for $v$, starting with $v_{yyy}$. We have already established the required estimates for $u_s q_{yyy}, q_y, q_{yy}$, and so we must estimate using Hardy's inequality: 
\begin{align*}
\| u_{syyy} q \| \le & \| u^i_{pyyy} q \| + \| \eps^{2} u^i_{eYYY} q \| \\
\le & \| u^i_{pyyy} y \|_\infty \| q \langle y \rangle^{-1} \| + \eps^{\frac{3}{2}} \| u^i_{eYYY} \|_\infty \|\sqrt{\eps} q_x\| \\
\lesssim & \| q_y \| + \eps^{\frac{3}{2}} \| \sqrt{\eps} q_x \|. 
\end{align*}

The same argument is performed for the remaining quantities from $\nabla^3 v$. The quantities in $\nabla^2 v$ and $\nabla v$ follow immediately upon using (\ref{eqn.dif.1}) and Poincare's inequality. This concludes the proof of (\ref{systemat1}).

\textit{Step 3: Proof of (\ref{small.guys}):} The $q_{yy}$ estimate follows from taking $\xi = 1$ in (\ref{chi}). For $v_{yy}$, we use (\ref{Poincare}) and (\ref{systemat1}) which shows that $\| v_{xyy} w \| \lesssim |||q|||_{w}$. Both $q_x$ and $v_x$ follow from (\ref{Poincare}) - (\ref{Poincare.3}).

\textit{Step 4: Proof of (\ref{interp.1}), (\ref{interp.L})}: This follows immediately from (\ref{chi2}) upon selecting $g = q_{xy}$ or $g = \sqrt{\eps} q_{xx}$.

\textit{Step 5: Proof of (\ref{pff})} The estimate for $q_x|_{x = L}$ is obtained by appealing to the boundary condition, (\ref{BTBC}): 
\begin{align*}
\|q_x w\|_{x = L} = & \|\frac{u_{sx}}{u_s} q w\|_{x= L} \le \sqrt{L} \| [\p_x \{ \frac{u_{sx}}{u_s} \} q + \frac{u_{sx}}{u_s} q_x ] w\| \\
\lesssim & \sqrt{L} \| (\p_x \{ \frac{u_{sx}}{u_s} \} + \frac{u_{sx}}{u_s}) \langle y \rangle \|_\infty \| \frac{q_x}{\langle y \rangle} w \|_. 
\end{align*}

For $q_x|_{x = 0}$, we use Fundamental Theorem of Calculus: 
\begin{align*}
\|q_x w\|_{x = 0} = & \| q_x(L,\cdot) w + \int_L^0 q_{xx}w \| \le \| q_x w\|_{x=  L} + \sqrt{L} \| q_{xx} w \|. 
\end{align*}

Next, $|v_{yy} w(L,\cdot) | \le \sqrt{L} \| v_{xyy} w \|$ by using $v|_{x = 0} = 0$. We now move to the $q_{xx}$ terms from (\ref{pff}) for which we recall (\ref{BTBC}). From here, $|\sqrt{\eps} q_{xx} w(0,\cdot)| = 2|\sqrt{\eps} \frac{u_{sx}}{u_s} q_x(0,\cdot)|$. From here, the result follows from the $q_x$ estimate. At $x = L$, we use Fundamental theorem of calculus to write: 
\begin{align*}
\|\eps \sqrt{u_s}q_{xx} \|_{x = L} \le \|\eps \sqrt{u_s}q_{xx} w \|_{x = 0} + \sqrt{L} \| \eps \sqrt{u_s}q_{xxx}w \|. 
\end{align*}

We now compute using (\ref{BTBC}): 
\begin{align} \n
\|q_{xy} \cdot w\|_{x = L} = &| \p_y \{ \frac{u_{sx}}{u_s} q \} \cdot w |_{x = L} \\ \n
\le & \| \p_y \{ \frac{u_{sx}}{u_s}\} q \cdot w \|_{x = L} + \| \frac{u_{sx}}{u_s} q_y \cdot w \|_{x = L}.
\end{align}

The latter term is estimated using $q|_{x = 0} = 0$ so by Fundamental Theorem of Calculus is majorized by $\sqrt{L}\|q_{xy} w \|$. The former term requires a decomposition, upon which we use that $q|_{x = 0} = 0$ and Hardy's inequality for the localized and Prandtl component, and the extra $\sqrt{\eps}$ for the Euler component coupled with the Poincare inequality in (\ref{Poincare.3}) for the $q_x$ term: 
\begin{align*}
&\|\p_y\{ \frac{u_{sx}}{u_s} \} q w \chi \|_{x = L} + \|\p_y \{ \frac{u^0_{px}}{u_s} \} q w [1-\chi] \|_{x = L} + \|\p_y \{ \frac{\sqrt{\eps} u^1_{ex}}{u_s} \} q w [1-\chi]\|_{x = L} \\
&\lesssim \sqrt{L} \| \frac{q_x}{y}\| + \sqrt{L} \| \p_y \{ \frac{u^0_{px}}{u_s} \}[1-\chi] y \|_\infty \|\frac{q_x}{y}\| + \| \p_y \{ u^1_{ex} \} \|_\infty \sqrt{L} \| \sqrt{\eps} q_x \|. 
\end{align*}

This concludes the treatment of (\ref{pff}).

\textit{Step 6: Proof of (\ref{gp.2.5})}

Using (\ref{BTBC}):
\begin{align*}
\|\eps^{\frac{1}{4}} q_{xx}  w \|_{x = 0} = & \|2\eps^{\frac{1}{4}} \frac{u_{sx}}{u_s} q_{x} w [ \chi + (1-\chi) ] \|_{x = 0} \\
\lesssim & \|u_{sx} \langle y \rangle \|_\infty \| \eps^{\frac{1}{4}} \frac{q_x}{\langle y \rangle} w \|_{x = 0}.
\end{align*}

We use the cutoff function $\chi(\frac{x}{10 L})$, which satisfies $|\p_x \chi(\frac{x}{10 L})| \lesssim \frac{1}{L}$, and use the standard Trace inequality to estimate: 
\begin{align*}
&\|\eps^{\frac{1}{4}} \frac{q_x}{\langle y \rangle} w\|_{x = 0} = \|\eps^{\frac{1}{4}} \frac{q_x}{\langle y \rangle} \chi(\frac{x}{L}) w \|_{x = 0} \lesssim \|\frac{q_x}{y} w \|^{\frac{1}{2}} \| \sqrt{\eps} q_{xx} w \|^{\frac{1}{2}} + \frac{\eps^{\frac{1}{4}}}{L} \| \frac{q_x}{\langle y \rangle} w \|.
\end{align*}

To conclude, we apply the Hardy inequality in (\ref{hardy.orig.w}).

\textit{Step 7: Proof of (\ref{gp.3})} Again using (\ref{BTBC}), the fact that $q|_{x = 0} = 0$, and the Fundamental Theorem of Calculus: 
\begin{align*}
\|\sqrt{u_s} \p_{yy} \{ \frac{u_{sx}}{u_s} q \} w\|_{x = L} =& \|\sqrt{u_s} [\frac{u_{sx}}{u_s} q_{yy} + 2 \p_y \{ \frac{u_{sx}}{u_s} \}q_y + (\frac{u_{sx}}{u_s})_{yy} q] w\|_{x = L} \\
\lesssim & \sqrt{L} \| \sqrt{u_s} q_{xyy} w \| + \sqrt{L} \| q_{xy} \| + \sqrt{L} \| (\frac{u_{sx}}{u_s})_{yy} y \|_\infty \|\frac{q}{y}\|.
\end{align*}
\end{proof}

We must now collect some blow-up rates near $y = 0$ of various quantities according to the $H^4_0$ norm. We emphasize that these are \textit{qualitative} estimates (and thus, any $\eps$ dependence on the right-hand side is acceptable): 
\begin{lemma} \label{lemma.ratio}   Let $v \in H^4_0$. Then the following are valid for $j = 0,1,2$ and $k = 0,1,2,3$: 
\begin{align} \label{ratio.estimate}
\sup_{y_0 \le 1}\Big[ \|\nabla^k v\|_{y = y_0} + \|\nabla^{j} q\|_{y = y_0} + \sqrt{y_0} \|\nabla^3 q\|_{y = y_0} \Big]\le C_\eps,  
\end{align}

for some constant $C_\eps < \infty$ that may depend poorly on small $\eps$. 
\end{lemma}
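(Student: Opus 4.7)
\medskip

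\noindent\textbf{Proof proposal.} The estimates split naturally into two groups: the $v$-derivatives, which are handled by standard one-dimensional trace inequalities applied to the qualitative $H^4_0$ regularity, and the $q$-derivatives, which require exploiting the order of vanishing at $y=0$ of the numerator $v$ against the linear vanishing of $u_s$.

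For the $v$-terms, since $v \in H^4_0$ (with $\eps$-dependent, but finite, norm), applying the one-dimensional trace identity $|g(y_0)|^2 = -2 \int_{y_0}^\infty g(y) g_y(y) \ud y$ to $g(y) = \p_y^{k_1}\p_x^{k_2} v(x,y)$ and then integrating in $x$ yields
\begin{equation*}
\| \nabla^k v(\cdot, y_0) \|_{L^2_x}^2 \le 2\,\| \nabla^k v \|_{L^2(\Omega)}\,\| \nabla^{k+1} v \|_{L^2(\Omega)} \le C_\eps
\end{equation*}
for $k = 0,1,2,3$, uniformly in $y_0 \le 1$. This directly gives the $\nabla^k v$ part of (\ref{ratio.estimate}).

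For the $q$-terms, the key structural observation is that since $\sigma \ll 1$, the shear $u_s$ vanishes linearly at $y = 0$ with strictly positive coefficient: writing $u_s(x,y) = y\,\tilde u_s(x,y)$ where $\tilde u_s(x,y) := \int_0^1 u_{sy}(x, ty)\,\ud t$, one has $\tilde u_s(x,0) = u_{sy}(x,0) = \bar u^0_{py}(x,0) + \mathcal{O}(\sqrt{\eps}) > 0$ by the Blasius structure and the assumption (\ref{scaling.intro}), so $\tilde u_s$ is smooth, bounded, and bounded away from zero on $[0,L]\times[0,1]$. Correspondingly, from $v|_{y=0} = v_y|_{y=0} = 0$ (and tangentially $v_x|_{y=0} = v_{xy}|_{y=0} = 0$) we write $v(x,y) = y^2\,\tilde v(x,y)$ with $\tilde v(x,y) := \int_0^1 (1-t) v_{yy}(x,ty)\,\ud t$, which inherits qualitative $H^2$ regularity from $v_{yy}$. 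Thus
\begin{equation*}
q(x,y) = y\, h(x,y), \qquad h(x,y) := \frac{\tilde v(x,y)}{\tilde u_s(x,y)},
\end{equation*}
where $h$ is qualitatively in $H^2(\Omega)$ and, together with $h_y, h_x$, is bounded on traces $\{y = y_0\}$ uniformly in $y_0 \le 1$ by the same trace argument used for $v$. This immediately controls $\| q \|_{y=y_0}, \| \nabla q \|_{y=y_0}, \| \nabla^2 q \|_{y=y_0}$ by $C_\eps$.

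The only delicate piece is the third-order term, where we expect a mild blowup at $y \to 0$, which is precisely what the $\sqrt{y_0}$ prefactor absorbs. Writing for instance $q_{yyy} = 3 h_{yy} + y\, h_{yyy}$, the trace of $h_{yy}$ is uniformly bounded as above; for the remaining piece, since $y h_{yyy} \in L^2(\Omega)$ with $\p_y(y h_{yyy}) \in L^2(\Omega)$ (using qualitative $H^4$ regularity of $v$), the trace inequality gives $y_0\, \| h_{yyy}(\cdot, y_0) \|_{L^2_x}^2 \le C_\eps$, which yields the $\sqrt{y_0}$ weighted bound on $\|\nabla^3 q\|_{y=y_0}$. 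The $x$-derivatives of $q$ are treated identically, as the boundary conditions for $v$ are tangentially preserved. The main technical obstacle is bookkeeping the cancellation between the vanishing of $v$ and that of $u_s$ in each third-order derivative; the factorization $q = y h$ circumvents this by reducing all estimates to the function $h$, for which standard trace inequalities suffice.
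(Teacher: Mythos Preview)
Your treatment of the $v$-terms is fine and matches the paper. The gap is in the $q$-terms: the double factorization $v = y^2\tilde v$ costs you a derivative you cannot afford.

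Concretely, with $h = \tilde v/\tilde u_s$ you only get $h \in H^2$ (since $\tilde v = \int_0^1 (1-t)v_{yy}(x,ty)\,\ud t$ uses two derivatives of $v$ already). But $q_{yy} = 2h_y + y h_{yy}$, and $h \in H^2$ gives bounded traces only for $h, h_y, h_x$ --- not for $h_{yy}$. So the step ``this immediately controls $\|\nabla^2 q\|_{y=y_0}$'' is unjustified: the term $y_0\|h_{yy}(\cdot,y_0)\|_{L^2_x}$ is not bounded by $\|h\|_{H^2}$. The same issue hits the third-order piece harder: you claim $\p_y(yh_{yyy}) \in L^2$ ``using qualitative $H^4$ regularity of $v$'', but $h_{yyy}$ contains $\tilde v_{yyy} = \int_0^1(1-t)t^3 v_{yyyyy}(x,ty)\,\ud t$, a fifth derivative of $v$ that $H^4$ does not provide; and your claim that ``the trace of $h_{yy}$ is uniformly bounded as above'' would again need $h_{yyy}\in L^2$.

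The paper avoids this by factoring only once: it writes $q = \frac{1}{u_{sy}(0)}q_0 + v\cdot(\text{smooth bounded})$ with $q_0 := v/y = \int_0^1 v_y(x,ty)\,\ud t$. Differentiating this integral representation directly gives $\p_x^{k_1}\p_y^{k_2}q_0 = \int_0^1 t^{k_2}\p_x^{k_1}\p_y^{k_2+1}v(x,ty)\,\ud t$, which for $k_1+k_2\le 3$ uses at most four derivatives of $v$. Taking $L^2_x$ and changing variables $s=ty_0$ then yields $\sqrt{y_0}\|\nabla^3 q_0\|_{y=y_0} \le \|\nabla^4 v\|_{L^2(\Omega)}$ and $\|\nabla^2 q_0\|_{y=y_0} \le \sup_{s}\|\nabla^3 v\|_{y=s}$, both finite. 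Your factorization $q = yh$ is algebraically equivalent to $q = q_0/\tilde u_s$, so the fix is simply to work with $q_0$ and its integral formula rather than splitting further into $y$ and $h$.
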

\begin{proof} First, that $\sup_y |\nabla^k v|_{L^2_x} < \infty$, for $k = 0,1,2,3$, follows immediately from $\| v \|_{H^4} < \infty$. We now use the elementary formula $\frac{1}{a+b} = \frac{1}{a} - \frac{b}{a+b}$ to write: 
\begin{align*}
q = \frac{v}{u_s} = \frac{v}{u_{sy}(0) y + [u_s - u_{sy}(0)y]} = \frac{1}{u_{sy}(0)} \frac{v}{y} - v \frac{u_s - u_{sy}(0)y}{y u_{sy}(0) u_s}.
\end{align*}

Using the estimates $u_s \gtrsim y$ as $y \downarrow 0$ and $|u_s - u_{sy}(0)y| \lesssim y^2$ as $y \downarrow 0$, it is easy to see that the second quotient above is bounded and in fact $\mathcal{C}^\infty$. We may thus limit our study to $q_0 := \frac{v}{y}$. We let $k_1 + k_2 = 3$ and differentiate the formula:
\begin{align} \n
q_0(x,y) = \frac{1}{y} \int_0^y v_y(x,y') \ud y'  = \int_0^1 v_y(x,ty) \ud t, 
\end{align}

where we changed variables via $ty = y'$, to obtain: 
\begin{align*}
\sqrt{y}_0 \p_x^{k_1} \p_x^{k_2} q_0(x,y_0) =& \int_0^1 \p_x^{k_1} \p_y^{k_2} v_y(x,ty_0) t^{k_2} \sqrt{y_0} \ud t.\end{align*}

We take $L^2_x$ and use Cauchy-Schwartz in $y$ to majorize: 
\begin{align*}
\sqrt{y_0}\|\p_x^{k_1} \p_y^{k_2} q_0\|_{y = y_0} \le & (\int_0^1 \| \p_x^{k_1} \p_y^{k_2 + 1} v\|_{y = t y_0}^2 y_0 t^{2k_2} \ud t )^{\frac{1}{2}} \\
\le & (\int_0^1 \| \p_x^{k_1} \p_y^{k_2 + 1} v\|_{y = t y_0}^2 y_0 \ud t )^{\frac{1}{2}} \\
 \le & (\int_0^{y_0} \|\nabla^4 v\|^2 )^{\frac{1}{2}}
\end{align*}

This establishes the $\nabla^3 q$ estimate. For $\nabla^2 q$, a similar calculation produces: 
\begin{align*}
\|\p_x^{j_1} \p_y^{j_2} q_0 \|_{y = y_0} = &\|\int_0^1 \p_x^{j_1} \p_y^{j_2 + 1}v(x,ty_0) t^{j_2} \ud t\|_{y = y_0} \\
\le &  \int_0^1 \|\nabla^3 v \|_{y = t y_0} t^{j_2} \ud t \\
 \le &  \Big( \int_0^y \|\nabla^3 v \|_{y = s}^2 \frac{s^{j_2}}{y^{j_2}} \frac{\ud s}{y} \Big)^{\frac{1}{2}} \\\le & \Big( \frac{1}{y} \int_0^y \|\nabla^3 v \|_{y = s}^2 \ud s \Big)^{\frac{1}{2}} \\
 \le & \Big(\frac{1}{y} y \sup_{s \lesssim 1} \| \nabla^3 v \|_{y = s}\Big)^{\frac{1}{2}}. 
\end{align*}

Squaring yields: 
\begin{align*}
|\nabla^2 q_0(y)|_{L^2_x}^2 \le \frac{1}{y} \int_0^y |\nabla^3 v (\cdot, s)|_{L^2_x}^2 \ud s := \frac{1}{y} \int_0^y F(s) \ud s, 
\end{align*}

where $F(s) := |\nabla^3 v (\cdot, s)|_{L^2_x}^2 \in L^1$. Thus, we may conclude by the Lebesgue Differentiation Theorem. 
\end{proof}

\begin{corollary}   Let $v \in H^4_0$. The trace $\nabla^2 q|_{y = 0}$ is well defined as an element of $L^2_x$, and moreover the following continuity is satisfied: $\nabla^2 q(\cdot, y) \xrightarrow{y \downarrow 0} \nabla^2 q(\cdot, 0)$ in $L^2(0,L)$.  
\end{corollary}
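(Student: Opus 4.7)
The plan is to piggyback on the integral representation used in the proof of Lemma \ref{lemma.ratio} and to transfer the $y$-continuity in $L^2_x$ from $\nabla^3 v$ (which lives in $H^1(\Omega)$) to $\nabla^2 q$. Recall from the proof of Lemma \ref{lemma.ratio} the splitting $q = u_{sy}(0)^{-1} q_0 - v \cdot h$, where $q_0 := v/y$ and $h := (u_s - u_{sy}(0)y)/(y u_{sy}(0) u_s)$ is smooth and bounded on $\bar\Omega$ (this uses $u_s - u_{sy}(0)y = \bigO(y^2)$ near the boundary). Since $v \in H^4_0 \subset H^4(\Omega)$ and $h$ is smooth with bounded derivatives, the product $v\cdot h \in H^4(\Omega)$, so $\nabla^2(v\cdot h) \in H^2(\Omega)$; its trace at $y=0$ exists in $L^2_x$ together with strong $L^2_x$-continuity by the standard trace theory. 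The real content therefore lies in $\nabla^2 q_0$.

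For $\nabla^2 q_0$, I would use the representation
\[
\p_x^{j_1}\p_y^{j_2} q_0(x,y) = \int_0^1 \p_x^{j_1}\p_y^{j_2+1} v(x,ty)\, t^{j_2}\, \ud t, \qquad j_1+j_2 = 2,
\]
which is valid because $v|_{y = 0} = 0$. Each component of $\nabla^3 v$ belongs to $H^1(\Omega)$, so I would invoke the embedding $H^1(\Omega) \hookrightarrow C([0,\infty); L^2(0,L))$, which follows from the Cauchy estimate
\[
\|f(\cdot,y_2)-f(\cdot,y_1)\|_{L^2(0,L)}^2 \le |y_2 - y_1| \int_{y_1}^{y_2}\|f_y(\cdot,s)\|_{L^2(0,L)}^2\, \ud s
\]
valid for smooth $f$ and extended by density. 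This yields both the existence of the trace $\nabla^3 v(\cdot, 0) \in L^2_x$ and the $L^2_x$-continuity of the map $y \mapsto \nabla^3 v(\cdot, y)$ at $y = 0$.

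Finally, Minkowski's inequality for integrals applied to the representation gives
\[
\Big\| \p_x^{j_1}\p_y^{j_2} q_0(\cdot,y) - \tfrac{1}{j_2+1}\,\p_x^{j_1}\p_y^{j_2+1}v(\cdot,0) \Big\|_{L^2_x} \le \int_0^1 \| \p_x^{j_1}\p_y^{j_2+1} v(\cdot,ty) - \p_x^{j_1}\p_y^{j_2+1} v(\cdot,0) \|_{L^2_x}\, t^{j_2}\, \ud t.
\]
The integrand converges pointwise in $t$ to $0$ by the previous step and is uniformly dominated by $2 \sup_{y'\in[0,1]}\|\nabla^3 v(\cdot,y')\|_{L^2_x}$, which is finite thanks to Lemma \ref{lemma.ratio}. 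Dominated convergence then delivers the $L^2_x$-continuity of $\nabla^2 q_0$ at $y = 0$, and combining with the smooth contribution from $v\cdot h$ closes the argument. The only mildly delicate point is the $H^1 \hookrightarrow C([0,\infty); L^2_x)$ embedding, but this is a routine density argument and should pose no real obstacle.
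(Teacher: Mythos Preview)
Your proof is correct but takes a genuinely different route from the paper. The paper argues that $|\nabla^2 q|^2$ is a $W^{1,1}$ function in $y$ by showing $\nabla^2 q \cdot \p_y \nabla^2 q \in L^1$ via the H\"older-type bound $\|\nabla^2 q\|_{L^2_x L^\infty_y}\|\nabla^3 q\|_{L^2_x L^1_y}$; the second factor is finite precisely because of the $\sqrt{y_0}\,\|\nabla^3 q\|_{y=y_0} \lesssim 1$ blow-up rate from Lemma~\ref{lemma.ratio}, which makes $\|\nabla^3 q\|_{L^2_x}$ locally $L^1$ in $y$. Trace and continuity then follow from $W^{1,1}$ trace theory and the Lebesgue Differentiation Theorem.

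Your argument instead bypasses $\nabla^3 q$ entirely: you stay at the level of the integral representation for $\nabla^2 q_0$ and transfer continuity from $\nabla^3 v \in H^1(\Omega)\hookrightarrow C_y L^2_x$ via Minkowski and dominated convergence. This is a bit longer but arguably more elementary, since it uses only standard $H^1$ trace theory and never touches the singular third-order quotient. It also produces the explicit identification $\p_x^{j_1}\p_y^{j_2} q_0(\cdot,0) = (j_2+1)^{-1}\,\p_x^{j_1}\p_y^{j_2+1} v(\cdot,0)$, which the paper's proof does not make explicit. One small remark: the global claim $v\cdot h \in H^4(\Omega)$ is a bit more than you need and would require checking decay of derivatives of $h$ at infinity; since the statement is purely local at $y=0$, it is cleaner to localize with a cutoff in $y$ before invoking $H^2$ trace theory for that piece.
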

\begin{proof} $(\nabla^2 q|_{y = 0})^2$ is realized as the boundary trace of a $W^{1,1}$ function $|\nabla^2 q|^2$. Indeed, this follows from estimating the product $\nabla^2 q \cdot \p_y \nabla^2 q \in L^1$:  
\begin{align*}
\| \nabla^2 q \cdot \p_y \nabla^2 q \|_{1} \le  \| \nabla^2 q\|_{L^2_x L^\infty_y} \| \nabla^3 q \|_{L^2_x L^1_y} < \infty, 
\end{align*}

\noindent  The continuity statement in the lemma is a consequence of the above estimate and the Lebesgue Differentiation Theorem. 
\end{proof}

\begin{corollary}  Let $v \in H^4_0$. Then all quantities appearing in $\| \cdot \|_{X_1}$ are finite. 
\end{corollary}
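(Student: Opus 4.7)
The plan is to verify each constituent term in $\|v\|_{X_1}$ is finite, using only the regularity $v \in H^4(\Omega)$ together with the qualitative blow-up rates from Lemma \ref{lemma.ratio} and the preceding corollary on traces of $\nabla^2 q$. Since the claim is purely qualitative, all bounds are allowed to depend (poorly) on $\eps$.

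First I would dispatch the terms involving pure derivatives of $v$. The interior $L^2$ contributions in $||||v||||_1$, namely $\|v_{yyyy}\|$, $\|\sqrt{\eps}v_{xyyy}\|$, $\|\eps v_{xxyy}\|$, $\|\eps^{3/2}v_{xxxy}\|$, $\|\eps^2 v_{xxxx}\|$, are immediate from $v\in H^4$. For the boundary pieces in $|v|_{\p,3,1}$, I would apply the standard trace theorem to $\nabla^3 v \in H^1(\Omega)$ at $\{x=0\}$ and $\{x=L\}$, noting that $u_s$ is uniformly bounded so the weights $\sqrt{u_s}, u_s$ are harmless.

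Next I would handle the $q$ contributions in $|||q|||_1$. For the $\sqrt{u_s}$-weighted third-order interior terms, split $\Omega$ into $\{y\ge 1\}$ and $\{y\le 1\}$. On $\{y\ge 1\}$, we have $u_s\gtrsim 1$, so $q = v/u_s$ inherits $H^4$ regularity from $v$ and everything is finite. On $\{y\le 1\}$, use $u_s(x,y_0) \lesssim y_0$ together with Lemma \ref{lemma.ratio}, which gives $\sqrt{y_0}\|\nabla^3 q\|_{y=y_0} \le C_\eps$; consequently
\begin{equation*}
\int_0^1 \int_0^L u_s |\nabla^3 q|^2 \, \ud x \, \ud y_0 \;\lesssim\; \int_0^1 y_0 \cdot \frac{C_\eps^2}{y_0} \, \ud y_0 \;<\;\infty,
\end{equation*}
which handles all four of $\sqrt{u_s}q_{yyy}, \sqrt{u_s}q_{xyy}, \sqrt{\eps}\sqrt{u_s}q_{xxy}, \eps\sqrt{u_s}q_{xxx}$ at once. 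For $\|\nabla_\eps q_x \cdot u_s\|$, I would use the expansions from Step 2 in the proof of (\ref{systemat1}), writing $u_s q_{xy} = v_{xy} - u_{sy}q_x - u_{sx}q_y - u_{sxy}q$ and similarly for $u_s q_{xx}$. The $v$-pieces lie in $L^2$ by $v\in H^4$, while the lower-order $q$ and $\nabla q$ pieces are finite by Lemma \ref{lemma.ratio} (with $j = 0, 1$).

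For the boundary terms in $|q|_{\p,2,1}$: $\|q_{yy}\|_{y=0}$ and $\|q_{xy}\|_{y=0}$ are exactly the traces controlled by the corollary immediately preceding this one, which establishes $\nabla^2 q|_{y=0} \in L^2_x$. For $\|u_s q_{xy}\|_{x=0}$ and $\|\sqrt{\eps}u_s q_{xx}\|_{x=L}$, I would again expand $u_s q_{xy}$ and $u_s q_{xx}$ in terms of $\nabla^2 v$ and lower-order factors of $q$; the $v$ contribution has a well-defined $H^{1/2}$ trace on the vertical boundaries from $\nabla^2 v \in H^1(\Omega)$, and the remaining low-order $q$ terms have traces by Lemma \ref{lemma.ratio}. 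There is no genuine obstacle here; the only subtle point is the matching between the $\sqrt{u_s} \sim \sqrt{y}$ weight and the $y^{-1/2}$ singularity of $\nabla^3 q$ near $y=0$, and Lemma \ref{lemma.ratio} is tailored precisely to make this cancellation work.
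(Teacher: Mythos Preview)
Your proof is correct and follows essentially the same approach as the paper: the key idea of matching the $\sqrt{u_s}\sim\sqrt{y}$ weight against the $y^{-1/2}$ blow-up of $\|\nabla^3 q\|_{L^2_x}$ from Lemma \ref{lemma.ratio}, and invoking the preceding corollary for the $\{y=0\}$ traces of $\nabla^2 q$, is exactly what the paper does. Your version is simply more detailed in treating the $v$-terms, the $u_s\nabla_\eps q_x$ term, and the $\{x=0,L\}$ traces separately, none of which introduces any new difficulty.
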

\begin{proof} All $\nabla^3 q$ terms, upon taking $|\cdot|_{L^2_x}$ scale like $y^{-1/2}$, and so clearly $\| \sqrt{u_s} \nabla^3 q \| < \infty$. The second derivatives, upon taking $|\cdot|_{L^2_x}$ are bounded, and so clearly $\| \nabla^2 q \| < \infty$. The boundary terms are well-defined from the above corollary. 
\end{proof}

\section{\textit{a-priori} Estimates for DNS}

\subsection{Quotient Estimates}

\begin{lemma} Let $v$ be a solution to (\ref{eqn.dif.1}), let $w$ satisfy $|\p_y^k w| \lesssim w$. 
\begin{align} \label{B3.estimate}
\begin{aligned}
& \| \sqrt{u_s} \{ q_{xyy}, \sqrt{\eps} q_{xxy}, \eps q_{xxx} \} \cdot w \|^2 + \|u_s q_{xy} \cdot w\|_{x = 0}^2 + \|q_{xy} \cdot w\|_{y = 0}^2 \\
& + \|\sqrt{\eps} u_s q_{xx} \cdot w\|_{x = L}^2  \\
 \le & o_L(1) \Big[ |||q|||_{w}^2  + |||| v ||||_w^2 \Big] + L^{-\frac{1}{8}} \| \nabla_\eps q_x \cdot u_s w \|^2   + L^{\frac{1}{8}} \| q_{xx} w_y \|^2 \\
& +|(F,q_{xx}w^2)|
\end{aligned}
\end{align}
\end{lemma}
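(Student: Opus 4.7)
I would test equation (\ref{eqn.dif.1}) with the multiplier $-q_{xx}w^2$, obtaining
\begin{equation*}
(\Delta_\eps^2 v,-q_{xx}w^2)+(\p_x R[q],q_{xx}w^2)+(J(v),-q_{xx}w^2)=-(F,q_{xx}w^2).
\end{equation*}
The positive terms on the LHS of (\ref{B3.estimate}) will come entirely from the bilaplacian; the Rayleigh and convection pieces are perturbative, and will either be absorbed into $o_L(1)[|||q|||_w^2+||||v||||_w^2]$ or matched with the RHS terms $L^{-1/8}\|\nabla_\eps q_x u_s w\|^2$ and $L^{1/8}\|q_{xx}w_y\|^2$.

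For $(v_{yyyy},-q_{xx}w^2)$ I would first integrate by parts in $x$: the $x=0$ trace vanishes because $v|_{x=0}=0$, and the $x=L$ trace is harmless via $q_x|_{x=L}=-(u_{sx}/u_s)q|_{x=L}$. An IBP in $y$ then kills the $y=0$ trace (since $q_x|_{y=0}=0$, as $v|_{y=0}=v_y|_{y=0}=0$ and $u_s\sim y$). Expanding $v_{xyyy}=\p_x\p_y^3(u_s q)$, the two dominant pieces $u_s q_{xyyy}$ and $3u_{sy}q_{xyy}$ produce, after one further IBP in $y$, the essential coercive quantities
\begin{equation*}
\|\sqrt{u_s}\,q_{xyy}\,w\|^2+\tfrac{3}{2}u_{sy}(0)\|q_{xy}w\|_{y=0}^2,
\end{equation*}
exactly mirroring the model computation (\ref{90}). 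All lower-order commutators (coming from $u_{sy},u_{syy},u_{syyy}$ paired with low derivatives of $q$, together with the $ww_y$ weight remainders) are dispatched into $o_L(1)[|||q|||_w^2+||||v||||_w^2]$ by Cauchy--Schwarz together with (\ref{systemat1})--(\ref{small.guys}). Analogous treatment of $(2\eps v_{xxyy},-q_{xx}w^2)$ and $(\eps^2 v_{xxxx},-q_{xx}w^2)$ yields $\|\sqrt{u_s}\sqrt{\eps}\,q_{xxy}w\|^2$ and $\|\sqrt{u_s}\,\eps q_{xxx}w\|^2$ in the interior, together with the favorable boundary traces $\|u_s q_{xy}w\|_{x=0}^2$ and $\|\sqrt{\eps}u_s q_{xx}w\|_{x=L}^2$; the correct signs at $x=0$ and $x=L$ emerge precisely because of the engineered conditions $v|_{x=0}=v_{xx}|_{x=0}=0$ and $v_x|_{x=L}=v_{xxx}|_{x=L}=0$.

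For the Rayleigh term $(\p_x R[q],q_{xx}w^2)$, I would expand $\p_x R[q]=\p_y\p_x\{u_s^2 q_y\}+\eps\p_x^2\{u_s^2 q_x\}$ and symmetrize the top-order piece $-(u_s^2 q_{xy},q_{xxy}w^2)$ by IBP in $x$, producing $(u_s u_{sx}q_{xy}^2,w^2)$ plus small traces at $x=0,L$. The remaining subleading pieces (involving $u_s u_{sx}q_y$, $u_{sx}^2 q_x$, etc.) are controlled by Cauchy--Schwarz together with the interpolation (\ref{interp.L}) applied with a small exponent, which is precisely how the $L^{-1/8}\|\nabla_\eps q_x u_s w\|^2$ term arises on the right; the $w_y$-weighted leftovers yield $L^{1/8}\|q_{xx}w_y\|^2$ by Young's inequality. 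The convection piece $(J(v),-q_{xx}w^2)$ consists of the six summands in (\ref{defn.J.conc}), each a product of a background coefficient ($v_s$, $v_{sx}$, $\Delta_\eps v_s$, etc.) with a derivative of $v$ (or $I_x[v_y]$); invoking $v|_{x=0}=0$ and Poincar\'e (\ref{Poincare}) on the non-integrated factor, each product gains a factor of $L$ and hence fits inside $o_L(1)[|||q|||_w^2+||||v||||_w^2]$. The forcing term contributes $|(F,q_{xx}w^2)|$ directly. The main obstacle is sign bookkeeping: the positive $y=0$ trace arises from exactly one commutator in $\p_x\p_y^3(u_s q)$, and one must verify that no other term generates a competing $y=0$ contribution of adverse sign (this rests on $q_x|_{y=0}=0$ while $q_{xy}|_{y=0}\neq 0$ in general). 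Equally delicate is confirming the favorable signs of the $x=0$ and $x=L$ traces dictated by the engineered boundary conditions on $v$, so that those quantities genuinely land on the LHS rather than requiring absorption.
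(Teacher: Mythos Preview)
Your overall plan is correct and matches the paper: test (\ref{eqn.dif.1}) with $q_{xx}w^2$, extract the interior coercivity $\|\sqrt{u_s}\{q_{xyy},\sqrt{\eps}q_{xxy},\eps q_{xxx}\}w\|^2$ and the $y=0$ trace $\|q_{xy}w\|_{y=0}^2$ from the bilaplacian via the expansion $v_{xyy}=\p_x\p_y^2(u_s q)$, and control the rest with (\ref{systemat1})--(\ref{interp.L}). However, you have misidentified the source of the two $x$-boundary traces.

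The terms $\|u_s q_{xy}w\|_{x=0}^2$ and $\|\sqrt{\eps}u_s q_{xx}w\|_{x=L}^2$ do \emph{not} come from $\Delta_\eps^2 v$; they come from the Rayleigh operator. When you symmetrize the top-order Rayleigh piece $-(u_s^2 q_{xy},q_{xxy}w^2)$ by IBP in $x$, the boundary contribution is precisely $\tfrac{1}{2}\|u_s q_{xy}w\|_{x=0}^2 - \tfrac{1}{2}\|u_s q_{xy}w\|_{x=L}^2$; the $x=0$ piece is favorable and the $x=L$ piece is $o_L(1)$ by (\ref{pff}). Likewise, the second Rayleigh term $-\eps(u_s^2 q_{xxx},q_{xx}w^2)$, after IBP in $x$, produces $\tfrac{1}{2}\|\sqrt{\eps}u_s q_{xx}w\|_{x=L}^2$ as the favorable trace. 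Your description of the Rayleigh contribution as ``perturbative'' with ``small traces at $x=0,L$'' is therefore incorrect: these traces are the principal positive boundary terms in the estimate. Conversely, the bilaplacian pieces $2\eps v_{xxyy}$ and $\eps^2 v_{xxxx}$, after the natural IBP, yield only interior coercivity plus boundary terms that are genuinely small (via (\ref{BTBC}) and (\ref{pff})) or require the fourth-order norm $||||v||||_w$ at $x=0$; they do not generate either $x$-trace appearing on the left of (\ref{B3.estimate}). If you carry out the computation carefully you will recover all the correct terms, but your current attribution would lead you to look for positivity in the wrong place and to discard the Rayleigh boundary terms prematurely.
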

\begin{proof} We will compute $( \text{Equation } (\ref{eqn.dif.1}), q_{xx} w^2)$. 

\subsubsection*{\normalfont \textit{Step 1: Rayleigh Terms}}
\begin{align} \n
(-\p_x R[q], q_{xx}w^2) \gtrsim & \|u_s q_{xy} w\|_{x = 0}^2 + \|u_s \sqrt{\eps} q_{xx}w\|_{x = 0}^2 -L ||| v |||_{w}^2 \\
& + L^{\frac{1}{8}} \| q_{xx} w_{y} \|^2 + L^{-\frac{1}{8}} \| u_s \nabla_\eps q_x w\|^2. 
\end{align}

We first integrate by parts in $y$, distribute the $\p_x$, and then integrate by parts in $x$:
\begin{align} \n
( - \p_{xy} \{ u_s^2 q_{y} \}, q_{xx} w^2) = & ( \p_x \{ u_s^2 q_y \}, q_{xxy} w^2) + ( \p_x \{ u_s^2 q_y \}, q_{xx} 2ww_{y} )\\ \n
= & ( 2 u_s u_{sx} q_y, q_{xxy} w^2) + ( u_s^2 q_{xy}, q_{xxy} w^2) \\ \n
& + ( 4 u_s u_{sx} q_y, q_{xx} ww_{y}) + ( 2u_s^2 q_{xy}, q_{xx} ww_{y})\\ \n
= & - ( 2 u_s u_{sx}, q_{xy}^2 w^2) - ( 2 \p_x \{ u_s u_{sx} \} q_{y}, q_{xy} w^2) \\ \n
& - (  u_s u_{sx}, q_{xy}^2 w^2)  +( 4 u_s u_{sx} q_y, q_{xx} ww_{y}) \\ \n
& + ( 2 u_s^2 q_{xy}, q_{xx} ww_{y}) +2 (u_s u_{sx} q_y, q_{xy} w^2)_{x = L} \\ \label{sbs}
& + \frac{1}{2}\|u_s q_{xy} w\|_{x = L}^2 - \frac{1}{2}\|u_s q_{xy} w\|_{x = 0}^2 .
\end{align}

\noindent  The term (\ref{sbs}.8) is a favorable contribution. The cross terms, (\ref{sbs}.\{4, 5\}), are the most dangerous terms: 
\begin{align} \n
&|(\ref{sbs}.\{4, 5\})| \lesssim  \|u_s q_{xy} \cdot w\|  \| u_s  q_{xx} w_{y} \| \lesssim L^{-\frac{1}{8}} \| u_s q_{xy} w \|^2 + L^{\frac{1}{8}} \| u_s q_{xx} w_y \|^2 , \\ \n
&|(\ref{sbs}.\{1, 2, 3\})| \lesssim \|u_s q_{xy} \cdot w \|^2, \\ \n
&|(\ref{sbs}.6)| + |(\ref{sbs}.7)| \lesssim o_L(1) \|u_s q_{xy} \cdot w \|^2
\end{align}

\noindent  To estimate (\ref{sbs}.2) we have used (\ref{Poincare}) because $q|_{x = 0} = 0$. For the two boundary terms, (\ref{sbs}.\{6,7\}), we have used (\ref{pff}). 

We will move to the next Rayleigh term, which upon expanding reads: 
\begin{align} \label{newr}
-(\eps \p_{xx} \{ u_s^2 q_x \},q_{xx}w^2)  = -\eps (u_s^2 q_{xxx} + 4 u_s u_{sx} q_{xx} + 2[u_s u_{sxx} + u_{sx}^2] q_x, q_{xx}w^2).  
\end{align}

\noindent  We integrate the first term by parts in $x$: 
\begin{align*}
(\ref{newr}.1) = & (\eps u_s u_{sx} q_{xx}, q_{xx}w^2) - \frac{1}{2}\| \sqrt{\eps} u_s q_{xx} w \|_{x = L}^2 + \frac{1}{2}\| \sqrt{\eps} u_s q_{xx} w \|_{x = 0}^2 \\
\lesssim & - \| \sqrt{\eps} u_s q_{xx} w \|_{x = L}^2 + \| \sqrt{\eps}u_s q_{xx} w \|^2, 
\end{align*}

\noindent  where we appeal to (\ref{pff}). The remaining two terms in (\ref{newr}) are also directly majorized by $\| \sqrt{\eps} u_s q_{xx} w \|^2$ upon using (\ref{pff}) and the Fundamental Theorem of Calculus. 

\subsubsection*{\normalfont \textit{Step 2: $\Delta_\eps^2$ Terms}}
\begin{align} \n
(\Delta_\eps^2 v, q_{xx}w^2) \lesssim & - \| \sqrt{u_s} \{ q_{xyy}, \sqrt{\eps} q_{xxy}, \eps q_{xxx} \} w \|^2 - |q_{xy}w(\cdot, 0)|^2\\ \label{bxx.sum.visc}
&  + o_L(1) |||q|||_{w}^2 + \sqrt{L} ||||v||||_w^2 + L^{-\frac{1}{8}} \| u_s \nabla_\eps q_x w \|^2 \\ \n
& - L^2 \| q_{xx} w_y \|^2. 
\end{align}

We now treat the contributions arising from $\Delta_\eps^2 v$, starting with $\p_y^4$ \footnote{Note that all integrations by parts are justified rigorously by Lemma \ref{lemma.ratio} and its corollaries.}:
\begin{align} \n
(v_{yyyy}, q_{xx}w^2) = & - (v_{yyy}, q_{xxy}w^2) - 2(v_{yyy}, q_{xx} ww_y ) \\ \n
= & (v_{xyyy}, q_{xy}w^2) - (v_{yyy}, q_{xy}w^2)_{x=L} \\ \n
& + 2(v_{yy}, q_{xxy} ww_y) + (v_{yy}, q_{xx} (w^2)_{yy}) \\ \n
= & - (v_{xyy}, q_{xyy}w^2) - 2(v_{xyy}, q_{xy}ww_y) - (v_{xyy}, q_{xy}w^2)_{y = 0} \\ \n
& - (v_{yyy}, q_{xy}w^2)_{x = L} - 2(v_{xyy}, q_{xy} ww_y) \\ \label{lib}
& + 2(v_{yy}, q_{xy} ww_y)_{x = L} + (v_{yy}, q_{xx} (w^2)_{yy}).
\end{align}

The main terms are (\ref{lib}.1) and (\ref{lib}.3), so we begin with these. First, an expansion of: 
\begin{align*}
v_{xyy} = u_s q_{xyy} + u_{sxyy} q + u_{syy}q_x + u_{sx} q_{yy} +  2 u_{sxy} q_y + 2 u_{sy} q_{xy},
\end{align*}

\noindent  shows: 
\begin{align*}
(\ref{lib}.1)  = - ( [ u_s q_{xyy} &+ u_{sxyy}q  + u_{syy} q_x + u_{sx} q_{yy} \\
& + 2u_{sxy}q_y + 2u_{sy} q_{xy} ], q_{xyy} w^2). 
\end{align*}

First, (\ref{lib}.1.1) is a favorable contribution to the left-hand side. We estimate immediately using Poincare estimate (\ref{Poincare}), $|(\ref{lib}.1.4)| \lesssim L ||| q|||_{w}^2$. Using the Hardy inequality in (\ref{systemat1}), the fact that $q|_{y = 0} = q_x|_{y = 0} = 0$, and the interpolation inequality (\ref{interp.L}) with appropriate selections of $\alpha$: 
\begin{align*}
|(\ref{lib}.1.3)| \lesssim & \| u_{syy} \langle y \rangle \|_\infty \Big \| \frac{q_x}{y} w \Big\| \| \sqrt{u_s} q_{xyy} w \| \\
\lesssim & \{\| q_{xy} w \| + L \| \sqrt{\eps} q_{xx} w \| \} \| \sqrt{u_s} q_{xyy} w \| \\
\lesssim & L^{\frac{1}{64}} |||q|||_{w}^2 + L^{-\frac{1}{8}} \| u_s \nabla_\eps q_x w \|^2. 
\end{align*}

\noindent Let us explain the computation above, as it is will be used repeatedly. We simply apply (\ref{interp.L}) twice with different choices of $\alpha$:
\begin{align}
\begin{aligned} \label{gold}
\| q_{xy} w \| \hspace{1 mm} |||q|||_w \lesssim & \{L^{\frac{1}{64}} |||q|||_w + L^{-\frac{1}{32}} \| u_s q_{xy} w \| \} |||q|||_w \\
\lesssim & L^{\frac{1}{64}} |||q||_w^2 + L^{-\frac{1}{32}} \{ L^{-\frac{3}{32}} \|u_s q_{xy} w \|^2 + L^{\frac{3}{32}} \| \sqrt{u_s}q_{xyy} w \|^2 \} \\
\lesssim & L^{\frac{1}{64}} |||q|||_w^2 + L^{-\frac{1}{8}} \| u_s q_{xy} w \|^2.
\end{aligned}
\end{align}

For (\ref{lib}.1.2) we may first use Poincare in $x$ as $q|_{x = 0} = 0$ to majorize in the same way as above.  

Integration by parts in $y$ and use of the assumption that $|w_y| \lesssim |w|$ yields:
\begin{align*}
(\ref{lib}.1.5) = & ( 2 u_{sxyy} q_{xy}, q_y w^2) + (2 u_{sxy} q_{xy}, q_{yy} w^2) \\
& + ( 4 u_{sxy} q_{xy}, q_y ww_{y}) + (2 u_{sxy} q_y, q_{xy}w^2)_{y= 0} \\
\lesssim & \| q_{xy}, q_{yy} \cdot w \|^2 + L \| q_{xy} w \|^2  + L \| q_{xy} w \|_{y = 0}^2.
\end{align*}

\noindent  We use above that $q_{yy}$ comes with a factor of $\sqrt{L}$ according to estimate (\ref{small.guys}).

Integrate by parts in $y$:
\begin{align} \n
(\ref{lib}.1.6) = & (q_{xy}^2, u_{syy} w^2) + (q_{xy}^2, u_{sy} 2ww_{y}) +(q_{xy}^2, u_{sy} w^2)_{y = 0} \\  \n
\le & C\| \sqrt{u_s} q_{xy} \cdot w \|^2 + C\| q_{xy} \cdot \sqrt{ww_{y}} \|^2. + |\sqrt{|u_{sy}|}q_{xy} w|_{L^2(y = 0)}^2 \\ \label{duda}
\le & L^{\frac{1}{16}} ||| q |||_{w}^2 + L^{-\frac{1}{8}} \| u_s \nabla_\eps q_x w \|^2 + \|\sqrt{u_{sy}}q_{xy} w\|_{y = 0}^2
\end{align}

\noindent Above, we have used $|w_y| \lesssim |w|$ and the interpolation inequality (\ref{interp.L}). Let us emphasize the $\{y = 0\}$ boundary term from (\ref{lib}.1.6) arises with a pre-factor of $+1$, which is of bad sign. We postpone the estimation of this boundary term until (\ref{bdry.g}). 

We move to (\ref{lib}.3) for which an expansion shows: 
\begin{align*}
(\ref{lib}.3) = & - (\{u_s q_{xyy} + u_{sxyy}q  + u_{syy} q_x + u_{sx} q_{yy} \\
& + 2u_{sxy}q_y + 2u_{sy} q_{xy}\} ,q_{xy}w^2)_{y = 0} \\
\le &- (2 - C_0 L) \| \sqrt{\bar{u}_y} q_{xy}\|_{y = 0}^2, 
\end{align*}

\noindent  for some $C_0 < \infty$, independent of small $L, \eps$. Let us provide some details regarding the above estimate. For (\ref{lib}.3.1), we use (\ref{ratio.estimate}) and the fact that $|u_s| \lesssim y$ near $y = 0$ to conclude that (\ref{lib}.3.1) vanishes. Using that $q|_{y = 0} = 0$ shows that (\ref{lib}.\{2, 3\}) vanishes. Using (\ref{ratio.estimate}) together with $|u_{sx}| \lesssim y$ for $y \sim 0$ shows that (\ref{lib}.4) vanishes. This leaves only (\ref{lib}.3.5) and (\ref{lib}.3.6). The main favorable term is (\ref{lib}.3.6). For this, we have used that: 
\begin{align} \n
u_{sy}|_{y = 0} =& \bar{u}|_{y =  0} + \sum_{i = 1}^n  \sqrt{\eps}^{n+1} u^i_{eY}|_{Y = 0} + \sum_{i = 1}^n \sqrt{\eps}^n u^i_{py}|_{y = 0} \\ \label{usy.exp}
\ge & (1 - C_1 \eps) \bar{u}_y|_{y = 0},
\end{align}

\noindent for some $C_1 < \infty$ independent of $L, \eps$. Note that $\bar{u}_y|_{y = 0}$ is bounded below according to the first line of (\ref{prof.pick}), which ensures that (\ref{lib}.3.6) is, in fact, a favorable contribution. For (\ref{lib}.3.5), we use that $q|_{x = 0} = 0$ to invoke the Poincare inequality: 
\begin{align*}
|(\ref{lib}.3.5)| \le& L \| \frac{u_{sxy}}{\bar{u}_y} \|_{y = 0} \|\sqrt{\bar{u}_y}q_{xy} w \|_{y = 0} \| \sqrt{\bar{u}_y} q_{xy} w \|_{y = 0}.
\end{align*} 

\noindent  This concludes the estimate of (\ref{lib}.3). 

We apply the same calculation as in (\ref{usy.exp}) to conclude: 
\begin{align} \label{bdry.g}
(\ref{lib}.3) + (\ref{duda}.3) & \le - (2- C_0 L) \| \sqrt{\bar{u}_y} q_{xy} \|_{y = 0}^2 + (1+ C_1 \eps) \| \sqrt{\bar{u}_y} q_{xy} \|_{y = 0}^2 \\ \n
& \le - \frac{1}{2}  \| \sqrt{\bar{u}_y} q_{xy} \|_{y = 0}^2.
\end{align}

Using (\ref{BTBC}) and the Fundamental Theorem of Calculus to integrate from $x = 0$ produces the identity: 
\begin{align*}
(\ref{lib}.4) = & (v_{yyy}, \p_y \{ \frac{u_{sx}}{u_s} q \} w^2)_{x = L} \\
= & (v_{xyyy}, \p_y \{ \frac{u_{sx}}{u_s} q \} w^2) + (v_{yyy}, \p_{xy} \{ \frac{u_{sx}}{u_s} q \} w^2) \\
= & - (v_{xyy}, \p_{yy} \{ \frac{u_{sx}}{u_s} q \} w^2) - (v_{xyy}, \p_y\{ \frac{u_{sx}}{u_s} q \} 2ww_y) \\
& - (v_{xyy}, \frac{u_{sx}}{u_s} q_y w^2)_{y = 0} + (v_{yyy}, \p_{xy} \{ \frac{u_{sx}}{u_s} q \} w^2)
\end{align*}

For the first term, we distribute the $\p_{yy}$ and subsequently use (\ref{w.Hardy}), Poincare in $x$, and (\ref{small.guys}) to obtain: 
\begin{align*}
|(\ref{lib}.4.1)| = & |- (v_{xyy}, [\p_{yy}\{ \frac{u_{sx}}{u_s} \}q + 2 \p_y \{ \frac{u_{sx}}{u_s} \} q_y + \frac{u_{sx}}{u_s} q_{yy}] w^2)| \\
\lesssim & \| v_{xyy} w \| \Big[ \| \p_{yy} \{ \frac{u_{sx}}{u_s} \} y \|_\infty \| \frac{q}{y} w \| + L \| \p_y \{ \frac{u_{sx}}{u_s} \} \|_\infty \| q_{xy} w \| \\
& + \|\frac{u_{sx}}{u_s} \|_\infty \| q_{yy} w \| \Big] \\
\lesssim & \| v_{xyy} w \Big[ \| q_y w \|_2 + \| \sqrt{\eps}q w \| + L \| q_{xy} w \| + \| q_{yy} w \| \Big] \\
\lesssim & o_L(1) |||q|||_w^2. 
\end{align*}

For the second term, we again distribute the $\p_y$ and use that $|w_y| \lesssim |w|$: 
\begin{align*}
|(\ref{lib}.4.2)| \le &|(v_{xyy}, \p_y \{ \frac{u_{sx}}{u_s} \} q  2ww_y)| + |(v_{xyy}, \frac{u_{sx}}{u_s} q_y 2ww_y )| \\
\lesssim & L \| \p_y \{ \frac{u_{sx}}{u_s} \} y \|_\infty \| v_{xyy} w \| \frac{q_x}{y} w \| + L \|\frac{u_{sx}}{u_s} \|_\infty \| v_{xyy} w \| \| q_{xy} w \| \\
\lesssim & L \| \p_y \{ \frac{u_{sx}}{u_s} \} y \|_\infty \| v_{xyy} w\|\{ \|q_y w \| \\
& + L\| \sqrt{\eps} q w \|  \} + \|\frac{u_{sx}}{u_s} \|_\infty \| v_{xyy} w \| \| q_{xy} w \| \\
\lesssim & L |||q|||_w^2. 
\end{align*}

For the third term, we expand the expression for $v_{xyy}$ via: 
\begin{align*}
(\ref{lib}.4.3) = - &(u_s q_{xyy} + u_{sx} q_{yy} + 2 u_{sxy} q_{y} + 2 u_{sy} q_{xy} \\
& + u_{sxyy} q + u_{syy}q_x, \frac{u_{sx}}{u_s}q_y w^2)_{y = 0}.
\end{align*}

\noindent  (\ref{lib}.4.3.1) and (\ref{lib}.4.3.2) vanish by combining (\ref{ratio.estimate}) with $|\p_x^j u_s| \lesssim y$ for $y$ small, and (\ref{lib}.4.3.5), (\ref{lib}.4.3.6) vanish by using that $q|_{y  = 0} = q_x|_{y = 0} = 0$. This then leaves: 
\begin{align*}
|(\ref{lib}.4.3.3)| + |(\ref{lib}.4.3.4)| \lesssim L \| q_{xy} w \|_{y = 0}^2 \lesssim L |||q|||_w^2,
\end{align*}

\noindent  where we have used the Poincare inequality, which is available as $q|_{x = 0} = 0$. 

For the fourth term, we use the interpolation inequality, (\ref{interp.L}), and then Young's inequality for products to establish: 
\begin{align*}
|(\ref{lib}.4.4)| \lesssim & \| v_{yyy} w \| \| q_{xy} w \| \lesssim \| v_{yyy} \| (\delta |||q|||_w + N_\delta \| u_s \nabla_\eps q_x w\|^2) \\
\lesssim & o_L(1) |||q|||_w^2 + L^{-\frac{1}{8}} \| u_s \nabla_\eps q_x w\|^2. 
\end{align*}

We now move to (\ref{lib}.6). Again using (\ref{BTBC}) and that $v|_{x = 0} = q|_{x= 0} = 0$:  
\begin{align*}
(\ref{lib}.6) = &- 2(v_{yy}, \p_{y} \{ \frac{u_{sx}}{u_s}q \} ww_y )_{x = L} \\
\lesssim & L \| v_{xyy} w \| \| \p_{xy} \{ \frac{u_{sx}}{u_s}  q \} w \| \lesssim L |||q|||_w^2.
\end{align*}

\noindent  For $(\ref{lib}.\{2, 5\})$ we use $|w_y| \lesssim |w|$ and the interpolation inequality (\ref{interp.L}), whereas for (\ref{lib}.7) we use Poincare in $x$, (\ref{Poincare.3}), and the assumption that $|(w^2)_{yy}| \lesssim |w'|^2$:
\begin{align*}
&|(\ref{lib}.\{2, 5\})| \le L^{\frac{1}{16}} ||| q |||_{w}^2 + L^{-\frac{1}{8}} \| u_s \nabla_\eps q_x w \|^2, \\
&|(\ref{lib}.7)| \lesssim L^2 \| v_{xyy} w \| \| q_{xx}w_y \|. 
\end{align*}

\noindent  This concludes the treatment of $\p_y^4$ contributions.

We now move to contributions from $2\eps \p_{xxyy}$. We first integrate by parts in $y$, second expand the expression for $v_{xxy}$, and third perform a further $y$-integration by parts for the $2 u_{sy} q_{xx}$ contribution. This produces:
\begin{align} \n
(2\eps v_{xxyy}, q_{xx}w^2) = & (- 2\eps v_{xxy}, q_{xxy}w^2) - 4(\eps v_{xxy}, q_{xx} ww_y) \\ \n
= & (-2\eps[u_{sxxy} q + 2 u_{sxy}q_x + u_{sxx}q_y + u_s q_{xxy} + 2 u_{sx}q_{xy} \\ \n
& + 2u_{sy} q_{xx}], q_{xxy}w^2) - 4(\eps v_{xxy}, q_{xx}ww_y) \\ \n
= & -( 2\eps [ u_{sxxy} q + 2 u_{sxy} q_x + u_{sxx} q_y + u_s q_{xxy} \\ \n
&+ 2 u_{sx} q_{xy} ], q_{xxy}w^2) + ( \eps u_{syy}, q_{xx}^2 w^2) \\ \label{am}
&+ ( 2 \eps u_{sy}, q_{xx}^2 ww_{y})  - ( 4 \eps v_{xxy}, q_{xx}ww_{y})
\end{align}

Term (\ref{am}.4) contributes favorably. Terms (\ref{am}.\{1, 2\}) are estimated through the weighted Hardy's inequality (\ref{w.Hardy}), terms (\ref{am}.\{3,5\}) are estimated via Poincare's inequality (\ref{Poincare}) and Cauchy-Schwartz, and terms (\ref{am}.\{6,7,8\}) are estimated through the use of the assumption that $|w_y| \lesssim |w|$:
\begin{align*}
&|(\ref{am}.\{1, 2 \})| \le  \sqrt{\eps} \| \{u_{sxy}, u_{sxxy} \} \langle y \rangle \|_\infty \| \frac{q, q_x}{y} w \| \| \sqrt{u_s} \sqrt{\eps} q_{xxy} w \|, \\
&|(\ref{am}.\{ 3, 5 \})| \le \sqrt{\eps} \| q_{xy} w \| \| \sqrt{u_s} \sqrt{\eps} q_{xxy} w \| \\
&|(\ref{am}.\{ 6,7,8 \})| \lesssim L^{-\frac{1}{10}} \| \sqrt{\eps} q_{xx} w \|^2 + o_L(1) \| \sqrt{\eps} v_{xxy} w \|^2 \\
& \hspace{23 mm} \lesssim L^{-\frac{1}{8}} \|u_s \nabla_\eps q w \|^2 + o_L(1) |||q|||_w^2. 
\end{align*} 

We next get to the contributions from $\eps^2 v_{xxxx}$. We first integrate by parts in $x$, use that $v_{xxx}|_{x = L} = 0$, and then expand $\p_x^3 v$ in terms of $q$ to obtain: 
\begin{align} \n
( \eps^2 v_{xxxx}, q_{xx}w^2) = & - (\eps^2 [ u_{sxxx}q + 3 u_{sxx}q_x + 3 u_{sx}q_{xx} \\ \label{am1}
& + u_s q_{xxx} ], q_{xxx} w^2) - (\eps^2 v_{xxx}, q_{xx}w^2)_{x = 0}.
\end{align}

\noindent   We first estimate the first three terms with the use of (\ref{pff}) - (\ref{gp.2.5}): 
\begin{align*}
|(\ref{am1}.{1, 2, 3})| \le L \| \sqrt{u_s} \eps q_{xxx} \cdot w \|^2 + \sqrt{\eps} L \| \sqrt{\eps} q_{xx} \cdot w \|^2. 
\end{align*}

\noindent   For the boundary term, (\ref{am1}.5), we use the identity (\ref{BTBC}) to simplify and (\ref{pff}) to estimate: 
\begin{align*}
(\ref{am1}.5) = &(2 \eps^2 v_{xxx}, \frac{u_{sx}}{u_s}  q_x w^2)_{x = 0} \\
\lesssim & \| \eps^{\frac{3}{2}} v_{xxx} \cdot w \Big( \frac{u_{sx}}{u_s} \Big) \|_{x = 0} \| \sqrt{\eps} q_x \cdot w \|_{x = 0} \\
\lesssim & \| \eps^{\frac{3}{2}} v_{xxx} \cdot w ( \frac{u_{sx}}{u_s} ) \|_{x = 0} \sqrt{L} |||q|||_w \\
\lesssim & \sqrt{L} ||||v||||_w |||q|||_w.
\end{align*}

\noindent   Note we have invoked the fourth-order norm, $||||v||||_w$, due to the boundary contribution at $\{x = 0\}$. 

\subsubsection*{\normalfont \textit{Step 3: $J(v)$ Terms}}
\begin{align} \label{accept.bru}
|(J, q_{xx}w^2)| \lesssim &  o_L(1) |||q|||_{w}^2 + L^{-\frac{1}{8}} \| u_s \nabla_\eps q w \|^2 + L \| q_{xx} w_y \|^2.
\end{align}

Recalling the definition of $J$ in (\ref{defn.J.conc}), we expand $(J, q_{xx}w^2)$ via: 
\begin{align} \n
( - \eps v_{sx}v_{xy} &- v_s v_{yyy} - \eps v_s v_{yxx} \\ \label{whyso}
&+ \Delta_\eps v_s v_y - v_{sx}I_x[v_{yyy}] + \Delta_\eps v_{sx} I_x[v_y], q_{xx}w^2)
\end{align}

An integration by parts first in $x$ and then in $y$ shows: 
\begin{align*}
(\ref{whyso}.5) = &(v_{sxx} I_x[v_{yyy}], q_x w^2) + (v_{sx}v_{yyy}, q_x w^2) - (v_{sx}I_x[v_{yyy}], q w^2)_{x = L} \\
= & - (v_{sxxy} I_x[v_{yy}], q_x w^2) - (v_{sxx} I_x[v_{yy}], q_{xy} w^2) - (v_{sxx}I_x[v_{yy}], q_x 2ww_y) \\
& + (v_{sxy} v_{yy}, q w^2) + (v_{sx} v_{yy}, q_y w^2) + (v_{sx} v_{yy}, q_x 2ww_y) \\
& + (I_x[v_{yy}], \p_y\{ v_{sx} q w^2 \})_{ x= L} \\
\lesssim & L[ |||q|||_{w}^2 + \| q_{xx} \cdot w_{y} \|^2]
\end{align*}

\noindent  We have used the Hardy inequality (\ref{w.Hardy}) and Poincare in $x$, (\ref{Poincare.3}). 

The estimates for (\ref{whyso}.2) follow along the same lines. Again, integration by parts in $y$ then in $x$ and an appeal to the boundary condition (\ref{BTBC}) produces the identity:  
\begin{align*}
(\ref{whyso}.2) = &- ( v_{sxy} v_{yy}, q_x w^2) - (v_{sx} v_{yy}, q_{xy}w^2)  - ( v_{sx} v_{yy}, q_x 2ww_{y})\\
&  - ( v_{sy} v_{xyy}, q_x w^2)  - (v_s v_{xyy}, q_{xy}w^2) - (v_s v_{xyy}, q_x 2ww_{y}) \\
& + (v_{yy} w^2, \p_y \{ v_s ( \frac{u_{sx}}{u_s} ) q \})_{x = L} + (v_{yy} v_s, \frac{u_{sx}}{u_s} q 2 ww_{y})_{x = L} \\
\lesssim &o_L(1) |||q|||_{w}^2 + L^{-\frac{1}{8}} \|u_s \nabla_\eps q_x \cdot w \|^2 + L^2 \| q_{xx} \cdot w_{y} \|^2.
\end{align*}

\noindent  The above estimate relies on the Hardy type inequality (\ref{w.Hardy}) for (\ref{whyso}.3.\{1,4\}), the interpolation inequality (\ref{interp.1}) for (\ref{whyso}.2.5), and Poincare in $x$ as $v|_{x = 0} = 0$ for the boundary terms (\ref{whyso}.2.\{7,8\}). 

Next, we trivially obtain: 
\begin{align*}
&|(\ref{whyso}.1)| \lesssim \sqrt{\eps} \| v_{xy} w \| \| \sqrt{\eps} q_{xx} w \| \lesssim \sqrt{\eps}|||q|||_w^2 \\
&|(\ref{whyso}.3)| \lesssim \| \sqrt{\eps} v_{xxy} w \| \| \sqrt{\eps} u_s q_{xx} w \| \lesssim L^{-\frac{1}{8}} \| u_s \nabla_\eps q w \|^2 + o_L(1) |||q|||_w^2.
\end{align*}

For (\ref{whyso}.4), we integrate by parts in $x$ and appeal to the boundary condition (\ref{BTBC}) and $v|_{x = 0} = 0$:
\begin{align*}
(\ref{whyso}.4) = & - (\Delta_\eps v_{sx} v_y, q_x w^2) - (\Delta_\eps v_s v_{xy}, q_x w^2) - (\Delta_\eps v_s v_y, \frac{u_{sx}}{u_s}q w^2)_{x= L} \\
\le & L \| \Delta_\eps v_{sx} y \|_\infty \| v_{xy} w \| \| \frac{q_x}{y} w \| + \| \Delta_\eps v_{s} \langle y \rangle \|_\infty \| v_{xy} w \| \| \frac{q_x}{y} w \| \\
&+ L \| \frac{u_{sx}}{u_s} \langle y \rangle \|_\infty \| \Delta_\eps v_s \|_\infty \| v_{xy} w \| \frac{q_x}{y} w \| \\
\lesssim & o_L(1) |||q|||_{w}^2 + L^{-\frac{1}{8}}\| u_s \nabla_\eps q_x w \|^2. 
\end{align*}

\noindent   Above we have used the Hardy type inequality (\ref{w.Hardy}), and the interpolation inequality (\ref{interp.L}) to conclude. 

Finally, for the final term (\ref{whyso}.6) we first split the coefficient via:  
\begin{align*}
(\ref{whyso}.6) = &(\Delta_\eps v^0_{px} I_x[v_y], q_{xx}w^2) + \sum_{i = 1}^n (\sqrt{\eps}^i \Delta_\eps v^i_{px} I_x[v_y], q_{xx} w^2) \\
& + \sum_{i = 1}^n (\sqrt{\eps}^{i+2} \Delta v^i_{ex} I_x[v_y], q_{xx} w^2).
\end{align*}

The higher order contributions are easily estimated using the extra power of $\sqrt{\eps}$ by: 
\begin{align*}
|(\ref{whyso}.6.2)| + |(\ref{whyso}.6.3)| \lesssim L \| v_y w \| \| \sqrt{\eps} q_{xx} w \| \lesssim L |||q|||_w^2. 
\end{align*}

For the leading order Prandtl contribution, we integrate by parts in $x$, use that $I_x|_{x = 0} = 0$, and estimate the resulting quantity using the rapid decay of $v^0_p$:
\begin{align*}
(\ref{whyso}.6.1) = &- (\Delta_\eps v^0_{pxx} I_x[v_y], q_x w^2) - (\Delta_\eps v^0_{px} v_y, q_x w^2) \\
& + (\Delta_\eps v^0_{px} \frac{u_{sx}}{u_s} q, I_x[v_y]w^2)_{x = L} \\
\lesssim & L|||q|||_1^2
\end{align*}

\noindent This concludes the treatment of the $J(v)$ contributions. 

We estimate directly: $|(F, q_{xx}w^2)|$ is placed on the right-hand side of the desired estimate. This concludes the proof.  
\end{proof}

\begin{lemma} Let $v$ be a solution to (\ref{eqn.dif.1}). Let $w$ satisfy $|\p_y^k w| \lesssim |w|$: 
\begin{align} \label{B4.estimate}
& \| q_{yy} \cdot w \|_{y = 0}^2 + \| \sqrt{u_s} \{q_{yyy}, \sqrt{\eps} q_{xyy}, \eps q_{xxy}  \} \cdot w \|^2  \\ \n
 \lesssim & \sqrt{L} |||q|||_{w}^2  + |(F, q_{yy}w^2)|.
\end{align}
\end{lemma}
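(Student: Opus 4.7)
The natural plan, mirroring the structure of the proof of (\ref{B3.estimate}) with the tangential multiplier $q_{xx}w^{2}$ replaced by the normal one $q_{yy}w^{2}$, is to pair (\ref{eqn.dif.1}) against $q_{yy}w^{2}$. The coercive triple on the left-hand side of (\ref{B4.estimate}) will be extracted from $(\Delta_\eps^{2}v, q_{yy}w^{2})$, while $(-\p_{x}R[q],q_{yy}w^{2})$, $(J(v),q_{yy}w^{2})$, and the forcing are treated as perturbations of size $o_{L}(1)|||q|||_{w}^{2}$ plus the unavoidable $|(F,q_{yy}w^{2})|$.

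For the principal term $(v_{yyyy}, q_{yy}w^{2})$, I will expand via Leibniz, $v_{yyyy}=u_{s}q_{yyyy}+4u_{sy}q_{yyy}+6u_{syy}q_{yy}+4u_{syyy}q_{y}+u_{syyyy}q$, and integrate by parts in $y$. One integration by parts on $(u_{s}q_{yyyy}, q_{yy}w^{2})$ produces the main coercive term $-\|\sqrt{u_{s}}q_{yyy}w\|^{2}$ (the boundary contributions vanish since $u_{s}|_{y=0}=0$) together with a residual $-(u_{sy}q_{yy}, q_{yyy}w^{2})$. Using $q_{yy}q_{yyy}=\tfrac12\p_{y}(q_{yy}^{2})$ and integrating by parts once more, this residual combined with the mixed term $(4u_{sy}q_{yyy}, q_{yy}w^{2})$ yields a net boundary contribution of $-\tfrac{3}{2}u_{sy}(0)w^{2}(0)\|q_{yy}\|_{y=0}^{2}$. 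Upon rearrangement, (\ref{usy.exp}) supplies $u_{sy}(0)\gtrsim\bar{u}_{y}(0)>0$, furnishing exactly the favorable $\|q_{yy}w\|_{y=0}^{2}$ on the left-hand side of (\ref{B4.estimate}). The three lower-order Leibniz pieces and all $w_{y}$-cross-terms are absorbable into $o_{L}(1)|||q|||_{w}^{2}$ via (\ref{small.guys}), (\ref{hardy.orig.w}), and (\ref{interp.L}).

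For $(2\eps v_{xxyy}, q_{yy}w^{2})$, I will integrate by parts twice in $x$, exploiting $v_{xyy}|_{x=L}=0$ (inherited from $v_{x}|_{x=L}=0$) and $q_{yy}|_{x=0}=0$ (inherited from $v|_{x=0}=0$) to kill the $x$-boundary terms, reducing to $-2\eps(v_{xyy}, q_{xyy}w^{2})$. Expanding $v_{xyy}$ via $v=u_{s}q$ gives the principal $-2\|\sqrt{u_{s}\eps}\,q_{xyy}w\|^{2}$; the five cross-terms involving derivatives of $u_{s}$ are absorbable into $o_{L}(1)|||q|||_{w}^{2}$ via Poincar\'e in $x$ (\ref{Poincare}), Hardy (\ref{hardy.orig.w}), and the interpolation (\ref{interp.L}). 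A parallel computation for $(\eps^{2}v_{xxxx}, q_{yy}w^{2})$, integrating by parts in $x$ twice and in $y$ once (using $v_{xxx}|_{x=L}=v_{xx}|_{x=0}=0$ together with (\ref{BTBC}) and (\ref{pff})--(\ref{gp.3}) to dispose of the resulting $x$-boundary traces), supplies the last coercive term $-\|\eps\sqrt{u_{s}}q_{xxy}w\|^{2}$.

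The Rayleigh piece $-(\p_{xy}\{u_{s}^{2}q_{y}\}+\eps\p_{xx}\{u_{s}^{2}q_{x}\},q_{yy}w^{2})$ after distribution involves only third-order derivatives of $q$; every summand carries either a factor of $\sqrt{\eps}$ or can be made $o_{L}(1)$ small via Poincar\'e in $x$ (\ref{Poincare.3}), which is available since $q|_{x=0}=0$. The $J(v)$ contribution will be handled in complete parallel to (\ref{accept.bru}): each of the six terms in (\ref{defn.J.conc}) carries smallness from $\sqrt{\eps}$, from $v_{s}|_{y=0}=0$ (producing an effective factor of $y$ to be converted via Hardy), or from Poincar\'e in $x$ against $v|_{x=0}=0$. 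The main obstacle I anticipate is the sign arithmetic in the $\{y=0\}$ boundary contribution from $(v_{yyyy},q_{yy}w^{2})$: the two successive integrations by parts produce boundary pieces $+\tfrac12 u_{sy}(0)w^{2}(0)q_{yy}(0)^{2}$ and $-2u_{sy}(0)w^{2}(0)q_{yy}(0)^{2}$, whose algebraic sum $-\tfrac32 u_{sy}(0)w^{2}(0)q_{yy}(0)^{2}$ must be verified carefully so that, after rearrangement, $\|q_{yy}w\|_{y=0}^{2}$ lands on the left with coefficient uniformly bounded below by $u_{sy}(0)>0$.
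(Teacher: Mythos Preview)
Your proposal is correct and follows essentially the same route as the paper. The only organizational difference is that the paper integrates by parts once in $y$ first to reach $-(v_{yyy},q_{yyy}w^{2})$ and then expands $v_{yyy}=u_{s}q_{yyy}+3u_{sy}q_{yy}+3u_{syy}q_{y}+u_{syyy}q$, whereas you expand $v_{yyyy}$ directly and integrate by parts afterward; both bookkeepings produce the same net boundary coefficient $-\tfrac{3}{2}u_{sy}(0)w^{2}(0)$ on $\|q_{yy}\|_{y=0}^{2}$. One small slip: for $(2\eps v_{xxyy},q_{yy}w^{2})$ a single integration by parts in $x$ suffices (the boundary terms already vanish by $v_{x}|_{x=L}=0$ and $q|_{x=0}=0$), not two.
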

\begin{proof} We will compute the inner-product $( \text{Equation} (\ref{eqn.dif.1}), q_{yy}w^2)$. 

\subsubsection*{\normalfont \textit{Step 1: Estimate of Rayleigh Terms}}

\begin{align} \label{bstate.1}
(-\p_x R[q], q_{yy}w^2) \lesssim -\|u_s q_{yy}w\|_{x = L}^2 + L |||q|||_{w}^2.
\end{align}

First, we will expand the term:
\begin{align*}
\p_{xy} \{ u_s^2 q_y \} = u_s^2 q_{xyy} + 2 u_{s}u_{sx} q_{yy} + 2 u_s u_{sy} q_{xy} + 2[u_s u_{sxy} + u_{sx}u_{sy}] q_y, 
\end{align*}

\noindent  and upon doing so we will integrate by parts the highest order contribution, that is $(u_s^2 q_{xyy}, q_{yy}w^2)$ in $x$: 
\begin{align*}
( -\p_{xy} \{ u_s^2 q_y \}, q_{yy} w^2) = &- ( 2 u_{sx} u_{sy} q_y, q_{yy} w^2) - (2 u_s u_{sxy} q_y, q_{yy}w^2) \\
& - ( 2 u_s u_{sx} q_{yy} w^2,q_{yy}) - ( 2 u_s u_{sy} q_{xy}, q_{yy}w^2) \\
& + ( u_s u_{sx} q_{yy} w^2, q_{yy}) -(\frac{u_s^2}{2} q_{yy} w^2,q_{yy})_{x = L} \\
\lesssim &L^2 ||| q |||_{w}^2 - \|u_s q_{yy} w\|_{x = L}^2.
\end{align*}

Second, we expand the term: 
\begin{align*}
\p_{xx} \{ u_s^2 q_x \} = u_s^2 q_{xxx} + 4 u_s u_{sx} q_{xx} + [2u_s u_{sxx} + 2 u_{sx}^2] q_x. 
\end{align*}

\noindent  We subsequently use Poincare inequalities, (\ref{Poincare}):  
\begin{align*}
( - \eps \p_{xx} \{ u_s^2 q_x \}, q_{yy} w^2)  =& - ( \eps [ (u_s^2)_{xx} q_x + 2 \p_x \{u_s^2\} q_{xx} + u_s^2 q_{xxx} ], q_{yy}w^2) \\
\lesssim & L ||| q |||_{w}^2.
\end{align*}

\subsubsection*{\normalfont \textit{Step 2: Estimate of $\Delta_\eps^2$ Terms}}

\begin{align}
(\Delta_\eps^2 v, q_{yy}w^2) \lesssim - \| \sqrt{u_s}\{ q_{yyy}, \sqrt{\eps}q_{xyy}, \eps q_{xxy} \} w \|^2 - \|q_{yy}w\|_{y = 0}^2 + L |||q|||_{w}^2 
\end{align}

We begin with $\p_{yyyy}$. First, we integrate by parts in $y$, and then we expand the term $v_{yyy}$: 
\begin{align}  \n
( v_{yyyy}, q_{yy}w^2) = &- (v_{yyy}, q_{yyy} w^2) - 2(v_{yyy}, q_{yy}ww_y) - (v_{yyy}, q_{yy} w^2)_{y = 0} \\ \n
= & - ([u_s q_{yyy} +  3 u_{syy}q_y + u_{syyy}q + 3u_{sy} q_{yy}], q_{yyy}w^2) \\ \label{sbp}
& -  2(v_{yyy}, q_{yy}ww_y) - (v_{yyy}, q_{yy} w^2)_{y = 0}. 
\end{align}

We first handle the important boundary contribution from above. We integrate by parts, expand the boundary term to obtain: 
\begin{align*}
(\ref{sbp}.\{4 +6\}) = &( \frac{3}{2}u_{syy} q_{yy} w^2, q_{yy}) + (\frac{3}{2} u_{sy} q_{yy} w^2,q_{yy})_{y = 0} \\
& - (3 u_{sy} q_{yy} w^2,q_{yy})_{y = 0} + ( 3 q_{yy}^2, u_{sy} ww_{y}) \\
\le& - \frac{3}{2} \| q_{yy} \cdot w \|_{y = 0}^2 + \sqrt{L} ||| q |||_{w}^2.
\end{align*}

\noindent  Above, we have used $|w_y| \lesssim |w|$, and the estimate (\ref{small.guys}) to estimate the $q_{yy}$ term. We emphasize the importance of the precise prefactors of $- 3$ and $+\frac{3}{2}$ in the above boundary terms, which enable us to generate the required positivity.

The first term, (\ref{sbp}.1) is a favorable contribution which contributes $\| \sqrt{u_s} q_{yyy} w \|^2$. The third term is controlled by the Hardy-type inequality, (\ref{w.Hardy}):
\begin{align*}
|(\ref{sbp}.3)| \le \| u_{syy} \langle y \rangle \|_\infty \Big\| \frac{q}{y} w \Big\| \| \sqrt{u_s} q_{yyy} w \| \lesssim L |||q|||_w^2. 
\end{align*}

The second term, (\ref{sbp}.2), is controlled via an integration by parts in $y$, Poincare in $x$, (\ref{Poincare}) which is available since $q|_{x = 0} = 0$, and finally (\ref{small.guys}) to estimate the $q_{yy}$ contribution:
\begin{align*}
(\ref{sbp}.2) = & ( 3 u_{syyy} q_y + 3 u_{syy} q_{yy} , q_{yy} w^2) + 6(u_{syy} q_y, q_{yy}ww_y) \\
& + 3(u_{syy} q_y, q_{yy} w^2)_{y = 0} \\
\lesssim & \| q_{yy} w \|^2 + L \|q_{xy} \|_{y = 0} \|q_{yy}\|_{y = 0} \lesssim L |||q|||_w^2. 
\end{align*}

\noindent   This concludes the contributions of $\p_y^4$. 

We next move to $2\eps \p_{xxyy}$. We integrate by parts the following term upon using that $v_x|_{x= L} = 0$ and $q|_{x= 0}  = 0$:
\begin{align} \n 
( 2 \eps v_{xxyy}, q_{yy}w^2) = &- ( 2 \eps v_{xyy}, q_{xyy}w^2) \\ \n
= & - ( 2 \eps \p_{xyy} \{ u_s q \}, q_{xyy}w^2) \\ \n
= & - ( 2 \eps \Big[ u_{sxyy} q + 2u_{sxy} q_y + u_{sx} q_{yy} \\ \label{Nata}
& + u_{syy}q_{x} +  2u_{sy}q_{xy} +  u_{s}q_{xyy} ],  q_{xyy}w^2).
\end{align}

While (\ref{Nata}.6) is a favorable contribution, straightforward computations using the Poincare inequalities, (\ref{Poincare}), show that 
\begin{align*}
|(\ref{Nata}.1)| + |(\ref{Nata}.3)| + |(\ref{Nata}.4)| \lesssim L \sqrt{\eps} |||q|||_w^2. 
\end{align*}

We must treat (\ref{Nata}.\{2, 5\}) via integration by parts in $y$ because their coefficients do not vanish as $y \downarrow 0$. For (\ref{Nata}.2), integrate by parts in $y$, and use $|w_y| \lesssim |w|$ to obtain:
\begin{align*}
 (\ref{Nata}.2) = &( 4 \eps q_{xy}, \p_y \{ u_{sxy} q_y w^2 \}) + (4 \eps q_{xy}, u_{sxy} q_y w^2)_{y = 0} \\
  = & ( 4 \eps u_{sxyy} q_y, q_y w^2) + (4 \eps q_{xy}, u_{sxy} q_{yy} w^2) \\
  & + ( 8\eps q_{xy}, u_{sxy} q_y ww_{y})  + (4\eps q_{xy}, u_{sxy} q_y w^2)_{y = 0} \\
  \lesssim & \eps ||| v |||_{w}^2 + \eps L\| q_{xy}w \|^2 + L\eps \|q_{xy}\|_{y = 0}^2.
\end{align*}

For (\ref{Nata}.5), integration by parts in $y$ produces the expression 
\begin{align*}
(\ref{Nata}.5) =& (2 \eps q_{xy} \p_{y} \{ u_{sy} w^2 \}, q_{xy}) + (2 \eps q_{xy}, q_{xy} u_{sy}w^2)_{y = 0} \lesssim \eps |||q|||_w^2. 
\end{align*}

We next move to $\eps^2 v_{xxxx}$. For this, we integrate by parts twice in $x$, use the boundary conditions $v_{xxx}|_{x = L} = 0$ and $q_{yy}|_{x = 0} = 0$ from (\ref{BTBC}), subsequently integrate by parts in $y$, and finally expand the term $v_{xxy}$. We show this below: 
\begin{align} \n
( \eps^2 v_{xxxx}, q_{yy} w^2) = & (\eps^2 v_{xx}, q_{xxyy}w^2) - (\eps^2 v_{xx}, q_{xyy})_{x = L} \\ \n
= &  - (\eps^2 v_{xxy}, q_{xxy}w^2) - 2(\eps^2 v_{xx}, q_{xxy} ww_y ) - (\eps^2 v_{xx}, q_{xyy})_{x = L} \\ \n
= & ( - \eps^2 [ u_{sxxy} q + 2 u_{sxy} q_x + u_{sy}q_{xx} + u_s q_{xxy} \\ \n
& + u_{sxx}q_y + 2 u_{sx} q_{xy} ], q_{xxy}w^2) - ( \eps^2 v_{xx}, q_{xxy} 2 ww_{y}) \\ \label{arjit}
& + ( \eps^2 v_{xx}, \p_{yy} \{ \frac{u_{sx}}{u_s} q \} w^2)_{x = L}.
\end{align}

The term (\ref{arjit}.4) is favorable. The terms with coefficients that vanish as $y \downarrow 0$ are (\ref{arjit}.5) and (\ref{arjit}.6), and so these may be estimated directly via 
\begin{align*}
|(\ref{arjit}.5)| + |(\ref{arjit}.6)| \lesssim L \eps^{\frac{3}{2}}  ||| v |||_{w}^2. 
\end{align*}

The remaining interior terms require integration by parts in $y$: 
\begin{align*}
(\ref{arjit}.1) = &( \eps^2 q_{xx}, u_{sxxyy} q w^2) + ( \eps^2 q_{xx}, u_{sxxy} q_y w^2) \\
& + ( \eps^2 q_{xx}, u_{sxxy} q 2 ww_{y}) \\
\lesssim & \eps^{\frac{3}{2}} ||| v |||_{w}^2. 
\end{align*}

Above, we have used the weighted Hardy inequality from (\ref{w.Hardy}). Next, in a similar fashion:  
\begin{align*}
(\ref{arjit}.2) =& ( 2\eps^2 q_{xx}, u_{sxyy} q_x w^2) + (2 \eps^2 q_{xx}, u_{sxy} q_{xy}w^2) \\
& + ( 4\eps^2 q_{xx}, u_{sxy} q_x ww_{y}) \\
\lesssim & \eps^{\frac{3}{2}} ||| q |||_{w}^2.
\end{align*}

We have used (\ref{w.Hardy}) for the $q_x$ term appearing in (\ref{arjit}.2.1), (\ref{arjit}.2.3). The term (\ref{arjit}.3) can be handled analogously using that $q_{xx}|_{y = 0} = 0$:
\begin{align*}
(\ref{arjit}.3) = & \frac{\eps^2}{2}(u_{syy} q_{xx}, q_{xx}w^2) + (\eps^2u_{sy} q_{xx}, q_{xx} ww_y) \lesssim \eps \| \sqrt{\eps} q_{xx} w \|^2. 
\end{align*}

Next, we use that $|w_y| \lesssim |w|$ and split the term (\ref{arjit}.7) into $y \le 1$ and $y \ge 1$. On the $y \le 1$ piece, we use $|w| \lesssim 1$, whereas in the far-field piece we use that $|w_y| \lesssim |w|$: 
\begin{align*}
|(\ref{arjit}.7)| \lesssim & |(\eps^2 v_{xx}, q_{xxy} ww_y \chi(y) )| + |(\eps^2 v_{xx}, q_{xxy} ww_y [1-\chi(y)] )| \\
\lesssim &  \eps \| \sqrt{\eps} \frac{v_{xx}}{y} \| \| \sqrt{u_s} \sqrt{\eps} q_{xxy} \| + \eps \| \sqrt{\eps} v_{xx}  w \| \| \sqrt{u_s} \sqrt{\eps} q_{xxy} w \|.
\end{align*}

For the boundary term we distribute the $\p_{yy}$ and estimate using the Fundamental Theorem of Calculus since both $v_{xx}|_{x = 0} = q|_{x = 0} = 0$: 
\begin{align*}
|(\ref{arjit}.8)| = &(\eps^2 v_{xx}, [\p_{yy} \{ \frac{u_{sx}}{u_s} \} q + 2 \p_y \{ \frac{u_{sx}}{u_s} \} q_y + \frac{u_{sx}}{u_s} q_{yy}] w^2)_{x = L} \\
= & (\eps^2 v_{xxx}, [\p_{yy} \{ \frac{u_{sx}}{u_s} \} q + 2 \p_y \{ \frac{u_{sx}}{u_s} \} q_y + \frac{u_{sx}}{u_s} q_{yy}] w^2) \\
& + (\eps^2 v_{xx}, \p_x [\p_{yy} \{ \frac{u_{sx}}{u_s} \} q + 2 \p_y \{ \frac{u_{sx}}{u_s} \} q_y + \frac{u_{sx}}{u_s} q_{yy}] w^2) \\
\lesssim &\sqrt{\eps} \| \frac{1}{\sqrt{u_s}} \eps v_{xxx} w \| \| \sqrt{\eps} \sqrt{u_s} \{ q + q_y + q_{yy} + q_x + q_{xy} + q_{xyy} \} w \| \\
\lesssim & \sqrt{\eps} |||q|||_w^2.
\end{align*}

\noindent  Above, we have expanded: 
\begin{align*}
\| \frac{1}{\sqrt{u_s}} \eps v_{xxx} w \| =& \| \frac{1}{\sqrt{u_s}} \eps\p_{xxx} \{u_s q \} w \| \\
\le & \| \frac{1}{\sqrt{u_s}} \eps \{ u_{sxxx}q + 3 u_{sxx} q_x + 3 u_{sx} q_{xx} + u_s q_{xxx} \} \| \\
\le & |||q|||_w, 
\end{align*}

\noindent where we have used that $|\p_x^j u_s| \lesssim y$ near $\{y = 0\}$.

\subsubsection*{\normalfont \textit{Step 3: $J(v)$ Terms}}

\begin{align} \n
|(J, q_{yy}w^2)|  \lesssim L |||q|||_{w,1}^2.
\end{align}

Recalling (\ref{defn.J.conc}), we expand and estimate immediately
\begin{align*} 
&( - v_s v_{yyy} - \eps v_{sx}v_{xy}  - \eps v_s v_{xxy} \\
&+ \Delta_\eps v_s v_y - v_{sx}I_x[v_{yyy}] + I_x[v_y] ,q_{yy}w^2)\lesssim  L |||q|||_{w,1}^2. 
\end{align*}

The forcing term clearly contributes $|(F, q_{yy}w^2)|$ to the right-hand side, which concludes the proof. 
\end{proof}

\begin{lemma} \label{Lemma.bx.estimate} Let $v$ be a solution to (\ref{eqn.dif.1}). Let $w$ satisfy $|\p_y^k w| \lesssim |w|$, $|(w^2)_{yy}| \lesssim |w_y|^2$ and $|w_y| \lesssim |w|$. Then:
\begin{align} \label{EST.2nd.order}
 \| \nabla_\eps q_x \cdot u_s w \|^2 & \lesssim L^{\frac{1}{2}} \{ ||| q |||_{w}^2 +  |||| q ||||_w^2 \} + L \| q_{xx} w_y \|^2 + |(F, q_x w^2)|. 
\end{align}

\end{lemma}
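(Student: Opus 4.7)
The plan is to test equation (\ref{eqn.dif.1}) against the multiplier $q_x w^2$ and extract coercivity over $\|\nabla_\eps q_x\, u_s w\|^2$ from the Rayleigh term $-\p_x R[q]$, with the biharmonic $\Delta_\eps^2 v$ and transport $J(v)$ contributions absorbed via small prefactors of $L$. The structure mirrors the proof of (\ref{B3.estimate}), but with one fewer $x$-derivative on the multiplier, so the balance of derivative orders is correspondingly shifted down by one.

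For the Rayleigh piece, write $-\p_x R[q] = -\p_{xy}\{u_s^2 q_y\} - \eps\,\p_{xx}\{u_s^2 q_x\}$. Integration by parts in $y$ against $q_x w^2$ produces the favorable contribution $\|u_s q_{xy} w\|^2$ as the leading term; the $y=0$ trace vanishes because $q_x|_{y=0} = 0$, which follows from $v|_{y=0} = v_y|_{y=0} = 0$ together with Lemma \ref{lemma.ratio}. Cross terms of the form $(2 u_s u_{sx} q_y, q_{xy} w^2)$ and $(\p_x\{u_s^2 q_y\}, q_x\,2 w w_y)$ are absorbed by Cauchy--Schwarz, Young's inequality, and the Poincare estimates (\ref{Poincare.2})--(\ref{Poincare.3}); the second of these is what ultimately generates the $L\|q_{xx}w_y\|^2$ term on the right-hand side, after writing $\|q_x w_y\| \lesssim L\|q_{xx}w_y\|$. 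The $\eps$-piece $(-\eps\p_{xx}\{u_s^2 q_x\}, q_x w^2)$ is handled by one integration by parts in $x$, yielding $\|\sqrt{\eps} u_s q_{xx} w\|^2$ with the $\{x = 0, L\}$ boundary contributions controlled via the identities (\ref{BTBC}) and the boundary inequalities (\ref{pff}). Together these supply the coercivity $\|\nabla_\eps q_x\,u_s w\|^2$ on the left-hand side.

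For the biharmonic term $(\Delta_\eps^2 v, q_x w^2)$ we treat the three pieces $v_{yyyy}$, $2\eps v_{xxyy}$, $\eps^2 v_{xxxx}$ in turn. Each is integrated by parts to redistribute derivatives and then expanded via Leibniz, as with $v_{yy} = u_s q_{yy} + 2 u_{sy} q_y + u_{syy} q$ and its higher analogs. The top-order contribution $(u_s q_{yyy}, q_{xy} w^2)$ and its siblings are integrated by parts in $x$, using $q|_{x=0} = 0$, into fourth-order quantities controlled by $||||v||||_w$; subleading contributions (with coefficients $u_{sy}, u_{syy}, \ldots$) are bounded using (\ref{systemat1}), the weighted Hardy inequality (\ref{w.Hardy}), and the interpolation cascade of (\ref{gold}), which trades $\|q_{xy} w\|$ against $L^{1/64}|||q|||_w + L^{-1/8}\|u_s q_{xy} w\|^2$. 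The resulting $\|u_s q_{xy} w\|^2$ pieces are absorbed into the Rayleigh coercivity. Boundary contributions at $\{x = 0, L\}$ arising from the $\eps^2 v_{xxxx}$ piece are handled in the same way as (\ref{am1}.5): use (\ref{BTBC}) to convert to $q$-boundary data, and use the Fundamental Theorem of Calculus together with $v_{xx}|_{x=0} = 0$ and $v_x|_{x = L} = 0$. The net biharmonic contribution is bounded by $L^{1/2}\{|||q|||_w^2 + ||||v||||_w^2\}$.

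The $J(v)$ contributions are treated exactly as in the expansion (\ref{whyso}) leading to (\ref{accept.bru}), using Poincare in $x$ (available from $q|_{x=0} = 0$ and $v|_{x=0} = 0$) and the Hardy inequality (\ref{w.Hardy}) to produce an $L|||q|||_w^2$ bound. The forcing term $|(F, q_x w^2)|$ is placed directly on the right as stated. The main obstacle will be the biharmonic $y = 0$ boundary traces produced by integration by parts: one must verify that each such trace either vanishes by virtue of the degenerate linear behavior of $u_s, u_{sx}$ at $y=0$ together with the boundary conditions on $v$, or else reduces to a trace $\|q_{xy}\|_{y=0}$ that is already controlled via (\ref{small.guys}) and (\ref{B3.estimate}).
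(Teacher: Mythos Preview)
Your overall plan (test against $q_x w^2$; extract $\|\nabla_\eps q_x \, u_s w\|^2$ from the Rayleigh term; control $\Delta_\eps^2 v$ and $J(v)$ with $L$-smallness) matches the paper's proof, and your treatment of the Rayleigh and $J(v)$ pieces is essentially correct.

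The gap is in your mechanism for the $\p_y^4$ biharmonic contribution. You propose one integration by parts in $y$ to reach $(v_{yyy}, q_{xy}w^2)$, expand $v_{yyy}$, and then integrate the leading term $(u_s q_{yyy}, q_{xy}w^2)$ by parts in $x$ ``into fourth-order quantities controlled by $||||v||||_w$.'' This does not work: integrating by parts in $x$ produces $(u_s q_{xyyy} + u_{sx}q_{yyy}, q_y w^2)$, and $u_s q_{xyyy}$ is essentially $v_{xyyy}$ up to lower order. But $||||v||||_w$ only controls $\|\sqrt{\eps}\,v_{xyyy} w\|$, not $\|v_{xyyy} w\|$, so the resulting term carries no $\eps$ or $L$ smallness and cannot be absorbed.

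The paper instead performs \emph{two} integrations by parts in $y$ to land on $(v_{yy}, q_{xyy}w^2)$ (see (\ref{seeg})), and then expands $v_{yy} = u_s q_{yy} + 2u_{sy}q_y + u_{syy}q$. The leading term $(u_s q_{yy}, q_{xyy}w^2)$ is then bounded directly by $\|\sqrt{u_s}q_{yy}w\|\,\|\sqrt{u_s}q_{xyy}w\| \lesssim L\,\|\sqrt{u_s}q_{xyy}w\|^2$ via the Poincar\'e-type inequality (\ref{Poincare.2}) (which applies since $q_{yy}|_{x=0}=0$). The subleading term $(2u_{sy}q_y, q_{xyy}w^2)$ is integrated by parts once more in $y$ and controlled using the crucial $\sqrt{L}$-smallness of $\|q_{yy}w\|$ from (\ref{small.guys}); this is where the $L^{1/2}$ prefactor in the statement originates. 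The only place $||||v||||_w$ genuinely enters is through the $\{x=0\}$ boundary trace $\|\eps^{3/2}v_{xxx}w\|_{x=0}$ arising from the $\eps^2 v_{xxxx}$ piece, exactly as in your treatment of (\ref{am1}.5).
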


\begin{proof} We compute the following inner product: $( \text{Equation } (\ref{eqn.dif.1}), q_x w(y)^2)$. 

\subsubsection*{\normalfont \textit{Step 1: Rayleigh Terms Estimates}}

\begin{align}
(- \p_x R[q], q_x w^2) \gtrsim \|u_s q_{xy}w\|^2 - L \| q_{xx} w' \|^2 - L |||q|||_{w}^2.
\end{align}

First, integrate by parts in $y$ and expand via the product rule: 
\begin{align} \n
( -\p_{xy} \{ u_s^2 q_y \}, q_x w^2) = &( \p_x \{ u_s^2 q_{y} \}, q_{xy} w^2) + ( \p_x \{u_s^2 q_y \}, q_x 2 w w_y) \\ \n
= & \|u_s q_{xy} w\|^2 + ( 2 u_s u_{sx}q_y, q_{xy}w^2) + (4u_s u_{sx} q_y, q_x ww_y) \\
& + 2(u_s^2 q_{xy}, q_x ww_y).
\end{align}

The second and third terms are majorized by $L \| u_s q_{xy}w \|^2 + L^2 \| u_s q_{xy} w \| \| q_x w_y \|$ upon using Poincare in $x$ as in (\ref{Poincare}). For the fourth, integrate by parts in $y$ to produce: 
\begin{align*}
- (q_x , q_x [2 u_s u_{sy} ww_y + u_s^2 (ww_y)_y ]) \le L \| u_{sy} y \|_\infty \|\frac{q_x}{y} w\| \| q_{xx} w' \| + L^2 \| q_{xx} w' \|^2
\end{align*}

\noindent In the above estimate, we have used Poincare in $x$, (\ref{Poincare}), Hardy in $y$, and the estimate that $|(w^2)_{yy}| \lesssim |w_y|^2$. 

The second Rayleigh contribution is as follows, upon integrating by parts in $x$ and then expanding:
\begin{align} \n
( - \eps \p_{xx} \{ u_s^2 q_{x} \}, q_x w^2) = &( \eps \p_x \{ u_s^2 q_x \}, q_{xx} w^2) - (\eps q_x, \p_x \{ u_s^2 q_x \} w^2)|_{x = 0}^{x = L} \\ \n
= & \| \sqrt{\eps} u_s q_{xx} w\|^2 + ( 2\eps u_s u_{sx} q_x, q_{xx} w^2) \\ \n
& - (\eps q_x, \p_x\{ u_s^2 q_x \} w^2)|_{x = 0}^{x= L} \\ \n
= & \| \sqrt{\eps} u_s q_{xx} w \|^2 + ( 2 \eps u_s u_{sx} q_x, q_{xx} w^2) \\ \label{ray.x}
& - (\eps u_s^2 q_x, q_{xx} w^2)|_{x = 0}^{x = L} - 2|\sqrt{\eps u_s u_{sx}} q_x w|_{x= 0}^{x = L}|^2 \\ \n 
\gtrsim & \| \sqrt{\eps} u_s q_{xx} w \|^2 -  L ||| v |||_{w}^2,
\end{align}

where we have used (\ref{Poincare}) - (\ref{Poincare.3}). The boundary terms follow from (\ref{pff}). 

\subsubsection*{\normalfont \textit{Step 2: Estimate for $\Delta_\eps^2$ terms}}

\begin{align} \label{sum.bx.lap}
|(\Delta_\eps^2 v, q_x w^2)| \lesssim \sqrt{L} [|||q|||_{w}^2 + ||||v||||_w^2] + L \| q_{xx} w_y \|^2.
\end{align}

We begin with the $\p_y^4$ contribution. A series of integration by parts in $y$ gives:
\begin{align} \n
( v_{yyyy}, q_x w^2) = & ( v_{yy}, q_{xyy} w^2) + ( v_{yy}, q_{xy} 2ww_{y}) \\ \n
& + ( v_{yy}, q_{xy} 2ww_{y}) - (v_y, q_{xy} 2 (ww_{y})_y) \\ \n
& - ( v_y, q_x 2 (ww_{y})_{yy}) +(v_{yy},q_{xy} w^2)_{y = 0} \\ \label{seeg}
& - (v_{yyy}, q_x w^2)_{y = 0}.
\end{align}

\noindent   Specifically, we have integrated by parts twice in $y$, expanded the resulting quantity, $\p_{yy} \{ q_{x}w^2 \} = q_{xyy}w^2 + 4 q_{xy} ww_y + q_x \p_{yy}\{ w^2 \}$, and further integrated by parts the final term in $y$.

We will first treat the boundary terms from (\ref{seeg}). First, $(\ref{seeg}.7) = 0$ due to the boundary condition $q_x|_{y = 0} = 0$ coupled with the asymptotic estimate (\ref{ratio.estimate}) for $v_{yyy}$. Next, an expansion shows: 
\begin{align*}
(\ref{seeg}.6) = ([ u_{syy} q + 2u_{sy}q_y + u_s q_{yy}], q_{xy} w^2)_{y = 0}. 
\end{align*}

\noindent  The first term vanishes as $q|_{y = 0} = 0$, whereas the third term vanishes according to the asymptotics in (\ref{ratio.estimate}).  The only contribution is thus the middle term for which we use that $q|_{x = 0} = 0$ to estimate $|(q_y , q_{xy})|_{y = 0}| \le L \|q_{xy}\|_{y = 0}^2$, which is an acceptable contribution to the right-hand side of (\ref{sum.bx.lap}) due to the inclusion $\|q_{xy} \|_{y = 0}$ in $|||q|||_w$.

We now turn to the bulk terms from (\ref{seeg}). An expansion shows 
\begin{align*}
(\ref{seeg}.1) = ( [ u_{syy}q + 2u_{sy}q_y + u_s q_{yy} ], q_{xyy}w^2). 
\end{align*}

\noindent   For the first term, we estimate via Hardy in $y$, (\ref{hardy.orig.w}), and Poincare in $x$: 
\begin{align*}
|(\ref{seeg}.1.1)| \le & \| u_{syy} \langle y \rangle \|_\infty \|\frac{q}{y} w \| \| \sqrt{u_s} q_{xyy} w \| \\
\lesssim & \{ \| q_y w \| + \| \sqrt{\eps}q w \| \} \| \sqrt{u_s} q_{xyy} w \| \\
\lesssim & L \{ \|q_{xy} w \| + \| \sqrt{\eps}q_x w \| \} \| \sqrt{u_s} q_{xyy} w \|. 
\end{align*}

\noindent   The middle term requires an integration by parts in $y$ via:
\begin{align*}
(\ref{seeg}.1.2) = &-  ( 2 q_{xy}, \p_y \{ u_{sy} q_y w^2  \}) - (2 q_{xy}, u_{sy} q_y w^2)_{y = 0} \\
= & - ( 2  u_{syy} q_{xy}, q_y w^2) - ( 2 q_{xy}, u_{sy} q_{yy}w^2) \\
& - ( 4 u_{sy}q_{xy}, q_y 2 ww_{y})  - (2 q_{xy}, q_y u_{sy} w^2)_{y = 0} \\
\lesssim & L \| q_{xy} \cdot w \|^2 + \| q_{xy} \cdot w \| \| q_{yy} \cdot w \| \\
& +  L \| q_{xy} \cdot \sqrt{ww_{y}} \|^2 + L \| q_{xy} \cdot w\|_{y = 0}^2 \\
\lesssim & \sqrt{L} |||q|||_{w}^2. 
\end{align*}

\noindent   Above, we have used (\ref{Poincare}) for the $q_y$ terms, the assumption that $|w_y| \lesssim |w|$, and most importantly the estimate (\ref{small.guys}) to obtain $\sqrt{L}$ control of $\| q_{yy} w \|$. The final term can be estimated via Poincare in $x$: $|(\ref{seeg}.1.3)| \lesssim L \| \sqrt{u_s} q_{xyy} w \|^2$. 

We continue with the bulk contributions from (\ref{seeg}), for which straightforward bounds using (\ref{Poincare}) and the inequalities $|w_y| \lesssim |w|$, $(w^2)_{yy} \lesssim |w_y|^2$ show: 
\begin{align*}
&|(\ref{seeg}.2, 3)| \le L \| v_{xyy} w \|  \| \sqrt{u_s} q_{xy} \cdot w \|,\\ 
&|(\ref{seeg}.4)| \le L \| v_{xy}w \| \| q_{xy} w \|, \\
& |(\ref{seeg}.5)| \le L^2 \| v_{xy} w \| \| q_{xx} w_y \|, 
\end{align*}

\noindent   all of which are acceptable contributions to the right-hand side of (\ref{sum.bx.lap}). This concludes our treatment of (\ref{seeg}). 

We move on to contributions from $\eps v_{xxyy}$. We begin with one integration by parts in $y$ and an expansion of $v_{xxy} = \p_{xxy} \{ u_s q \}$: 
\begin{align} \n 
( 2 \eps v_{xxyy}, q_x w^2) = & ( - 2\eps v_{xxy}, q_{xy}w^2) - ( 4 \eps v_{xxy},q_x ww_{y}) - (2 \eps v_{xxy}, q_x w^2)_{y = 0} \\ \n
= &- ( 2\eps [ u_{sxxy}q + u_{sy}q_{xx} + 2 u_{sxy}q_x + u_{sxx}q_y \\ \label{fp1}
& + u_s q_{xxy} + 2u_{sx} q_{xy} ], q_{xy} w^2) - ( 4\eps q_x, v_{xxy}ww_{y}).
\end{align}

\noindent   We have used (\ref{ratio.estimate}) to conclude that the $\{y = 0\}$ boundary contribution vanishes. It is straightforward to estimate using (\ref{Poincare.3}) and that $|w_y| \lesssim |w|$:
\begin{align*}
&|(\ref{fp1}.1)| +... + |(\ref{fp1}.6)| \lesssim \sqrt{\eps} ||| q |||_{w}^2 \\
&|(\ref{fp1}.7)| \lesssim L \| \sqrt{\eps}v_{xxy}w \|  \| \sqrt{\eps} q_{xx} w \|.
\end{align*}

We now move to $\p_x^4$ contributions, for which an integration by parts in $x$ and expansion gives:
\begin{align} \n
( \eps^2 v_{xxxx}, q_x w^2) = & - (\eps^2 v_{xxx}, q_{xx}w^2) - (\eps^2 v_{xxx}, q_x w^2)_{x = 0} \\ \n
 = &- (\eps^2 [u_{sxxx}q + 3 u_{sxx}q_x + 3 u_{sx} q_{xx} + u_s q_{xxx}], q_{xx}w^2)\\ \n
&  - (\eps^2 v_{xxx}, q_x w^2)_{x = 0} \\ \n
\lesssim & \sqrt{\eps} |||q|||_w^2 + \|\eps^{\frac{3}{2}}v_{xxx} \|_{x = 0} \|\sqrt{\eps} q_x w\|_{x = 0} \\
\lesssim & \sqrt{\eps}|||q|||_w^2 + \sqrt{L} ||||v||||_w |||q|||_w,
\end{align}

\noindent  where we have used estimate (\ref{pff}) for the $q_x|_{x = 0}$ boundary term.

\subsubsection*{\normalfont \textit{Step 3: $J(v)$ Terms}}

\begin{align} \n
|&(J, q_x w^2)| \lesssim  \sqrt{L} |||q|||_w  + L^2 \| q_{xx} w_y \|^2.
\end{align}

Recalling the definition of $J$ in (\ref{defn.J.conc}), we have 
\begin{align} \label{whyso.bx}
( - \eps v_{sx}v_{xy} &- v_s v_{yyy} - \eps v_s v_{yxx} + \Delta_\eps v_s v_y  \\ \n
&- v_{sx}I_x[v_{yyy}] + \Delta_\eps v_{sx} I_x[v_y], q_{x}w^2).
\end{align}

We first record the elementary inequality which will be in repeated use: 
\begin{align} \label{elementary.L}
\| I_x[f] \| \le \sqrt{L} \| I_x[f] \|_{L^\infty_x L^2_y} \le L \| f \|. 
\end{align}

We integrate by parts in $y$: 
\begin{align*}
(\ref{whyso.bx}.5) = &( I_x[v_{yy}] v_{sxy}, q_x w^2) + ( I_x[v_{yy}] v_{sx}, q_{xy} w^2) + ( I_x[v_{yy}] v_{sx}, q_x 2ww_{y}).
\end{align*}

\noindent  Using (\ref{elementary.L}), we immediately estimate (\ref{whyso.bx}.5.\{2, 3\}). The first term, (\ref{whyso.bx}.5.1), is controlled upon using that $\| v_{sxy} y \|_\infty < \infty$ and an appeal to the Hardy inequality, (\ref{w.Hardy}):  
\begin{align*}
&|(\ref{whyso.bx}.5.1)| \lesssim L \| v_{sxy} \langle y \rangle  \|_\infty \| v_{yy} w \| \| \frac{q_x}{\langle y \rangle} w \|, \\
&|(\ref{whyso.bx}.5.2)| \lesssim L \| v_{yy} w \| \| q_{xy} w \|, \\
&|(\ref{whyso.bx}.5.3)| \lesssim L^2 \| v_{yy} w \| q_{xx} w_y \|.
\end{align*}

Integration by parts in $y$ for the term (\ref{whyso.bx}.2) produces: 
\begin{align*}
(\ref{whyso.bx}.2) = (v_{yy} v_{sy}, q_x w^2) + (v_{yy}v_s, q_{xy}w^2) + 2(v_{yy} v_s, q_{x} ww_y)
\end{align*}

\noindent  From here an analogous set of estimates to (\ref{whyso.bx}.5) produces the desired estimate upon using one further Poincare inequality, $\| v_{yy} w \| \le L \| v_{xyy} w \|$, which is valid as $v|_{x = 0} = 0$. Direct Poincare inequality in $x$ using (\ref{Poincare}) yields $|(\ref{whyso.bx}.1)| + |(\ref{whyso.bx}.3)| \lesssim L |||q|||_w^2$. Terms (\ref{whyso.bx}.4) and (\ref{whyso.bx}.6) are estimated identically so we focus on (\ref{whyso.bx}.4). We estimate the $\Delta_\eps$ term using (\ref{w.Hardy}) and Poincare in $x$ as $v|_{x = 0} = 0$: 
\begin{align*}
|(\ref{whyso.bx}.4)| \le & \| \Delta_\eps v_s y \|_\infty \| v_y w \| \| \frac{q_x}{y} w \| \\
\lesssim & L \| v_{xy} w \| \{ \| q_{xy} w \| + \| \sqrt{\eps} q_x w \| \} \\
\lesssim & L |||q|||_1^2.
\end{align*}

\noindent This concludes the terms in $J$. 

We put directly the contribution $|(F, q_{x}w^2)|$ on the right-hand side of the desired estimate, which concludes the proof. 
\end{proof}

\subsection{Trace Estimates}

For the first fourth order bound, we will perform a weighted estimate for a weight $w(y)$. Let us make the following definition: 
\begin{align} \label{defn.barB}
\bar{B}(w) := \| \sqrt{\eps} u_s v_{xyy} \cdot w\|_{x = 0}^2 + \| \eps u_s v_{xxy} \cdot w\|_{x = L}^2.
\end{align}

\begin{lemma} Let $v$ be a solution to (\ref{eqn.dif.1}). Then the following estimate is valid: 
\begin{align} 
\begin{aligned} \label{beat.2}
\bar{B}(w) &+ \| \{ \sqrt{\eps} v_{xyyy}, \eps v_{xxyy}, \eps^{\frac{3}{2}}v_{xxxy} \} \sqrt{u_s}w \|^2 \\
& \lesssim \sqrt{\eps}|||q|||_1^2 + ||| q |||_{\sqrt{\eps}w}^2  +  |||q|||_{\sqrt{\eps}w}|||q|||_{w_y} + |(F, \eps u_s v_{xxyy}w^2)|. 
\end{aligned}
\end{align}

\end{lemma}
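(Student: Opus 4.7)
The plan is to test the DNS equation (\ref{eqn.dif.1}) against the multiplier $\eps u_s v_{xxyy} w^2$. The argument structurally mirrors the interior identity (\ref{sum.vxxyy.lap}) already established in Lemma \ref{alread}, but now needs to retain boundary contributions at $\{x=0\}$ and $\{x=L\}$, and must also absorb the Rayleigh and $J(v)$ terms that are present in the full equation (\ref{eqn.dif.1}) but absent from the pure bilaplacian problem of Lemma \ref{alread}.

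The heart of the estimate is the bilaplacian pairing. Following the chain of integrations by parts carried out in the proof of (\ref{sum.vxxyy.lap}), but now tracking boundary contributions on the actual domain, one expects
\[
(\Delta_\eps^2 v, \eps u_s v_{xxyy} w^2) \gtrsim \|\{\sqrt{\eps}v_{xyyy}, \eps v_{xxyy}, \eps^{3/2}v_{xxxy}\}\sqrt{u_s}w\|^2 + \bar{B}(w) - \text{l.o.t.}
\]
The two surviving $x$-boundary integrals assemble precisely into $\bar{B}(w)$. These arise with the favorable sign because the boundary conditions $v|_{x=0}=v_{xx}|_{x=0}=0$ and $v_x|_{x=L}=v_{xxx}|_{x=L}=0$ are exactly calibrated to annihilate the ``other'' trace at each endpoint, leaving only $\sqrt{\eps}u_s v_{xyy} w\big|_{x=0}$ and $\eps u_s v_{xxy} w\big|_{x=L}$, each weighted by the non-negative factor $u_s$. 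The $\{y=0\}$ traces from the $y$-integrations by parts vanish thanks to $v|_{y=0}=v_y|_{y=0}=0$ together with $u_s|_{y=0}=0$.

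For the Rayleigh pairing $(-\p_x R[q], \eps u_s v_{xxyy} w^2)$, I would expand $v_{xxyy} = \p_{xxyy}\{u_s q\}$ via the product rule and integrate by parts to move $\p_x, \p_y$ derivatives onto the Rayleigh operators $\p_y\{u_s^2 q_y\}$ and $\eps \p_x\{u_s^2 q_x\}$. Each resulting term is controlled by $\sqrt{\eps}|||q|||_1^2 + |||q|||_{\sqrt{\eps}w}^2$ after invoking the embedding bounds (\ref{systemat1}) and the weighted Hardy inequality (\ref{w.Hardy}); the mixed contribution $|||q|||_{\sqrt{\eps}w}|||q|||_{w_y}$ appears specifically when a $y$-derivative falls on $w^2$, forcing a Cauchy--Schwarz pairing between the $w$-weighted and $w_y$-weighted third-order norms that cannot be further absorbed.

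The $J(v)$ contribution is lower order. Expanding via (\ref{defn.J.conc}), each summand either carries an explicit $\eps$ prefactor or contains an $x$-integral $I_x[\cdot]$ that activates Poincar\'e in (\ref{Poincare}); combined with the rapid decay of $v_s, v_{sx}$ and (\ref{hardy.orig.w}), these absorb into $|||q|||_{\sqrt{\eps}w}^2$ with a small constant. The forcing is placed directly as $|(F, \eps u_s v_{xxyy} w^2)|$ on the right-hand side. The principal obstacle will be the careful sign bookkeeping of the $x$-boundary integrals produced by the repeated $x$-integrations by parts in the bilaplacian step: one has to verify term by term that, after applying the boundary conditions on $v, v_x, v_{xx}, v_{xxx}$, the residue assembles into $\bar{B}(w)$ with the correct sign rather than contributing to the right-hand side, where it would be unabsorbable.
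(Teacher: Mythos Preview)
Your overall framework is right---pair the equation with $\eps u_s v_{xxyy} w^2$---but you have misidentified where the positive boundary contribution $\bar{B}(w)$ comes from, and this is not a cosmetic error.

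The bilaplacian pairing $(\Delta_\eps^2 v, \eps u_s v_{xxyy} w^2)$ produces \emph{no} boundary terms at $x=0$ or $x=L$. Each of the three $x$-integrations by parts (in the $v_{yyyy}$, $2\eps v_{xxyy}$, $\eps^2 v_{xxxx}$ pieces) generates a trace that is annihilated by one of the four boundary conditions $v|_{x=0}=v_{xx}|_{x=0}=v_x|_{x=L}=v_{xxx}|_{x=L}=0$. For example, from $(v_{yyyy},\eps u_s v_{xxyy}w^2)$ one integrates by parts in $x$ to obtain a trace $-(\eps u_s v_{xyy} w^2, v_{yyyy})\big|_{x=0}^{x=L}$; at $x=L$ this vanishes since $v_{xyy}|_{x=L}=0$, and at $x=0$ it vanishes since $v_{yyyy}|_{x=0}=0$. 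The computation in Lemma \ref{alread} confirms this: only the interior coercive quantity $\|\sqrt{u_s}\{\sqrt{\eps}v_{xyyy},\eps v_{xxyy},\eps^{3/2}v_{xxxy}\}w\|^2$ survives.

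The boundary positivity $\bar{B}(w)$ actually comes from the \emph{Rayleigh} pairing. The paper rewrites $u_s^2 q_y = u_s v_y - u_{sy} v$ so that $-\p_x R[q]$ contains the leading pieces $-\p_{xy}\{u_s v_y\}$ and $-\eps\p_{xx}\{u_s v_x\}$. Pairing $-u_s v_{xyy}$ (from the first piece) against $\eps u_s v_{xxyy} w^2$ and integrating by parts in $x$ leaves $+\tfrac{1}{2}\|\sqrt{\eps}u_s v_{xyy} w\|_{x=0}^2$, since the $x=L$ trace vanishes. Similarly the $-\eps u_s v_{xxx}$ piece, after integrating by parts in $y$ then $x$, leaves $+\tfrac{1}{2}\|\eps u_s v_{xxy} w\|_{x=L}^2$. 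Your plan treats the Rayleigh pairing as pure error (``each resulting term is controlled by $\sqrt{\eps}|||q|||_1^2+|||q|||_{\sqrt{\eps}w}^2$''), which would lose $\bar{B}(w)$ entirely; moreover your proposed route of expanding the multiplier $v_{xxyy}=\p_{xxyy}\{u_s q\}$ in $q$-coordinates is unlikely to exhibit these boundary signs cleanly---the paper keeps the entire Rayleigh computation in $v$-coordinates precisely for this reason.
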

\begin{proof} We compute the inner-product $( \text{Equation (\ref{eqn.dif.1})}, \eps v_{xxyy} u_s w^2)$.

\subsubsection*{\normalfont \textit{Step 1: Rayleigh Terms}}

The main estimate in this step is: 
\begin{align} \n
(-\p_x R[q], \eps v_{xxyy}w^2) &\gtrsim \|\sqrt{\eps}u_s v_{xyy}w\|_{x = 0}^2 + \|\eps  u_s v_{xxy} w\|_{x = L}^2 \\
& - |||q|||_{\sqrt{\eps}w}^2 - \sqrt{\eps}|||q|||_1^2.
\end{align}

A series of integration by parts shows: 
\begin{align*}
( - \p_{xy} &\{ u_s v_y \}, \eps v_{xxyy} w^2 u_s) \\
= &- ( \eps [ u_{sx} v_{yy} + u_{sy} v_{xy}  + u_{sxy}v_y + u_s v_{xyy} ], v_{xxyy} w^2 u_s) \\
= & ( \eps v_{xyy}, \p_x \{ u_s u_{sx} v_{yy} \} w^2) + ( \eps v_{xxy}, \p_y \{ u_s u_{sy} v_{xy} w^2 \}) \\
& + ( \eps v_{xyy} w^2, \p_x \{ u_s u_{sxy} v_y \}) + (\eps u_s u_{sx} v_{xyy}, v_{xyy}w^2) \\
& + \frac{1}{2} \|\sqrt{\eps} v_{xyy} w u_s \|_{x = 0}^2 \\
\gtrsim &  \|\sqrt{\eps} v_{xyy} w u_s\|_{x = 0}^2 - |||q|||_{\sqrt{\eps}w}^2.
\end{align*}

Again, we expand and perform a series of integrations by parts which produces:
\begin{align} \n
- ( \eps \p_{xx} \{ u_s& v_x \}, \eps v_{xxyy} u_s w^2) \\ \n
= & - ( \eps^2 [ u_s v_{xxx} + 2 u_{sx} v_{xx} + u_{sxx}v_x ], v_{xxyy} u_s w^2) \\ \n
= & ( \eps^2 v_{xxy}, \p_y \{ u_s^2 v_{xxx} w^2 \}) + (2\eps^2 v_{xyy} w^2, \p_x \{ u_s u_{sx} v_{xx} \}) \\ \n
&  + ( \eps^2 v_{xxy}, \p_y \{ u_{sxx} u_s v_x w^2 \})\\ \n
= &(\eps^2 v_{xxy}, \p_y\{ u_s^2 w^2\} v_{xxx}) + (\eps^2 v_{xxy}, u_s^2 v_{xxxy} w^2) \\ \label{rocka}
& + 2(\eps^2 v_{xyy} w^2, \p_x\{u_s u_{sx} v_{xx} \})  + (\eps^2 v_{xxy}, \p_y \{ u_{sxx}u_s v_x w^2 \} ).
\end{align}

First, let us deal with (\ref{rocka}.1), which produces the ``main commutator" term. Using (\ref{profile.splitting}): 
\begin{align} \label{nablasplit}
|\nabla u_s| \le |\nabla u^P_s| + \sqrt{\eps}, 
\end{align}

\noindent  and so: 
\begin{align*}
|(\ref{rocka}.1)| \lesssim &\| \sqrt{\eps}v_{xxy} \sqrt{\eps}w \| \| \eps v_{xxx} w_y \| + \sqrt{\eps} \| \sqrt{\eps}v_{xxy} \| \eps v_{xxx} \| \\
& + \sqrt{\eps} \| \sqrt{\eps} v_{xxy} \sqrt{\eps}w \| \eps v_{xxx} \sqrt{\eps}w \| \\
\lesssim & |||q|||_{\sqrt{\eps}w} |||q|||_{w_y} + \sqrt{\eps} |||q|||_1^2 + \sqrt{\eps} |||q|||_{\sqrt{\eps}w}^2.
\end{align*}

The term (\ref{rocka}.2) produces a positive boundary contribution via integration by parts in $x$: 
\begin{align*}
(\ref{rocka}.2) = &\frac{1}{2} \|\eps u_s v_{xxy} w\|_{x = L}^2 - (u_s u_{sx} \eps^2 v_{xxy}, v_{xxy} w^2) \\
\gtrsim & \|\eps u_s v_{xxy}w\|_{x = L}^2 - |||q|||_{\sqrt{\eps}w}^2
\end{align*}

We estimate (\ref{rocka}.3) directly: 
\begin{align*}
&|(\ref{rocka}.3)| \lesssim \| v_{xyy} \sqrt{\eps}w \| \| \eps v_{xxx} \sqrt{\eps} w \| \lesssim |||q|||_{\sqrt{\eps}w}^2.
\end{align*} 

Finally, for (\ref{rocka}.4), we distribute the $\p_y$: 
\begin{align*}
|(\ref{rocka}.4)| =& |(\eps^2 v_{xxy}, u_{sxxy} u_s v_x w^2 + u_{sxx}u_{sy}v_x w^2 + u_{sxx}u_s v_{xy} w^2 \\
& + u_{sxx}u_s v_x 2ww_y)| \\
\lesssim & \| \sqrt{\eps}v_{xxy} \sqrt{\eps}w \| \{ \| \sqrt{\eps}v_x \sqrt{\eps}w \| + \| v_{xy} \sqrt{\eps}w \| \}
\end{align*}

We now have the lower order Rayleigh contributions. Here, the main mechanism is the pointwise inequality, (\ref{nablasplit}). We simply expand the product, integrate by parts once,  expand further the resulting expression, and estimate using this pointwise inequality: 
\begin{align*}
( \p_{xy} \{ u_{sy}v \}, \eps v_{xxyy} u_s w^2) = & ( [u_{sxyy} v + u_{sxy}v_y + u_{syy}v_x+ u_{sy}v_{xy}], \eps v_{xxyy} u_s w^2) \\
= & - ( \eps v_{xyy} w^2, \p_x \{ u_s u_{sxyy}v \}) - ( \eps v_{xyy} w^2, \p_x \{ u_s u_{sxy} v_y \}) \\
& - ( \eps v_{xxy}, \p_y \{ u_{syy} u_s v_x w^2 \}) - ( \eps v_{xxy}, \p_y \{ u_{sy} u_s v_{xy} w^2 \}) \\
= & - ( \eps u_{sx} u_{sxyy} v_{xyy}, v w^2) - ( \eps u_s u_{sxxyy} v_{xyy},v w^2) \\
& - ( \eps u_s u_{sxyy} v_x, v_{xyy}w^2) - ( \eps u_{sx} u_{sxy} v_y, v_{xyy} w^2) \\
& - ( \eps u_s u_{sxxy} v_{xyy},v_y w^2) - ( \eps u_s u_{sxy} v_{xyy}, v_{xy}w^2) \\
& - ( \eps u_s u_{syy} v_{xxy}, v_x w^2) - ( \eps u_s u_{syy} v_{xy},v_{xxy} w^2) \\
& - ( \eps u_{syy}u_{sy} v_{xxy}, v_x w^2) - ( 2 \eps u_{syy}u_s v_x, v_{xxy} ww_{y}) \\
& - ( \eps u_s u_{syy} v_{xxy},v_{xy}w^2) - ( \eps u_s u_{sy} v_{xxy},v_{xyy}w^2) \\
& - ( \eps u_{sy}^2 v_{xxy},v_{xy}w^2) - ( 2\eps u_s u_{sy} v_{xxy},v_{xy}ww_{y}) \\
\lesssim & \sqrt{\eps}|||q|||_1^2 + \sqrt{\eps}||| q |||_{\sqrt{\eps}w}^2.
\end{align*}

\noindent   Above, we have used that $|w_y| \lesssim |w|$. We have the final lower-order Rayleigh terms, for which a nearly identical argument to above is carried out: 
\begin{align*}
( \eps \p_{xx} \{ u_{sx} v\}, \eps v_{xxyy} u_s w^2) = & ( \eps^2 [u_{sxxx} v + 2 u_{sxx} v_x + u_{sx}v_{xx} ], v_{xxyy} u_s w^2) \\
= & - ( \eps^2 v_{xyy}, \p_x \{ u_{sxxx} u_s v  \} w^2) - ( 2 \eps^2 v_{xxy}, \p_y \{ u_{sxx} v_x u_s w^2 \})\\
&  - ( \eps^2 v_{xyy}, \p_x \{ u_{sx} u_s v_{xx} \} w^2) \\
= & - ( \eps^2 u_{sxxxx}u_s v_{xyy},v w^2) - ( \eps^2 u_{sxxx} u_s v_{xyy},v_x w^2) \\
& - ( \eps^2 u_{sxxx}u_{sx} v_{xyy}, v w^2) - (2\eps^2 u_s u_{sxxy} v_{xxy}, v_x w^2) \\
& - ( 2 \eps^2 u_{s} u_{sxx} v_{xy}, v_{xxy}) - (4\eps^2 u_s u_{sxx} v_x, v_{xxy} ww_{y}) \\
& - (2\eps^2 u_{sxx}u_{sy} v_{xxy}, v_x w^2) - (\eps^2 u_{sxx}u_s v_{xyy}, v_{xx} w^2) \\
& - (\eps^2 u_{sx} u_s v_{xyy}, v_{xxx} w^2) - (\eps^2 u_{sx}^2 v_{xyy}, v_{xx}w^2) \\
\lesssim & \sqrt{\eps}|||q|||_1^2 + \sqrt{\eps}||| q |||_{\sqrt{\eps}w}^2.
\end{align*}

\subsubsection*{\normalfont \textit{Step 2: Estimate of $\Delta_\eps^2$ Terms}}

This is done in (\ref{sum.vxxyy.lap}).

\subsubsection*{\normalfont \textit{Step 3: Estimate of $J(v)$ Terms}}

\begin{align} \label{kii}
|(J, \eps v_{xxyy} u_s w^2)|  \lesssim o(1) \text{LHS}(\ref{beat.2})+  |||q|||_{\sqrt{\eps}w}^2
\end{align}

Recalling the definition of $J$ from (\ref{defn.J.conc}), we expand
\begin{align} \label{beyonce}
( - v_s v_{yyy} &- \eps v_s v_{yxx} - \eps v_{sx}v_{xy} -  v_y \Delta_\eps v_s  \\ \n
&- v_{sx}I_x[v_{yyy}] +  I_x[v_y] \Delta_\eps v_{sx},\eps v_{xxyy} u_sw^2)
\end{align}

Straightforward estimates give: 
\begin{align*}
&|(\ref{beyonce}.2)| \lesssim \| \sqrt{\eps}v_{yxx} \sqrt{\eps} w \| \| \eps v_{xxyy} u_s w \| \lesssim |||q|||_{\sqrt{\eps}w} \times \text{LHS}(\ref{beat.2}), \\
&|(\ref{beyonce}.3)| \lesssim \sqrt{\eps} \| v_{xy} \sqrt{\eps} w \| \| \eps v_{xxyy} u_s w \| \lesssim \sqrt{\eps}|||q|||_{\sqrt{\eps}w} \times \text{LHS}(\ref{beat.2}), 
\end{align*}

\noindent   which upon using Young's inequality for products is clear acceptable to the right-hand side of (\ref{kii}). 

We now turn to (\ref{beyonce}.1) for which we integrate by parts in $x$, and subsequently integrate by parts the middle term in $y$ thanks to the boundary condition $v|_{y = 0} = v_y|_{y = 0} = 0$:
\begin{align*}
(\ref{beyonce}.1) = & (v_{sx}v_{yyy}, \eps v_{xyy} u_s w^2) + (v_s v_{xyyy}, \eps v_{xyy} u_s w^2) + (v_s v_{yyy}, \eps v_{xyy} u_{sx} w^2) \\
= & (v_{sx}v_{yyy}, \eps v_{xyy}w^2) - \frac{1}{2}(v_{sy} v_{xyy}, \eps v_{xyy} u_s w^2) - \frac{1}{2}(v_{s} v_{xyy}, \eps v_{xyy} u_{sy} w^2)\\
& - (v_{s} v_{xyy}, \eps v_{xyy} u_s ww_y) + (v_s v_{yyy}, \eps v_{xyy} u_{sx} w^2) \\
\lesssim & |||q|||_{\sqrt{\eps}w}^2.
\end{align*}

We next move to (\ref{beyonce}.4) for which we integrate by parts in $x$ using that $v|_{x = 0} = 0$ and $v_x|_{x = L} = 0$: 
\begin{align*}
(\ref{beyonce}.4) = & (v_{xy} \Delta_\eps v_s, \eps v_{xyy}u_s w^2) +  (v_{y} \Delta_\eps v_{sx}, \eps v_{xyy}u_s w^2) +  (v_{y} \Delta_\eps v_s, \eps v_{xyy}u_{sx} w^2) \\
\lesssim & \| \Delta_\eps v_s + \Delta_\eps v_{sx} \|_\infty |||q|||_{\sqrt{\eps}w}^2. 
\end{align*}

Next, we move to (\ref{beyonce}.5) for which we integrate by parts in $x$ and use that $I_x[f]|_{x = 0} = 0$ by definition: 
\begin{align*}
(\ref{beyonce}.5) = & (v_{sxx}I_x[v_{yyy}], \eps v_{xyy} u_s w^2) +  (v_{sx} v_{yyy}, \eps v_{xyy} u_s w^2) \\
& +  (v_{sx}I_x[v_{yyy}], \eps v_{xyy} u_{sx} w^2) \\
\lesssim & |||q|||_{\sqrt{\eps}w}^2. 
\end{align*}

Lastly, we move to (\ref{beyonce}.6), for which we again integrate by parts in $x$ and subsequently use the Poincare inequality in $x$, (\ref{Poincare}), to produce: 
\begin{align*}
(\ref{beyonce}.6) = & - (v_y \Delta_\eps v_{sx}, \eps v_{xyy}u_s w^2) - (I_x[v_y] \Delta_\eps v_{sxx}, \eps v_{xyy} u_s w^2) \\
& - (I_x[v_y] \Delta_\eps v_{sx}, \eps v_{xyy} u_{sx}w^2) \\
\lesssim & \| \p_x^j \Delta_\eps v_s \|_\infty |||q|||_{\sqrt{\eps}w}^2. 
\end{align*}

\noindent This concludes the estimation of the $J(v)$ terms. 

To conclude the proof, we simply put the forcing term, $|(F, \eps u_s v_{xxyy}w^2)|$ to the right-hand side of the desired estimate. 
\end{proof}

\begin{lemma}  Let $\zeta > 0$ be arbitrary. Let $v$ be a solution to (\ref{eqn.dif.1}), and suppose $|\p_y^k w| \lesssim |w|$. Then: 
\begin{align}\label{beat.1} 
\|\eps^\frac{3}{2} \sqrt{u_s} v_{xxx} &\cdot w\|_{x = 0}^2 +  \| \{ \eps v_{xxyy}, \eps^{\frac{3}{2}}v_{xxxy}, \eps^2 v_{xxxx} \} \cdot w \|^2 \\ \n
& \lesssim \frac{1}{\zeta}\bar{B}(1) + \bar{B}(w) + \Big(\zeta^3 + \sqrt{\eps} \Big) |||q|||_1^2 + ||| q |||_{\sqrt{\eps}w}^2\\ \n
& + |||q|||_{\sqrt{\eps}w}|||q|||_{w_y} + |(F, \eps^2 v_{xxxx}w^2)|,
\end{align}

\noindent where $\bar{B}$ has been defined in (\ref{defn.barB}).

\end{lemma}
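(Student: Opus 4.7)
The plan is to test the DNS equation (\ref{eqn.dif.1}) against the multiplier $\eps^2 v_{xxxx} w^2$. The bi-Laplacian contribution is handled via identity (\ref{sum.vxxxx.lap}) from Lemma \ref{alread}: it yields the main coercive quantity $\|\{\eps v_{xxyy}, \eps^{3/2}v_{xxxy}, \eps^2 v_{xxxx}\}w\|^2$ on the left-hand side, plus an error $\eps|||q|||_w^2$ which is absorbed into $|||q|||_{\sqrt{\eps}w}^2$ on the right.

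The crux is the Rayleigh term $(-\p_x R[q], \eps^2 v_{xxxx}w^2)$. Writing $\eps^2 v_{xxxx} = \p_x(\eps^2 v_{xxx})$ and integrating by parts in $x$, the boundary contribution at $x=L$ vanishes by $v_{xxx}|_{x=L} = 0$, leaving
\begin{equation*}
(-\p_x R[q], \eps^2 v_{xxxx}w^2) = \int \p_x R[q]\big|_{x=0}\cdot \eps^2 v_{xxx}\big|_{x=0}\, w^2\, dy + (\p_{xx}R[q], \eps^2 v_{xxx}w^2).
\end{equation*}
Using $q|_{x=0} = q_y|_{x=0} = q_{yy}|_{x=0} = 0$, the boundary identity $q_{xx}|_{x=0} = -2(u_{sx}/u_s) q_x|_{x=0}$ from (\ref{BTBC}), and the algebraic identity $u_s q_{xxx} = v_{xxx} - 3u_{sx}q_{xx} - 3u_{sxx} q_x - u_{sxxx}q$, the trace reduces to
\begin{equation*}
\p_x R[q]\big|_{x=0} = \eps u_s v_{xxx}\big|_{x=0} + 2 u_s u_{sy} q_{xy}\big|_{x=0} + u_s^2 q_{xyy}\big|_{x=0} - \eps u_s u_{sxx} q_x\big|_{x=0}.
\end{equation*}
The leading term pairs with the multiplier trace $\eps^2 v_{xxx}|_{x=0} w^2$ to produce precisely the favorable coercive quantity $\|\eps^{3/2}\sqrt{u_s} v_{xxx} w\|_{x=0}^2$ on the left-hand side.

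The remaining cross traces are bounded by $\sqrt{\bar{B}(1)} \cdot |||q|||_1$ after invoking the trace estimates (\ref{pff}) and (\ref{gp.3}), and then Young's inequality in the form $ab \le \tfrac{\zeta^3}{2}a^2 + \tfrac{1}{2\zeta}b^2$ produces the contributions $\tfrac{1}{\zeta}\bar{B}(1) + \zeta^3 |||q|||_1^2$ on the right-hand side of (\ref{beat.1}). The interior remainder $(\p_{xx}R[q], \eps^2 v_{xxx}w^2)$ is handled by the Poincare bound $\|\eps^2 v_{xxx}w\| \lesssim L\|\eps^2 v_{xxxx}w\|$ (valid since $v_{xxx}|_{x=L}=0$), after expanding $\p_{xx}R[q]$ and converting the top-order piece $\eps u_s^2 q_{xxxx}$ into $\eps u_s v_{xxxx}$ modulo lower-order $q$-contributions, which gives a factor $L$ absorbable into the left-hand side. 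The $J(v)$ terms are estimated term-by-term along the lines of Lemma \ref{Lemma.bx.estimate}, producing bounds by $\sqrt{\eps}|||q|||_1^2 + |||q|||_{\sqrt{\eps}w}^2 + |||q|||_{\sqrt{\eps}w}|||q|||_{w_y}$, while the forcing is deposited directly as $|(F,\eps^2 v_{xxxx}w^2)|$. The principal obstacle is the precise bookkeeping of the $x=0$ boundary traces of $\p_x R[q]$: isolating the coercive quantity $\|\eps^{3/2}\sqrt{u_s} v_{xxx} w\|_{x=0}^2$ requires the cancellation against the nonlocal boundary condition on $q_{xx}|_{x=0}$, after which the Young parameter $\zeta$ must be tuned to match the asymmetric coupling between $|||q|||_1$ and $\bar{B}(1)$ appearing on the right-hand side.
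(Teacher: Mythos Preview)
Your route—integrating the entire Rayleigh term by parts once in $x$ to isolate a trace at $x=0$ and an interior remainder $(\p_{xx}R[q],\eps^2 v_{xxx}w^2)$—differs from the paper's and, as written, has a gap in the interior piece. Writing $R[q]=u_s v_{yy}-u_{syy}v+\eps(u_s v_{xx}-u_{sxx}v)$, the expansion of $\p_{xx}R[q]$ has \emph{two} top-order contributions: the one you name, $\eps u_s v_{xxxx}$, and also $u_s v_{xxyy}$ (equivalently $u_s^2 q_{xxyy}$), which carries no $\eps$-prefactor. For this second piece your Poincar\'e step gives only
\[
|(u_s v_{xxyy},\eps^2 v_{xxx}w^2)|\lesssim L\,\|v_{xxyy}w\|\,\|\eps^2 v_{xxxx}w\| = L\eps^{-1}\,\|\eps v_{xxyy}w\|\,\|\eps^2 v_{xxxx}w\|,
\]
and since $\eps\ll L$ the factor $L\eps^{-1}$ cannot be absorbed into the left-hand side. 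The boundary cross term $(u_s^2 q_{xyy},\eps^2 v_{xxx}w^2)_{x=0}$ has the same obstruction: it reduces to $(u_s v_{xyy},\eps^2 v_{xxx}w^2)_{x=0}$, and Cauchy--Schwarz forces a $\sqrt{u_s}$ mismatch between the coercive trace $\|\eps^{3/2}\sqrt{u_s}v_{xxx}w\|_{x=0}$ and $\bar B(w)$ (which carries a full $u_s$ on $v_{xyy}$). Neither (\ref{pff}) nor (\ref{gp.3}) gives control of $q_{xyy}$ at $x=0$. Also, the Young split $ab\le \tfrac{\zeta^3}{2}a^2+\tfrac{1}{2\zeta}b^2$ is invalid for small $\zeta$ (it would require $\zeta^2\ge 1$).

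The paper instead treats the Rayleigh pieces in the $v$-formulation one at a time. The coercive trace at $x=0$ comes directly from $(-\eps\p_x\{u_s v_{xx}\},\eps^2 v_{xxxx}w^2)$. The genuinely difficult contribution is the \emph{interior} term $-(\eps^2 u_s v_{xyy}, v_{xxxx}w^2)$ arising from $-\p_x\{u_s v_{yy}\}$, and this is precisely where $\zeta$ enters—through a localization in $y$, not a Young parameter on a trace product. On $\{y\le\zeta\}$ one uses $|u_s|\lesssim\zeta$ to bound the term against the left-hand side, producing the $\zeta^3|||q|||_1^2$ contribution. On $\{y\gtrsim\zeta\}$ one integrates by parts in $x$; the resulting trace $(\eps^2 u_s v_{xyy}, v_{xxx}w^2)_{x=0}$ is handled by inserting $\sqrt{u_s}/\sqrt{u_s}$ and invoking $u_s\gtrsim\zeta$ on the support, which costs $\zeta^{-1/2}$ and, after Young, yields $\tfrac{1}{\zeta}\bar B(1)$. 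A further integration by parts in $y$ on the remaining bulk, together with the splitting $u_{sy}=u^P_{sy}+\sqrt{\eps}\,u^E_{sY}$ (the Prandtl part absorbing the weight), produces the $\sqrt{\eps}|||q|||_1^2$ and $|||q|||_{\sqrt{\eps}w}|||q|||_{w_y}$ terms. This $y$-localization at scale $\zeta$ is the idea missing from your proposal.
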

\begin{proof} We will compute the inner-product $( \text{Equation } (\ref{eqn.dif.1}), \eps^2 v_{xxxx} w^2)$. 

\subsubsection*{\normalfont \textit{Step 1: Estimate of Rayleigh Terms}}

The main estimate of this step is: 
\begin{align} \label{4Ray1}
(-\p_x R[q], \eps^2 v_{xxxx} w^2 ) \gtrsim &\|\sqrt{u_s} \eps^{\frac{3}{2}} v_{xxx}w\|_{x = 0}^2 - ( \bar{B}(w) + \zeta^{-1} \bar{B}(1)) \\ \n
&+ (\zeta^3 + \sqrt{\eps})|||q|||_1^2 +  |||q|||_{\sqrt{\eps} w}^2  - o(1) \text{LHS(\ref{beat.1})} \\ \n
& - |||q|||_{\sqrt{\eps}w}|||q|||_{w_y}.
\end{align}

First, we will extract the positive terms: 
\begin{align} \n
( -\eps \p_x \{ u_s v_{xx} \}, \eps^2 v_{xxxx} w^2) = &(\eps^3 u_{sxx} v_{xx},v_{xxx} w^2) + \frac{3}{2} \|\eps^{\frac{3}{2}} \sqrt{ |u_{sx}|} v_{xxx} w \|^2  \\ \label{Rara.1}
& + \frac{1}{2} \|\sqrt{u_s} \eps^{\frac{3}{2}} v_{xxx} w\|_{x=0}^2 \\ \n
\gtrsim & \frac{1}{2} \|\sqrt{u_s} \eps^{\frac{3}{2}} v_{xxx} w\|_{x= 0}^2 - \| \eps v_{xxx} \cdot \sqrt{\eps}w \|^2.
\end{align}

The lower order Rayleigh term is treated as follows, using the Poincare inequality paired with $v_x|_{x = L} = 0$: 
\begin{align*}
|(\eps^3 \p_x \{ u_{sxx}v \},  v_{xxxx} w^2 ) | & \lesssim \| \eps^2 v_{xxxx} \cdot w \| \| \sqrt{\eps}v_x \cdot \sqrt{\eps}w \|  \\
& \lesssim L \times \text{LHS}(\ref{beat.1}) + L |||q|||_{\sqrt{\eps}w}^2.
\end{align*}

The next Rayleigh contributions are of the following form: 
\begin{align} \label{gggre}
- ( \p_x\{u_s v_{yy}\} \cdot \eps^2 v_{xxxx} w^2) = - ( \eps^2 [u_{sx} v_{yy} + u_s v_{xyy}],v_{xxxx} w^2).
\end{align}

For the first term from (\ref{gggre}), we integrate by parts in $x$ with no boundary contributions according to (\ref{eqn.dif.1}):
\begin{align*}
(\ref{gggre}.1) = & ( \eps^2 u_{sxx} v_{yy},v_{xxx} w^2) + (\eps^2 u_{sx}v_{xyy},v_{xxx}w^2) \\
\lesssim & \| \eps v_{xxx} \cdot \sqrt{\eps}w \| \| v_{xyy} \sqrt{\eps}w \|. 
\end{align*}

For the latter term, we require a localization. Recall the definition of (\ref{basic.cutoff}) and define: 
\begin{align*}
\chi_{\le \zeta}(y) := \chi(\frac{y}{\zeta}), \hspace{3 mm} \chi_{\zeta \le y \le 1}(y) := \chi(y) - \chi(\frac{y}{\zeta}), \hspace{3 mm} \chi_{\ge 1}(y) := 1 - \chi(y). 
\end{align*}

We then decompose:
\begin{align*}
(\ref{gggre}.2) = - ( \eps^2 u_s v_{xyy}, v_{xxxx} w^2 [ \chi_{\le \zeta}(y) + \chi_{\zeta \le y \le 1}(y) +  \chi_{\ge 1}(y)]). 
\end{align*}

Using that $u_s(0) = 0$ and $|\p_y u_s| \lesssim 1$ gives that $|u_s| \lesssim y \lesssim \zeta$ in the support of $\chi_{\le \zeta}$, and so we estimate with Young's inequality for products: 
\begin{align} \n
|( \eps^2 u_s v_{xyy}, v_{xxxx} w^2 \chi_{\le \zeta})| &\lesssim \zeta \| w \chi_{\le \zeta} \|_\infty \| v_{xyy} \| \| \eps^2 v_{xxxx} \cdot w \|  \\ \label{balloon1}
& \le o(1) \underbrace{\| \eps^2 v_{xxxx} \cdot w \|^2}_{\bigO(LHS)} + N \zeta^3 |||q|||_1^2, 
\end{align}

\noindent   for some large number $N$. Both of these contributions are acceptable to the right-hand side of (\ref{4Ray1}).

Let now $\phi = \chi_{\zeta \le y \le 1}$ or $\chi_{y \ge 1}$. We integrate by parts the term in (\ref{gggre}.2), and use that $v_{xxx}|_{x = L} = 0$ to produce only boundary contributions at $\{x = 0\}$: 
\begin{align} \n
- ( \eps^2 u_s v_{xyy}, v_{xxxx} w^2 \phi) = &(\eps^2 u_s v_{xxyy}, v_{xxx} w^2 \phi) + (\eps^2 u_{sx} v_{xyy}, v_{xxx} w^2 \phi) \\ \label{gtch1}
& + (\eps^2 u_s v_{xyy}, v_{xxx} w^2 \phi)_{x = 0}.
\end{align}

We estimate: 
\begin{align*}
&|(\ref{gtch1}.2)| \lesssim \| v_{xyy} \sqrt{\eps} w \| \| \eps^{\frac{3}{2}}v_{xxx} w \|, \\
&|(\ref{gtch1}.3)| \lesssim \frac{1}{\sqrt{\zeta}} \|\eps^{\frac{3}{2}}\sqrt{u_s}v_{xxx}\|_{x = 0} \| u_s \sqrt{\eps}v_{xyy}\|_{x = 0} \\
& \hspace{15 mm} + \|\eps^{\frac{3}{2}}\sqrt{u_s}v_{xxx} w\|_{x = 0} \| u_s \sqrt{\eps}v_{xyy}w\|_{x = 0} \\
& \hspace{12 mm} \le \frac{N}{\zeta} \bar{B}(1)^2 + o(1) \text{LHS}(\ref{beat.1}) + N \bar{B}(w)^2,
\end{align*}

\noindent   for a potentially large constant $N$. Above for (\ref{gtch1}.3), we have split into two cases: 
\begin{align}
\begin{aligned} \label{split.same}
|(\eps^2 u_s v_{xyy}&, v_{xxx}w^2 \chi_{\zeta \le y \le 1})_{x= 0}| \\
=& |(\eps^2 u_s v_{xyy}, v_{xxx} w^2 \frac{\sqrt{u_s}}{\sqrt{u_s}}\chi_{\zeta \le y \le 1})_{x = 0}| \\
\le & \frac{1}{\sqrt{\zeta}} \| \frac{\sqrt{\zeta}}{\sqrt{u_s}} \chi_{\zeta \le y \le 1} \|_\infty \| \eps^{\frac{3}{2}} \sqrt{u_s} v_{xxx} \|_{x = 0} \| u_s \sqrt{\eps}v_{xyy} \|_{x = 0},
\end{aligned}
\end{align}

\noindent  whereas in the $\phi = \chi_{\ge 1}$ case, we use that $u_s \gtrsim 1$. For the highest order term, (\ref{gtch1}.1), we integrate by parts in $y$ to get: 
\begin{align*}
(\ref{gtch1}.1) = & - (\eps^2 u_{sy} v_{xxy}, v_{xxx} w^2 \phi) - (\eps^2 u_s v_{xxy}, v_{xxxy} w^2 \phi) \\
& -2 (\eps^2 u_s v_{xxy}, v_{xxx} ww_y \phi) - (\eps^2 u_s v_{xxy}, v_{xxx} w^2 \phi_y).
\end{align*} 

First, we estimate the lower order terms: 
\begin{align*}
&|(\ref{gtch1}.1.1)| \le \sqrt{\eps} \| u^P_{sy} w^2 \|_\infty \| \sqrt{\eps}v_{xxy} \| \| \eps v_{xxx} \| \\
& \hspace{20 mm} + \sqrt{\eps} \| u^E_{sY} \|_\infty \| \sqrt{\eps}v_{xxy} \sqrt{\eps}w \| \| \eps v_{xxx} \sqrt{\eps} w \|, \\
&|(\ref{gtch1}.1.3)| \lesssim \| \sqrt{\eps}v_{xxy} \sqrt{\eps}w \| \| \eps v_{xxx} w_y \| \lesssim |||q|||_{\sqrt{\eps}w}|||q|||_{w_y}, \\
&|(\ref{gtch1}.1.4)| \lesssim \| u_s \p_y \phi w^2 \|_\infty \sqrt{\eps} \| \sqrt{\eps}v_{xxy} \| \| \eps v_{xxx} \|. 
\end{align*}

For (\ref{gtch1}.1.1), we split $u_{sy} = u^P_{sy} + \sqrt{\eps} u^E_{sY}$ according to (\ref{profile.splitting}). We highlight above that (\ref{gtch1}.2.3) is an acceptable term into the right-hand side of (\ref{4Ray1}). For the term (\ref{gtch1}.1.4), we use that the following quantity is bounded independent of $\zeta$: 
\begin{align*}
\|u_s \p_y \phi\|_\infty = & \|u_s \p_y \{ \chi_{\zeta \le y \le 1} + \chi_{y \ge 1} \}\|_\infty \\
= & \|u_s \p_y \{ \chi(y) - \chi(\frac{y}{\zeta}) + 1- \chi(y) \}\|_\infty \\
= &\| - u_s \frac{1}{\zeta} \chi'(\frac{y}{\zeta})\|_\infty \lesssim 1. 
\end{align*}

The highest order term, (\ref{gtch1}.1.2), we integrate by parts in $x$ to produce: 
\begin{align*}
(\ref{gtch1}.1.2) =& \frac{1}{2} (\eps^2 u_{sx} v_{xxy}, v_{xxy} w^2 \phi) - \frac{1}{2}\|\eps \sqrt{u_s} v_{xxy}w \sqrt{\phi}\|_{x = L}^2 \\
\lesssim & \| \sqrt{\eps}v_{xxy} \sqrt{\eps} w \|^2 + \frac{1}{\zeta} \bar{B}(1) + \bar{B}(w). 
\end{align*}

This concludes the treatment of (\ref{gtch1}). Piecing together (\ref{gtch1}) and (\ref{balloon1}), we complete the estimate of (\ref{gggre}.2). Summarizing the above estimates: 
\begin{align*}
|(\ref{gggre}.2)| \lesssim &\zeta^3 |||q|||_1^2 + \sqrt{\eps} |||q|||_1^2 + ||| q |||_{\sqrt{\eps}w}^2 + \frac{1}{\zeta} \bar{B}(1) + \bar{B}(w) \\
& + |||q|||_{\sqrt{\eps}w}|||q|||_{w_y} + o(1) \text{LHS}(\ref{beat.1}).
\end{align*}

Above, we have performed a splitting identical to (\ref{split.same}) to estimate the boundary term. For the next Rayleigh contribution, we integrate by parts in $x$ and expand: 
\begin{align} \label{mot1}
( \p_x \{ u_{syy}v \}, \eps^2 v_{xxxx} w^2) = &- ( \eps^2 v_{xxx} w^2, u_{sxxyy}v) -(2 \eps^2 v_{xxx} w^2, u_{sxyy} v_x) \\ \n
& - ( \eps^2 v_{xxx} w^2, u_{syy} v_{xx}) - (\eps^2 v_{xxx} w^2, u_{syy}v_x)_{x = 0}.
\end{align}

Upon using the decomposition (\ref{split.same}) to write: 
\begin{align} \label{use.yy}
\p_x^j u_{syy} = \p_x^j u^P_{syy} + \eps  \p_x^j u^E_{sYY}, 
\end{align}

\noindent   we estimate: 
\begin{align*}
|(\ref{mot1}.\{1,2,3\})| \le \sqrt{\eps} |||q|||_1^2 + \eps ||| q |||_{\sqrt{\eps}w}^2.
\end{align*}

Next, again using (\ref{use.yy}):
\begin{align*}
|(\ref{mot1}.4)| \lesssim &\| u^P_{syy} w \langle y \rangle \|_\infty \|\eps^{\frac{1}{4}} v_x \langle y \rangle^{-1}\|_{x = 0} \|\eps^{\frac{3}{2}}v_{xxx} w\|_{x = 0} \eps^{1/4} \\
& +\eps \|\eps^{\frac{3}{2}}v_{xxx} w\|_{x = 0} \|\sqrt{\eps}v_x \sqrt{\eps}w\|_{x = 0} \\
\lesssim & \eps^{\frac{1}{4}} |||q|||_w ||||v||||_w + \eps ||||v||||_w |||q|||_1,
\end{align*}

\subsubsection*{\normalfont \textit{Step 2: $\Delta_\eps^2$ Terms}} 

This is done in (\ref{sum.vxxxx.lap}).

\subsubsection*{\normalfont \textit{Step 3: $J(v)$ Terms}}

\begin{align} \n
|(J,  \eps^2 v_{xxxx}w^2)|  \lesssim ||| q |||_{\sqrt{\eps}w}^2 + \eps |||q|||_1^2.
\end{align}

Recalling the definition of $J$ from (\ref{defn.J.conc}), we expand 
We expand: 
\begin{align} \n
&(J, \eps^2 v_{xxxx} w^2) \\ \n
& = - ( \eps^3 v_{sx} v_{xy},v_{xxxx} w^2) - ( v_s v_{yyy}, \eps^2 v_{xxxx} w^2) \\ \n
& - ( \eps^3 v_s v_{xxy}, v_{xxxx} w^2) + (\eps^2 v_y, v_{xxxx} \Delta_\eps v_s w^2)  \\ \label{jeat}
& - (v_{sx}I_x[v_{yyy}] , \eps^2v_{xxxx}w^2) + (\Delta_\eps v_{sx}I_x[v_y], \eps^2 v_{xxxx}w^2). 
\end{align}

Next, we integrate by parts in $x$, and there are no boundary contributions at $x = 0$ due to $I_x[f]|_{x = 0} = 0$ by definition:
\begin{align*}
&(\ref{jeat}.5) =  - ( v_{sxx} I_x[v_{yyy}], \eps^2 v_{xxx} w^2) - ( v_{sx} v_{yyy}, \eps^2 v_{xxx} w^2) \\
& \hspace{14 mm} \lesssim \| v_{yyy} \sqrt{\eps}w \| \| \eps v_{xxx} \sqrt{\eps}w \| \lesssim ||| q |||_{\sqrt{\eps}w}^2.
\end{align*}

Similarly, an integration by parts in $x$ produces: 
\begin{align*}
(\ref{jeat}.6) = &-(\Delta_\eps v_{sxx} I_x[v_y] + \Delta_\eps v_{sx} v_y, \eps^2 v_{xxx}w^2)  \lesssim L \| v_{xy} \sqrt{\eps} w \| \| \eps v_{xxx} \sqrt{\eps} w \|.
\end{align*}

For (\ref{jeat}.1), we perform Young's inequality for products: 
\begin{align*}
|(\ref{jeat}.1)| \lesssim  \sqrt{\eps}  \|\eps^2 v_{xxxx} \cdot w \| \|v_{xy} \cdot \sqrt{\eps} w \|  \lesssim \delta \| \eps^2 v_{xxxx} w \|^2 + N_\delta \eps \| v_{xy} \sqrt{\eps}w \|^2. 
\end{align*}

We will now integrate by parts in $x$ to produce: 
\begin{align*}
(\ref{jeat}.2) = & ( \eps^2 v_{sx} v_{yyy}, v_{xxx} w^2) + ( \eps^2 v_s v_{xyyy}, v_{xxx} w^2).
\end{align*}

The first term can be majorized by $\| v_{yyy} \sqrt{\eps}w \| \| \eps v_{xxx} \sqrt{\eps}w \| $, which is clearly admissible. For the latter term, we integrate by parts in $y$:
\begin{align*}
(\ref{jeat}.2.2) = & - (\eps^2 v_{sy} v_{xyy}, v_{xxx}w^2) - (\eps^2 v_s v_{xyy}, v_{xxxy}w^2) - 2(\eps^2 v_s v_{xyy}, v_{xxx} ww_y).
\end{align*}

\noindent   The first and third are evidently majorized by $\| v_{xyy} \sqrt{\eps} w \| \| \eps v_{xxx} \sqrt{\eps}w \|$ upon using $|w_y| \lesssim |w|$. The middle term can be majorized $\| v_{xyy} \sqrt{\eps}w \| \| \eps^{\frac{3}{2}}v_{xxxy} w \|$ upon which we use Young's inequality for products. This concludes the bound for (\ref{jeat}.2).

Next, for (\ref{jeat}.3), an integration by parts first in $x$, using that $v_{xx}|_{x = 0} = v_{xxx}|_{x = L} = 0$, and then in $y$ for the highest order term produces: 
\begin{align*}
(\ref{jeat}.3) = & (\eps^3 v_{sx} v_{xxy}, v_{xxx}w^2) + (\eps^3 v_{s} v_{xxxy}, v_{xxx} w^2)\\
= & (\eps^3 v_{sx} v_{xxy},v_{xxx} w^2) - \frac{1}{2}(\eps^3 v_{sy}v_{xxx}, v_{xxx} w^2) - (\eps^3 v_{s}v_{xxx}, v_{xxx} ww_y) \\
\lesssim & \| \sqrt{\eps}v_{xxy} \cdot \sqrt{\eps}w \|^2 + \| \eps v_{xxx} \cdot \sqrt{\eps}w \|^2 \lesssim  ||| q |||_{\sqrt{\eps}w}^2.
\end{align*}

Finally, for (\ref{jeat}.4), we again integrate by parts in $x$ using that $v_y|_{x = 0} = v_{xxx}|_{x = L} = 0$, and use that: 
\begin{align} \label{delta.dot}
\Delta_\eps v_s = \Delta_\eps v^P_s + \eps \Delta v^E_s,
\end{align} 

\noindent   we estimate
\begin{align*}
(\ref{jeat}.4) = & - ( \eps^2 v_{xy} ,  \Delta_\eps v_s v_{xxx} w^2) - ( \eps^2 v_y ,  \Delta_\eps v_{sx} v_{xxx} w^2) \\
\lesssim & \eps \| \Delta_\eps v^P_s w^2 \|_\infty \| \eps v_{xxx} \| \| v_{xy}\| + \sqrt{\eps} \| \Delta v^E_s \|_\infty \|\eps v_{xxx} \sqrt{\eps}w \| \| v_{xy} \sqrt{\eps}w \| \\
\lesssim & \eps |||q|||_1^2 + \sqrt{\eps} |||q|||_{\sqrt{\eps}w}^2.
\end{align*}

\noindent This concludes the treatment of $J(v)$ terms. 

To conclude the proof, we simply place $|(F, \eps^2 v_{xxxx}w^2)|$ to the right-hand side of the desired estimate. 
\end{proof}


\begin{lemma}  Let $v$ be a solution (\ref{eqn.dif.1}). Then the following estimate holds: 
\begin{align*}
\| v_{yyyy} w \|_{2} \lesssim & \text{RHS of Estimates } (\ref{EST.2nd.order}), (\ref{B3.estimate}), (\ref{B4.estimate}), (\ref{beat.1}), (\ref{beat.2}) \\
& + o_L(1) \Big[ \|\{v_{sx} \cdot u^0_{yy} - u^0 \Delta_\eps v_{sx} \} w\| \Big]
\end{align*}
\end{lemma}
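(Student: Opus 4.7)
The strategy is to use equation (\ref{eqn.dif.1}) as an identity for $v_{yyyy}$. Since $\Delta_\eps^2 v = v_{yyyy} + 2\eps v_{xxyy} + \eps^2 v_{xxxx}$, rearranging gives
\begin{equation*}
v_{yyyy} = F + \p_x R[q] - J(v) - 2\eps v_{xxyy} - \eps^2 v_{xxxx},
\end{equation*}
so the $L^2(w^2)$-norm satisfies
\begin{equation*}
\|v_{yyyy} w\|^2 \lesssim \|Fw\|^2 + \|\p_x R[q]\, w\|^2 + \|J(v)w\|^2 + \|\eps v_{xxyy} w\|^2 + \|\eps^2 v_{xxxx} w\|^2.
\end{equation*}
The plan is to match each of the five terms on the right to one of the previously established estimates.

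The fourth-order pieces $\|\eps v_{xxyy}w\|^2$ and $\|\eps^2 v_{xxxx}w\|^2$ appear explicitly on the left-hand side of (\ref{beat.1}) (and the former also on (\ref{beat.2})), so they inherit the bounds furnished by those lemmas. For the Rayleigh contribution I would expand
\begin{equation*}
\p_x R[q] = \p_{xy}\{u_s^2 q_y\} + \eps\, \p_{xx}\{u_s^2 q_x\},
\end{equation*}
and distribute derivatives. The top-order pieces $u_s^2 q_{xyy}$ and $\eps u_s^2 q_{xxx}$ are controlled (using $\|u_s\|_\infty \lesssim 1$ to absorb one factor of $u_s$) by $\|\sqrt{u_s}\{q_{xyy},\sqrt{\eps}q_{xxy},\eps q_{xxx}\}w\|$, which sits on the LHS of (\ref{B3.estimate}). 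Every remaining Rayleigh piece reduces to $\|q_y\|$, $\|q_{yy}w\|$, $\|q_{xy}w\|$, $\|\sqrt{\eps}q_x w\|$ or $\|\sqrt{\eps}q_{xx}w\|$, all of which are contained in $|||q|||_w$ via (\ref{systemat1})-(\ref{small.guys}) and are therefore supplied by the RHSes of (\ref{EST.2nd.order}), (\ref{B3.estimate}) and (\ref{B4.estimate}).

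The $J(v)$-contribution is treated term-by-term from (\ref{defn.J.conc}). The profile factors $v_s,v_{sx},\Delta_\eps v_s$ are uniformly bounded, so the local pieces $v_s v_{yyy}$, $\eps v_{sx}v_{xy}$, $\eps v_s v_{xxy}$, $v_y\Delta_\eps v_s$ are immediately dominated by $|||q|||_w$ (or $|||q|||_{\sqrt{\eps}w}$). For the two $I_x$-terms I exploit $v|_{x=0}=0$, which gives $I_x[v_y]=v$ and $I_x[v_{yyy}]=v_{yy}$, folding them into the same bracket; again these are subsumed into $|||q|||_w$ and hence into the five RHSes.

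The main obstacle, and the origin of the $o_L(1)\bigl[\|\{v_{sx}u^0_{yy}-u^0\Delta_\eps v_{sx}\}w\|\bigr]$ residual, is the forcing term $\|Fw\|^2$. When the DNS template is applied to the full remainder problem, $F$ is built from the NS-vorticity equation via $u=u^0-I_x[v_y]$, and a direct computation gives
\begin{equation*}
\p_x\bigl[v_s \Delta_\eps u - u \Delta_\eps v_s\bigr] = J(v) + \bigl(v_{sx}u^0_{yy} - u^0\Delta_\eps v_{sx}\bigr),
\end{equation*}
so the piece $v_{sx}u^0_{yy}-u^0\Delta_\eps v_{sx}$ is pushed into $F$. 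Every other component of $F$ (NS source, homogenization data, quadratic nonlinearity) is already accounted for inside the $|(F,\cdot)|$ pairings on the RHSes of the five cited estimates. Only this one residual must be carried separately, and Poincare in $x$ (using $v|_{x=0}=0$, together with the smallness of $L$ inherent in the setup) yields the $o_L(1)$ prefactor. Combining all of the above produces the claimed bound.
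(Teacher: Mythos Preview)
Your overall strategy---isolate $v_{yyyy}$ from the equation and place each remaining piece in $L^2(w)$---is exactly the paper's approach. The paper records essentially the same identity (its (\ref{ucd.1}), written with the full $u$ before the splitting $u=u^0-I_x[v_y]$) and then observes that every term except the two $u^0$--contributions is controlled by the \emph{left}-hand sides of the five cited estimates, hence by their right-hand sides.

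That said, there are three concrete errors in your execution. First, the identities $I_x[v_y]=v$ and $I_x[v_{yyy}]=v_{yy}$ are false: $I_x[\cdot]=\int_0^x(\cdot)\,dx'$ integrates in $x$, not $y$, so $I_x[v_y]=u^0-u$ (from divergence-free), not $v$. The $I_x$--terms in $J(v)$ are nevertheless harmless via $\|I_x[f]\|\le L\|f\|$, but your stated reason is wrong. Second, your handling of $\|Fw\|^2$ does not work: a squared norm $\|Fw\|^2$ is not bounded by bilinear pairings $|(F,\text{multiplier})|$, and since the lemma is stated for the \emph{abstract} system (\ref{eqn.dif.1}) with generic $F$, appealing to the concrete Navier--Stokes structure of $F$ is out of place here. (The paper's identity (\ref{ucd.1}) simply omits $F$; read the lemma as tacitly carrying $\|Fw\|$ alongside the displayed right-hand side.) Third, the $o_L(1)$ in front of $\|\{v_{sx}u^0_{yy}-u^0\Delta_\eps v_{sx}\}w\|$ cannot come from Poincar\'e in $x$ via $v|_{x=0}=0$: the expression in braces involves only the profile $v_s$ and the boundary datum $u^0$, not $v$, and it does not vanish at $x=0$. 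The smallness is simply the $\sqrt{L}$ picked up because the $L^2_{xy}$-norm integrates over $x\in[0,L]$.
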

\begin{proof} We use the equation (\ref{eqn.dif.1}) to write the identity: 
\begin{align} \label{ucd.1}
\begin{aligned}
v_{yyyy} =& - 2 \eps v_{xxyy} - \eps^2 v_{xxxx} - \p_y \{ u_s^2 \p_y q \} - \eps \p_x \{ u_s^2 \p_x q \} \\ 
& + v_y \Delta_\eps v_s - u \Delta_\eps v_{sx} - v_{sx} \Delta_\eps u + v_s \Delta_\eps v_y.
\end{aligned}
\end{align}

We will place each term in $L^2(w)$. It is easy to see that all of the terms aside from (\ref{ucd.1}.6) and $(\ref{ucd.1}.7)[\p_{yy}]$ are controlled by the left-hand sides of estimates (\ref{EST.2nd.order}), (\ref{B3.estimate}), (\ref{B4.estimate}), (\ref{beat.1}), (\ref{beat.2}), whereas (\ref{ucd.1}.6) and $(\ref{ucd.1}.7)[\p_{yy}]$ are clearly controlled by the $u^0$ terms appearing in the desired estimate. 
\end{proof}

\section{Solution to DNS and NS} \label{Subsection.nonlin}

The aim in this section is to bring together the estimates of the prior sections.Recall our ultimate aim is the nonlinear problem defined by (\ref{sys.u0.app.unh}) and (\ref{spec.nl}). Motivated by these, we define the problem of interest in this section: 
\begin{align} \label{linearised.1}
&-\p_x R[q] + \Delta_\eps^2 v + J(v) \\ \n
& \hspace{10 mm} =  - B_{(v^0)}(\bar{v}^0) - \eps^{N_0} \mathcal{N}(\bar{u}^0, \bar{v}^0, \bar{v})  + F_{(q)}(\bar{u}^0, \bar{v}^0, \bar{v}), \\ \label{linearised.2}
&\mathcal{L} v^0 = F_{(v)}(\bar{v}) + \mathcal{Q}(\bar{u}^0, \bar{v}^0, \bar{v}) + \mathcal{H} + F^a_R.
\end{align}

\noindent Recall the definition of $F_{(q)}$ from (\ref{spec.nl}). While $\p_x F_R$, $\p_x b_{(u)}(a^\eps)$, and the $h$-dependent terms are pure forcing terms, $H[a^\eps]$ is linear. We thus take $H[a^\eps] = H[a^\eps][\bar{v}, \bar{u}^0, \bar{v}^0]$. 

We build the following linear combinations: 
\begin{align}\n
&\mathcal{B}_{X_1} := ( B_{(v^0)}, q_x + q_{xx} + q_{yy} \\ \label{defn.BX1}
& \hspace{20 mm} + \eps^{-\frac{3}{8}}\eps^2 v_{xxxx} + \eps^{-\frac{3}{8}}\eps^{-\frac{1}{8}}\eps u_s v_{xxyy}), \\ \n
&\mathcal{B}_{Y_w} := ( B_{(v^0)}, \{ \eps q_x + \eps q_{xx} + \eps q_{yy} + \eps^2 v_{xxxx} + \eps u_s v_{xxyy}\} w^2 \\ \label{defn.BYw}
& \hspace{20 mm} + \{\eps^2 v_{xxxx} + \eps^{-\frac{1}{8}} \eps u_s v_{xxyy} \}).
\end{align}

\noindent The quantities $\mathcal{N}_{X_1}, \mathcal{N}_{Y_w}$ are defined as above, with $\eps^{N_0}\mathcal{N}(\bar{u}^0, \bar{v}^0, \bar{v})$ taking the place of $B_{(v^0)}$. Similarly, the quantities $\mathcal{F}_{X_1}, \mathcal{F}_{Y_w}$ are defined as above, with $F_{(q)}$ taking the place of $B_{(v^0)}$.  As a notational point, we will sometimes need to think of $\mathcal{F}_{X_1}, \mathcal{F}_{Y_w}$ as a bilinear term. In this case, we introduce the notation $\mathcal{F}_{X_1}(F_{(q)}(\bar{v}, \bar{u}^0, \bar{v}^0), q)$ (and same with $\mathcal{F}_{Y_w}$, and $\mathcal{F}_{B}$ below). 

We also define the quantities: 
\begin{align} 
\begin{aligned} \label{mathB}
&\mathcal{B}_{B} := |( F_{(v)}(\bar{v}), q^0)| + \| F_{(v)}(\bar{v}) w_0 \|^2, \\
&\mathcal{N}_B := |( \mathcal{Q}(\bar{u}^0, \bar{v}^0, \bar{v}) + \mathcal{H}, q^0)| + \| \{\mathcal{Q}(\bar{u}^0, \bar{v}^0, \bar{v}) + \mathcal{H} \} w_0 \|^2, \\
&\mathcal{F}_B := |( F^a_R, q^0)| + \| F^a_R w_0 \|^2.  
\end{aligned}
\end{align}

\noindent One sees from the specification of $F^a_R$ in (\ref{sys.u0.app.unh}) that $F^a_R$ is a pure forcing term. 

The purpose of all of these definitions is:
\begin{lemma} Let $v$ be a solution to (\ref{linearised.1}) and $[u^0, v^0]$ a solution to (\ref{linearised.2}), and $\bold{u} \in \mathcal{X}$ as in (\ref{defn.norms.ult}). Then the following estimates are valid: 
\begin{align}
\begin{aligned} \label{bring1}
&\| v \|_{X_1}^2 \lesssim \mathcal{B}_{X_1} + \mathcal{N}_{X_1} + \mathcal{F}_{X_1}, \\
&[u^0, v^0]_B^2 \lesssim \mathcal{B}_B + \mathcal{N}_B + \mathcal{F}_B, \\
&\| v \|_{Y_{w_0}}^2 \lesssim \| v \|_{X_1}^2 + \mathcal{B}_{Y_{w_0}} + \mathcal{N}_{Y_{w_0}} + \mathcal{F}_{w_0}, 
\end{aligned}
\end{align}
\end{lemma}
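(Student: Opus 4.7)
The plan is to treat the three estimates separately and then recognize that each is a packaged consequence of the multiplier schemes already established in Sections~\ref{Section.1}--4.

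The middle estimate is essentially a quotation. Proposition~\ref{prop.L.exist} applied to (\ref{linearised.2}) with forcing $F = F_{(v)}(\bar v) + \mathcal{Q}(\bar u^0, \bar v^0, \bar v) + \mathcal{H} + F^a_R$ yields $[u^0,v^0]_B^2 \lesssim |(F,q^0)| + \|F w_0\|^2$. I then split each of the two right-hand side quantities linearly across the four summands of $F$, and the resulting pieces match, by definition (\ref{mathB}), the quantities $\mathcal{B}_B$, $\mathcal{N}_B$, and $\mathcal{F}_B$ (noting that $F^a_R$ is pure forcing, so it contributes only to $\mathcal{F}_B$).

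For the first estimate, I realize at the level of the bilinear identity the closed scheme (\ref{feelsjai}) from the overview. Concretely, I pair (\ref{linearised.1}) with the five test functions listed in the definition (\ref{defn.BX1}), namely $q_x$, $q_{xx}$, $q_{yy}$, $\eps^{-3/8}\eps^2 v_{xxxx}$, and $\eps^{-3/8}\eps^{-1/8}\eps u_s v_{xxyy}$. The first three multipliers are governed by Lemmas producing (\ref{EST.2nd.order}), (\ref{B3.estimate}), (\ref{B4.estimate}) and deliver coercivity over $|||q|||_1^2$ up to an $o_L(1)$ loss into the fourth-order norm $||||v||||_1^2$; the last two are governed by (\ref{beat.1}) and (\ref{beat.2}) and deliver coercivity over $||||v||||_1^2$ up to an $\eps^{3/8}$ loss into $|||q|||_1^2$. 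The weighting by $\eps^{-3/8}$ on the fourth-order pair, and the further $\eps^{-1/8}$ boost on the $\eps u_s v_{xxyy}$ piece (needed to absorb the $\bar B(1)/\zeta$ boundary term in (\ref{beat.1}) against the $\bar B(1)$ recovered from (\ref{beat.2})), ensure that the total loss on the right is of the form $(\eps^{1/8}+o_L(1))|||q|||_1^2 + o_L(1)\eps^{-3/8}||||v||||_1^2$, which is absorbable into $\|v\|_{X_1}^2 = \eps^{-3/8}||||v||||_1^2 + |||q|||_1^2$ provided $\eps, L$ are small. The inner products of the RHS $-B_{(v^0)} - \eps^{N_0}\mathcal{N} + F_{(q)}$ against this combination then match term by term the three bilinear quantities $\mathcal{B}_{X_1}$, $\mathcal{N}_{X_1}$, $\mathcal{F}_{X_1}$ in (\ref{defn.BX1}).

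The third estimate follows the same playbook in the weighted regime $w = w_0$. Since $\|v\|_{Y_{w_0}}^2 = ||||v||||_{w_0}^2 + \eps|||q|||_{w_0}^2$, I pair (\ref{linearised.1}) with the multipliers appearing in (\ref{defn.BYw}): $\eps q_x w_0^2$, $\eps q_{xx} w_0^2$, $\eps q_{yy} w_0^2$, $\eps^2 v_{xxxx} w_0^2$, and $\eps^{-1/8}\eps u_s v_{xxyy} w_0^2$, plus the unweighted $\eps^2 v_{xxxx}$ contribution. The $w=w_0$ versions of (\ref{EST.2nd.order})--(\ref{beat.2}) realize the feedback (\ref{feelsjaiw}); all error terms on their right-hand sides are of one of the following types, each of which is either absorbable or matches the right-hand side of the claim: (i) pure $|||q|||_1^2$ contributions (from (\ref{beat.1}), (\ref{beat.2})), absorbed into the $\|v\|_{X_1}^2$ term that explicitly appears; (ii) $o_L(1) ||||v||||_{w_0}^2$ contributions absorbed back; (iii) the bilinear commutator $\sqrt{\eps}|||q|||_{w_0}|||q|||_{w_{0,y}}$, controlled by Young's inequality together with $|w_{0,y}|\lesssim|w_0|$ from (\ref{weight.prop.1}). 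The main technical obstacle is bookkeeping rather than conceptual: one must choose the coefficients so that both the $(\eps^{1/8}+o_L(1))$ coupling between third- and fourth-order norms and the $\bar B(1)/\zeta$ boundary contribution in (\ref{beat.1}) close cleanly against the quantities recovered from (\ref{beat.2}), while keeping the right-hand side expressible as one of $\mathcal{B}_{Y_{w_0}}$, $\mathcal{N}_{Y_{w_0}}$, $\mathcal{F}_{Y_{w_0}}$. Once this is done, recognizing the paired inner products against the prescribed multipliers yields the estimate.
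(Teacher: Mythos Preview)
Your proposal is correct and follows essentially the same approach as the paper: the middle estimate is a direct quotation of Proposition~\ref{prop.L.exist}, and the other two are obtained by combining the five multiplier estimates (\ref{EST.2nd.order}), (\ref{B3.estimate}), (\ref{B4.estimate}), (\ref{beat.1}), (\ref{beat.2}) with the indicated coefficients, then recognizing the right-hand side as the defined bilinear quantities. One small bookkeeping correction for the $Y_{w_0}$ case: the multiplier list in (\ref{defn.BYw}) places the $\eps^{-1/8}$ boost on the \emph{unweighted} $\eps u_s v_{xxyy}$ piece (needed to absorb $\zeta^{-1}\bar B(1)$ in (\ref{beat.1})), not on the weighted one, and there is an additional unweighted $\eps^{-1/8}\eps u_s v_{xxyy}$ term you omitted; this does not affect the argument, only the precise matching with the definition.
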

\begin{proof} The $[u^0, v^0]_B$ bound follows immediately from (\ref{mainu0est}) upon selecting the forcing according to the right-hand side of (\ref{linearised.2}).

Next, we take the combination $\eps^{\frac{1}{8}} (\ref{beat.2}) + (\ref{beat.1})$ and $(\ref{B3.estimate}) + (\ref{B4.estimate}) + (\ref{EST.2nd.order})$ for $L << 1$ and $w = 1$, which produces
\begin{align*}
&||||v||||_1^2 \lesssim \eps^{\frac{3}{8}}|||q|||_1^2 + |(F, \eps^2 v_{xxxx} + \eps^{-\frac{1}{8}} \eps v_{xxyy})|, \\
&|||q|||_1^2 \lesssim o_L(1) |||q|||_1^2 + o_L(1) ||||v||||_1^2 + |(F, q_{yy} + q_{xx} + q_x)|. 
\end{align*}

\noindent From here, we conclude the $X_1$ estimate. 

Next, we take the combination $\eps^{\frac{1}{8}} (\ref{beat.2}) + (\ref{beat.1})$ and $(\ref{B3.estimate}) + (\ref{B4.estimate}) + (\ref{EST.2nd.order})$ for $L << 1$ and $w = w_0$, which produces 
\begin{align*}
&||||v||||_w^2 \lesssim \eps^{\frac{3}{8}} |||q|||_1^2 + |||q|||_{\sqrt{\eps}w}^2 + |||q|||_{w_y}^2 \\
& \hspace{20 mm} + |(F, \eps^2 v_{xxxx} w^2 + \eps v_{xxyy} u_s w^2 + \eps^{-\frac{1}{8}} \eps u_s v_{xxyy} )|, \\
&|||q|||_w^2 \lesssim o_L(1) ||||v||||_w^2 + o_L(1) \| q_{xx} \|_{w_y}^2 + |( F ,[q_{xx} + q_{yy} + q_x ] w^2)|
\end{align*}

\noindent From here, using the inequality $|w_{0y}| \lesssim 1 + \sqrt{\eps}|w_0|$, we conclude the $Y_{w_0}$ estimate.  
\end{proof}

Our aim now is to estimate each of the quantities appearing on the right-hand sides of (\ref{bring1}). We do this in a sequence of lemmas. 

\begin{lemma} \label{lemma.bb1} Let $\bold{u} \in \mathcal{X}$ as in (\ref{defn.norms.ult}). Let $C(h)$ denote a constant that is $\bigO(\| h \|_{\mathcal{C}^{M_0}(e^y)})$ for a large $M_0$. Then for $\mathcal{B}_{X_1}, \mathcal{B}_{Y_{w_0}}$ defined as in (\ref{defn.BX1}), (\ref{defn.BYw}), the following estimates are valid
\begin{align}
&|\mathcal{B}_{X_1}| \le o(1) \| v \|_{X_1}^2 + C \eps^{-\frac{1}{2}} [\bar{u}^0, \bar{v}^0]_B^2, \\
&|\mathcal{B}_{Y_{w_0}}| \le o(1) \| v \|_{Y_{w_0}}^2 + [\bar{u}^0, \bar{v}^0]_B^2. 
\end{align}
\end{lemma}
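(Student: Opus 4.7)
The two estimates reduce, via Cauchy–Schwarz and Young's inequality, to (i) weighted $L^2(\Omega)$ bounds on $B_{(v^0)}$ by $[\bar{u}^0,\bar{v}^0]_B$, together with (ii) norm-equivalences that control each test function in $\|v\|_{X_1}$ or $\|v\|_{Y_{w_0}}$. So the plan is first to unpack $B_{(v^0)}$ termwise and record suitable $L^2$ estimates, then dispatch each summand of $\mathcal{B}_{X_1}$ and $\mathcal{B}_{Y_{w_0}}$ by duality.

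For step (i), recall from (\ref{portrait}) that $B_{(v^0)}$ consists of $v^0_{yyyy}$, $-2\p_y\{u_s u_{sx} q^0_y\}$, the bracketed third-order pair $v^0_{yyy}\p_x\{(x{+}1)v_s\} - v^0_y \p_x\{(x{+}1)v_{syy}\}$, and the $\eps$-correction $-\eps v^0_y \p_x\{(x{+}1)v_{sxx}\}$. Since $v^0, u^0, q^0$ depend only on $y$, each summand is the product of a $y$-profile controlled by the $B$-norm (cf.\ (\ref{d.norm.B})) with a smooth coefficient in $(x,y)$ built from $u_s, v_s$, which is $L^\infty_{x,y}$-bounded by Theorem \ref{thm.construct}. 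Expanding $\p_y\{u_s u_{sx}q^0_y\} = (u_s u_{sx})_y q^0_y + u_s u_{sx} q^0_{yy}$ and pairing the $q^0_{yy}$ factor with the available $\|\sqrt{u_s} q^0_{yy} w_0\|$ in $B$, one obtains
\begin{equation*}
\|B_{(v^0)}\|_{L^2(\Omega)} + \|B_{(v^0)} w_0\|_{L^2(\Omega)} \lesssim \sqrt{L}\,[\bar{u}^0,\bar{v}^0]_B,
\end{equation*}
where the $\sqrt{L}$ comes from integrating the $y$-only profiles across $x\in[0,L]$.

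For step (ii) on $\mathcal{B}_{X_1}$, the unweighted pairings with $q_x, q_{xx}, q_{yy}$ are immediate from (\ref{systemat1}), which gives $\|q_x\|+\|q_{xx}\|+\|q_{yy}\| \lesssim |||q|||_1 \le \|v\|_{X_1}$, and Young's inequality yields $o(1)\|v\|_{X_1}^2 + C\|B_{(v^0)}\|^2 \le o(1)\|v\|_{X_1}^2 + C[\bar u^0,\bar v^0]_B^2$. The delicate term is $(B_{(v^0)}, \eps^{-3/8}\eps^2 v_{xxxx})$: Cauchy--Schwarz and $||||v||||_1 \le \eps^{3/16}\|v\|_{X_1}$ give
\begin{equation*}
|(B_{(v^0)}, \eps^{-3/8}\eps^2 v_{xxxx})| \lesssim \eps^{-3/8}\|B_{(v^0)}\|\,||||v||||_1 \lesssim \eps^{-3/16}\|B_{(v^0)}\|\,\|v\|_{X_1},
\end{equation*}
and Young's with splitting parameter $\delta=o(\eps^{3/8})$ produces $o(1)\|v\|_{X_1}^2 + C\eps^{-3/8}[\bar u^0,\bar v^0]_B^2 \le o(1)\|v\|_{X_1}^2 + C\eps^{-1/2}[\bar u^0,\bar v^0]_B^2$ (since $3/8 < 1/2$). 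The $\eps^{-3/8-1/8}\eps u_s v_{xxyy}$ pairing is handled identically using the $\sqrt{u_s}$-weighted piece of $||||v||||_1$; here the extra $\eps^{-1/8}$ is compensated by the improved coefficient $u_s$, and again $\eps^{-3/8-1/8}=\eps^{-1/2}$ lines up exactly with the stated RHS.

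For $\mathcal{B}_{Y_{w_0}}$, the situation is easier because of the uniform factor of $\eps$ on the $q$-type terms and no $\eps^{-3/8}$ penalty. The weighted tests $\{\eps q_x, \eps q_{xx}, \eps q_{yy}\} w_0^2$ satisfy $\|\eps q_* w_0\| \lesssim \sqrt{\eps}\cdot \sqrt{\eps}|||q|||_{w_0} \le \sqrt{\eps}\|v\|_{Y_{w_0}}$, while $\{\eps^2 v_{xxxx}, \eps u_s v_{xxyy}\}w_0^2$ are bounded by $||||v||||_{w_0} \le \|v\|_{Y_{w_0}}$, and the unweighted pair $\{\eps^2 v_{xxxx}, \eps^{-1/8}\eps u_s v_{xxyy}\}$ is dominated via $\|v\|_{Y_1}\le \|v\|_{Y_{w_0}}$. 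Cauchy--Schwarz against $\|B_{(v^0)} w_0\|$ and Young's with $\delta=o(1)$ give $o(1)\|v\|_{Y_{w_0}}^2 + C[\bar u^0,\bar v^0]_B^2$, completing the estimate.

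The only point needing care, and the main obstacle, is the arithmetic of $\eps$-exponents in the $X_1$ case: one must verify that the combined exponent from the test function's prefactor, the conversion $||||v||||_1 \le \eps^{3/16}\|v\|_{X_1}$, and the Young's splitting precisely matches $\eps^{-1/2}$ on the right-hand side. The calculation above shows this is the case (both the $v_{xxxx}$ and the $u_s v_{xxyy}$ pairings land at $\eps^{-1/2}$), so no further structural input is required.
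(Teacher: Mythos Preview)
There is a genuine gap in your treatment of the $q_{xx}$ pairing in $\mathcal{B}_{X_1}$. You assert that $\|q_{xx}\| \lesssim |||q|||_1$ via (\ref{systemat1}), but that estimate only controls $\|\sqrt{\eps}\,q_{xx}\|$, not $\|q_{xx}\|$ itself. Feeding the correct bound $\|q_{xx}\| \le \eps^{-1/2}\|\sqrt{\eps}\,q_{xx}\| \lesssim \eps^{-1/2}|||q|||_1$ into your Cauchy--Schwarz/Young computation gives
\[
|(B_{(v^0)}, q_{xx})| \lesssim \sqrt{L}\,\eps^{-1/2}[\bar u^0,\bar v^0]_B\,|||q|||_1 \;\Longrightarrow\; o(1)\|v\|_{X_1}^2 + C\,L\,\eps^{-1}[\bar u^0,\bar v^0]_B^2,
\]
which is $\eps^{-1}$, not the required $\eps^{-1/2}$. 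Since $L$ is small but independent of $\eps$, this does not close. (The same issue lurks behind $\|q_x\|$, though there it can be repaired by trading for $\|q_x/\langle y\rangle\|$ against the decay of $B_{(v^0)}$; for $q_{xx}$ no such Hardy substitute is available.)

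The paper avoids this loss by \emph{not} using a blunt $L^2$ bound on $B_{(v^0)}$ against $q_{xx}$. Instead it integrates by parts in $x$, exploiting that $B_{(v^0)}$ is a function of $y$ times smooth $x$-dependent coefficients: this converts $(B_{(v^0)},q_{xx})$ into boundary terms at $x=0,L$ plus bulk terms involving only $q_x$ or $q_{xy}$. The $x=L$ boundary is handled via $q_x|_{x=L}=-\tfrac{u_{sx}}{u_s}q|_{x=L}$, while the $x=0$ trace uses the interpolation $\|q_x\|_{x=0}^2 \lesssim \|q_x\|\,\|q_{xx}\|$ to recover only an $\eps^{-1/4}$ loss. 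For the far-field part of $v^0_{yyyy}$ an additional integration by parts in $y$ brings in $\|u_s q_{xy}w\|_{x=0}$ from $|||q|||_w$. These structural moves are what produce the sharp $\eps^{-1/2}$; they are not optional refinements of your duality argument but the essential mechanism.
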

\begin{proof} Recall the specification of $B_{(v^0)}(\bar{v}^0)$ given in (\ref{portrait}). Recall also the specification of the norms (\ref{defn.norms.ult}) and (\ref{d.norm.B}). The inequality (\ref{systemat1}) will be in constant use throughout the proof of this lemma. 

\textit{Step 1: $q_{xx}$ Multiplier}

\begin{align} \label{pm1}
&|( B_{(v^0)} ,q_{xx} w^2)| \lesssim \begin{cases}  \eps^{-1/2} [ \bar{u}^0, \bar{v}^0 ]_{B}^2 + o(1) |||q|||^2 \text{ if } w = 1 \\  \eps^{-1}  [\bar{u}^0, \bar{v}^0]_B^2 + o(1) |||q|||_w^2 \text{ if } w = w_0 \end{cases}.
\end{align}

Recall the specification of $B_{(v^0)}$ given in (\ref{portrait}). We compute 
\begin{align} \label{spl.1}
(\bar{v}^0_{yyyy}, q_{xx} w^2) = & (\bar{v}^0_{yyyy}, q_{xx} w^2 \chi(y)) + (\bar{v}^0_{yyyy}, q_{xx} w^2 \{1 - \chi(y)\}).
\end{align}

\noindent For ease of notation, denote $\chi^C( y) := 1 - \chi( y)$. For the localized quantity, we estimate
\begin{align*}
(\ref{spl.1}.1) = & (\bar{v}^0_{yyyy}, q_x w^2\chi)|_{x = L} - (\bar{v}^0_{yyyy}, q_x w^2\chi)_{x = 0} \\
= & - (\bar{v}^0_{yyyy}, \frac{u_{sx}}{u_s} q w^2 \chi)|_{x = L} - (\bar{v}^0_{yyyy}, q_x w^2 \chi)_{x = 0} \\
\lesssim & \sqrt{L}  \| \bar{v}^0_{yyyy} w_0\| \| q_{xy} \| +\| \bar{v}^0_{yyyy} w_0 \| \Big( \|q_x \chi \|^{1/2} \| q_{xx} \|^{1/2}  \Big) \\
\lesssim &  \sqrt{L}  \| \bar{v}^0_{yyyy} w_0\| \| q_{xy} \| +   \eps^{-\frac{1}{4}} \| \bar{v}^0_{yyyy} w_0 \| \| \frac{q_x}{y}  \|^{1/2} \| \sqrt{\eps}q_{xx} \|^{1/2} \\
\lesssim & \eps^{-\frac{1}{4}}[\bar{u}^0, \bar{v}^0]_B |||q|||_1,
\end{align*}

\noindent where we have used $q_x^2|_{x = 0} = q_{x}^2|_{x = L} + 2I_L[q_{x}q_{xx}]$, and (\ref{defn.norms.ult}), (\ref{d.norm.B}), and (\ref{systemat1}).

For the far field quantity, we integrate by parts to produce 
\begin{align*}
(\ref{spl.1}.2) = & - (\bar{v}^0_{yyy}, q_{xxy} w^2 \chi^C )) - (\bar{v}^0_{yyy}, q_{xx} 2ww_y \chi^C) \\
& - (\bar{v}^0_{yyy}, q_{xx} w^2 (\chi^C)') \\
= & - (\bar{v}^0_{yyy}, q_{xy} w^2 \chi^C)_{x = 0} + (\bar{v}^0_{yyy}, q_{xy} w^2 \chi^C)_{x = L}  \\
& - (\bar{v}^0_{yyy}, q_{xx} 2ww_y \chi^C) - (\bar{v}^0_{yyy}, q_{xx} w^2(\chi^{C})) \\
= & - (\bar{v}^0_{yyy}, q_{xy} w^2 \chi^C)_{x = 0} - (\bar{v}^0_{yyy}, \p_y \{ \frac{u_{sx}}{u_s}q \} w^2 \chi^C)_{x = L}  \\
& - (\bar{v}^0_{yyy}, q_{xx} 2ww_y \chi^C) - (\bar{v}^0_{yyy}, q_{xx} w^2  (\chi^{C})') \\
= & - (\bar{v}^0_{yyy}, q_{xy} w^2 \chi^C)_{x = 0} - (\bar{v}^0_{yyy}, \p_y \{ \frac{u_{sx}}{u_s}q \} w^2 \chi^C)_{x = L}  \\
& - (\bar{v}^0_{yyy}, q_{xx} 2ww_y \chi^C) - (\bar{v}^0_{yyy}, q_x w^2 (\chi^C)')_{x = L} \\
& + (\bar{v}^0_{yyy}, q_x w^2 (\chi^C)')_{x = 0} \\
\lesssim &  \| \bar{v}^0_{yyy} w_0 \| \| u_s q_{xy} w \|_{x = 0} +  \sqrt{L}\| \bar{v}^0_{yyy} w_0 \| \|q_{xy} w \| \\
& + \| \bar{v}^0_{yyy} w_0 \| \|q_{xx} w_y \| +  \eps^{-1/4} \| \bar{v}^0_{yyy} w_0 \| \| q_{xy} \|^{1/2} \|\sqrt{\eps}q_{xx} \|^{1/2} \\
\lesssim & [\bar{u}^0, \bar{v}^0]_B \Big( \eps^{-\frac{1}{2}} |||q|||_{w_y} + \eps^{-\frac{1}{4}} |||q|||_w \Big).
\end{align*}

\noindent Above, we have used that $|\frac{1}{u_s}| \chi^C \lesssim \frac{1}{\bar{u}^0_p} \chi^C \lesssim 1$. We have also used the same estimates as in (\ref{spl.1}.1) for $| (\bar{v}^0_{yyy}, q_x w^2 (\chi^C)')_{x= L} - (\bar{v}^0_{yyy}, q_x w^2 (\chi^C)')_{x = 0} |$.

We next compute 
\begin{align*}
-2(\p_y \{u_s u_{sx} \bar{q}^0_y \}, q_{xx}w^2 ) =& 2( u_s u_{sx} \bar{q}^0_y, q_{xxy} w^2) + 4(u_s u_{sx} \bar{q}^0_y, q_{xx} ww_y) \\
= & - 2(\p_x \{ u_s u_{sx} \} \bar{q}^0_y, q_{xy}w^2) + 2(u_s u_{sx}\bar{q}^0_y, q_{xy}w^2)_{x = L} \\
& - 2(u_s u_{sx} \bar{q}^0_y, q_{xy}w^2)_{x = 0} + 4 (u_s u_{sx} \bar{q}^0_y, q_{xx}ww_y) \\
= & - 2(\p_x \{ u_s u_{sx} \} \bar{q}^0_y, q_{xy}w^2) - 2(u_s u_{sx}\bar{q}^0_y, \p_y \{ \frac{u_{sx}}{u_s}q \}w^2)_{x = L} \\
& - 2(u_s u_{sx} \bar{q}^0_y, q_{xy}w^2)_{x = 0} + 4 (u_s u_{sx} \bar{q}^0_y, q_{xx}ww_y) \\
\lesssim &  \sqrt{L} \| u_s \bar{q}^0_y w_0 \| \| q_{xy} w \| +  L \| \sqrt{u_s} \bar{q}^0_y w_0 \| \| q_{xy} w \| \\
& + \| u_s \bar{q}^0_y \| \| u_s q_{xy} w \|_{x = 0} +  \sqrt{L} \| u_s \bar{q}^0_y \| \| q_{xx} w_y \| \\
\lesssim &[\bar{u}^0, \bar{v}^0]_B \Big( |||q|||_w + \eps^{-1/2} |||q|||_{w_y} \Big)
\end{align*}

We compute 
\begin{align} \n
&(\p_x \{ (x+1)v_s \} \bar{v}^0_{yyy} - \p_x \{ (x+1)v_{syy} \} \bar{v}^0_y, q_{xx}w^2) \\ \n
= & - (\p_x \{ (x+1)v_{s}\} \bar{v}^0_{yy} - \p_x \{(x+1)v_{sy} \} \bar{v}^0_y, q_{xxy}w^2 + 2q_{xx}ww_y) \\ \n
= & (\p_{xx} \{ (x+1)v_s \} \bar{v}^0_{yy} - \p_{xx} \{(x+1)v_{sy} \} \bar{v}^0_y, q_{xy} w^2) - (\p_x \{(x+1)v_s \} \bar{v}^0_{yy} \\ \n
& - \p_x \{(x+1)v_{sy} \} \bar{v}^0_y, q_{xy}w^2)_{x = L} + (\p_x \{(x+1)v_s \} \bar{v}^0_{yy}  \\
\label{weget}& - \p_x \{(x+1)v_{sy} \} \bar{v}^0_y, q_{xy}w^2)_{x = 0}   - (\p_x \{ (x+1)v_{s}\} \bar{v}^0_{yy} - \p_x \{(x+1)v_{sy} \} \bar{v}^0_y,  2q_{xx}ww_y).
\end{align}

\noindent Above, we have used the identity 
\begin{align*}
\p_x \{ (x+1)v_s \} \bar{v}^0_{yyy} - \p_x \{ (x+1)v_{syy} \} \bar{v}^0_y = \p_y \{ \p_x \{ (x+1)v_s \} \bar{v}^0_{yy} - \p_x \{ (x+1)v_{sy} \} \bar{v}^0_y  \}.
\end{align*}

We estimate the first term in the $w = 1$ case: 
\begin{align*}
|(\ref{weget}.1)| \le & \|(x+1)v_{sxx} + 2 v_{sx} \|_\infty \| \bar{v}^0_{yy} \| q_{xy} \| + \| (x+1) u_{sxxx} + 2u_{sxx} \|_\infty \| \bar{v}^0_y \| \|q_{xy} \| \\
\lesssim & [\bar{u}^0, \bar{v}^0]_B |||q|||_1 
\end{align*}

We next expand 
\begin{align*}
(\ref{weget}.2) = & (\p_x \{ (x+1)v_s \} \bar{v}^0_{yy} - \p_x \{(x+1)v_{sy} \} \bar{v}^0_y, \p_y \{ \frac{u_{sx}}{u_s} q \} w^2)_{x = L} \\
= & (\p_x \{ (x+1)v_s \} \bar{v}^0_{yy} - \p_x \{ (x+1)v_{sy} \} \bar{v}^0_y, \p_y \{ \frac{u_{sx}}{u_s} \} q w^2)_{x = L} \\
& + (\p_x \{ (x+1)v_s \} \bar{v}^0_{yy} - \p_x \{ (x+1)v_{sy} \} \bar{v}^0_y,  \frac{u_{sx}}{u_s} q_y w^2)_{x = L}
\end{align*}

Thus, in the $w = 1$ case, we estimate 
\begin{align*}
&\| (x+1)v_{sx} + v_s \|_\infty \| y \p_y \{ \frac{u_{sx}}{u_s} \} + \frac{u_{sx}}{u_s} \|_\infty \| \bar{v}^0_{yy}\| \| q_{xy} \| \sqrt{L} \\
&+ \|(x+1) u_{sxx} + u_{sx} \|_\infty \| y \p_y \{ \frac{u_{sx}}{u_s} \} + \frac{u_{sx}}{u_s} \|_\infty \| \bar{v}^0_y \| \| q_{xy} \| \sqrt{L} \\
\lesssim & [\bar{u}^0, \bar{v}^0]_B |||q|||.
\end{align*}

We next continue with $w = 1$ to estimate 
\begin{align*}
|(\ref{weget}.3)| \lesssim & \| \frac{(x+1)v_{sx} + v_s}{u_s} \|_\infty \| \bar{v}^0_{yy} \| \| u_s q_{xy} \|_{x = 0} + \| \frac{v_{sy} + (x+1)v_{sxy}}{u_s}\|_\infty \| \bar{v}^0_y \| \| u_s q_{xy} \|_{x = 0} \\
\lesssim &  [\bar{u}^0, \bar{v}^0]_B |||q|||_1 
\end{align*}

This concludes the $w = 1$ case, and we move on to the $w = w_0$ case. We first record using (\ref{exp.u}), the following estimate 
\begin{align} \label{Eul.vs.1}
\| \p_x^k v_s w_0 \|_\infty \lesssim \| \p_x^k \{v^0_p + v^1_e \} \langle y \rangle \langle Y \rangle^m\|_\infty + \bigO(1) \lesssim \eps^{-\frac{1}{2}}.
\end{align}

We begin with the following, using (\ref{Eul.vs.1}):
\begin{align*}
|(\ref{weget}.1)| \lesssim & \| \p_{xx} \{ (x+1)v_s \} w_0 \|_\infty \| \bar{v}^0_{yy} \| \| q_{xy} w_0 \| + \| \p_{xx} \{ (x+1)v_{sy} \} w_0 \|_\infty \| \bar{v}^0_y \| \| q_{xy} w \| \\
\lesssim & \eps^{-1/2} \| \bar{v}^0_{yy} \| q_{xy} w_0 \| +  \|\bar{v}^0_y \| \|q_{xy} w\| \\
\lesssim &  \eps^{-1/2} [\bar{u}^0, \bar{v}^0]_B |||q|||_{w}.
\end{align*}

We move to the (\ref{weget}.2) for which 
\begin{align*}
|(\ref{weget}.2)| \lesssim & \| (x+1)v_{sx} + v_s \|_\infty \| w_0 y \p_y \{ \frac{u_{sx}}{u_s} \} + w \frac{u_{sx}}{u_s} \|_\infty \| \bar{v}^0_{yy} \| \|q_{xy} w_0 \| \sqrt{L} \\
& + \| (x+1) u_{sxx} + u_{sx}\|_\infty \| w_0 y \p_y \{ \frac{u_{sx}}{u_s} \} + w_0 \frac{u_{sx}}{u_s} \|_\infty \| \bar{v}^0_y \| \| q_{xy} \| \sqrt{L} \\
\lesssim &  \sqrt{L}  [\bar{u}^0, \bar{v}^0]_B |||q|||_w
\end{align*}

\noindent Next, again recalling (\ref{Eul.vs.1}), 
\begin{align*}
|(\ref{weget}.3)| \lesssim & \| \frac{\p_x \{ (x+1)v_s \}}{u_s} w_0 \|_\infty \| \bar{v}^0_{yy} \| \| u_s q_{xy} w_0 \|_{x = 0} \\
& + \| \frac{\p_x \{(x+1)v_{sy} \}}{u_s} w_0 \|_\infty \| \bar{v}^0_y \| \| u_s q_{xy} w_0 \|_{x = 0} \\
\lesssim & \eps^{-1/2} [\bar{u}^0, \bar{v}^0]_B |||q|||_{w_0}.
\end{align*}

Last, again using (\ref{Eul.vs.1}),
\begin{align*}
|(\ref{weget}.4)| \lesssim & \eps^{-1/2} \| \p_x \{ (x+1)v_s \} w_y \|_\infty \| \bar{v}^0_{yy} \| \sqrt{\eps}q_{xx} w \| \\
& + \eps^{-1/2} \| \p_x \{ (x+1)v_{sy} \} w_y \|_\infty \| \bar{v}^0_y \| \| \sqrt{\eps}q_{xx} w \| \\
\lesssim & \eps^{-1/2}[\bar{u}^0, \bar{v}^0]_B |||q|||_w. 
\end{align*}

Finally, upon using again (\ref{Eul.vs.1}),
\begin{align*}
|( \eps \bar{v}^0_y \p_{x} \{ (x+1)v_{sxx} \}, q_{xx}w^2)| \lesssim & \sqrt{L} \| \bar{v}^0_y \| \| \sqrt{\eps} q_{xx} w \| \\
\lesssim &  \sqrt{L}[\bar{u}^0, \bar{v}^0]_B |||q|||_w. 
\end{align*}

\noindent This concludes the $B_{(v^0)}$ terms for this multiplier.

\textit{Step 2: $q_{yy}$ Multiplier}
\begin{align}  \label{pm2}
|(B_{(v^0)} , q_{yy}w^2)|  \lesssim o(1) |||q|||_{w,1}^2 + \sqrt{L} [ \bar{u}^0, \bar{v}^0 ]_B^2 .
\end{align}

Recall again the specification of $B_{(v^0)}$ given in (\ref{portrait}). We begin with
\begin{align*}
|(\bar{v}^0_{yyyy}, q_{yy} w^2)| \le \sqrt{L} \| \bar{v}^0_{yyyy} w \| \| q_{yy} w \|.
\end{align*}

Second, 
\begin{align*}
&-2(\p_y \{ u_s u_{sx}\bar{q}^0_y \}, q_{yy}w^2) \\
= &- 2( \p_y \{ u_s u_{sx} \} \bar{q}^0_y, q_{yy}w^2) - 2 (u_s u_{sx} \bar{q}^0_{yy}, q_{yy}w^2) \\
\lesssim &  L \| u_s \bar{q}^0_y \| \| q_{yy} w \| + L \| u_s \bar{q}^0_{yy} \| \| q_{yy} w \| \\
\lesssim &  [\bar{u}^0, \bar{v}^0] |||q|||_w.
\end{align*}

Next, 
\begin{align} \n
&|(\p_x \{(x+1)v_s \} \bar{v}^0_{yyy} - \p_x \{ (x+1)v_{syy} \} \bar{v}^0_y, q_{yy}w^2 )| \\ \n
\lesssim & \sqrt{L} [  \| \bar{v}^0_{yyy} w_0 \| + \| \bar{v}^0_y \| ]  \| q_{yy} w \| \\ \label{identical.1}
\lesssim & \sqrt{L} [\bar{u}^0, \bar{v}^0]_B |||q|||_{w},
\end{align}

Finally, 
\begin{align*}
|(\eps \p_x \{(x+1)v_{sxx} \} \bar{v}^0_y, q_{yy} w^2 )| \lesssim \sqrt{L} \sqrt{\eps} \|\bar{v}^0_y \| \| q_{yy} w \|, 
\end{align*}

\noindent using the bound 
\begin{align*}
\| \eps y \{ v_{sxxx} x + v_{sxx} \} \| \lesssim \sqrt{\eps}.
\end{align*}

\textit{Step 3: $q_x$ Multiplier}

\begin{align} \n
|(B_{(v^0)}, q_x w^2)| \lesssim & L^2 \| q_{xx} w_y \|^2 \\ \label{pm3}
&+ o_L(1) |||q|||_w^2 + o_L(1)  \begin{cases}  [ \bar{u}^0, \bar{v}^0 ]_{B}^2 \text{ if } w = 1, \\  \eps^{-1} [\bar{u}^0, \bar{v}^0 ]_{B}^2 \text{ if } w = w_0  \end{cases}.
\end{align}

Recall again the specification of $B_{(v^0)}$ given in (\ref{portrait}). We begin with (letting $w$ be either $w_0$ or $1$ for this calculation)
\begin{align*}
|(\bar{v}^0_{yyyy}, q_x w^2)| =& |- (\bar{v}^0_{yyy}, q_{xy}w^2) - 2(\bar{v}^0_{yyy}, q_x ww_y)| \\
\lesssim & \sqrt{L} \| \bar{v}^0_{yyy} w_0 \| \| q_{xy} w \| + \sqrt{L} \| \bar{v}^0_{yyy} w \| \{ \sqrt{L} \| \sqrt{\eps}q_{xx} w \| + \|\frac{q_x}{y} w \| \}, 
\end{align*}

\noindent where we have used that $|w_y| \lesssim \sqrt{\eps}|w| + 1$, which is true for both choices of $w$. 

Next, 
\begin{align} \n
&(\p_x \{ (x+1)v_s \} \bar{v}^0_{yyy} - \p_x \{ (x+1)v_{syy} \} \bar{v}^0_y, q_x w^2) \\ \label{djd}
= & (\p_x \{ (x+1)v_{sy} \} \bar{v}^0_y - \p_x \{(x+1)v_s \} \bar{v}^0_{yy}, q_{xy}w^2 + 2q_x ww_y)
\end{align}

\noindent We must now distinguish the weights for $w = 1$ and $w = w_0$. In the case $w = 1$, we majorize the above quantity by 
\begin{align*}
|(\ref{djd})| \lesssim &\Big( \| v_{sy} + (x+1)v_{sxy} \|_\infty \| \bar{v}^0_y \| + \| v_s + (x+1)v_{sx} \|_\infty \| \bar{v}^0_{yy} \| \Big) \| q_{xy} \| \\
\lesssim & [\bar{u}^0, \bar{v}^0]_B |||q|||.
\end{align*}

\noindent In the case of $w = w_0$, 
\begin{align*}
|(\ref{djd})| \lesssim & [\| \{(x+1)v_{sxy} + v_{sy} \} w_y \|_\infty \| \bar{v}^0_y \| + \| \{v_s + (x+1)v_{sx} \} w_y \|_\infty \| \bar{v}^0_{yy} \| ] \\
& \times [\| q_{xy} w \| + \eps^{-1/2} L \|\sqrt{\eps} q_{xx} w \| ] \\
\lesssim & \eps^{-1/2} L [\bar{u}^0, \bar{v}^0]_B |||q|||_w
\end{align*}

Next, 
\begin{align*}
(\p_y \{ u_s u_{sx} \bar{q}^0_y \}, q_x w^2) = &  - (u_s u_{sx} \bar{q}^0_y, q_{xy}w^2) - (u_s u_{sx} \bar{q}^0_y, q_x 2ww_y).
\end{align*}

\noindent We again distinguish between the case of $w = 1$ and $w = w_0$. In the $w = 1$ case, we estimate by 
\begin{align*}
\sqrt{L}  \| \bar{q}^0_y u_s \| \| q_{xy} \| \lesssim \sqrt{L}  [\bar{u}^0, \bar{v}^0]_B |||q|||_1. 
\end{align*}

\noindent whereas in the $w = w_0$ case, we majorize by 
\begin{align*}
&\sqrt{L}  \| \bar{q}^0_y u_s \| \| q_{xy} w \| + \sqrt{L}  \| u_s \bar{q}^0_y \| \{ L\|\sqrt{\eps}q_{xx} w \| + \| \frac{q_x}{\langle y \rangle} w \|  \} \\
\lesssim & (\sqrt{L}  + L^{3/2} + \sqrt{L} )[\bar{u}^0, \bar{v}^0] |||q|||_w
\end{align*}

We move to the final term. In the case $w = 1$, 
\begin{align*}
|(\eps \bar{v}^0_y \p_x \{(x+1)v_{sxx} \}, q_x)| \lesssim \sqrt{\eps} \| \bar{q}_{xy} \| \| v^0_y \| \lesssim  \sqrt{\eps} [\bar{u}^0, \bar{v}^0]_B |||q|||_1
\end{align*}

\noindent upon using that $|\sqrt{\eps}y \p_x \{(x+1)v_{sxx} \}| \lesssim 1$. In the case $w = w_0$, 
\begin{align*}
|(\eps \bar{v}^0_y \p_x \{(x+1)v_{sxx} \}, q_x)| \lesssim \| \frac{q_x}{y} w \| \| \bar{v}^0_y \| \lesssim [\bar{u}^0, \bar{v}^0]_B |||q|||_w. 
\end{align*}

\noindent upon using that $|\eps y w_0 \p_x \{ (x+1)v_{sxx} \}| \lesssim 1$. For these profile estimates, we have used (\ref{prof.pick}).

\textit{Step 4: $\eps v_{xxyy}u_s$ Multiplier}

\begin{align} \label{pm4}
|(B_{(v^0)}, \eps u_s v_{xxyy} w^2)| \le  C \sqrt{\eps} [\bar{u}^0, \bar{v}^0]_{B}^2 + o(1) \sqrt{\eps} |||q|||_{\sqrt{\eps}w}^2.
\end{align}

Recall again the specification of $B_{(v^0)}$ given in (\ref{portrait}). We compute 
\begin{align*}
&(\bar{v}^0_{yyyy}, \eps u_s v_{xxyy} w^2) \\
= & - (\bar{v}^0_{yyyy}, \eps u_{sx} v_{xyy} w^2) - (\bar{v}^0_{yyyy}, \eps u_s v_{xyy}w^2)_{x = 0} \\
\le & \sqrt{L}  \sqrt{\eps} \| \bar{v}^0_{yyyy} w \| \| v_{xyy} \sqrt{\eps} w \| + \sqrt{\eps} \| \bar{v}^0_{yyyy} w \| \| u_s v_{xyy} \sqrt{\eps}w \|_{x = 0} \\
\lesssim & \sqrt{L} \sqrt{\eps} [\bar{u}^0, \bar{v}^0]_B |||q|||_{\sqrt{\eps}w} + \sqrt{\eps}[\bar{u}^0, \bar{v}^0]_B ||||v||||_w
\end{align*}

Next, 
\begin{align} \n
&(\p_x \{(x+1)v_s \} \bar{v}^0_{yyy} -  \p_x \{(x+1)v_{syy} \}\bar{v}^0_y, \eps u_s v_{xxyy}w^2) \\ \n
= & -(\p_x \{(x+1)v_s \} \bar{v}^0_{yyy} - \p_x \{(x+1)v_{syy} \} \bar{v}^0_y, \eps u_s v_{xyy}w^2)_{x = 0} \\ \n
& - ( \p_x \{(x+1)v_s \} \bar{v}^0_{yyy} - \p_x \{(x+1)v_{syy} \}\bar{v}^0_y ,\eps u_{sx} v_{xyy}w^2) \\ \label{emmit}
& - ( \p_{xx} \{(x+1)v_{s} \} \bar{v}^0_{yyy} - \p_{xx} \{(x+1)v_{syy} \}\bar{v}^0_y ,\eps u_{s} v_{xyy}w^2).
\end{align}

\noindent First, 
\begin{align*}
|(\ref{emmit}.1)| \lesssim & (  \sqrt{\eps} \| \sqrt{u_s} \bar{v}^0_{yyy} w_0 \|  +  \sqrt{\eps} \| \bar{v}^0_y \| ) \| \sqrt{\eps}u_s v_{xyy} w \|_{x = 0} \\
\lesssim &  \sqrt{\eps} [\bar{u}^0, \bar{v}^0]_B ||||v||||_w +  \sqrt{\eps} [\bar{u}^0, \bar{v}^0]_B ||||v||||_w. 
\end{align*}

\noindent Next, 
\begin{align*}
|(\ref{emmit}.2)| + |(\ref{emmit}.3)| \lesssim \sqrt{L} \sqrt{\eps} ( \| u_s \bar{v}^0_{yyy}w_0 \| + \| \bar{v}^0_y \|) \| v_{xyy} \sqrt{\eps}w \|
\end{align*}

\begin{align} \n
&(-2\p_y \{ u_s u_{sx} \bar{q}^0_y \}, \eps u_s v_{xxyy}w^2) \\ \n
= &(2\p_y \{ u_s u_{sx} \bar{q}^0_y \} ,\eps u_{sx} v_{xyy}w^2) + (2\p_y \{ (u_s u_{sx})_x \bar{q}^0_y \}, \eps u_s v_{xyy}w^2) \\ \n
& + 2( \p_y \{ u_s u_{sx} \bar{q}^0_y \}, \eps u_s v_{xyy}w^2)_{x = 0} \\ \n
= &(2\p_y \{ u_s u_{sx} \} \bar{q}^0_y ,\eps u_{sx} v_{xyy}w^2)  + (2 u_s u_{sx} \bar{q}^0_{yy} ,\eps u_{sx} v_{xyy}w^2) \\ \n
& + (2 \{ (u_s u_{sx})_{xy} \bar{q}^0_y , \eps u_s v_{xyy}w^2) + (2 (u_s u_{sx})_{x} \bar{q}^0_{yy} , \eps u_s v_{xyy}w^2) \\ \label{analysis}
& + 2( \p_y \{ u_s u_{sx} \} \bar{q}^0_y, \eps u_s v_{xyy}w^2)_{x = 0} + 2(  u_s u_{sx} \bar{q}^0_{yy}, \eps u_s v_{xyy}w^2)_{x = 0} 
\end{align}

\noindent We begin with the first two terms. Since $u_{sx}$ decays at $y = \infty$, 
\begin{align*}
&|(\ref{analysis}.1)| \lesssim \sqrt{\eps} \sqrt{L}  \| u_s \bar{q}^0_y \| \| v_{xyy} \sqrt{\eps}w \| , \\
&|(\ref{analysis}.2)| \lesssim \sqrt{\eps} \sqrt{L}  \| \sqrt{u_s} \bar{q}^0_{yy} \| \| v_{xyy} \sqrt{\eps}w \|,
\end{align*}

Next, we estimate the third and fourth terms
\begin{align*}
&|(\ref{analysis}.3)| + |(\ref{analysis}.4)| \lesssim \sqrt{\eps} \sqrt{L} (\| u_s \bar{q}^0_y \| + \| u_s \bar{q}^0_{yy} \|) \| v_{xyy} \sqrt{\eps} w \|, 
\end{align*}

The last two terms follow very similarly from the first two, yielding
\begin{align*}
&|(\ref{analysis}.5)| \lesssim \sqrt{\eps}  \| u_s \bar{q}^0_y \| \| \sqrt{\eps} u_s v_{xyy} w \|_{x = 0}, \\
&|(\ref{analysis}.6)| \lesssim \sqrt{\eps}  \| \sqrt{u_s} \bar{q}^0_{yy} \| \| \sqrt{\eps} u_s v_{xyy} w \|_{x = 0}
\end{align*}

We finally move to 
\begin{align*}
&(\eps \p_x \{ (x+1)v_{sxx} \} \bar{v}^0_y, \eps u_s v_{xxyy}w^2 ) \\
=& - (\eps \p_{xx} \{ (x+1)v_{sxx} \} \bar{v}^0_y, \eps u_s v_{xyy}w^2) - (\eps \p_{x} \{(x+1)v_{sxx} \} \bar{v}^0_y, \eps u_{sx} v_{xyy}w^2) \\
& - (\eps \p_x \{ (x+1)v_{sxx} \}\bar{v}^0_y, \eps u_s v_{xyy}w^2)_{x = 0} \\
\lesssim & \eps \sqrt{L}\| \bar{v}^0_y \| \| \sqrt{\eps}u_s v_{xyy}w \|_{x = 0} + \sqrt{L} \eps \| \bar{v}^0_y \| \| \sqrt{\eps}v_{xyy}w \|  \\
& + \eps \| \bar{v}^0_y \| \| \sqrt{\eps}u_s v_{xyy}w \|_{x = 0}
\end{align*}

\textit{Step 5: $\eps^2 v_{xxxx}$ Multiplier}

\begin{align} \label{pm5}
|(B_{(v^0)}, \eps^2 v_{xxxx} w^2)| \le  C \eps [\bar{u}^0, \bar{v}^0]_{B}^2 + \eps |||q|||_{\sqrt{\eps}w}^2.
\end{align}

This follows in the same manner is the previous multiplier. Putting together estimates (\ref{pm1}), (\ref{pm2}), (\ref{pm3}), (\ref{pm4}), (\ref{pm5}) according to the linear combinations in (\ref{defn.BX1}) and (\ref{defn.BYw}) gives the desired bound and completes the proof of the lemma. 
\end{proof}

\begin{lemma} \label{lemma.bb2} Let $\bold{u} \in \mathcal{X}$ as in (\ref{defn.norms.ult}). For $\mathcal{B}_B$ defined as in (\ref{mathB}), the following estimates are valid  
\begin{align}
|\mathcal{B}_{B}| \lesssim \eps \| \bar{v} \|_{Y_{w_0}}^2 + \eps^{\frac{1}{2}+ \frac{3-}{16}} \| \bar{v} \|_{X_1}^2
\end{align}
\end{lemma}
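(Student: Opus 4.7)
My plan is to estimate $\|F_{(v)}(\bar v) w_0\|^2$ and $|(F_{(v)}(\bar v),q^0)|$ term by term. Using the divergence-free relation $\bar u = \bar u^0 + I_x[-\bar v_y]$, which gives $\bar u_{xx}|_{x=0} = -\bar v_{xy}|_{x=0}$, the forcing at $\{x=0\}$ reads
\begin{align*}
F_{(v)}(\bar v)\big|_{x=0} = -2\eps u_s u_{sx}\bar q_x - 2\eps \bar v_{xyy} - \eps^2 \bar v_{xxx} + \eps v_s \bar v_{xy}.
\end{align*}
For the pairing $|(F_{(v)}(\bar v),q^0)|$ I will integrate by parts in $y$ to transfer derivatives from $\bar v$ onto $q^0$; boundary contributions at $y=0$ vanish since $v_s|_{y=0} = q^0|_{y=0} = 0$ together with the asymptotics of Lemma \ref{lemma.ratio}, and at $y=\infty$ by decay. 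The resulting inner products close via Cauchy--Schwarz against $\|q^0/\langle y\rangle\|\lesssim \|q^0_y\|\le [u^0,v^0]_B$, producing a self-referential term of the form $\sqrt{A}\,[u^0,v^0]_B$ that is absorbed when $\mathcal{B}_B$ is fed back into estimate (\ref{mainu0est}) of Proposition \ref{prop.L.exist}.

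The workhorse for the $L^2(w_0)$ part is the trace inequality $\|f w_0\|_{x=0}^2 \le 2\|fw_0\|\,\|f_xw_0\|$, valid for every $f$ among $\{\bar v_{xy}, \bar v_{xyy}, \bar v_{xxy}, \bar v_{xxxy}\}$ since $\bar v_x|_{x=L} = \bar v_{xxx}|_{x=L} = 0$ propagates by $\p_y$. I combine this with a dyadic cutoff in $y$ at a fixed small height $\eta\in(0,1)$ independent of $\eps,L$. On the near-boundary slab $\{y\le\eta\}$ the weight $w_0$ is of order one, so interior bounds drawn from the $\eps^{-3/16}||||\bar v||||_1$-amplified norm inside $\|\bar v\|_{X_1}$, together with the third-order bounds $\|\eps\bar v_{xyy}\|,\|\eps^2\bar v_{xxx}\|\le|||\bar q|||_1\le\|\bar v\|_{X_1}$ from (\ref{systemat1}), will produce a contribution of size $\eps^{19/16}\|\bar v\|_{X_1}^2\ll \eps^{11/16}\|\bar v\|_{X_1}^2$. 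On the far-field region $\{y\ge\eta\}$ we have $u_s \gtrsim \eta$, and dividing through by $u_s$ or $\sqrt{u_s}$ lands on the $u_s$-weighted boundary traces $\|\sqrt\eps u_s\bar v_{xyy}w_0\|_{x=0}$ and $\|\eps^{3/2}\sqrt{u_s}\bar v_{xxx}w_0\|_{x=0}$, both contained in $|\bar v|_{\p,3,w_0}\subset||||\bar v||||_{w_0}\le\|\bar v\|_{Y_{w_0}}$, yielding exactly $\eps\|\bar v\|_{Y_{w_0}}^2$ after squaring.

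The first term is controlled directly via (\ref{gp.2.5}) using $\|u_s u_{sx}\langle y\rangle\|_\infty\lesssim 1$ (which follows from $u_s|_{y=0} = u_{sx}|_{y=0} = 0$), while terms two and three are handled by the splitting scheme above. The main obstacle is the fourth term $\eps v_s\bar v_{xy}$: because $v_s$ contains the non-decaying Euler corrector $v^1_e(Y)$, one only has $\|v_s\|_\infty\lesssim 1$, so the naive trace bound $\|\bar v_{xy}w_0\|_{x=0}^2 \le 2\|\bar v_{xy}w_0\|\,\|\bar v_{xxy}w_0\|$ followed by (\ref{systemat1}) and $\sqrt\eps|||\bar q|||_{w_0}\le \|\bar v\|_{Y_{w_0}}$ only produces $\sqrt\eps\|\bar v\|_{Y_{w_0}}^2$, which is too weak. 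Upgrading this to the required $\eps$-power is where the argument is most delicate: the plan is to exploit the additional identity $\bar v_{xxy}|_{x=0} = 0$ (inherited from $\bar v_{xx}|_{x=0} = 0$) together with Fundamental Theorem of Calculus in $x$ to obtain $\|\bar v_{xxy}w_0\|\le L\|\bar v_{xxxy}w_0\|$, thereby trading a power of $\eps$ for the small parameter $L$, and then invoking the top-order bound $\|\eps^{3/2}\bar v_{xxxy}w_0\|\le||||\bar v||||_{w_0}\le\|\bar v\|_{Y_{w_0}}$ to absorb the final $L\cdot\eps^{-1}$ factor against the $L\ll 1$ hypothesis so that the contribution is controlled by $\eps\|\bar v\|_{Y_{w_0}}^2$ modulo the absorption permitted by (\ref{mainu0est}).
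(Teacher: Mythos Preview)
Your treatment of terms two and three ($-2\eps\bar v_{xyy}$ and $-\eps^2\bar v_{xxx}$) via the near/far splitting and the trace inequality is essentially the paper's argument and is fine. However, there is a genuine gap in your handling of the fourth term $\eps v_s\bar v_{xy}$.

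Tracing through your proposed argument: the trace inequality gives $\eps^2\|\bar v_{xy}w_0\|_{x=0}^2 \le 2\eps^2\|\bar v_{xy}w_0\|\,\|\bar v_{xxy}w_0\|$. From (\ref{systemat1}) one has $\|\bar v_{xy}w_0\|\lesssim|||\bar q|||_{w_0}\le\eps^{-1/2}\|\bar v\|_{Y_{w_0}}$, and your Poincar\'e step yields $\|\bar v_{xxy}w_0\|\le L\|\bar v_{xxxy}w_0\|\le L\eps^{-3/2}\|\bar v\|_{Y_{w_0}}$. Multiplying out gives $2\eps^2\cdot\eps^{-1/2}\cdot L\eps^{-3/2}\|\bar v\|_{Y_{w_0}}^2 = 2L\|\bar v\|_{Y_{w_0}}^2$, which is exactly the $L\eps^{-1}$ deficit you identify relative to the target $\eps\|\bar v\|_{Y_{w_0}}^2$. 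But the standing hypothesis of the paper is $0<\eps\ll L\ll 1$, so $L\eps^{-1}\gg 1$ and the smallness of $L$ alone cannot absorb it. Feeding $L\|\bar v\|_{Y_{w_0}}^2$ into the scheme (\ref{scheme.1}) would require $L\eps^{-1/2}\ll 1$, which is false.

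The paper avoids this entirely by expanding $\bar v_{xy}|_{x=0}=u_s\bar q_{xy}|_{x=0}+u_{sy}\bar q_x|_{x=0}$ and observing that the boundary trace $\|u_s q_{xy}w\|_{x=0}$ is \emph{already} part of $|q|_{\partial,2,w}\subset|||q|||_w$ from Definition~\ref{defn.norms.intro}. Hence $\|\eps v_s u_s\bar q_{xy}w_0\|_{x=0}\lesssim\eps|||\bar q|||_{w_0}=\sqrt\eps\cdot\sqrt\eps|||\bar q|||_{w_0}\le\sqrt\eps\|\bar v\|_{Y_{w_0}}$, squaring to exactly $\eps\|\bar v\|_{Y_{w_0}}^2$. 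The remaining piece $u_{sy}\bar q_x$ is then handled via the localization and (\ref{gp.2.5}) since $u_{sy}$ is bounded and $\bar q_x|_{y=0}=0$. This decomposition into quotient variables is the missing idea in your argument; you should use it rather than trying to trade powers of $\eps$ for $L$.

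A smaller point: for term one, your claim $\|u_su_{sx}\langle y\rangle\|_\infty\lesssim 1$ does not by itself accommodate the full weight $w_0=\langle y\rangle\langle Y\rangle^m$; the paper splits $u_{sx}=u^P_{sx}+\sqrt\eps\,u^E_{sx}$ so that the Prandtl piece absorbs all of $w_0$ (allowing the unweighted (\ref{gp.2.5})), while the Euler piece carries an extra $\sqrt\eps$ and lands in $\|\bar v\|_{Y_{w_0}}$.
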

\begin{proof} We estimate each term in $F_{(v)}(\bar{v})$ which we write here for convenience: 
\begin{align} \label{Fv}
F_{(v)} := -2\eps u_s u_{sx}\bar{q}_x|_{x = 0} - 2\eps \bar{v}_{xyy}|_{x =0} - \eps^2 \bar{v}_{xxx}|_{x = 0} + \eps v_s \bar{v}_{xy}|_{x = 0}.
\end{align}

Starting with the higher order terms,
\begin{align*}
\|\eps^2 \bar{v}_{xxx} w \|_{x = 0} \le& \| \eps^2 \bar{v}_{xxx} w\{1- \chi\} \|_{x = 0} +  \| \eps^2 \bar{v}_{xxx} w  \chi  \|_{x = 0} \\
\le & \sqrt{\eps} \| \eps^{\frac{3}{2}}u_s \bar{v}_{xxx} w \|_{x = 0} + \eps^{\frac{1}{2}} \| \eps \bar{v}_{xxx} \|^{\frac{1}{2}} \| \eps^2 \bar{v}_{xxxx} \|^{\frac{1}{2}} \\
\lesssim & \sqrt{\eps} \| \bar{v} \|_{Y_w} + \eps^{\frac{1}{2}+\frac{3}{32}} \| \bar{v} \|_{X_1}.
\end{align*}

\noindent The identical argument is performed for (\ref{Fv}.2). 

For the fourth term, we expand $\bar{v}_{xy}|_{x = 0} = u_s \bar{q}_{xy}|_{x = 0} + u_{sy}\bar{q}_x|_{x = 0}$, perform a Hardy type inequality for the $\bar{q}_x$ term, and use (\ref{gp.2.5}) to obtain
\begin{align*}
\| \eps v_s \bar{v}_{xy} w \|_{x = 0} \le& \| \eps v_s u_s \bar{q}_{xy} w \|_{x = 0} + \| \eps v_s u_{sy} \bar{q}_x w \|_{x = 0} \\
\le & \sqrt{\eps} \| \bar{v} \|_{Y_w} + \eps \| v_s u_{sy} \bar{q}_x \chi \|_{x = 0} \\
\le & \sqrt{\eps} \| \bar{v} \|_{Y_w} + \eps^{\frac{3}{4}} \| \eps^{\frac{1}{4}}\frac{\bar{q}_x}{\langle y \rangle}\|_{x = 0} \\
\le & \sqrt{\eps} \| \bar{v} \|_{Y_w} + \eps^{\frac{3}{4}} \| \bar{v} \|_{X_1}.
\end{align*}

To estimate the first term from (\ref{Fv}), we split into Euler and Prandtl: 
\begin{align*}
\| \eps u_s u_{sx}\bar{q}_x w \|_{x = 0} \le& \| \eps u_s u^P_{sx} \bar{q}_x w \|_{x = 0} + \eps^{\frac{3}{2}} \| u_s u^E_{sx} \bar{q}_x w \|_{x = 0} \\
\le & \| u^P_{sx} w \langle y \rangle \|_\infty \eps \| \frac{\bar{q}_x}{\langle y \rangle} \|_{x = 0} + \sqrt{\eps} [\|\sqrt{\eps}\bar{q}_x \sqrt{\eps}w \| + \| \sqrt{\eps} \bar{q}_{xx} \sqrt{\eps}w \|] \\
\lesssim & \eps^{\frac{3}{4}} \| \bar{v} \|_{X_1} + \sqrt{\eps} \| \bar{v} \|_{Y_w}.
\end{align*}

We have thus established: 
\begin{align*}
\| F_{(v)} w_0 \| \lesssim \sqrt{\eps} \| \bar{v} \|_{Y_{w_0}} + \eps^{\frac{1}{2}+\frac{3}{32}} \| \bar{v} \|_{X_1}. 
\end{align*}

This concludes the proof. 
\end{proof}

\begin{lemma} \label{lemma.nonlinear} Let $\bold{u} \in \mathcal{X}$ as in (\ref{defn.norms.ult}). The following estimates are valid
\begin{align} \label{showin}
&|\mathcal{N}_{X_1}| \lesssim \eps^{N_0-1} [\bar{u}^0, \bar{v}^0 ]_B \| \bar{v} \|_{X_1} \| v \|_{X_1} +\eps^{N_0-\frac{3}{4}} \| \bar{v} \|_{X_1} \| \bar{v} \|_{X_1} \| v \|_{X_1}, \\ \n
& \hspace{20 mm} + \eps^{N_0-1} [\bar{u}^0, \bar{v}^0]_B^2 \| v \|_{X_1}, \\ \label{showin2}
&|\mathcal{N}_{Y_w}| \lesssim \eps^{N_0-1} [ \bar{u}^0, \bar{v}^0 ]_B \| \bar{v} \|_{Y_w} \| v \|_{Y_w} +\eps^{N_0-\frac{3}{4}} \| \bar{v} \|_{X_1} \|\bar{v} \|_{Y_w} \| v \|_{Y_w}  \\ \n
& \hspace{20 mm} + \eps^{N_0-1} [\bar{u}^0, \bar{v}^0]_B^2 \| v \|_{X_1}, \\ \label{ree}
&|\mathcal{N}_B| \lesssim \eps^{N_0-1}[\bar{u}^0, \bar{v}^0]^4 + C(h, a_2^\eps) + \eps^{N_0-1} \| \bar{v} \|_{Y_w}^2.
\end{align}
\end{lemma}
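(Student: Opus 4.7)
The plan is to prove the three bounds by combining the structural observation that $\mathcal N$ (resp.\ $\mathcal Q$) is the $\p_x$ (resp.\ trace at $\{x=0\}$) of a fixed Navier--Stokes quadratic form in $\bar{\bold u}$ with the $L^\infty\times L^2$ splittings already used in the proof of Lemma \ref{lemma.bb1}. After expanding, $\mathcal N$ decomposes into a finite sum of terms of the schematic shape $\bar u_x\,\Delta_\eps \bar v$, $\bar v_x\,\Delta_\eps\bar v$, $\bar u\,\Delta_\eps \bar v_x$, $\bar v\,\Delta_\eps\bar v_x$ (and their $\eps$--scaled analogues), where the tangential factor is further split via the homogenization identity $\bar u=\bar u^0-I_x[\bar v_y]$ into a boundary--trace piece (carrying a $[\bar u^0,\bar v^0]_B$ factor) and a bulk piece (carrying an $\|\bar v\|_{X_1}$ factor through the elementary inequality $\|I_x[\bar v_y]\|\lesssim L\|\bar v_y\|$).

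For \eqref{showin} and \eqref{showin2} I would work multiplier by multiplier against the list $\{q_x,q_{xx},q_{yy},\eps^2 v_{xxxx},\eps u_s v_{xxyy}\}$ used to build $\mathcal B_{X_1}$ and $\mathcal B_{Y_w}$. For each pairing $(\eps^{N_0}\mathcal N,M)$ I would place one copy of $\bar{\bold u}$ in $L^\infty$ and the other in the relevant weighted $L^2$ norm: the key embeddings are Lemma \ref{sister.lemma} giving $\|\bar v^0\|_\infty\lesssim \eps^{-1/4}[\bar u^0,\bar v^0]_B$ together with the interior bound $\|\bar v\|_\infty\lesssim \eps^{-1/2}\|\bar{\bold u}\|_{\mathcal X}$ used to derive \eqref{main.inviscid}, while $\|\bar u\|_\infty$ is $\bigO(1)$. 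The cross (trace $\times$ bulk) pieces produce, after $\sqrt L$ Poincar\'e gains where available, a prefactor $\eps^{N_0}\cdot \eps^{-1/4}\cdot \eps^{-3/4}=\eps^{N_0-1}[\bar u^0,\bar v^0]_B\|\bar v\|_{X_1}\|v\|_{X_1}$, while the pure bulk contribution $\bar v\,\Delta_\eps \bar v_x$ yields $\eps^{N_0-3/4}\|\bar v\|_{X_1}^2\|v\|_{X_1}$ after two uses of $\|\bar v\|_\infty\lesssim \eps^{-1/2}$. The quadratic-trace contributions from $\bar u^0\,\p_x\bar v^0$ and similar $\{x=0\}$ pieces yield the $\eps^{N_0-1}[\bar u^0,\bar v^0]_B^2\|v\|_{X_1}$ term. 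The weighted version \eqref{showin2} is obtained by the identical scheme with $w_0$ attached to every $L^2$ norm; the boundary contributions generated by the $x$-integrations by parts are absorbed using \eqref{pff}--\eqref{gp.3}.

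For \eqref{ree} the quantity $\mathcal Q$ is the restriction to $\{x=0\}$ of the same bilinear form, and I would test against $q^0$ (controlled by $[\bar u^0,\bar v^0]_B$ through the Hardy bounds \eqref{leh.2}--\eqref{leh.3}) and in $L^2(w_0)$. Applying the trace embedding $\|\bar v^0\|_\infty\lesssim\eps^{-1/4}[\bar u^0,\bar v^0]_B$ to one factor and keeping the other in weighted $L^2$ gives $|(\mathcal Q,q^0)|+\|\mathcal Q w_0\|^2\lesssim \eps^{N_0-1}[\bar u^0,\bar v^0]_B^4$. The term $\mathcal H$ splits into a pure forcing piece depending only on $h$ and $a^\eps$ (contributing $C(h,a_2^\eps)$) and a piece linear in $\bar v$ weighted by $a^\eps$, which is small by \eqref{assume.bq.intro}, thereby producing the last summand $\eps^{N_0-1}\|\bar v\|_{Y_w}^2$.

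The main obstacle will be to ensure that the pairings against the top-order multipliers $\eps^2 v_{xxxx}$ and $\eps u_s v_{xxyy}$ do not eat an extra $\eps^{-1/2}$ from $\|\bar v\|_\infty$ and thereby exceed the stated rate $\eps^{N_0-1}$. I plan to integrate by parts in $x$ (using $v_x|_{x=L}=v_{xxx}|_{x=L}=0$) to transfer one $\p_x$ from the multiplier onto $\mathcal N$, replacing the dangerous bound on $\|\eps^2 v_{xxxx}\|$ by the safer $\|\eps^{3/2}v_{xxx}\|$ at the cost of a $\sqrt L$ Poincar\'e factor, and then close by Young's inequality. A second delicate point is that the bilinear form $\bar u\cdot\nabla\bar v_x$ need not vanish at $\{y=0\}$, so Hardy-type cancellations are unavailable near the wall; instead one must invoke the $\eps^{1/4}$-scaled $L^\infty$ embedding \eqref{sister} on the trace piece, which is precisely the mechanism producing the $\eps^{-1/4}$ loss carried by the $[\bar u^0,\bar v^0]_B$ terms in \eqref{showin}--\eqref{showin2}.
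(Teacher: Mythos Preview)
Your plan is more elaborate than the paper's and contains a structural misidentification in the $\mathcal N_B$ part.

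For \eqref{showin}--\eqref{showin2}, the paper does \emph{not} work multiplier by multiplier. Instead it applies a single crude Cauchy--Schwarz step,
\[
|\mathcal N_{X_1}|\lesssim \eps^{-1/2}\|\mathcal N\|\,\|v\|_{X_1},\qquad |\mathcal N_{Y_w}|\lesssim \|\mathcal N\,w\|\,\|v\|_{Y_w},
\]
and then bounds $\|\mathcal N\cdot w\|$ term by term via $L^\infty\times L^2$ splittings on the explicit list $Q_{11}$, $Q_{12}$ (with $Q_{22}$ handled in Lemmas~\ref{Lem.double}--\ref{Lem.triple}). The key embedding is $\|\eps^{1/4}\bar v\|_\infty\lesssim|||\bar q|||_1$ (i.e.\ a loss of $\eps^{-1/4}$, not $\eps^{-1/2}$ as you write), which together with $\|\bar u^0\|_\infty\lesssim[\bar u^0,\bar v^0]_B$ and $\|\bar v^0\|_\infty\lesssim\eps^{-1/4}[\bar u^0,\bar v^0]_B$ yields
\[
\|\mathcal N\cdot w\|\lesssim\bigl(\eps^{-1/2}[\bar u^0,\bar v^0]_B+\eps^{-1/4}\|\bar v\|_{X_1}\bigr)|||\bar q|||_w.
\]
The ``main obstacle'' you flag---top-order multipliers eating extra $\eps$---is therefore a non-issue: the multiplier dependence is absorbed once and for all into the single factor $\|v\|_{X_1}$ (resp.\ $\|v\|_{Y_w}$), and no integration by parts against $\eps^2v_{xxxx}$ or $\eps u_sv_{xxyy}$ is needed. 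Your proposed $x$-integration-by-parts route may close, but it creates boundary terms and commutators that the paper simply avoids.

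For \eqref{ree} you misread the structure of $\mathcal H$. From \eqref{sys.u0.app.unh}, $\mathcal H=-h'''+v_sh''-h\Delta_\eps u_s$ is a pure $h$-forcing with \emph{no} $\bar v$-dependence; it contributes only to $C(h,a_2^\eps)$. The term $\eps^{N_0-1}\|\bar v\|_{Y_w}^2$ arises instead from the cross piece $\eps^{N_0+1}v^0\,v_{xy}|_{x=0}$ inside $\mathcal Q$, estimated via the trace interpolation $\|v_{xy}w\|_{x=0}\lesssim\|v_{xy}w\|^{1/2}\|\sqrt\eps\,v_{xxy}w\|^{1/2}$ together with $\|\sqrt\eps\,v^0\|_\infty\lesssim[\bar u^0,\bar v^0]_B$. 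Your sketch would miss this contribution and attribute it to the wrong source.
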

\begin{proof}   \textit{Proof of (\ref{showin}) - (\ref{showin2}):} We begin with the immediate estimates: 
\begin{align*}
|\mathcal{N}_{X_1}| \lesssim  \eps^{-\frac{1}{2}} \|\mathcal{N} \| \| v \|_{X_1} \hspace{3 mm} |\mathcal{N}_{Y_{w}}| \lesssim  \| \mathcal{N} w \| \| v \|_{Y_w}.
\end{align*}

First, recall the specification of $\mathcal{N} = Q_{11} + Q_{12} + Q_{22}$ given in (\ref{spec.nl}). We now establish the following bound:
\begin{align*}
\| \mathcal{N} \cdot w \| \lesssim \{\eps^{-1/2}[ \bar{u}^0, \bar{v}^0 ]_B + \eps^{-\frac{1}{4}} \|  \bar{v} \|_{X_1} \}||| \bar{q} |||_w. 
\end{align*}

To establish this, we go term by term through $Q_{11}$: 
\begin{align*}
&\| \bar{v}_y \Delta_\eps \bar{v} w \| \le \| \bar{v}_y \|_\infty \| \Delta_\eps \bar{v} w \| \\
&\|I_x[\bar{v}_y] \Delta_\eps \bar{v}_x w \| \le \| \bar{v}_y \|_\infty \| \Delta_\eps \bar{v}_x w \| \\
&\| \bar{v}_x I_x[\bar{v}_{yyy}] w \| \le \eps^{-\frac{1}{4}} \| \eps^{\frac{1}{4}} \bar{v}_x \|_{L^2_x L^\infty_y} \| \bar{v}_{yyy} w \| \\
&\| \bar{v} \bar{v}_{yyy} w \| \le \eps^{-\frac{1}{4}} \| \eps^{\frac{1}{4}} \bar{v} \|_{L^2_x L^\infty_y} \| \bar{v}_{yyy} w \| \\
&\| \eps \bar{v}_x \bar{v}_{xy}w \| \le \sqrt{\eps} \| \sqrt{\eps} \bar{v}_x \|_\infty \| \bar{v}_{xy} w \| \\
&\| \bar{v} \Delta_\eps \bar{v}_y w \| \le \eps^{-\frac{1}{4}} \| \eps^{\frac{1}{4}} \bar{v} \|_\infty \| \Delta_\eps \bar{v}_y w \| \\
&\| \bar{u}^0 \Delta_\eps \bar{v}_x w \| \le | \bar{u}^0 |_\infty \| \Delta_\eps \bar{v}_x w \| \lesssim [ \bar{u}^0, \bar{v}^0 ]_B |||\bar{q}|||_w, \\
&\| \bar{u}^0_{yy} \bar{v}_x w \| \le \| \bar{u}^0_{yy} \langle y \rangle \|_{L^\infty_x L^2_y} \| \bar{v}_x \langle y \rangle^{-1} w \|_{L^2_x L^\infty_y} \lesssim [\bar{u}^0, \bar{v}^0 ]_{B} |||\bar{q}|||_w.
\end{align*}

Above, we have used the following interpolation: 
\begin{align*}
\| \bar{v}_x \langle y \rangle^{-\frac{1}{2}} w \|_{L^2_x L^\infty_y} \le & \| \bar{v}_x \langle y \rangle^{-1} w \|^{\frac{1}{2}} \| \bar{v}_{xy} w\|^{\frac{1}{2}},
\end{align*}

\noindent  and the weighted Hardy's inequality, (\ref{w.Hardy}). The result follows upon remarking the following basic fact. For any function $g(x,y)$ such that $g_{x = 0 \text{ OR } x= L} = 0$ and $g|_{y = \infty} = 0$: $|g|^2 \le \| g_x \| \| g_y \| + \| g \| \| g_{xy} \|$. This immediately gives: $\| \eps^{\frac{1}{4}} v \|_\infty + \| v \langle y \rangle^{-1/2} \|_\infty + \|\nabla_\eps v \|_\infty \lesssim |||q|||_1$. A basic interpolation also gives $\| \eps^{\frac{1}{4}} \bar{v}_x \|_{L^\infty_y} \le \| \sqrt{\eps} v_x \|_{L^2_y}^{\frac{1}{2}} \|v_{xy} \|_{L^2_y}^{\frac{1}{2}}$.

We treat now the quantity $\| Q_{12} w\|$
\begin{align*}
&\| v_y v^0_{yy} w \| \le \| v_{xy} w \| \| v^0_{yy} \|_\infty \lesssim [u^0, v^0]_B |||q|||_w, \\
&\| v^0_y \Delta_\eps v w\| \le \| v^0_y \|_\infty \| \Delta_\eps v w \| \lesssim [u^0, v^0]_B |||q|||_w, \\
&\| x v^0 \Delta_\eps v_x w \| \lesssim \| v^0 \|_\infty \| \Delta_\eps v_x w \| \lesssim \eps^{-\frac{1}{2}} [u^0, v^0]_B |||q|||_w \\
& \| x v_x v^0_{yy} w\| \lesssim \eps^{-1/2} \| \sqrt{\eps}v_{xx} w \| \| v^0_{yy} \|_\infty \lesssim \eps^{-1/2} [u^0, v^0]_B |||q|||_w, \\
& \| v v^0_{yyy}w \| \le \eps^{-\frac{1}{4}} \| \eps^{\frac{1}{4}}v \|_{L^2_x L^\infty_y} \| v^0_{yyy} w \| \lesssim \eps^{-\frac{1}{4}} [u^0, v^0]_B |||q|||_w, \\
&\| v^0 \Delta_\eps v_y w \| \le \| v^0 \|_\infty \| \Delta_\eps v_y w \| \lesssim \eps^{-\frac{1}{2}} [u^0, v^0]_B |||q|||_w. 
\end{align*}

To conclude, we note that the $Q_{22}$ terms have already been treated in Lemmas \ref{Lem.double} and \ref{Lem.triple}.

\textit{Proof of (\ref{ree})} Recall the specification of $\mathcal{Q}$ from (\ref{nawa}). We begin with the multiplier of $q^0$.  First, 
\begin{align*}
\eps^{N_0} ( v^0 v^0_{yyy} - v^0_y v^0_{yy}, q^0 ) = & \eps^{N_0} (u_s q^0 \p_{yyy} \{ u_s q^0 \} - \p_y \{ u_s q^0 \} \p_{yy} \{ u_s q^0 \}, q^0) \\
= & \eps^{N_0} (u_s^2 q^0 q^0_{yyy} - u_s^2 q^0_y q^0_{yy}, q^0) + \mathcal{J}_2.
\end{align*}

\noindent Here, 
\begin{align*}
\mathcal{J}_2 := &\eps^{N_0} (u_s q^0[u_{syyy}q^0 + 3 u_{syy}q^0_y + 3 u_{sy}q^0_{yy}] \\
& - u_{sy}q^0[u_{syy}q^0 + 2u_{sy}q^0_y + u_s q^0_{yyy}] - u_s q^0_y[u_{syy}q^0 + 2 u_{sy}q^0_y], q^0)
\end{align*}

\noindent Thus, $\mathcal{J}_2$ contains harmless commutator terms which are easily seen to be size $\eps^{N_0} [[[q^0]]] [[q^0]]^2$ upon using (\ref{leh.3}), (\ref{paired}), and the rapid decay of $\p_y^k u_s$ ($k \ge 1$) which is present in each term above. We estimate
\begin{align*}
\eps^{N_0} |(u_s^2 q^0 q^0_{yyy}, q^0)| \lesssim & \eps^{N_0} [[q^0]]^2 (u_s^2 |q^0_{yyy}|, \langle y \rangle)| \\
\lesssim &\eps^{N_0-(\frac{1}{2}+)} (q^0_{yyy} \langle y \rangle Y^{\frac{1+}{2}}, \langle y \rangle^{-\frac{1+}{2}}) [[q^0]]^2 \\
\lesssim &\eps^{N_0-(\frac{1}{2}+)} \| q^0_{yyy} \langle y \rangle Y^{\frac{1+}{2}} \| [[q^0]]^2 \\
\lesssim &\eps^{N_0-(\frac{1}{2}+)} [[[q^0]]][[q^0]]^2. 
\end{align*} 

Next, recalling (\ref{sister})
\begin{align*}
\eps^{N_0}|(u_s^2 q^0_y q^0_{yy}, q^0)| \lesssim& \eps^{N_0} \| q^0 \|_\infty \| \sqrt{u_s} q^0_y \| \| u_s q^0_{yy} \|  \\
\lesssim &\eps^{N_0-\frac{1}{2}} C_{\sigma, \lambda} [[q^0]]^3. 
\end{align*}

The next nonlinear terms are 
\begin{align*}
&\eps^{N_0+1}(u^0 v_{xx}|_{x = 0} + v^0 v_{xy}|_{x = 0}, q^0) \\
\lesssim & \eps^{N_0+1} \| u^0 \|_\infty \| v_{xx}|_{x = 0} \langle y \rangle \| \| q^0_y \| + \eps^{N_0+1} \| v^0 \|_\infty \| v_{xy}|_{x = 0} w \| \| q^0_y \| \\
\lesssim & \eps^{N_0+1} [u^0, v^0]_B^2 \| a_2^\eps \langle y \rangle \| +\eps^{N_0+\frac{1}{4}} (\sqrt{\eps}\| v^0 \|_\infty) \|v_{xy} w \|^{\frac{1}{2}} \| \sqrt{\eps} v_{xxy} w \|^{\frac{1}{2}} \| q^0_y \| \\
\lesssim & \eps^{N_0+1} [u^0, v^0]_B^2 \| a_2^\eps \langle y \rangle \| +\eps^{N_0-\frac{1}{4}} [u^0, v^0]_B^2 \| v \|_{Y_w}. 
\end{align*}

To conclude, we treat the contribution of the $h$ terms: 
\begin{align*}
&|(\eps^{N_0} \{h v^0_{yy} - v^0 h_{yy} \}, q^0)|\\
 \lesssim & \eps^{N_0} \| h \langle y \rangle \|_\infty \| v^0_{yy} \| \| q^0_y \| + \eps^{N_0} \| h_{yy} y^2 \|_\infty \| v^0_y \| \| q^0_y \|  \\
 \lesssim & \eps^{N_0} \{ \| h \langle y \rangle \|_\infty + \| h_{yy} y^2 \|_\infty \} [u^0, v^0]_B^2. 
\end{align*}

Next, 
\begin{align*}
|(\mathcal{H}, q^0)| \lesssim & [[q^0]] \| \mathcal{H} \langle y \rangle^{1/2} \|_1 \\
\lesssim & \| \{h''' - v_s h'' - h \Delta_\eps u_s \} \langle y \rangle^{1/2} \|_1 [u^0, v^0]_B^2. 
\end{align*}

We now move to the contribution of $\|\mathcal{Q} w_0 \|$. We estimate the first term directly upon using (\ref{sister}):
\begin{align*}
\|\eps^{N_0}v^0 v^0_{yyy} w_0\| \le &\eps^{N_0-\frac{1}{2}} \| \sqrt{\eps} v^0 \|_\infty \| v^0_{yyy} w \|  \lesssim \eps^{N_0-\frac{1}{2}} [u^0, v^0]_B^2. 
\end{align*}

For the second nonlinearity, we have 
\begin{align*}
&\| \eps^{N_0} \p_y \{ u_s q^0\} \p_{yy} \{ u_s q^0 \} w_0 \| \\
= &\| \eps^{N_0} \Big( u_s u_{syy} q^0 q^0_y + 2 u_s u_{sy}|q^0_y|^2 + u_s^2 q^0_y q^0_{yy} + u_{sy} u_{syy} |q^0|^2 \\
& + 2 u_{sy}^2 q^0 q^0_y + u_s u_{sy} q^0 q^0_{yy} \Big) w_0 \| \\
\le & \eps^{N_0} \| \{u_s u_{syy} q^0 q^0_y + 2 u_s u_{sy} |q^0_y|^2 + u_s^2 q^0_y q^0_{yy} + u_{sy} u_{syy} |q^0|^2 \\
& + 2 u_{sy}^2 q^0 q^0_y + u_s u_{sy} q^0 q^0_{yy} \} w \|  \\
\lesssim & \eps^{N_0-1} [u^0, v^0]_B^2.
\end{align*}

\noindent Above, we have used 
\begin{align*}
\| u_s^2 q^0_y q^0_{yy} w\| \lesssim &  \| q^0_y \langle y \rangle \|_\infty \| q^0_{yy} \frac{w}{\langle y \rangle} \| \lesssim  \eps^{-1} \| \eps \langle y \rangle q^0_y \|_\infty [u^0, v^0]_B \lesssim  \eps^{-1}[u^0, v^0]_B^2. 
\end{align*}

We next move to 
\begin{align*}
&\|\eps^{N_0+1} u^0 v_{xx}|_{x = 0} w_0 \| + \eps^{N_0+1} \| v^0 v_{xy}|_{x = 0} w_0\| \\
\lesssim & \eps^{N_0+1}\Big( \| u^0 \|_\infty \| v_{xx}|_{x = 0} w \| + \| v^0 \|_\infty \| v_{xy} w \|_{x = 0} \Big) \\
\lesssim & \eps^{N_0+1} [u^0, v^0]_B^2 \| a_2^\eps w \| +\eps^{N_0+\frac{1}{4}} \| \sqrt{\eps} v^0 \|_\infty \| v_{xy} w \|^{\frac{1}{2}} \| \sqrt{\eps} v_{xxy} w \|^{\frac{1}{2}}  \\
\lesssim & \eps^{N_0+1} [u^0, v^0]_B^2 \| a_2^\eps w \| +\eps^{N_0-\frac{1}{4}} [u^0, v^0]_B^2 \| v \|_{Y_w}.   
\end{align*}

To conclude, we estimate the contributions of $h$, starting with 
\begin{align*}
\| \eps^{N_0} \{ h v^0_{yy} - v^0 h_{yy} \} w_0 \| \lesssim & \eps^{N_0} \| h w \|_\infty \| v^0_{yy} \| + \eps^{N_0} \| \frac{v^0}{y} \|_\infty \| h_{yy} y w \|_2  \\
\lesssim & \eps^{N_0} [[q^0]] [[[q^0]]] C(h) \lesssim  \eps^{N_0} [u^0, v^0]_B^4 + C(h).
\end{align*}

We next move to the $\mathcal{H}$ terms:
\begin{align*}
\| \mathcal{H}w_0 \| \le & \| [- h''' + v_s h'' - h \Delta_\eps v_s] w \| \le   C(h). 
\end{align*}
\end{proof}

The remaining terms from the right-hand sides of (\ref{bring1}) are the $\mathcal{F}$ terms, for which we estimate 
\begin{lemma} Let $\bold{u} \in \mathcal{X}$ as in (\ref{defn.norms.ult}). Assume (\ref{assume.bq.intro}) and $h \in C^\infty(e^y)$ as in (\ref{rene}). Let $n > 1 + 2N_0$ in Theorem \ref{thm.construct}. Then the forcing terms satisfy
\begin{align} \label{defn.F}
\eps^{\frac{1}{2}}|\mathcal{F}_{X_1}| + |\mathcal{F}_B| +\eps^{\frac{1}{2}} |\mathcal{F}_{Y_{w_0}}| \le o(1) + o(1) \| \bold{u} \|_{\mathcal{X}}^2 + o(1) \| \bold{\bar{u}} \|_{\mathcal{X}}^2. 
\end{align}
\end{lemma}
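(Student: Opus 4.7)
The plan is to reduce each of $\mathcal{F}_{X_1}, \mathcal{F}_B, \mathcal{F}_{Y_{w_0}}$ to a finite list of weighted $L^2$ pairings and estimate each pairing by Cauchy--Schwarz against the corresponding multiplier. Recall the definitions in (\ref{defn.BX1}), (\ref{defn.BYw}), (\ref{mathB}): $\mathcal{F}_{X_1}, \mathcal{F}_{Y_w}$ are built from $F_{(q)}$ paired with the multipliers $q_x, q_{xx}, q_{yy}, \eps^2 v_{xxxx}, \eps u_s v_{xxyy}$ (with appropriate $\eps$ powers), while $\mathcal{F}_B$ pairs $F^a_R$ with $q^0$ and measures $\|F^a_R w_0\|^2$. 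Each of $F_{(q)}$ and $F^a_R$ decomposes into three structural pieces: (i) the pure expansion forcing $\p_x F_R$ (respectively $F_R^{(0)}$ at $x=0$); (ii) boundary-data contributions involving $a^\eps$ and its derivatives; and (iii) $h$-dependent forcing. I would treat these three in turn.

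For piece (i), I would invoke Theorem \ref{thm.construct}: since $n > 1 + 2N_0$, the approximate solution is constructed to order so that $\eps^{-N_0}$ times the residual is $o(1)$ in the relevant weighted spaces, with spatial derivatives controlled uniformly. In particular, one obtains $\|F_R \cdot w_0\| + \|\p_x F_R \cdot w_0\| = o(1)$, and similarly at $\{x=0\}$ for $F_R^{(0)}$. Pairing with $q_x, q_{xx}, q_{yy}$ then gives $|(\p_x F_R, q_\star w^2)| \lesssim \|\p_x F_R w\| \cdot |||q|||_w \le o(1) \| v\|_{X_1}$ by (\ref{systemat1}), which upon multiplying by $\eps^{\frac{1}{2}}$ yields $o(1)(1 + \|v\|_{X_1}^2)$. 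The $\eps^2 v_{xxxx}$ and $\eps u_s v_{xxyy}$ pairings are controlled similarly using $||||v||||_w \lesssim \|v\|_{X_1}, \|v\|_{Y_{w_0}}$, with the $\eps^{-3/8}, \eps^{-1/8}$ prefactors compensated by the overall $\eps^{\frac{1}{2}}$.

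For piece (ii), the key input is (\ref{assume.bq.intro}), which tells us $\|\p_y^j a_i^\eps \cdot \eps^{-\frac{1}{2}} \langle y\rangle\langle Y\rangle^m\| = o(1)$ for $j \le 4$. Using the homogenization (\ref{homfin}), every term in $\p_x b_{(u)}(a^\eps)$ can be written as a product of derivatives of $a_i^\eps$ with polynomial-in-$x$ coefficients. Thus after putting each such term in $L^2(w_0)$, one extracts a factor $\sqrt{\eps}$ (from the $\eps^{1/2}$ normalization in (\ref{assume.bq.intro})), which when combined with $\eps^{\frac{1}{2}}$ out front yields $o(1)$ bounds against the multipliers. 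Piece (iii), the $h$-dependent forcing, is controlled directly by $\|h\|_{C^{M_0}(e^y)} < \infty$: each $h$-term is bounded in $L^2(w_0)$ by a constant depending on $h$, which when paired with the multipliers and multiplied by $\eps^{\frac{1}{2}}$ absorbs into the $o(1)$ slack. For $\mathcal{F}_B$ specifically, the forcing $F^a_R$ at $x=0$ consists of evaluations of expansion residuals and $a^\eps$ derivatives at $\{x=0\}$; the same trichotomy gives $|(F^a_R, q^0)| + \|F^a_R w_0\|^2 = o(1) + o(1) \| \bold{u}\|_{\mathcal{X}}^2$ via Cauchy--Schwarz against $[[q^0]], [[[q^0]]]$ and the control (\ref{B.dep}).

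The main obstacle is bookkeeping: carefully tracking the $\eps$-powers in $F_{(q)}$ so that each multiplier pairing produces either pure $o(1)$ or $o(1)\|\bold{u}\|_{\mathcal{X}}^2$ after multiplication by $\eps^{1/2}$. The most delicate pairing is the one involving $\eps^{-\frac{3}{8}}\eps^{-\frac{1}{8}} \eps u_s v_{xxyy}$ for $\mathcal{F}_{X_1}$: here one must verify that the forcing residuals decay at the rate $\eps^{1/2+}$ in $L^2(u_s w_0)$, which follows from the construction of Theorem \ref{thm.construct} precisely because $n > 1 + 2N_0$ gives at least two extra factors of $\sqrt{\eps}$ beyond the $\eps^{N_0}$ baseline. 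No new analytic machinery is required beyond the estimates and weighted inequalities already developed; the proof is essentially a careful accounting of forcing contributions against the multipliers defined in (\ref{defn.BX1}), (\ref{defn.BYw}), (\ref{mathB}).
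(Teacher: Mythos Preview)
Your overall strategy matches the paper's: decompose $F_{(q)}$ and $F^a_R$ into structural pieces, then estimate each pairing by Cauchy--Schwarz against the multipliers in (\ref{defn.BX1}), (\ref{defn.BYw}), (\ref{mathB}). However, there is a real gap and a minor $\eps$-accounting slip.

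\textbf{The gap: you have omitted $H[a^\eps](\bar v,\bar u^0,\bar v^0)$.} From (\ref{spec.nl}), $F_{(q)}$ contains not only $\p_x F_R$, $\p_x b_{(u)}(a^\eps)$, and the $h$-term, but also $H[a^\eps]=Q_{13}+Q_{23}+Q_{33}$. While $Q_{33}$ is a pure $a^\eps$-forcing and fits into your piece (ii), $Q_{13}$ and $Q_{23}$ are \emph{linear in the barred unknowns} $(\bar v,\bar v^0)$ with $a^\eps$-dependent coefficients. These are precisely the terms responsible for the $o(1)\|\bold{\bar u}\|_{\mathcal{X}}^2$ contribution on the right of (\ref{defn.F}); your three-piece scheme produces only $o(1)$ and $o(1)\|\bold{u}\|_{\mathcal{X}}^2$, so as written it cannot account for the barred norm. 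The paper closes this by showing $\|H[a^\eps](\bar u^0,\bar v^0,\bar v)\,\tfrac{w_0}{\sqrt\eps}\|\le o(1)+o(1)\big([\bar u^0,\bar v^0]_B+\|\bar v\|_{X_1}+\|\bar v\|_{Y_{w_0}}\big)$ directly from (\ref{assume.bq.intro}).

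\textbf{The $\eps$-count on the $q_{xx}$ pairing.} You write $|(\p_x F_R, q_{xx}w^2)|\lesssim\|\p_x F_R w\|\cdot|||q|||_w$, but (\ref{systemat1}) only controls $\|\sqrt\eps\,q_{xx}w\|$ by $|||q|||_w$, so the correct bound is $\|\eps^{-1/2}\p_x F_R w\|\cdot|||q|||_w$. This is not fatal: (\ref{thm.force.maz}) in Theorem~\ref{thm.construct} gives the stronger estimate $\|\eps^{-1/2}\p_x F_R w_0\|\lesssim\sqrt\eps^{\,n-1-2N_0}=o(1)$ under $n>1+2N_0$, and the paper invokes exactly this. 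Your statement $\|\p_x F_R\cdot w_0\|=o(1)$ is too weak to close the $q_{xx}$ multiplier (and similarly the $\eps^{-3/8}\eps^{-1/8}\eps u_s v_{xxyy}$ multiplier) without the extra $\eps^{-1/2}$.
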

\begin{proof} Recalling the definition of $F_{(q)}, F^a_R$ from (\ref{sys.u0.app.unh}), (\ref{spec.nl}): 
\begin{align*}
&F_{(q)} = \p_x F_R + \p_x b_{(u)}(a^\eps) + H[a^\eps](\bar{v}, \bar{u}^0, \bar{v}^0) + \{ v_{sx} h_{yy} - h \Delta_\eps v_{sx} \}, \\
&F^a_R = F_R|_{x = 0} + b_{(u)}(a^\eps). 
\end{align*}

Examining the definition of $\mathcal{F}_{X_1}, \mathcal{F}_{Y_{w_0}}, \mathcal{F}_{B}$, we may estimate 
\begin{align*}
\eps^{\frac{1}{2}}\Big( \mathcal{F}_{X_1}(\p_x F_R, q) + \mathcal{F}_{Y_{w_0}}(\p_x F_R, q)\Big) \lesssim & \eps^{\frac{1}{2}} \| \frac{1}{\sqrt{\eps}} \p_x F_R w_0 \| [\| v \|_{Y_{w_0}} + \| v \|_{X_1}]  \\
\lesssim & o(1) + o(1) \| \bold{u} \|_{\mathcal{X}}^2, 
\end{align*}

\noindent upon recalling (\ref{thm.force.maz}). Next, 
\begin{align*}
\mathcal{F}_B(F_R|_{x = 0}, q) \le & |(F_R, q^0)| + \| F_R w_0 \|^2 \\
\lesssim &  \| F_Rw_0 \| \| q^0_y \| + \| F_R w_0 \|^2\\
\le & o(1) + o(1) [u^0, v^0]_B^2 \\
\le & o(1) + o(1) \| \bold{u} \|_{\mathcal{X}}^2. 
\end{align*}

Repeating the above estimates for the $\p_x b_{(u)}(a^\eps), b_{(u)}(a^\eps)$ terms, we obtain that these contributions to (\ref{defn.F}) are bounded by 
\begin{align*}
 &C \| \frac{1}{\sqrt{\eps}} \p_x b_{(u)}(a^\eps) \|^2 + \| b_{(u)}(a^\eps) w_0 \|^2 + o(1) \| \bold{u} \|_{\mathcal{X}}^2  \lesssim  o(1) + o(1) \| \bold{u} \|_{\mathcal{X}}^2, 
\end{align*}

\noindent upon invoking assumption (\ref{assume.bq.intro}) and consulting the definitions  (\ref{bu}).

A similar computation, consulting the definition of $H[a^\eps](\bar{u}^0, \bar{v}^0, \bar{v})$ given in (\ref{spec.nl}), produces a bound 
\begin{align*}
\| H[a^\eps](\bar{u}^0, \bar{v}^0, \bar{v}) \frac{w_0}{\sqrt{\eps}} \| \le& o(1) + o(1)  \Big( [\bar{u}^0, \bar{v}^0]_B + \|\bar{v} \|_{X_1} + \| \bar{v} \|_{Y_{w_0}} \Big),
\end{align*}

\noindent upon invoking again assumption (\ref{assume.bq.intro}). A similar estimate holds for the $h$ terms in $F_{(q)}$ using (\ref{rene}). This thus concludes the proof of (\ref{defn.F}).
\end{proof}

We are now ready to insert all of these estimates into (\ref{bring1}), which gives the following 
\begin{proposition} For $\sigma << 1$ then $L << 1$, solutions to (\ref{linearised.1}), (\ref{linearised.2}) satisfy the following set of estimates: 
\begin{align}
\begin{aligned} \label{est.imjm}
&\| v \|_{X_1}^2 \lesssim o(1) \| v \|_{X_1}^2 +  \eps^{-\frac{1}{2}} [\bar{u}^0, \bar{v}^0]_B^2 \\
& \hspace{20 mm} + \eps^{N_0 - 1} \Big( \| \bar{v} \|_{X_1}^4 + [\bar{u}^0, \bar{v}^0]_B^4 \Big) + C(h) + \mathcal{F}_{X_1}\\
&[u^0, v^0]_B^2 \lesssim \eps \| \bar{v} \|_{Y_{w_0}}^2 + \eps^{\frac{1}{2}+\frac{3-}{16}} \| \bar{v} \|_{X_1}^2 \\
& \hspace{20 mm} + \eps^{N_0 - 1}[\bar{u}^0, \bar{v}^0]_B^4 + C(h, a_2^\eps) + \mathcal{F}_{B} \\
&\| v \|_{Y_{w_0}}^2 \lesssim \| v \|_{X_1}^2  + [\bar{u}^0, \bar{v}^0]_B^2 \\
& \hspace{20 mm} + \eps^{N_0 - 1} \Big( \| \bar{v} \|_{X_1}^4 + \| \bar{v} \|_{Y_{w_0}}^4 + [\bar{u}^0, \bar{v}^0]_B^4 \Big) + C(h) + \mathcal{F}_{Y_{w_0}}.
\end{aligned}
\end{align}

\noindent Above, $C(h) = \bigO( \| h \|_{C^{M_0}(e^y)})$ for a large $M_0$. 

\end{proposition}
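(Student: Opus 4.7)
The plan is to combine the three baseline inequalities of (\ref{bring1}) with the bounds on $\mathcal{B}, \mathcal{N}, \mathcal{F}$ obtained in Lemmas \ref{lemma.bb1}, \ref{lemma.bb2}, \ref{lemma.nonlinear} and in the forcing estimate (\ref{defn.F}). All three estimates in (\ref{est.imjm}) are structurally parallel, so I would treat them in the same way and differ only in the bookkeeping of $\eps$-powers.

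First I would record the three baseline inequalities:
\begin{align*}
\| v \|_{X_1}^2 \lesssim &\ \mathcal{B}_{X_1} + \mathcal{N}_{X_1} + \mathcal{F}_{X_1}, \\
[u^0, v^0]_B^2 \lesssim &\ \mathcal{B}_B + \mathcal{N}_B + \mathcal{F}_B, \\
\| v \|_{Y_{w_0}}^2 \lesssim &\ \| v \|_{X_1}^2 + \mathcal{B}_{Y_{w_0}} + \mathcal{N}_{Y_{w_0}} + \mathcal{F}_{Y_{w_0}}.
\end{align*}
For the $\mathcal{B}$ terms I would invoke Lemma \ref{lemma.bb1} to bound $|\mathcal{B}_{X_1}|$ by $o(1)\| v \|_{X_1}^2 + C \eps^{-1/2}[\bar{u}^0,\bar{v}^0]_B^2$ and $|\mathcal{B}_{Y_{w_0}}|$ by $o(1)\|v\|_{Y_{w_0}}^2 + [\bar{u}^0,\bar{v}^0]_B^2$, and Lemma \ref{lemma.bb2} to bound $|\mathcal{B}_B|$ by $\eps \|\bar{v}\|_{Y_{w_0}}^2 + \eps^{\frac{1}{2}+\frac{3-}{16}}\|\bar{v}\|_{X_1}^2$; each $o(1)$ factor is subsequently absorbed into the left-hand side.

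For the nonlinear terms I would apply Lemma \ref{lemma.nonlinear}. The chief subtlety is that the estimates (\ref{showin}), (\ref{showin2}) are triple products (of the barred and unbarred quantities), so I would convert them into fourth powers of the barred quantities (plus an absorbable factor of $\| v\|_{X_1}$ or $\| v\|_{Y_{w_0}}$) via Young's inequality for products with an $\eps^{N_0-1}$ weight, e.g.
\begin{align*}
\eps^{N_0-1}[\bar{u}^0,\bar{v}^0]_B \|\bar{v}\|_{X_1}\| v\|_{X_1}
\le\ \delta \| v\|_{X_1}^2 + N_\delta \eps^{2(N_0-1)}\big( [\bar{u}^0,\bar{v}^0]_B^4 + \|\bar{v}\|_{X_1}^4 \big),
\end{align*}
so that with $N_0 = 1+$ the $\eps$ weight is favorable. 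The resulting $\delta \|v\|_{X_1}^2$ can again be absorbed, whereas the fourth powers of the barred quantities are precisely those appearing in (\ref{est.imjm}). The boundary-trace nonlinearity (\ref{ree}) is already in the desired form, contributing $\eps^{N_0-1}[\bar{u}^0,\bar{v}^0]_B^4$ together with $C(h,a_2^\eps)$ and an $\eps^{N_0-1}\|\bar{v}\|_{Y_{w_0}}^2$ term, which falls within (\ref{est.imjm}.2).

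Finally, the forcing bound (\ref{defn.F}) shows $\mathcal{F}_{X_1}, \mathcal{F}_B, \mathcal{F}_{Y_{w_0}}$ are all $o(1) + o(1)\|\bold{u}\|_{\mathcal{X}}^2 + o(1)\|\bar{\bold{u}}\|_{\mathcal{X}}^2$, and I would split the $o(1)\|\bold{u}\|_{\mathcal{X}}^2$ between $[u^0,v^0]_B$, $\|v\|_{X_1}$, and $\|v\|_{Y_{w_0}}$ and absorb each contribution. The main (only genuine) obstacle is making sure the absorption on each line is self-consistent: the first and second lines of (\ref{est.imjm}) each have an $o(1)$ prefactor of the quantity being estimated, and the third line's $\|v\|_{X_1}^2$ term is not absorbed but left explicit, which is exactly why the chain (\ref{diag1}) closes at the level of (\ref{scheme.1}). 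Once the prefactors $\sigma^{1/3}$ (from Lemmas \ref{Lem.double}-\ref{Lem.triple}, requiring $\sigma \ll 1$) and the various $L^\delta$ factors (requiring $L\ll 1$) are made small enough to allow this absorption, the three inequalities of (\ref{est.imjm}) follow.
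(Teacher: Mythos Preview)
Your proposal is correct and matches the paper's approach: the paper presents this Proposition as a direct consequence of inserting Lemmas \ref{lemma.bb1}, \ref{lemma.bb2}, \ref{lemma.nonlinear} into the baseline scheme (\ref{bring1}), exactly as you outline. One small point of over-reach: the $\mathcal{F}_{X_1}, \mathcal{F}_B, \mathcal{F}_{Y_{w_0}}$ terms are left \emph{explicit} in (\ref{est.imjm}) and are only estimated via (\ref{defn.F}) later in the proof of Theorem \ref{thm.main}, so your final paragraph about splitting and absorbing them is unnecessary at this stage (though not harmful).
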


From here, we may immediately prove the main result: 

\begin{proof}[Proof of Theorem \ref{thm.main}] We apply a standard contraction mapping theorem to the map $\Psi$ which sends $[\bar{v}, \bar{u}^0, \bar{v}^0]$ to $[v, u^0, v^0]$ via the equations (\ref{linearised.1}) , (\ref{linearised.2}). Such a map is well-defined according to Proposition \ref{prop.L.exist} and Proposition \ref{result.v.exist}. The estimates (\ref{est.imjm}) together with the forcing estimates in  (\ref{defn.F}) produce the following inequality 
\begin{align} \label{cont.1}
\| \bold{u} \|_{\mathcal{X}}^2 \le o(1) \| \bold{u} \|_{\mathcal{X}}^2 + \| \bold{\bar{u}} \|_{\mathcal{X}}^2 + \| \bold{\bar{u}} \|_{\mathcal{X}}^4 + o(1). 
\end{align}

By repeating the above analysis for differences $\bold{u}_1 - \bold{u}_2$, and $\bold{\bar{u}}_1 - \bold{\bar{u}}_2$, (\ref{cont.1}) shows that $\Psi$ is a contraction map, and thus has a fixed point. Clearly, from (\ref{linearised.1}) and (\ref{linearised.2}), such a fixed point solves the nonlinear equations (\ref{sys.u0.app.unh}) and (\ref{spec.nl}). The homogenization procedure to derive these two systems (see Subsection \ref{subsection.rem}) ensures that this is equivalent to solving:
\begin{align*}
&\p_x \text{LHS Equation (\ref{eqn.vort.intro})} = \p_x \text{RHS Equation (\ref{eqn.vort.intro})}, \text{ and } \\
&\text{LHS Equation  (\ref{eqn.vort.intro})}|_{x = 0} = \text{RHS Equation (\ref{eqn.vort.intro})}|_{x = 0}.
\end{align*}

\noindent Thus, such a fixed point solves (\ref{eqn.vort.intro}) itself. 
\end{proof}

\newpage

\appendix

\part*{Appendix}

\section{Derivation of Equations} \label{appendix.derive}

We will assume the expansions: 
\begin{align}
&U^\eps = \tilde{u}^n_s + \eps^{N_0} u, \hspace{3 mm} V^\eps = \tilde{v}^n_s + \eps^{N_0} v, \hspace{3 mm} P^\eps = \tilde{P}^n_s + \eps^{N_0} P.
\end{align}

\noindent We will denote the partial expansions: 
\begin{align}
&u_s^i = \sum_{j = 0}^i \sqrt{\eps}^j u^j_e + \sum_{j = 0}^{i-1} \sqrt{\eps}^j u^j_p, \hspace{5 mm} \tilde{u}_s^i = u_s^i + \sqrt{\eps}^i u^i_p, \\
&v_s^i = \sum_{j = 1}^i \sqrt{\eps}^{j-1} v^j_e + \sum_{j = 0}^{i-1} \sqrt{\eps}^j v^j_p, \hspace{5 mm} \tilde{v}_s^i = v_s^i + \sqrt{\eps}^i v^i_p, \\
&P^i_s = \sum_{j = 0}^i \sqrt{\eps}^j P^j_e, \hspace{5 mm}  \tilde{P}_s^i = P_s^i + \sqrt{\eps}^i \Big\{ P^i_p + \sqrt{\eps} P^{i,a}_p \Big\}.
\end{align}

\noindent  We will also define $u^{E,i}_s = \sum_{j = 0}^i \sqrt{\eps}^j u^j_e$ to be the ``Euler" components of the partial sum. Similar notation will be used for $u^{P,i}_s, v^{E,i}_s, v^{P,i}_s$. The following will also be convenient: 
\begin{align}
\begin{aligned} \label{profile.splitting}
&u_s^E := \sum_{i =0}^n \sqrt{\eps}^i u^i_e, \hspace{3 mm} v_s^E := \sum_{i = 1}^n \sqrt{\eps}^{i-1} v^i_e, \\
&u_s^P := \sum_{i = 0}^n \sqrt{\eps}^i u^i_p, \hspace{3 mm} v_s^P := \sum_{i = 0}^n \sqrt{\eps}^i v^i_p, \\
&u_s = u_s^P + u_s^E, \hspace{3 mm} v_s = v_s^P + v_s^E. 
\end{aligned}
\end{align} 

\noindent  The $P^{i,a}_p$ terms are ``auxiliary Pressures" in the same sense as those introduced in \cite{GN} and \cite{Iyer} and are for convenience. We will also introduce the notation: 
\begin{align} \label{bar.defs}
\bar{u}^i_p := u^i_p - u^i_p|_{y = 0}, \hspace{5 mm} \bar{v}^i_p := v^i_p - v^i_p(x,0), \hspace{5 mm} \bar{v}^i_e = v^i_e - v^i_e|_{Y = 0}.
\end{align}

\subsection{$i = 0$}

We first record the properties of the leading order $(i = 0)$ layers. For the outer Euler flow, we will take a shear flow, $[u^0_e(Y), 0, 0]$. The derivatives of $u^0_e$ decay rapidly in $Y$ and that is bounded below, $|u^0_e| \gtrsim 1$. 

For the leading order Prandtl boundary layer, the equations are: 
\begin{align}
\left.
\begin{aligned} \label{Pr.leading}
&\bar{u}^0_p u^0_{px} + \bar{v}^0_p u^0_{py} - u^0_{pyy} + P^0_{px} = 0, \\
&u^0_{px} + v^0_{py} = 0, \hspace{3 mm} P^0_{py} = 0, \hspace{3 mm} u^0_p|_{x = 0} = U^0_P, \hspace{3 mm} u^0_p|_{y = 0} = - u^0_e|_{Y = 0}.
\end{aligned}
\right\}
\end{align}

It is convenient to state results in terms of the quantity $\bar{u}^0_p$, whose initial data is simply $\bar{U}^0_P := u^0_e(0) + U^0_P$. Our starting point is the following result of Oleinik in \cite{Oleinik}, P. 21, Theorem 2.1.1:
\begin{theorem}[Oleinik]   \label{thm.Oleinik} Assume boundary data is prescribed satisfying $U^0_P \in C^\infty$ and exponentially decaying $|\p_y^j \{\bar{U}^0_P - u^0_e(0)\}|$ for $j \ge 0$ satisfying: 
\begin{align} 
\begin{aligned} \label{OL.1}
& \bar{U}^0_P > 0 \text{ for } y > 0, \hspace{3 mm} \p_y \bar{U}^0_P(0) > 0, \hspace{3 mm} \p_y^2 \bar{U}^0_P \sim y^2 \text{ near } y = 0
\end{aligned}
\end{align}

\noindent  Then for some $L > 0$, there exists a solution, $[\bar{u}^0_p, \bar{v}^0_p]$ to (\ref{Pr.leading}) satisfying, for some $y_0, m_0 > 0$, 
\begin{align} \label{coe.2}
&\sup_{x \in (0,L)} \sup_{y \in (0, y_0)} |\bar{u}^0_p, \bar{v}^0_p, \p_y \bar{u}^0_p, \p_{yy}\bar{u}^0_p, \p_x \bar{u}^0_p| \lesssim 1, \\ \label{coe.1}
&\sup_{x \in (0,L)} \sup_{y \in (0, y_0)} \p_y \bar{u}^0_p > m_0 > 0. 
\end{align}
\end{theorem}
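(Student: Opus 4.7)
The plan is to follow Oleinik's classical approach via the Crocco transformation, which exploits the positivity hypothesis $\partial_y \bar{U}^0_P > 0$ near the boundary to reduce the degenerate Prandtl system to a single quasilinear parabolic equation. Specifically, since $\partial_y \bar{u}^0_p(0,0) > 0$ by hypothesis, for short times $x \in [0,L]$ the map $(x,y) \mapsto (x, \eta)$ with $\eta = \bar{u}^0_p(x,y)/u^0_e(\infty)$ will be a diffeomorphism onto $[0,L] \times [0,1)$. Introduce the Crocco unknown
\[
w(x,\eta) := \partial_y \bar{u}^0_p(x,y(x,\eta)),
\]
which, using $u^0_p \partial_x + v^0_p \partial_y = \bar{u}^0_p(\partial_x + w\, \partial_\eta \cdot \text{(chain rule factors)})$ and the Prandtl equation (\ref{Pr.leading}) with $P^0_p \equiv 0$ (shear outer flow), satisfies a single quasilinear parabolic equation of the form
\[
\partial_x w \;=\; w^2 \partial_{\eta\eta} w \quad \text{on } (0,L) \times (0,1),
\]
with boundary conditions $w\partial_\eta w|_{\eta=0} = 0$ (from $\partial_y^2 \bar{u}^0_p|_{y=0} = \bar{u}^0_p \partial_x \bar{u}^0_p|_{y=0} = 0$), $w|_{\eta=1} = 0$, and initial data $w(0,\eta)$ determined by $\bar{U}^0_P$.

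Next, I would solve the Crocco equation by standard parabolic techniques. First linearize: for a given profile $\tilde w > 0$, solve the linear equation $\partial_x w = \tilde w^2 \partial_{\eta\eta} w$ with the prescribed boundary data, which admits classical solutions by standard parabolic theory once one establishes that $\tilde w$ stays bounded above and below. A maximum principle argument, together with the regularity and non-degeneracy of the initial data $w(0,\cdot)$, gives
\[
0 < m_0 \le w(x,\eta) \le M_0 \quad \text{for } x \in [0,L_0], \, \eta \in [0, 1-\delta],
\]
with quantitative control for small $L_0$. This produces a fixed-point iteration converging in, say, $C^{2+\alpha}$ on compact subsets of $(0, 1)$; higher regularity and exponential decay of derivatives transfer from the initial data thanks to the smoothing effect of the (non-degenerate) parabolic operator. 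The smallness of $L$ will be used throughout to preserve the crucial positivity $w \ge m_0 > 0$, which corresponds exactly to the preserved monotonicity $\partial_y \bar{u}^0_p > 0$.

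The final step is to revert to physical variables: since $w > 0$, the map $\eta \mapsto y$ is well defined and smooth, and one reconstructs $\bar{u}^0_p(x,y)$ from $w(x,\eta)$ by $\partial_y \bar{u}^0_p = w$, and $\bar{v}^0_p$ from the divergence-free condition. The compatibility condition $\partial_{yy}\bar{u}^0_p \sim y^2$ near $y=0$ from the hypothesis (\ref{OL.1}), combined with the corner-compatibility of $w$ at $(x,\eta) = (0,0)$, ensures that the solution is smooth up to the boundary $\{y = 0\}$. Exponential decay in $y$ at $y = \infty$ (i.e.\ $\eta \uparrow 1$) is controlled by a weighted energy estimate that propagates the decay of the initial profile $\bar{U}^0_P - u^0_e(0)$ forward in $x$ for small $L$.

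The main obstacle is the degeneracy of the Crocco equation at both $\eta = 0$ and $\eta = 1$: the coefficient $w^2$ degenerates at $\eta = 1$ (where $\bar{u}^0_p$ meets the outer flow), and the boundary condition at $\eta = 0$ is of mixed type. The hypothesis $\partial_{yy}\bar{U}^0_P \sim y^2$ is precisely what rules out a corner singularity at $(0,0)$ in Crocco variables and allows one to build $C^{2,\alpha}$ solutions to the linearized problem. Handling the $\eta = 1$ degeneracy requires a truncation argument (solve on $\eta \in [0, 1-\delta]$ and pass to $\delta \downarrow 0$ using uniform weighted estimates), and it is here that exponential decay of the initial data is essential to close the estimates. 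Throughout, the smallness of $L$ is used to absorb nonlinear corrections and to prevent the lower bound $w \ge m_0$ from being lost via the iteration.
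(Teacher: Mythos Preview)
The paper does not prove this theorem at all: it is quoted verbatim as Oleinik's result from \cite{Oleinik}, page 21, Theorem 2.1.1, and serves purely as a starting point for the subsequent higher-regularity analysis of the Prandtl layers. Your sketch via the Crocco transformation is indeed the classical method Oleinik employs, so in that sense you have correctly identified the underlying argument; but since the paper offers no proof of its own here, there is nothing to compare against beyond noting that your outline matches the cited source.
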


\noindent  By evaluating the system (\ref{Pr.leading}) and $\partial_y$ of (\ref{Pr.leading}) at $\{y = 0\}$ we conclude: 
\begin{align*}
\bar{u}^0_{pyy}|_{y = 0} = \bar{u}^0_{pyyy}|_{y = 0} = 0. 
\end{align*}

\subsection{$1 \le i \le n-1$}

We now list the equations to be satisfied by the $i$'th layers, starting with the $i$'th Euler layer:
\begin{align} \label{des.eul.1}
\left.
\begin{aligned}
&u^0_e \p_x u^i_e + \p_Y u^0_e v^i_e + \p_x P^i_e =: f^i_{E,1}, \\
&u^0_e \p_x v^i_e + \p_Y P^i_e  =: f^i_{E,2}, \\
&\p_x u^i_e + \p_Y v^i_e = 0, \\
&v^i_e|_{Y = 0} = - v^0_p|_{y = 0}, \hspace{5 mm} v^i_e|_{x = 0, L} = V_{E, \{0, L\}}^i \hspace{5 mm} u^i_e|_{x = 0} = U^i_{E}.
\end{aligned}
\right\}
\end{align}

For the $i$'th Prandtl layer:
\begin{align} \label{des.pr.1}
\left.
\begin{aligned}
&\bar{u} \p_x u^i_p + u^i_p \p_x \bar{u} + \p_y \bar{u} [v^i_p - v^i_p|_{y = 0}] + \bar{v} \p_y u^i_p + \p_x P^i_p - \p_{yy} u^i_p := f^{(i)}, \\  
& \p_x u^i_p + \p_y v^i_p = 0,  \hspace{5 mm} \p_y P^i_p = 0\\  
& u^i_p|_{y = 0} = -u^i_e|_{y = 0}, \hspace{5 mm} [u^i_p, v^i_p]_{y \rightarrow \infty} = 0, \hspace{5 mm} v^i_p|_{x = 0} = \text{prescribed initial data}. 
\end{aligned}
\right\}
\end{align}

The relevant definitions of the above forcing terms are given below. Note that as a matter of convention, summations that end with a negative number are empty sums.
\begin{definition}[Forcing Terms] \label{def.forcing}  
\begin{align*}
&-f^i_{E,1} := u^{i-1}_{ex} \sum_{j = 1}^{i-2} \sqrt{\eps}^{j-1} \{u^j_e + u^j_p(x,\infty) + u^{i-1}_e \sum_{j = 1}^{i-2} \sqrt{\eps}^{j-1} u^j_{ex} \\
& \hspace{15 mm} + \sqrt{\eps}^{i-2}[ \{u^{i-1}_{e} + u^{i-1}_p(x,\infty) \} u^{i-1}_{ex} + v^{i-1}_e u^{i-1}_{eY}] \\
& \hspace{15 mm} + u^{i-1}_{eY} \sum_{j = 1}^{i-2} \sqrt{\eps}^{j-1} v^j_e + v^{i-1}_e \sum_{j = 1}^{i-2} \sqrt{\eps}^{j-1} u^j_{eY} - \sqrt{\eps} \Delta u^{i-1}_e - g^{u,i}_{ext, e} \\
&-f^i_{E,2} := v^{i-1}_{eY} \sum_{j = 1}^{i-2} \sqrt{\eps}^{j-1} v^j_e + v_e^{i-1} \sum_{j = 1}^{i-2} \sqrt{\eps}^{j-1} v^j_{eY} + \sqrt{\eps}^{i-2}[v^{i-1}_e v^{i-1}_{eY} + u^{i-1}_e v^{i-1}_{ex}] \\
& \hspace{15 mm} + \{u_e^{i-1} + u^{i-1}_p(x,\infty)\} \sum_{j =1}^{i-2} \sqrt{\eps}^{j-1} v^j_{ex} + v^{i-1}_{ex} \sum_{j = 1}^{i-2} \sqrt{\eps}^{j-1} \{u^j_e + u^j_p(x,\infty) \}\\
& \hspace{15 mm} - \sqrt{\eps} \Delta v^{i-1}_e - g^{v, i}_{ext, e}, \\
&-f^{(i)} := \sqrt{\eps} u^{i-1}_{pxx} + \eps^{-\frac{1}{2}} \{ v^i_e - v^i_e(x,0) \} u^0_{py} + \eps^{-\frac{1}{2}} \{ u^0_e - u^0_e(0) \} u^{i-1}_{px} + \eps^{-\frac{1}{2}} \{ u^{P, i-1}_{sx} \\
& \hspace{15 mm} - \bar{u}^0_{sx} \} u^{i-1}_p +  \eps^{-\frac{1}{2}} \{ u^{E, i-1}_{sx}  - \bar{u}^0_{sx} \} \{u^{i-1}_p - u^{i-1}_p(x,\infty) \} + \eps^{-\frac{1}{2}} v^{i-1}_p \{ \bar{u}^{i-1}_{sy} \\
& \hspace{15 mm} - u^0_{py} \} + u^{i-1}_{px} \sum_{j =1 }^{i-1} \sqrt{\eps}^{j-1}(u^j_e + u^j_p)  + \eps^{-\frac{1}{2}} (v_s^{i-1} - v_s^1) u^{i-1}_{py} + \eps^{-\frac{1}{2}} (v^1_e \\
& \hspace{15 mm} - v^1_e(x,0)) u^{i-1}_{py} + \sqrt{\eps} u^i_{eY} \sum_{j = 0}^{i-1} \sqrt{\eps}^j v^j_p + v^i_e \sum_{j = 1}^{i-1} \sqrt{\eps}^{j-1} u^j_{py} + u^i_{ex} \sum_{j = 0}^{i-1} \sqrt{\eps}^j \{u^j_p \\
& \hspace{15 mm} - u^j_p(x,\infty) \} + u^i_e \sum_{j = 0}^{i-1} \sqrt{\eps}^j u^j_{px} + \int_y^\infty \p_x \{ \sqrt{\eps}^2 u^i_e \sum_{j = 0}^{i-1} \sqrt{\eps}^j v^j_{px} + \sqrt{\eps} v^i_{ex} \\
& \hspace{15 mm} \times \sum_{j = 0}^{i-1} \sqrt{\eps}^j \{u^j_p - u^j_p(x,\infty)\} + \sqrt{\eps}^2 v^i_{eY} \sum_{j = 0}^{i-1} \sqrt{\eps}^j v^j_p + \sqrt{\eps} v^i_e \sum_{j = 0}^{i-1} \sqrt{\eps}^j v^j_{py} \\
& \hspace{15 mm} + \sqrt{\eps} v^{i-1}_s v^{i-1}_{py} + \sqrt{\eps} v_{sy}^{i-1} v^{i-1}_p  + \sqrt{\eps} v^{E,i-1}_{sx} \{u^{i-1}_p - u^{i-1}_p(x,\infty)\} \\
& \hspace{15 mm} +  \sqrt{\eps} v^{P,i-1}_{sx} u^{i-1}_{p} + \sqrt{\eps} u_s^{i-1} v^{i-1}_{px} + \sqrt{\eps} \Delta_\eps v^{i-1}_p + \sqrt{\eps}^i \{ u^{i-1}_p v^{i-1}_{px} + v^{i-1}_p v^{i-1}_{py} \} \} \ud z \\
& \hspace{15 mm} - g^{u, i}_{ext, p} + \int_y^\infty \p_x \{ \sqrt{\eps}^2 g^{v, i}_{ext, p} \} \ud z. 
\end{align*}
\end{definition}

For $i = 1$ only, we make the following modifications. The aim is to retain only the required order $\sqrt{\eps}$ terms into $f^{(1)}$. $f^{(2)}$ will then be adjusted by including the superfluous terms. Moreover, $f^{(1)}$ will contain the important $g^{u, 1}_{ext, p}$ external forcing term. Specifically, define: 
\begin{align} 
\begin{aligned} \label{defn.f1.special}
f^{(1)} := &g^{u, 1}_{ext, p} - u^0_p u^1_{ex}|_{Y = 0} - u^0_{px} u^1_e|_{Y = 0} \\
& - \bar{u}^0_{eY}(0) y u^0_{px} - v^0_p u^0_{eY} - v^1_{eY}(0) y u^0_{py}.
\end{aligned}
\end{align} 

\subsection{$i = n$}

For the final Prandtl layer, we must enforce the boundary condition $v^n_p|_{y = 0} = 0$. Define the quantities $[u_p, v_p, P_p]$ to solve
\begin{align} \label{des.pr.1}
\left.
\begin{aligned}
&\bar{u} \p_x u_p + u_p \p_x \bar{u} + \p_y \bar{u} v_p + \bar{v} \p_y u_p + \p_x P_p - \p_{yy} u_p := f^{(n)}, \\  
& \p_x u_p + \p_y v_p = 0,  \hspace{5 mm} \p_y P^i_p = 0\\  
& [u_p, v_p]|_{y = 0} = [-u^n_e, 0]|_{y = 0}, \hspace{5 mm} u_p|_{y \rightarrow \infty} = 0 \hspace{5 mm} v_p|_{x = 0} = V_P^n. 
\end{aligned}
\right\}
\end{align}

Note the change in boundary condition of $v_{p}|_{y = 0} = 0$ which contrasts the $i = 1,..,n-1$ case. This implies that $v_p = \int_0^y u_{px} \ud y'$. For this reason, we must cut-off the Prandtl layers: 
\begin{align*}
&u^n_p := \chi(\sqrt{\eps}y) u_p + \sqrt{\eps} \chi'(\sqrt{\eps}y) \int_0^y u_p(x, y') \ud y', \\
&v^n_p := \chi(\sqrt{\eps}y) v_p. 
\end{align*}

Here $\mathcal{E}^n$ is the error contributed by the cut-off: 
\begin{align*}
\mathcal{E}^{(n)} &:= \bar{u} \p_x u^{n}_{p} + u^n_p \p_x \bar{u}  +\bar{v} \p_y u^n_{p} + v^n_p \p_y \bar{u}  - u^n_{pyy} - f^{(n)}. 
\end{align*}

Computing explicitly: 
\begin{align} \n
\mathcal{E}^{(n)} := &(1-\chi) f^{(n)} + \bar{u} \sqrt{\eps} \chi'(\sqrt{\eps}y) v_p(x,y) + \bar{u}_{x} \sqrt{\eps} \chi' \int_0^y u_p \\  \n
& + \bar{v} \sqrt{\eps} \chi' u_p + \eps \bar{v} \chi'' \int_0^y u_p + \sqrt{\eps} \chi' u_p \\ \label{dan.1}
& + \eps^{\frac{3}{2}} \chi''' \int_0^y u_p + 2\eps \chi'' u_p + \sqrt{\eps} \chi' u_{py}.
\end{align}

We will now define the contributions into the next order, which will serve as the forcing for the remainder term: 
\begin{align} \n
&\underbar{f}^{(n+1)} := \sqrt{\eps}^n \Big[ \eps u^n_{pxx} + v^n_p\{ \bar{u}^n_{sy} - u^0_{py} \} + \{u^0_e - u^0_e(0) \} u^n_{px} \\ \n
& \hspace{15 mm} + u^n_{px} \sum_{j = 1}^n  \sqrt{\eps}^j (u^j_e + u^j_p) + \{ u^n_{sx} - \bar{u}^0_{sx} \} u^n_p + (v^n_s - v^1_s) u^n_{py} \\ \n
& \hspace{15 mm} + \{ v^1_e - v^1_e(x,0) \} u^n_{py} \Big] + \sqrt{\eps}^n \mathcal{E}^{(n)} + \sqrt{\eps}^{n+2} \Delta u^n_e \\ \label{underbar.f}
& \hspace{15 mm} + \sqrt{\eps}^n u^n_{ex} \sum_{j = 1}^{n-1} \sqrt{\eps}^j u^j_e + \sqrt{\eps}^n u^n_e \sum_{j = 1}^{n-1} \sqrt{\eps}^j u^j_{ex} + \sqrt{\eps}^{2n} [ u^n_e u^n_{ex} \\ \n
& \hspace{15 mm} + v^n_e u^n_{eY}] + \sqrt{\eps}^{n+1} u^n_{eY} \sum_{j= 1}^{n-1} \sqrt{\eps}^{j-1} v^j_e + \sqrt{\eps}^{n-1}v^n_e \sum_{j = 1}^{n-1} \sqrt{\eps}^{j+1} u^j_{eY} ..
\end{align}

\begin{align} \n
& \underbar{g}^{(n+1)} := \sqrt{\eps}^n \Big[ v_s^n \p_y v^n_p + \p_y v_s^n v^n_p + \p_x v^n_s u^n_p + u^n_s \p_x v^n_p - \Delta_\eps v^n_p \\ \n
& \hspace{15 mm}  + \sqrt{\eps}^n \Big( u^n_p \p_x v^n_p + v^n_p \p_y v^n_p \Big) \Big] + (\sqrt{\eps})^n \p_Y v^n_e \sum_{j = 1}^{n-1} (\sqrt{\eps})^{j-1} v^j_e \\ \label{underbar.g}
& \hspace{15 mm}+ \sqrt{\eps}^{n-1} v^n_e \sum_{j = 1}^{i-1} \sqrt{\eps}^j \p_Y v^j_e + \sqrt{\eps}^{2n-1} [v^n_e v^n_{eY} + u^n_e \p_x v^n_e]  \\ \n
& \hspace{15 mm} + \sqrt{\eps}^n u^n_e \sum_{j =1}^{n-1} (\sqrt{\eps})^{j-1}\p_x v^j_e + \sqrt{\eps}^{n-1} \p_x v^n_e \sum_{j = 0}^{n-1} \sqrt{\eps}^j u^j_e + \sqrt{\eps}^{n+1} \Delta v^n_e.
\end{align}

\subsection{Remainder System} \label{subsection.rem}

 A straightforward linearization yields:
\begin{align} \label{rem.sys.1}
\left.
\begin{aligned} 
&-\Delta_\eps u^{(\eps)} + S_u + \p_x P^{(\eps)} = \eps^{-N_0} \underbar{f}^{(n+1)} - \eps^{N_0} \{u^{(\eps)} u^{(\eps)}_x + v^{(\eps)} u^{(\eps)}_y \} \\
&-\Delta_\eps v^{(\eps)} + S_v + \frac{\p_y}{\eps}P^{(\eps)} =\eps^{-N_0} \underbar{g}^{(n+1)} - \eps^{N_0}\{ u^{(\eps)} v^{(\eps)}_x + v^{(\eps)} v^{(\eps)}_y \} \\
&\p_x u^{(\eps)} + \p_y v^{(\eps)} = 0.
\end{aligned}
\right\}
\end{align}

Denote: 
\begin{align}
u_s := \tilde{u}^n_s, \hspace{5 mm} v_s := \tilde{v}^n_s.
\end{align}

Here we have defined: 
\begin{align} \label{Su}
&S_u = u_s \p_x u^{(\eps)} + u_{sx}u^{(\eps)} + v_s \p_y u^{(\eps)} + u_{sy}v^{(\eps)}, \\ \label{Sv}
&S_v = u_s \p_x v^{(\eps)} + v_{sx}u^{(\eps)} + v_s \p_y v^{(\eps)} + v_{sy}v^{(\eps)}.
\end{align}

Let us discuss now the boundary conditions. We take 
\begin{align*}
&u^\eps|_{x = 0} := u^0 (\text{unknown}), \\
&v^\eps|_{x = 0} := v^0 (\text{unknown}), \\
&v^\eps|_{y = 0} =v^\eps_y|_{y = 0} = 0, \\
&v^\eps_x|_{x = L} := a_1^\eps(y), v^\eps_{xx}|_{x = 0} := a_2^\eps(y), v^\eps_{xxx}|_{x = L} := a_3^\eps(y). 
\end{align*}

Going to the vorticity formulation of (\ref{rem.sys.1}) yields the system (\ref{eqn.vort.intro}), with 
\begin{align} \label{forcingdefn}
&F_R := \eps^{-N_0} ( \p_y \underbar{f}^{(n+1)} - \eps \p_x \underbar{g}^{(n+1)}).
\end{align}

In Section \ref{Section.1}, our main object of analysis with the vorticity equation evaluated at the $\{x = 0\}$ boundary, $(\ref{eqn.vort.intro})|_{x = 0}$, which reads: 
\begin{align}
\begin{aligned} \label{sys.u0.app.unh}
&\mathcal{L} v^0 = F_{(v)} + F^a_R + \mathcal{Q}(u^0, v^0, v) + \mathcal{H}, \\
&\mathcal{L} v^0 := v^0_{yyyy} - \{ u_s v^0_{yy} - u_{syy}v^0 \} - \{ v_s v^0_{yyy} - v^0_y v_{syy} \} \\
& \hspace{20 mm} + \eps u_{sxx} v^0 + \eps v_{sxx} v^0_y , \\
&\mathcal{Q}(u^0, v^0, v) := \eps^{N_0} \Big[v^0_y v^0_{yy} - v^0 v^0_{yyy} + \eps u^0 v_{xx}|_{x = 0} + \eps v^0 v_{xy}|_{x = 0} \\
& \hspace{20 mm} +   h v^0_{yy} - v^0 h_{yy}  \Big] \\
& \mathcal{H} := [- h_{yyy} + v_s h_{yy} - h \Delta_\eps u_s], \\
&F_{(v)}(v) := \eps u_s v_{xx}|_{x = 0} - 2\eps v_{xyy}|_{x = 0} - \eps^2 v_{xxx}|_{x = 0} + \eps v_s v_{xy}|_{x = 0}, \\ 
&F^a_R := F_R|_{x = 0} + \eps u_s a^\eps_{xx}|_{x  = 0} - 2 \eps a^\eps_{xyy}|_{x = 0} - \eps^2 a^\eps_{xxx}|_{x = 0} + \eps v_s a^\eps_{xy}|_{x = 0} \\ 
& \hspace{5 mm} :=  F_R|_{x = 0} + b_{(u)}(a^\eps)|_{x = 0}. 
\end{aligned}
\end{align}

We homogenize the $v^\eps$ via (\ref{homfin}). Define the quotients: 
\begin{align*}
q^\eps := \frac{v^\eps}{u_s}, \hspace{3 mm} \tilde{q} := \frac{\tilde{v}}{u_s}, \hspace{3 mm} q := \frac{v}{u_s}, \hspace{3 mm} q^0 := \frac{v^0}{u_s|_{x = 0}}. 
\end{align*}

The $\p_x$ of vorticity equation (DNS) satisfied by $[u^\eps, v^\eps]$ is as follows 
\begin{align}
\begin{aligned}  \label{spec.nl.unh}
&-\p_x R[q^{(\eps)}] + \Delta_\eps^2 v^{(\eps)} + \p_x \{ v_s \Delta_\eps u^{(\eps)} - u^{(\eps)} \Delta_\eps v_s \} \\
& \hspace{10 mm} = \eps^{N_0}\p_x \{ v^\eps \Delta_\eps u^\eps - u^\eps \Delta_\eps v^\eps \} + \p_x F_R, \\
&v^{(\eps)}|_{x = 0} = v^0,  v^{(\eps)}_{xx}|_{x = 0} = a^{(\eps)}_2, v^{(\eps)}_x|_{x = L} = a^{(\eps)}_1, v^{(\eps)}_{xxx}|_{x = L} = a^{(\eps)}_3, \\
&v^{(\eps)}|_{y = 0} = v^{(\eps)}_y|_{y = 0} = 0, 
\end{aligned}
\end{align}

We now homogenize equation (\ref{spec.nl.unh}) by writing it in terms of $[u, v]$. First, the linear contributions are given in terms of the following 
\begin{align} \label{bu}
&b_{(u)}(\tilde{v}) = - R[\tilde{v}] + I_x[\tilde{v}_{yyyy}] + 2\eps \tilde{v}_{xyy} + \eps^2 \tilde{v}_{xxx} - \eps \tilde{v}_{xy} \\ \n
& \hspace{10 mm} + v_s I_x[\tilde{v}_{yyy}] - \Delta_\eps v_s I_x[\tilde{v}_y],
\end{align}

We now arrive at the nonlinearity. For this, we will use  (\ref{homfin}) to write 
\begin{align*}
\p_x \{ v^\eps \Delta_\eps u^\eps - u^\eps \Delta_\eps v^\eps\} = & \eps^{N_0} (Q_{11} + Q_{12} + Q_{13} + Q_{22} + Q_{23} + Q_{33}),
\end{align*}

\noindent where the quadratic terms are 
\begin{align*}
Q_{11} := & v_y \Delta_\eps v - u \Delta_\eps v_x + v_x \Delta_\eps u - v \Delta_\eps v_y, \\
Q_{12} := & v_y v^0_{yy} + v^0_y \Delta_\eps v - x v^0 \Delta_\eps v_x - x v_x v^0_{yy} - v v^0_{yyy} - v^0 \Delta_\eps v_y, \\
Q_{22} := & v^0_y v^0_{yy} - v^0 v^0_{yyy}, 
\end{align*}

\noindent and the linear terms are 
\begin{align*}
Q_{13} := & v_y \Delta_\eps a^\eps + a^\eps_y \Delta_\eps v - u \Delta_\eps a^\eps_x + I_x[a^\eps] \Delta_\eps v_x - v \Delta_\eps a^\eps_y - a^\eps \Delta_\eps v_y  \\
Q_{23} :=& v^0_y \Delta_\eps a^\eps + a^\eps_y v^0_{yy} - x v^0 \Delta_\eps a^\eps_x - v^0 \Delta_\eps a^\eps_y - a^\eps v^0_{yyy}
\end{align*}

\noindent and the forcing term is 
\begin{align*}
Q_{33} := a^\eps_y \Delta_\eps a^\eps + I_x[a^\eps] \Delta_\eps a^\eps_x - a^\eps \Delta_\eps I_x[a^\eps] - a^\eps \Delta_\eps a^\eps_y.
\end{align*}

The last step is to use the identity (recalling (\ref{bu}), (\ref{rene}), and (\ref{portrait})):
\begin{align}
\p_x b_{(u)}(v^0) + \{v_{sx}u^0_{yy} - u^0 \Delta_\eps v_{sx} \} = B_{v^0} + \{ v_{sx} h_{yy} - h \Delta_\eps v_{sx} \},
\end{align}

Piecing together the preceding, we arrive at the homogenized system 
\begin{align}
\begin{aligned} \label{spec.nl}
&-\p_x R[q] + \Delta_\eps^2 v + J(v)  + B_{v^0}  = \eps^{N_0} \mathcal{N}  + F_{(q)} , \\
&\mathcal{N} := Q_{11} + Q_{12} + Q_{22}, \\
&F_{(q)} :=  \p_x F_R + \p_x b_{(u)}(a^\eps) + H[a^\eps](v, u^0, u^0) + \{v_{sx} h_{yy} - h \Delta_\eps v_{sx} \}, \\
&H[a^\eps](v, u^0, v^0) := Q_{13}(u,v) + Q_{23}(v^0)+ Q_{33}(a^\eps),
\end{aligned}
\end{align}

\noindent where we have defined $J, B_{v^0}$ in (\ref{defn.J.conc}) and (\ref{portrait}).

\section{Prandtl Layers} \label{appendix.prandtl}

\subsection{Formulation of D-Prandtl System}

In this subsection, we will analyze the linearized Prandtl equations, (\ref{des.pr.1}). We will rename the unknowns $u_p = u^i_p$ and $v_p = \bar{v}^i_p$, and the linearized quantities $\bar{u} = \bar{u}^0_p, \bar{v} = \bar{v}^0_p$. The equation then reads: 
\begin{align}
\begin{aligned} \label{Orig.Orig.Pr}
&\bar{u} \p_x u_p + u_p \p_x \bar{u} + \bar{v} \p_y u_p + v_p \p_y \bar{u} - \p_{yy} u_p = f, \\
&u_p|_{y = 0} = - \phi(x,0), v_p|_{y = 0} = 0, v_p|_{x = 0} = \bar{v}^i_p|_{x = 0}(y). 
\end{aligned}
\end{align}

\noindent We homogenize the system so that $u|_{y = 0} = 0$ via: 
\begin{align} \label{antiPsi}
u = u_p - u_p(x,0) \psi(y), \hspace{3 mm} v = v_p + \phi_x(x) I_\psi(y), \hspace{3 mm} I_\psi(y) := \int_y^\infty \psi(\theta) \ud \theta. 
\end{align}

\noindent  Here, we select $\psi$ to be a $C^\infty$ function satisfying the following: 
\begin{align} \label{psi.spec}
\psi(0) = 1, \hspace{3 mm} \int_0^\infty \psi = 0, \hspace{3 mm} \psi \text{ decays as } y \uparrow \infty. 
\end{align}

\noindent  The unknowns $[u,v]$ satisfy the system: 
\begin{align}
\begin{aligned} \label{origPrLay}
&\bar{u} \p_x u + u \p_x \bar{u} + \bar{v} \p_y u + v \p_y \bar{u} - \p_{yy} u = f + G = : g_1, \\
&u_x + v_y = 0, \\
&u|_{y = 0} = 0, \hspace{3 mm} v|_{y = 0} = 0, \hspace{3 mm} v|_{x = 0} = \bar{v}^i_p|_{x = 0} - \phi_x(0) I_\psi(y) =: \bar{V}_0(y). \\
&-G = \bar{u} \psi \phi_x + \bar{u}_{x} \psi \phi + \bar{v} \psi' \phi + \bar{u}_{y} \phi_x I_\psi - \psi'' \phi.  
\end{aligned}
\end{align}

\noindent  By applying $\p_y$, we obtain the system: 
\begin{align}  \label{eval.2}
- \bar{u} v_{yy} + v \bar{u}_{yy} - u \bar{v}_{yy} + \bar{v} u_{yy} - u_{yyy} = \p_y g_1,
\end{align}

\noindent  and in the $q$ formulation: 
\begin{align}
\begin{aligned} \label{origPrLay.beta}
&- \p_{xy} \{ \bar{u}^2 q_y \} + \p_y^4 v + \Lambda + U   = \p_{xy} g_1, \\
&q|_{y = 0} = 0, \hspace{3 mm} q|_{x = 0} = \frac{1}{\bar{u}}|_{x= 0}(y) \bar{V}_0(y) := f_0(y). 
\end{aligned}
\end{align}

\noindent We have defined: 
\begin{align*}
&\Lambda := \bar{v}_{xyy}I_x[v_y] + \bar{v}_{yy} v_y - \bar{v}_{x}I_x[v_{yyy}] - \bar{v} v_{yyy}, \\
&U := - \bar{v}_{xyy}u^0 + \bar{v}_{x}u^0_{yy}.
\end{align*}

\noindent We record here the identity: 
\begin{align}  \label{beta.a}
\p_{xy} \{ \bar{u}^2 q_y \} = & 2 \bar{u}_{x} \bar{u}_{y} q_y + 2 \bar{u} \bar{u}_{xy} q_y + 2 \bar{u} \bar{u}_{y} q_{xy} + 2 \bar{u}_{} \bar{u}_{x} q_{yy} + \bar{u}^2 q_{xyy}.
\end{align}

We will approximate the system (\ref{origPrLay}) by introducing the parameter $\theta > 0$. First, define the profile: 
\begin{align*}
\bar{u}_{}^{(\theta)} := \bar{u} + \theta, \bar{v}^{(\theta)} = \bar{v}. 
\end{align*}

\noindent  It is clear that $\p_x \bar{u}^{(\theta)} + \p_y \bar{v}^{(\theta)} = 0$. Define now the solution $[u^{(\theta)}, v^{(\theta)}]$ to the following system:
\begin{align}
\begin{aligned} \label{sysb}
&\bar{u}_{}^{(\theta)} u^{(\theta)}_x + \bar{u}^{(\theta)}_x u^{(\theta)} + \bar{v}^{(\theta)} u^{(\theta)}_y + \bar{u}^{(\theta)}_{y} v^{(\theta)} - \p_{yy} u^{(\theta)} = g_1, \\
&\p_x u^{(\theta)} + \p_y v^{(\theta)} = 0, \\
&u^{(\theta)}|_{y = 0} = \theta, v^{(\theta)}|_{y = 0} = 0, v^{(\theta)}|_{x = 0} = \bar{V}_0(y). 
\end{aligned}
\end{align}

We may also define the corresponding quotient: 
\begin{align} \label{qb}
q^{(\theta)} = \frac{v^{(\theta)}}{\bar{u}^{(\theta)}}, 
\end{align}

\noindent  which satisfies the following
\begin{align}
\begin{aligned} \label{sysb.beta}
&- \p_{xy} \{ |\bar{u}^{(\theta)}|^2 \p_y q^{(\theta)}\} + \p_y^4 v^{(\theta)} + \Lambda_\theta[v^{(\theta)}] + U_\theta[u^{0, (\theta)}] = \p_{xy}g_1, \\
&q^{(\theta)}|_{y = 0} = \p_y q^{(\theta)}|_{y = 0} = 0, \\
&q^{(\theta)}|_{x = 0} = f_0^\theta(y) := \frac{1}{\bar{u}^\theta}|_{x = 0}(y) \bar{V}_0(y),
\end{aligned}
\end{align}

\noindent  where $u^{0, (\theta)} := u^{(\theta)}|_{x = 0}$, and $\Lambda_\theta, U_\theta$ are:
\begin{align*}
&\Lambda_\theta := \bar{v}^{(\theta)}_{xyy}I_x[v^{(\theta)}_y] + \bar{v}^{(\theta)}_{yy} v^{(\theta)}_y  - \bar{v}^{(\theta)}_x I_x[v^{(\theta)}_{yyy}] - \bar{v}^{(\theta)} v^{(\theta)}_{yyy}, \\
&U_\theta := - \bar{v}^{\theta}_{xyy} u^{0,\theta} + \bar{v}^{(\theta)}_x u^{0,\theta}_{yy}.
\end{align*}

\begin{remark}[Notation]   We will drop the superscript $(\theta)$ from here on out. It will be understood that we are dealing with the system in (\ref{sysb.beta}) for $\theta > 0$, and all estimates stated will be independent of the parameter $\theta$. 
\end{remark}

Our aim now is to derive compatibility conditions for the initial data. By computing $\p_x$ of (\ref{origPrLay}) and evaluating at $y = 0$, we obtain the condition: 
\begin{align*}
v_{yyy}|_{y = 0} = \p_x g_1|_{y = 0} \text{ on } (0,L). 
\end{align*}

\noindent  We therefore assume the compatibility condition: 
\begin{align} \label{compatibility.1}
v_{yyy}|_{x = 0, y = 0} (= \bar{v}^i_{pyyy}|_{x = 0}(0)) = \p_x g_1|_{x = 0, y = 0}.  
\end{align}

\noindent  Note that all compatibility conditions are placed on $\bar{v}^i_p|_{x = 0}$. This is because these compatibility conditions occur at $y = 0$, for which $\bar{v}^i_p|_{x = 0} = \bar{V}_0$ (recall the definition of $\bar{V}_0$ in (\ref{origPrLay})). We also require the second-order compatibility which can be obtained as follows. Taking $\p_x$ of (\ref{eval.2}): 
\begin{align*}
- \p_x \{ - \bar{u} v_{yy}+ v \bar{u}_{yy} + \bar{v} u_{yy} - u \bar{v}_{yy} \} + \p_y^4 v = \p_{xy}g_1. 
\end{align*}

\noindent  Evaluating at $y = 0$ gives the identity: 
\begin{align*}
\p_y^4 v|_{y = 0} = \p_{xy} g_1|_{y = 0} \text{ on } (0,L).
\end{align*}

\noindent  We thus assume the compatibility at $x = 0, y = 0$: 
\begin{align} \label{compatibility.2}
\p_y^4 v|_{x = 0}(y = 0) (= (\bar{v}^i_p|_{x = 0})''''(0)) = \p_{xy}g_1|_{y =0}(x = 0). 
\end{align}

\noindent Starting from the $q$ formulation in (\ref{origPrLay.beta}), we will further distribute on the Rayleigh term: 
\begin{align*}
- \p_y \{ \bar{u}^2 q_{xy} \} - \p_y \{2 \bar{u} \bar{u}_{x} q_y \} + \p_y \p_x \{ \bar{v} u_y - u\bar{v}_{y} \} + \p_y \p_y^3 v = \p_y \p_x g_1. 
\end{align*}

\noindent  We now compute at $\{x = 0\}$:
\begin{align} \n
\bar{u}^2 q_{xy} = &- \int_y^\infty \p_y\{ \bar{u}^2 q_{xy} \} \ud y' \\  \n
= & \int_y^\infty \p_y \Big\{ \p_x g_1 - \p_y^3 v + 2 \bar{u} \bar{u}_{x} q_y - \bar{v}_{x} u^0_y + \bar{v}  v_{yy} - v_y \bar{v}_{y} + u^0 \bar{v}_{xy}  \Big\} \ud y' \\ \label{bahumbug}
= & - \{ \p_x g_1 - \p_y^3 v + 2 \bar{u} \bar{u}_{x} q_y - \bar{v}_{x}u^0_y + \bar{v} v_{yy} - v_y \bar{v}_{y} + u^0 \bar{v}_{xy} \}. 
\end{align}

\noindent  It is clear that all quantities are vanishing at $y = 0$. We thus have that: 
\begin{align*}
\bar{u} q_{xy}|_{x = 0} \in L^2. 
\end{align*}

\noindent  A computation of $\p_y$ shows: 
\begin{align*}
&\p_{xy} g_1+ \p_y^4 v + \p_y \{ 2 \bar{u}_{x}\bar{u} q_y - \bar{v}_{x} u^0_y + \bar{v} v_{yy} - v_y \bar{v}_{y} + u^0 \bar{v}_{yy} \}|_{y = 0} \\
& = \p_{xy}g_1(0,0) + \p_y^4v|_{x = 0}(y  = 0) = 0. 
\end{align*}

\noindent  Thus $q_{xy}$ itself is in $L^2$. Using this we may easily bootstrap to higher order in $\p_y$ compatibility conditions for $v^0$ which we refrain from writing. These conditions in turn assure that: 
\begin{lemma} \label{lemma.compat.1}   Assume the compatibility conditions on $V_0$ given in (\ref{compatibility.1}) and (\ref{compatibility.2}). Assume also higher-order compatibility conditions on $V_0$ at $y = 0$ which we do not explicitly specify. Assume exponential decay on $\p_y^k V_0$ for $k \ge 1$. Then there exist functions $f_k(y) \in L^2_w(\mathbb{R}_+) \cap C^\infty(\mathbb{R}_+)$ for exponential weight $w$ such that 
\begin{align} \label{whois}
\p_x^k q_y|_{x = 0} = f_k(y) \in L^2_w(\mathbb{R}_+) \text{ for } k \ge 1.
\end{align} 

\noindent  Moreover, $f_k$ depend only on the given profile $V_0$ and the forcing term $g_1$. 
\end{lemma}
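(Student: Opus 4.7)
The proof proceeds by induction on $k \ge 1$, with the key observation being that at $x=0$ every $x$-derivative of $q_y$ (or of $u$) can be computed explicitly in terms of $y$-derivatives of $V_0$, $g_1|_{x=0}$, $\p_x g_1|_{x=0}, \dots$ and the (prescribed) profile $\bar u,\bar v$. For the base case $k=1$, the identity (\ref{bahumbug}) gives, upon evaluation at $x=0$,
\begin{equation*}
\bar{u}^2 q_{xy}\big|_{x=0} \;=\; -\Bigl\{\p_x g_1 - \p_y^3 v + 2\bar{u}\bar{u}_{x} q_y - \bar{v}_{x}u^0_y + \bar{v} v_{yy} - v_y \bar{v}_{y} + u^0 \bar{v}_{xy}\Bigr\}\Big|_{x=0}.
\end{equation*}
The trace $v|_{x=0}=V_0$ and its $y$-derivatives are given; the remaining unknown is $u^0:=u|_{x=0}$, which I would determine by evaluating the momentum equation (\ref{origPrLay}) at $x=0$, using $u_x|_{x=0}=-V_0'$ from the divergence-free condition. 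This converts (\ref{origPrLay}) at $x=0$ into a linear second-order ODE in $y$ for $u^0$, with exponentially decaying inhomogeneity and the boundary conditions $u^0(0)=\theta$, $u^0(\infty)=0$; standard ODE theory yields $u^0\in C^\infty$ with exponential decay of every derivative. Since $\bar u^\theta\ge\theta>0$, dividing the displayed formula by $\bar u^2$ and applying $\p_y$ produces $f_1$ with the claimed regularity and decay.

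For the inductive step, suppose $f_1,\dots,f_{k-1}$ have been constructed. Apply $\p_x^{k-1}$ to (\ref{bahumbug}) and evaluate at $x=0$: the highest-order term is $\bar u^2\,\p_x^k q_y|_{x=0}$, plus a remainder involving only $\p_x^{j}q_y|_{x=0}$ with $j<k$ (known by hypothesis), $\p_x^{j}v|_{x=0}$ with $j\le k-1$ (known recursively from $V_0$ via the divergence condition $\p_x^{j}v=-\p_x^{j-1}\int_0^y u_x\,dy'$ combined with the $u$-equation), $\p_x^{j}u|_{x=0}$ with $j\le k-1$ (determined by $\p_x^{j-1}$ of the momentum equation at $x=0$, which is again a second-order ODE in $y$ with previously-computed data), and $\p_x^{j}g_1|_{x=0}$. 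All of these depend only on $V_0$ and $g_1$. Dividing once more by $\bar u^2\ge\theta^2$ and differentiating in $y$ yields $f_k$.

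The regularity and decay statements are then automatic: by Theorem \ref{thm.Oleinik}, $\p_y^{j}\bar u$ and $\p_y^{j}\bar v$ are smooth and Gaussian-tailed for $j\ge 1$ (and their $x$-derivatives inherit such decay through the Prandtl equation); $V_0$ and its $y$-derivatives decay exponentially by hypothesis; and the auxiliary $u$-ODEs preserve exponential decay of their inhomogeneities. Hence each $f_k$ is $C^\infty$ and lies in $L^2_w$ for an exponential weight $w$ adapted to the worst decay rate appearing in the construction. The higher-order compatibility conditions at $y=0$ (the unstated generalizations of (\ref{compatibility.1})--(\ref{compatibility.2})) are exactly the requirements that $f_k(0)=0$, so that the recursion closes.

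\textbf{Expected main obstacle.} The bookkeeping of which auxiliary ODE determines which trace is the delicate part: one must verify at each level that the right-hand sides of the $u$-ODEs are, inductively, expressible purely in terms of $V_0$ and $g_1$ (and not of some previously uncontrolled quantity), and that the corresponding compatibility at $y=0$ really does force $f_k(0)=0$. The analytic bounds themselves are routine, but producing the recursion in a form where $\theta$ disappears uniformly from the estimates—so that the $f_k$ are truly intrinsic to $V_0$ and $g_1$—requires a careful accounting of the $\theta$-dependence in every intermediate ODE.
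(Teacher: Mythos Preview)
Your inductive scheme is correct and is precisely the bootstrap the paper sketches in the paragraphs leading up to the lemma (the paper offers no separate proof beyond that discussion and (\ref{bahumbug})). Two small corrections are worth making. First, for $j\ge 1$ you do not need an auxiliary ODE to recover $\p_x^{j}u|_{x=0}$: the divergence-free relation gives $\p_x^{j}u=-\p_x^{j-1}v_y$ algebraically, and the right-hand side is expressible through $f_0,\dots,f_{j-1}$ and the background profile; only $u^0$ itself is determined by the second-order ODE (this is the content of (\ref{wknd})). Second, the role of the compatibility conditions is not to force $f_k(0)=0$ --- for $\theta>0$ that already follows from the boundary condition $q_y^{(\theta)}|_{y=0}=0$ --- but rather to guarantee that the numerator in the identity $\bar u^2\,\p_x^{k}q_y|_{x=0}=\{\cdots\}$ vanishes to second order at $y=0$, so that division by $\bar u^2$ yields a function in $L^2_w$ uniformly in $\theta$; this is exactly the check the paper carries out between (\ref{bahumbug}) and the statement of the lemma.
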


Our task now is to establish criteria on the initial data, $\bar{v}^i_p|_{x = 0}$ so that $u^i_p|_{x = 0}$ can be bounded. We evaluate the velocity equation (\ref{Orig.Orig.Pr}) at $x = 0$ to obtain the equation: 
\begin{align} \label{wknd}
&L^1_{v_{\parallel}} u^0 = f - r(y), \hspace{5 mm} u^0(0) = - u^{i}_e|_{x = 0}(0). 
\end{align}

\noindent To invert this for $u^0$, we assume: 
\begin{align} \label{integral.cond}
u_{\parallel y}|_{x = 0}(0) u^{i}_e|_{x = 0}(0) - \int_0^\infty u_{\parallel} e^{-\int_1^y v_{\parallel}} \{f(y) - r(y) \} \ud y  = 0,
\end{align}

\noindent where $r(y) := \bar{v}^i_p u_{\parallel y} - u_{\parallel} \bar{v}^i_{py}$.

\begin{lemma}   Assume the integral condition, (\ref{integral.cond}) is satisfied by the initial data $\bar{v}^i_p|_{x = 0}$. Then the solution $u^0$ to (\ref{wknd}) exists and satisfies: 
\begin{align} \label{base1}
&|\p_y^k u^0 e^{My}|_\infty \le C_{K,M} (v^0, g_1) \text{ for }k \ge 1, \\
&u^0(0) = - u^i_e|_{x = 0}(0) \text{ and }  \lim_{y \uparrow \infty} u^0 = 0. 
\end{align}
\end{lemma}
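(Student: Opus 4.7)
The plan is to regard (\ref{wknd}) as a linear second-order ODE in $y$ on $[0,\infty)$ and exploit the fact that one explicit homogeneous solution is already available. First I would verify that $u_\parallel(0,\cdot)$ lies in the kernel of $L^1_{v_\parallel}$. Combining the incompressibility identity $u_{\parallel x}|_{x=0} = -v_{\parallel y}|_{x=0}$ with the leading-order Prandtl equation (\ref{Pr.leading}) evaluated at $x = 0$ (with $P^0_{px} = 0$ in the Blasius setting) yields $L^1_{v_\parallel} u_\parallel(0,\cdot) = 0$, so the Blasius profile itself is a homogeneous solution of the ODE.

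Using this, I would perform reduction of order via the Wronskian $W := u_\parallel u^0_y - u_{\parallel y} u^0$, which by a short computation satisfies the first-order ODE $W' - v_\parallel W = -u_\parallel (f - r)$. With integrating factor $\mu(y) := e^{-\int_1^y v_\parallel}$ this integrates explicitly to
\[
W(y) \mu(y) = W(0) \mu(0) - \int_0^y u_\parallel (f-r) \mu \, dy',
\]
and the free constant $W(0)$ is pinned by $W(0) = u_{\parallel y}|_{x=0}(0)\, u^i_e|_{x=0}(0)$, using $u_\parallel(0) = 0$ and the prescribed boundary value $u^0(0) = -u^i_e|_{x=0}(0)$.

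Since $v_\parallel \to v_\parallel^\infty > 0$ as $y \uparrow \infty$ for the Blasius profile, $\mu(y)$ decays exponentially and $\mu^{-1}$ grows exponentially. For $u^0$ to decay at infinity one needs $W(y)\mu(y) \to 0$, which is precisely the Fredholm compatibility
\[
W(0) \mu(0) = \int_0^\infty u_\parallel (f-r) \mu \, dy',
\]
i.e., hypothesis (\ref{integral.cond}). Under this assumption the representation collapses to $W(y) = \mu(y)^{-1}\int_y^\infty u_\parallel (f-r) \mu(y')\,dy'$, and the exponential/Gaussian tails of $f, r$ (inherited from Blasius decay and the assumed exponential tails of $\bar{v}^i_p|_{x = 0}$) yield $|W(y)| \lesssim e^{-cy}$ for some $c > 0$. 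To recover $u^0$ I would then integrate the residual first-order equation $(u^0/u_\parallel)_y = W / u_\parallel^2$ once more, fixing the constant so that $u^0(0) = -u^i_e|_{x=0}(0)$.

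The main obstacle in this last step is the apparent singularity of $1/u_\parallel^2$ at $y = 0$: since $u_\parallel(y) \sim u_{\parallel y}(0)\, y$ near the origin and $W(0)$ is generically nonzero, the integrand $W/u_\parallel^2$ scales like $y^{-2}$. The compatibility (\ref{integral.cond}) however fixes $W(0)$ to the unique value for which, after multiplication by the prefactor $u_\parallel(y) \sim u_{\parallel y}(0) y$ in the reconstruction $u^0 = u_\parallel \int W/u_\parallel^2$, the leading $y^{-1}$ singularity cancels and $u^0$ is smooth up to the boundary with the correct value $-u^i_e|_{x=0}(0)$. Finally, the exponential decay of the higher $y$-derivatives (\ref{base1}) follows by inductive differentiation of (\ref{wknd}): at each order, the forcing is exponentially decaying (using the Gaussian decay of $\p_y^j v_\parallel$ for $j \geq 1$ and the exponential tails of $f, r$), and a weighted energy estimate in $L^2(e^{My})$ followed by Sobolev embedding produces the bound $\|\p_y^k u^0 e^{My}\|_\infty \leq C_{K,M}$ for each $k \geq 1$ and $M > 0$.
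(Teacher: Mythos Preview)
Your approach is essentially the same as the paper's: both exploit that $u_\parallel(0,\cdot)$ solves the homogeneous equation and reduce order. The paper packages this as a full variation-of-parameters formula using both fundamental solutions $u_\parallel$ and a second solution $\tilde u_s$ (with $\tilde u_s(0)\neq 0$), whereas you work directly with the Wronskian ODE for $W = u_\parallel u^0_y - u_{\parallel y}u^0$. These are equivalent.

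There is, however, a genuine confusion in your handling of the $y=0$ singularity. You write that the compatibility (\ref{integral.cond}) ``fixes $W(0)$ to the unique value for which \ldots\ the leading $y^{-1}$ singularity cancels.'' This is not correct: $W(0)$ is already pinned by the boundary condition $u^0(0)=-u^i_e|_{x=0}(0)$ (as you yourself noted two paragraphs earlier), and the integral condition (\ref{integral.cond}) plays no role at $y=0$ --- it is purely the solvability condition for decay at $y=\infty$. The apparent singularity in $u_\parallel\int W/u_\parallel^2$ is an artifact of the representation and cancels automatically for \emph{any} value of $W(0)$: with $u_\parallel\sim u_{\parallel y}(0)y$ and $W(0)\neq 0$ one has $\int_a^y W/u_\parallel^2 \sim -W(0)/(u_{\parallel y}(0)^2 y)$, and multiplying by $u_\parallel$ gives the finite limit $-W(0)/u_{\parallel y}(0)$, which is precisely $-u^i_e|_{x=0}(0)$ for the $W(0)$ you computed. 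Equivalently, the singular part of $\int W/u_\parallel^2$ is exactly the second fundamental solution $\tilde u_s$, which is regular at the origin. The paper sidesteps this by writing $\tilde u_s$ explicitly from the start.

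A second, smaller omission: under (\ref{integral.cond}) you obtain $W(y)$ exponentially small, hence $u^0$ \emph{bounded} at infinity, but you do not explain how to achieve $u^0(\infty)=0$ exactly. There remains a one-parameter freedom (the base point of the second integration, equivalently an additive multiple of $u_\parallel$), and the paper uses precisely this freedom --- subtracting $c\,u_\parallel$ with $c$ chosen appropriately --- to force $u^0(\infty)=0$.
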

\begin{proof} First, we compute the Wronskian of $u_{\parallel}$ and $\tilde{u}_s$: 
\begin{align*}
W = u_{\parallel} \tilde{u}_{sy} - \tilde{u}_s u_{\parallel y} = u_{\parallel}(1)^2 \exp \Big[ \int_1^y v_{\parallel} \Big].
\end{align*}

Next, we express the solution to (\ref{wknd}) in the following manner: 
\begin{align} \n
u^0 = &- u^i_e|_{x = 0}(0) \frac{\tilde{u}_s}{\tilde{u}_s(0)} + c_1 u_{\parallel} - \frac{1}{u_s(1)^2} \tilde{u}_s \int_0^y u_{\parallel} e^{- \int_1^z v_{\parallel}} \{f(z) - r(z) \} \ud z \\ \label{yanks}
& + \frac{1}{u_s(1)^2} u_s \int_0^y \tilde{u}_s e^{-\int_1^z v_{\parallel}} \{ f(z) - r(z) \} \ud z. 
\end{align}

We now compute:
\begin{align*}
\tilde{u}_s(0) = - \frac{u_s(1)^2}{u_{\parallel y}(0)} e^{\int_1^0 v_{\parallel}}.
\end{align*}

Using this, we now evaluate at $y = \infty$ and observe that the terms with a $\tilde{u}_s$ prefactor vanish according to the integral condition, (\ref{integral.cond}).
\begin{align*}
\frac{u_{\parallel y}(0)}{u_s(1)^2} u^i_e|_{x = 0}(0) e^{-\int_1^0 v_{\parallel}} - \frac{1}{u_s(1)^2} \int_0^\infty u_{\parallel} e^{- \int_1^y v_{\parallel}}\{f - r(y) \} \ud y = 0. 
\end{align*} 

This proves that $u^0$ as defined in (\ref{yanks}) is bounded as $y \uparrow \infty$. We next notice that the derivative of $\int_0^y u_{\parallel} e^{-\int_1^\infty v_{\parallel}} \{ f - r(z) \}$ is the integrand itself, which decays fast enough to eliminate $\tilde{u}_s$ at $\infty$. Therefore we also see that $\p_y^k u^0$ for $k \ge 1$ decays rapidly. 

Finally, we need to ensure that $u^0 \rightarrow 0$ as $y \uparrow \infty$. It is clear that $L^1_{v_{\parallel}} u^0 = 0$, and so we are free to modify $u^0$ by factors of $u_\parallel$. Thus we modify (\ref{yanks}) by subtracting off a factor of $c u_\parallel$, for $c$ appropriately selected so as to ensure $u^0(\infty) = 0$. 
\end{proof}

\begin{remark}  Note that the conditions (\ref{integral.cond}) and compatibility conditions at $\{y = 0\}$ together define a large class of data. For instance, we could select $\frac{\bar{v}^i_p}{u_{\parallel}}|_{x = 0}$ to be an increasing function and specify the higher order derivatives, $\p_y^k \bar{v}^i_p(0)$ as in (\ref{compatibility.1}) and (\ref{compatibility.2}).
\end{remark}

Summarizing the above, 
\begin{lemma}   Assume smooth data, $\bar{v}^i_p|_{x = 0}$, are prescribed that satisfies the compatibility conditions (\ref{compatibility.1}), (\ref{compatibility.2}), as well as higher order compatibility conditions. Assume also that $\bar{v}^i_p|_{x = 0}$ satisfies the integral condition (\ref{integral.cond}). Let $q = \frac{v}{\bar{u}^0_p}$ solve (\ref{origPrLay.beta}) and $u^0$ be constructed from $v$ via (\ref{wknd}). Then $[u = u^0 - \int_0^x v_y, v]$ solve (\ref{origPrLay}). Further, let $[\bar{u}^i_p, \bar{v}^i_p]$ be reconstructed from $[u, v]$ using (\ref{antiPsi}). Then $[\bar{u}^i_p, \bar{v}^i_p]$ are solutions to (\ref{Orig.Orig.Pr}).
\end{lemma}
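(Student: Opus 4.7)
The plan is to assemble the lemma as a verification that the constructions in the preceding paragraphs are mutually consistent, so the bulk of the work is bookkeeping rather than new analysis. I would proceed in the order suggested by the diagram $v^0 \to v \to u^0 \to [u,v] \to [\bar u^i_p, \bar v^i_p]$, checking at each step that the derived quantities solve the expected equation with the expected boundary traces.

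First, I would invoke the existence result for \eqref{origPrLay.beta} (to be established in the remainder of Appendix \ref{appendix.prandtl}) to obtain a smooth $q$, then set $v = \bar u q$ and record that, by construction, $v|_{x=0} = \bar V_0$, $v|_{y=0} = 0$, and the compatibility conditions \eqref{compatibility.1}--\eqref{compatibility.2} together with the higher-order ones (cf.\ Lemma~\ref{lemma.compat.1}) guarantee that $\p_y^k v|_{x=0}$ are the functions $f_k$ prescribed by the original data $\bar v^i_p|_{x=0}$. Next, define $u^0$ by \eqref{yanks}; the integral condition \eqref{integral.cond} was tailored precisely so that $u^0(\infty)=0$ and the rapid-decay estimates \eqref{base1} hold. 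Then set $u := u^0 - \int_0^x v_y\,dx'$, which automatically gives $u|_{x=0} = u^0$ and, together with $u^0_y + v_y|_{x=0} = 0$ (a consequence of $\p_y$ of \eqref{wknd} and the definition of $f_0 = q|_{x=0}$), the divergence-free condition $u_x + v_y = 0$.

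The heart of the verification is showing $[u,v]$ satisfies the full equation \eqref{origPrLay}, not merely its $\p_{xy}$. For this I would define the residual
\[
\mathcal{R}(x,y) := \bar u u_x + u \bar u_x + \bar v u_y + v \bar u_y - u_{yy} - g_1.
\]
By construction $\p_{xy}\mathcal{R} = 0$ (that is the content of \eqref{origPrLay.beta}), and by the definition of $u^0$ we have $\mathcal{R}|_{x=0}=0$. Hence $\p_y \mathcal{R}$ is independent of $x$ and vanishes at $x=0$, so $\p_y\mathcal{R}\equiv 0$. Thus $\mathcal{R}(x,y) = \mathcal{R}(x,0)$, and evaluating at $y=0$ using $u|_{y=0}=v|_{y=0}=0$ together with $u_{yy}|_{y=0} = \p_x g_1|_{y=0}$ (which follows from the higher-order $y=0$ compatibility conditions propagated forward in $x$) gives $\mathcal{R}(x,0)=0$, hence $\mathcal{R}\equiv 0$ on $(0,L)\times\mathbb R_+$. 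This is the step I expect to be the main obstacle: one has to check carefully that the chain of compatibility conditions at $\{x=0,y=0\}$ really does propagate in $x$ so that $\mathcal R(x,0)$ vanishes identically, and to confirm that the constructions of $u^0$ and $v$ are compatible with the choice of the stream function (i.e.\ that no linear-in-$x$ harmonic drift has been introduced by integrating twice).

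Finally, I would undo the homogenization \eqref{antiPsi}: set $u_p := u + u_p(x,0)\psi(y)$ where $u_p(x,0) = -\phi(x,0)$ is prescribed, and $v_p := v - \phi_x(x) I_\psi(y)$. Using $\int_0^\infty \psi = 0$ from \eqref{psi.spec} and the divergence-free property of $[u,v]$, one checks that $\p_x u_p + \p_y v_p = 0$ and the boundary conditions $u_p|_{y=0} = -\phi(x,0)$, $v_p|_{y=0}=0$, $v_p|_{x=0} = \bar v^i_p|_{x=0}$ follow from the definitions. Substituting back into the left-hand side of \eqref{Orig.Orig.Pr} produces exactly $g_1 - G = f$ by the definition of $G$ in \eqref{origPrLay}, completing the verification.
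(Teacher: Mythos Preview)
The paper states this lemma with the preface ``Summarizing the above'' and gives no proof; it treats the lemma as a recapitulation of the preceding constructions. Your proposal therefore supplies considerably more than the paper does, and the residual argument you outline is exactly the right mechanism for turning the $\p_{xy}$-differentiated system \eqref{origPrLay.beta} together with the $x=0$ ODE \eqref{wknd} back into the undifferentiated equation \eqref{origPrLay}.

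Two minor points. First, the divergence-free condition $u_x+v_y=0$ follows immediately from the definition $u=u^0-\int_0^x v_y$; the auxiliary claim ``$u^0_y+v_y|_{x=0}=0$'' is neither needed nor obviously correct, and you should drop it. Second, the relation you invoke at $y=0$ is misstated: the paper's compatibility discussion gives $v_{yyy}|_{y=0}=\p_x g_1|_{y=0}$, not $u_{yy}|_{y=0}=\p_x g_1|_{y=0}$. To close your $\mathcal R(x,0)=0$ step you would write $u_{yy}(x,0)=u^0_{yy}(0)-\int_0^x v_{yyy}(x',0)\,dx' = u^0_{yy}(0)-g_1(x,0)+g_1(0,0)$ and then check that $u^0_{yy}(0)+g_1(0,0)=0$, which is the corner compatibility coming from \eqref{wknd} evaluated at $y=0$. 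Alternatively, and more cleanly, you can avoid the $y=0$ evaluation entirely: once you know $\p_y\mathcal R\equiv 0$, send $y\to\infty$ and use the exponential decay of $u$, $v_y$, $\bar u_y$, $u_{yy}$, and $g_1$ (all established in the preceding constructions) to conclude $\mathcal R(x,\infty)=0$, hence $\mathcal R\equiv 0$. This sidesteps the corner bookkeeping you flagged as the main obstacle.
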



\subsection{Linearized Prandtl Estimates}

Let $\chi$ denote the cut-off function from (\ref{basic.cutoff}). Fix $w = e^{Ny}$ for some large $N$. Denote by $q^{(k)} := \p_x^k q$. We will now define several norms: 
\begin{align} 
\begin{aligned} \label{norm.X.layer}
&\| q \|_X :=  \sup_{0 \le x_0 \le L} \Big[ \|\bar{u} q_{xy} \|_{x = x_0} + \|q_{yyy} w \chi \|_{x= x_0} \Big]  + \| \sqrt{\bar{u}} q_{xyy} w \| + \| v_{yyyy} w \|, \\ 
&\| q \|_{\mathcal{E}} := \sup_{0 \le x_0 \le L} \| \bar{u} q_{xy} \|_{x= x_0} + \| \sqrt{\bar{u}} q_{xyy} \| \\
&\| q \|_{\mathcal{H}} := \sup_{0 \le x_0 \le L} \| q_{yyy} w\{1 - \chi \}\|_{x = x_0} + \| v_{yyyy} w \{1 - \chi\} \| + \| q_{xyy} w \{1 - \chi\} \| \\
& \| q \|_{X_k} := \| q^{(k)} \|_X,  \hspace{3 mm} \| q \|_{\mathcal{E}_k} := \| q^{(k)} \|_{\mathcal{E}}, \hspace{3 mm} \| q \|_{\mathcal{H}_k} := \| q^{(k)} \|_{\mathcal{H}}, \\
& \| q \|_{X_{\langle k \rangle}} = \sum_{i = 0}^k \| q \|_{X_{i}}.
\end{aligned}
\end{align}

\begin{remark}[Notation]   The notation $p_{k}$ will denote an inhomogeneous polynomial of one variable of unspecified power in the quantity $\| \bar{q} \|_{X_k}$. Similarly. $p_{\langle k \rangle}$ will denote such a polynomial in the quantity $\| \bar{q} \|_{X_{\langle k \rangle}}$. In general, we will suppress those constants which depend on $\| \bar{q} \|_{X_{\langle k \rangle}}$, and only display those which depend on $\| \bar{q} \|_{X_{\langle k + 1 \rangle}}$.
\end{remark}

\begin{lemma}   The following inequalities are valid: 
\begin{align*}
&\| q^{(k)}_{xy} \| + \| \{q^{(k)}_y, q^{(k)}_{yy} \} w \| + \| q^{(k)} \langle y \rangle^{-1} \| \le o_L(1)(1 + \| q \|_{X_k}) \\
& \|v^{(k)} \langle y \rangle^{-1} \| + \| \{v^{(k)}_{y}, v^{(k)}_{yy}, v^{(k)}_{yyy}, v^{(k)}_{xy} \} w \| \le o_L(1) (1 + \| q \|_{X_k}) \\
&\| q^{(k)}_{xy} w \| + \| v^{(k)}_{xyy} w \| \lesssim 1 + \| q \|_{X_k} \\
&\| q^{(k)} \|_\infty + \| v^{(k)}_y \|_\infty + \| q^{(k)}_y \|_{\infty, \ge 1} \lesssim 1 + o_L(1) p(\| q \|_{X_{\langle k \rangle}}) \\
&\| v^{(k)}_{yy}, v^{(k)}_{yyy} \|_\infty \le 1 + o_L(1) \| q \|_{X_{\langle k + 1 \rangle}}.
\end{align*}
\end{lemma}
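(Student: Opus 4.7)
The plan is to prove all five estimates by straightforward expansion, localization, and interpolation, treating them as unweighted/weighted Hardy-Poincar\'e-type estimates built on top of the building blocks in the $X_k$ norm, namely $\|\bar u q_{xy}^{(k)}\|_{L^\infty_x L^2_y}$, $\|\sqrt{\bar u} q_{xyy}^{(k)} w\|$, $\|q_{yyy}^{(k)} w \chi\|_{L^\infty_x L^2_y}$, and $\|v_{yyyy}^{(k)} w\|$. The architecture parallels the embedding lemmas (\ref{Poincare})--(\ref{small.guys}) and (\ref{systemat1}) used for the DNS analysis, but now adapted to the Prandtl profile where $\bar u \gtrsim y$ near $\{y=0\}$ and $\bar u\gtrsim 1$ away from it, with an exponential weight $w=e^{Ny}$ in place of the polynomial weight $w_0$.

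For the first two lines I would first dispatch $\|q_{xy}^{(k)}\|$: split at the scale $y\sim\delta$ with $\chi(y/\delta)$, control the far-field piece by $\delta^{-1}\|\bar u q_{xy}^{(k)}\|$, and the localized piece by integrating by parts $\|q_{xy}^{(k)}\chi\|^2 = (\partial_y\{y\}, |q_{xy}^{(k)}|^2\chi^2)$ to bring out $\|\sqrt{\bar u} q_{xyy}^{(k)}\chi\|$; optimizing $\delta$ yields an $o_L(1)$ prefactor because $q^{(k)}|_{x=0}$ and $q_x^{(k)}|_{x=0}$ are tied to the compatible data (cf. Lemma \ref{lemma.compat.1}). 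The bounds on $\|q_y^{(k)} w\|$, $\|q_{yy}^{(k)} w\|$ and $\|q^{(k)}\langle y\rangle^{-1}\|$ then follow by Poincar\'e in $x$ using this control on $q_{xy}^{(k)}$, a weighted Hardy inequality (with $q^{(k)}|_{y=0}=0$), and the absorption of the weight $w$ against its own derivative ($|w_y|\lesssim w$). Expanding $v^{(k)} = \partial_x^k(\bar u q)$ via Leibniz and using the just-proved $q^{(k)}$ bounds together with pointwise bounds on $\partial_x^j\bar u$ inherited from Oleinik's profile gives the corresponding $v^{(k)}$ bounds on line two; the third-order piece $v_{yyy}^{(k)}w$ is the only subtle one and is handled by the explicit presence of $\|q_{yyy}^{(k)} w\chi\|_{L^\infty_x L^2_y}$ in $X_k$ plus the decay of $\bar u_{yyy}$ at infinity (which is small by Oleinik's bounds (\ref{coe.2})--(\ref{coe.1})).

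For the third line the loss of the $o_L(1)$ is deliberate: $\|q_{xy}^{(k)} w\|$ is essentially a top-order object once one weights, so one applies the same localization as above but with the weight $w$ inside, and uses $\|\sqrt{\bar u} q_{xyy}^{(k)} w\|$ together with $\|\bar u q_{xy}^{(k)} w\|_{L^\infty_x L^2_y}$, estimating the latter by Fundamental Theorem from its trace plus $\sqrt L$ and absorbing. The same reasoning, again expanding $v_{xyy}^{(k)}=\partial_x^k(\bar u q)_{xyy}$, gives $\|v_{xyy}^{(k)} w\|$. For the $L^\infty$ estimates on lines four and five I would use the standard anisotropic Sobolev embedding $\|f\|_\infty^2 \lesssim \|f\|_{L^\infty_x L^2_y}\|f_y\|_{L^\infty_x L^2_y}$ combined with the one-dimensional trace $|g|_{L^\infty_x}^2 \lesssim \|g\|_{x=0}^2 + L\|g_x\|\cdot\|g\|$; the values at $x=0$ are controlled by the compatible initial data $f_k$ of Lemma \ref{lemma.compat.1}, while the $L$ factor explains the $o_L(1)$ prefactor on the $X_{\langle k\rangle}$ or $X_{\langle k+1\rangle}$ norm. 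The apparent jump from $k$ to $k+1$ in the last line is because $v_{yyy}^{(k)}$ requires a full $y$-derivative of $v_{yy}^{(k)}$ and is ultimately controlled by $v_{yyyy}^{(k)}w\in X_k$, whereas the corresponding $L^\infty_x$ passage costs us $\partial_x$ of $v_{yyy}^{(k)}$, forcing the $k+1$ norm via the Ladyzhenskaya-type inequality.

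The main obstacle is calibrating the $L$-powers so that every $\p_x$ turned into a Poincar\'e factor or every trace estimate at $x=0$ lands with a genuine $o_L(1)$ coefficient rather than $O(1)$; this must be done carefully because $X_k$ only supplies $\|\bar u q_{xy}^{(k)}\|_{L^\infty_x L^2_y}$ (not a global $L^2_{x,y}$ bound with $\bar u$) and $\sqrt{\bar u} q_{xyy}^{(k)}$ vanishes at $\{y=0\}$, so the unweighted $\|q_{xy}^{(k)}\|$ estimate really does require the localization/integration-by-parts dance described above and not a direct pointwise bound.
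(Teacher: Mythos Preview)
Your proposal is essentially correct and follows the same architecture as the paper's proof: the same localization at scale $y\sim\delta$, the same integration-by-parts with $\partial_y\{y\}$, the same Poincar\'e-in-$x$ and Hardy-in-$y$ chain, the same Leibniz expansions (\ref{expressionsk}), and the same anisotropic Sobolev embeddings for the $L^\infty$ lines. Two places where the paper is more specific than your sketch: (i) for $\|q_{yy}^{(k)}\|$ near $y=0$, a direct Poincar\'e in $x$ does not close because $X_k$ only carries $\sqrt{\bar u}\,q_{xyy}^{(k)}$, so the paper instead writes $q_{yy}\sim\int_0^1 t^2 v_{yyy}(x,ty)\,dt$ and bounds it by $\|v_{yyy}\|$, which in turn is interpolated between $\|v_{yyyy}\|$ and $\|v_{yy}\|$; (ii) for the weighted $\|v_{yyy}^{(k)}w\|$, ``decay of $\bar u_{yyy}$'' is not quite enough against $w=e^{Ny}$ with $N$ large, so the paper invokes the Prandtl identity $\bar u_{yyy}=-\bar u\bar v_{yy}+\bar v\bar u_{yy}$ to rewrite this term in quantities already controlled by the (suppressed) background norms $\|\bar q\|_{X_{\langle k\rangle}}$. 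Both are minor refinements of exactly the steps you outlined.
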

\begin{proof} The first step is to obtain control over $\| q^{(k)}_{xy} \|$ via interpolation. 
\begin{align*}
\| q^{(k)}_{xy} \{1 - \chi(\frac{y}{\delta}) \} \| \le \delta^{-1} \| \bar{u} q^{(k)}_{xy} \| \le L \delta^{-1} \sup_x |\bar{u} q^{(k)}_{xy}| \le L \delta^{-1} \| q \|_{X_k}. 
\end{align*}

\noindent Near the $\{y = 0\}$ boundary, one interpolates: 
\begin{align*}
|( \chi(\frac{y}{\delta}) \p_y \{ y \}, |q^{(k)}_{xy}|^2)| \lesssim &\| \chi y q^{(k)}_{xyy} \|^2 + ( \frac{y}{\delta} \chi'(\frac{y}{\delta}), |q^{(k)}_{xy}|^2) \\
\lesssim & \delta \| q^{(k)} \|_{X}^2 + L^2 \delta^{-2} \| q^{(k)} \|_X^2. 
\end{align*}

\noindent  Optimizing $\sqrt{\delta} + L \delta^{-1}$, one obtains $\delta = L^{2/3}$. Thus, $\| q^{(k)}_{xy} \| \lesssim L^{1/3} \| q \|_{X_k}$. From here, a basic Poincare inequality gives: 
\begin{align*}
\| q^{(k)}_y \| = \| q^{(k)}_y|_{x = 0} + \int_0^x q^{(k)}_{xy} \| \lesssim \sqrt{L} |q^{(k)}_y|_{x = 0}\| + L \| q^{(k)}_{xy} \|
\end{align*}

\noindent  From here, Hardy inequality gives immediately $\| q^{(k)} \langle y \rangle^{-1} \| \le \| q^{(k)}_y \|$. 

The next step is to establish the uniform bound via straightforward Sobolev embedding: 
\begin{align*}
|q^{(k)}|^2 \lesssim \sup_x |q^{(k)}_y \langle y \rangle\|^2 \lesssim |q^{(k)}_y|_{x = 0} \langle y \rangle\|^2 + L \| q^{(k)}_{xy} \langle y \rangle \|^2 \lesssim 1 + o_L(1) \| q \|_{X_k}. 
\end{align*}

A Hardy computation gives:  
\begin{align*}
\| q^{(k)}_{xy} w \{1 - \chi\} \| \lesssim& \| q^{(k)}_{xy} \|_{2,loc} + \| q^{(k)}_{xyy} w \{1 - \chi\} \| \lesssim \| q \|_{X_k}.
\end{align*}

We record the following expansions which follow from the product rule upon recalling that $v = \bar{u} q$:
\begin{align}
\begin{aligned} \label{expressionsk}
&|v^{(k)}| \lesssim \sum_{j = 0}^k |\bar{u}^j q^{(k-j)}| \\
&|v^{(k)}_y| \lesssim \sum_{j = 0}^k |\bar{u}^j_{y} q^{(k-j)}| + |\bar{u}^j q^{(k-j)}_y| \\
&|v^{(k)}_{yy}| \lesssim \sum_{j = 0}^k |\bar{u}^j_{yy} q^{(k-j)}| + |\bar{u}^j_{y} q^{(k-j)}_{y}| + |\bar{u}^j q^{(k-j)}_{yy}| \\
&|v^{(k)}_{yyy}| \lesssim \sum_{j = 0}^k |\bar{u}^j_{yyy} q^{(k-j)}| + |\bar{u}^j_{yy} q^{(k-j)}_y| + |\bar{u}^j_{y} q^{(k-j)}_{yy}|+ |\bar{u}^j q^{(k-j)}_{yyy}| \\
&|v^{(k)}_{xy}| \lesssim \sum_{j = 0}^k |\bar{v}^{j}_{yy} q^{(k-j)}| + |\bar{u}^j_{y} q^{(k-j)}_x| + |\bar{v}^j_{y} q^{(k-j)}_y| + |\bar{u}^j q^{(k-j)}_{xy}| \\
&|v^{(k)}_{xyy}| \lesssim \sum_{j = 0}^k |\bar{v}^j_{yyy} q^{(k-j)}| + |\bar{u}^j_{yy} q^{(k-j)}_x| + |\bar{v}^j_{yy} q^{(k-j)}_y| + |\bar{u}^j_{y} q^{(k-j)}_{xy}| \\
& \hspace{10 mm} + |\bar{v}^j_{y} q^{(k-j)}_{yy}| + |\bar{u}^j q^{(k-j)}_{xyy}|.
\end{aligned}
\end{align}

We will restrict to $k = 0$ for the remainder of the proof, as the argument works for general $k$ in a straightforward way. From (\ref{expressionsk}), $\| v_y\|$ follows obviously. Next, 
\begin{align} \label{broad}
\| v_{yy} \| \lesssim & \| \bar{u}_{yy} q \| + \| \bar{u}_{y} q_y \| + \| \bar{u} q_{yy} \| \lesssim  \sqrt{L} + o_L(1) \| q \|_X. 
\end{align}

\noindent  From here, $\| v_{yyy} \|_{loc}$ can be interpolated in the following way: 
\begin{align*}
&(v_{yyy}, v_{yyy} \chi(\frac{y}{\delta})) = ( \p_y \{ y \} \chi(\frac{y}{\delta}), |v_{yyy}|^2) \\
= & - ( y \chi(\frac{y}{\delta}) v_{yyy}, \p_x^{k-1} v_{yyyy}) - ( y \delta^{-1} \chi'(\frac{y}{\delta}), |v_{yyy}|^2) \\
\lesssim &\delta^2 \| v_{yyyy} \|^2 + \| \psi_\delta v_{yyy} \|^2. 
\end{align*}

\noindent  For the far-field component, we may majorize via: 
\begin{align*}
|(v_{yyy}, v_{yyy} \{1 - \chi(\frac{y}{\delta}) \} )| \lesssim \| \psi_\delta v_{yyy} \|^2.
\end{align*}

\noindent  Here $\psi_\delta = 1 - \chi(\frac{10 y}{\delta})$, the key point being that both $\{1 - \chi(\frac{y}{\delta})\}$ and $\chi'(\frac{y}{\delta})$ are supported in the region where $\psi_\delta = 1$. To estimate this term, we may integrate by parts: 
\begin{align*}
( \psi v_{yyy}, v_{yyy}) = &- ( \psi v_{yy}, v_{yyyy})  - ( \delta^{-1} \psi' v_{yy}, v_{yyy}) \\
= & - ( \psi v_{yy}, v_{yyyy}) + ( \delta^{-2} \frac{\psi''}{2}, |v_{yy}|^2) \\
\lesssim & \delta^2 \|  v_{yyyy} \|^2 + N_\delta \| v_{yy} \|^2. 
\end{align*}

\noindent  Thus, 
\begin{align} \label{interp.3}
\| v_{yyy} \| \le & \delta \| v_{yyyy} \| + N_\delta \| v_{yy} \|.
\end{align}

\noindent  We combine the above with (\ref{broad}) to select $\delta = L^{0+}$ to achieve control over $\|\p_y^j v^{(k)}\|$ for $j = 1,2,3$. From here, we can obtain: 
\begin{align*}
\| q_{yy} \|_{loc} \simeq \| \int_0^y v_{yyy} \ud y' \|_{loc} \le o(1) \| v_{yyy} \|. 
\end{align*}

\noindent  Away from the $\{y = 0\}$ boundary, we estimate trivially: 
\begin{align*}
\| q_{yy}\{1 - \chi(\frac{y}{\delta}) \} w \| &\lesssim \| \bar{u} q_{yy} w \{1 - \chi(\frac{y}{\delta}) \} \|_{2} \\
& \lesssim \sqrt{L} \|\bar{u} q_{yy} w\|_{x = 0} + L \| \sqrt{\bar{u}} q_{xyy} w \|. 
\end{align*}

\noindent  From here, obtaining $\| q_y w \|$ follows from Hardy. We now turn our attention to the weighted estimates for $v_y, v_{yy}, v_{xy}, v_{xyy}$, which follow from (\ref{expressionsk}), whereas for $v_{yyy}$, we use the Prandtl equation to produce the identity: 
\begin{align*}
v_{yyy} = & \bar{u}_{yyy} q + 3 \bar{u}_{yy} q_y + 3 \bar{u}_{y} q_{yy} + \bar{u} q_{yyy} \\
= &  \Big( - \bar{u} \bar{v}_{yy} + \bar{v} \bar{u}_{yy} \Big) q +  3 \bar{u}_{yy} q_y + 3 \bar{u}_{y} q_{yy} + \bar{u} q_{yyy}
\end{align*}

The uniform estimates subsequently follow from straightforward Sobolev embeddings.
\end{proof}

\begin{lemma}[$\p_x^k$ Energy Estimate]   Assume $q$ solves (\ref{origPrLay.beta}) 
\begin{align}
\begin{aligned} \label{Ek}
\| q \|_{\mathcal{E}_k}^2 \lesssim & |\bar{u} \p_x^k q_{xy}|_{x = 0}\|^2  + o_L(1)p_{\langle k + 1 \rangle} (1 + \| q \|_{X_{\langle k \rangle}}^2 ) \\
& + o_L(1) C(u^0) + o_L(1) \| \p_x^{k+1} \p_{xy}g_1 \langle y \rangle \|^2.
\end{aligned}
\end{align}
\end{lemma}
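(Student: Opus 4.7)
The plan is to differentiate the $q$-formulation of the equation \eqref{origPrLay.beta} a total of $k+1$ times in $x$ and then test against the multiplier $\p_x^{k+1} q$, integrating over $(0,x_0) \times \mathbb{R}_+$ and taking the supremum over $x_0 \in (0,L)$. Observe that $\p_x^{k+1} \p_y q = (\p_x^k q)_{xy}$ and $\p_x^{k+1} \p_y^2 q = (\p_x^k q)_{xyy}$, so these are precisely the quantities appearing in the definition of $\| q \|_{\mathcal{E}_k}$.

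The first step is to handle the Rayleigh contribution $-\p_x^{k+1}\p_{xy}\{\bar u^2 q_y\}$. After one integration by parts in $y$ and use of the boundary conditions $q|_{y=0} = 0$, this becomes $\int_0^{x_0}\langle \p_x^{k+2}\{\bar u^2 q_y\}, \p_x^{k+1} q_y\rangle$. Isolating the top-order piece $\bar u^2 \p_x^{k+2} q_y$ and integrating by parts once more in $x$ produces the favorable boundary contribution $\tfrac12 \|\bar u\, \p_x^{k+1} q_y\|_{x=x_0}^2 - \tfrac12 \|\bar u\, \p_x^{k+1} q_y\|_{x=0}^2$, the latter being absorbed into the data term $\|\bar u\, \p_x^k q_{xy}|_{x=0}\|^2$. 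The remaining commutator terms (analogous to \eqref{rara.1}) split into those with $j \le (k+1)/2$ derivatives falling on $\bar u$ and those with $j \ge (k+1)/2$; both are estimated by $o_L(1) p_{\langle k+1\rangle}(1 + \|q\|_{X_{\langle k\rangle}}^2)$ using the $L^\infty$ and lower-order Sobolev embeddings from the preceding lemma together with Poincar\'e inequalities in $x$.

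The second step is the bilaplacian term. Applying $\p_x^{k+1}$ and testing gives $\int_0^{x_0}\langle \p_x^{k+1} v_{yyyy}, \p_x^{k+1} q\rangle$; integrating by parts twice in $y$ (the boundary terms at $y=0$ vanish since $v|_{y=0}=v_y|_{y=0}=0$ and $q|_{y=0}=0$) yields $\int_0^{x_0}\langle \p_x^{k+1} v_{yy}, \p_x^{k+1} q_{yy}\rangle$. Expanding $\p_x^{k+1} v_{yy}$ via the product rule as in \eqref{expressionsk}, the principal contribution $\bar u\, \p_x^{k+1} q_{yy}$ produces the coercive quantity $\|\sqrt{\bar u}\, \p_x^{k+1} q_{yy}\|^2$, while the lower-order commutators $\p_x^j \bar u_{yy}\, \p_x^{k+1-j} q$, $\p_x^j \bar u_y\, \p_x^{k+1-j} q_y$ are absorbed after further integration by parts in $y$ (to trade $q_{yy}$ for $q_y$), with help from the weighted Hardy inequality and the embedding $\|q^{(l)}_y\|, \|q^{(l)} \langle y\rangle^{-1}\| \lesssim o_L(1)(1 + \|q\|_{X_l})$ of the preceding lemma.

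The remaining terms $\Lambda$, $U(u^0)$ and the forcing $\p_x^{k+1}\p_{xy}g_1$ are perturbative. Each summand of $\p_x^{k+1}\Lambda$ is a product of the form $\p_x^j \bar v^{(\alpha)} \cdot \p_x^{k+1-j} v^{(\beta)}$ integrated against $\p_x^{k+1} q$ or (after integration by parts in $y$) against $\p_x^{k+1} q_y$; placing the factor with fewer $x$-derivatives in $L^\infty$ using the symbol-to-$L^\infty$ estimates previously recorded, and the other in $L^2$ via the $X_{\langle k\rangle}$ norms (or $X_{\langle k+1\rangle}$ for $\bar v$), yields control by $o_L(1) p_{\langle k+1\rangle}(1 + \|q\|_{X_{\langle k\rangle}}^2)$. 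The $U$ contribution is bounded by $o_L(1) C(u^0)$ using $\|u^0\|_\infty + \|u^0_{yy}\langle y\rangle\|_\infty \lesssim C(u^0)$ combined with Cauchy--Schwarz, and the forcing contributes $|\langle \p_x^{k+1}\p_{xy}g_1, \p_x^{k+1} q\rangle| \le \|\p_x^{k+1}\p_{xy}g_1 \langle y\rangle\| \|\p_x^{k+1} q/\langle y\rangle\|$, absorbed by Young's inequality. The main obstacle is bookkeeping the commutator terms in the Rayleigh identity at high order, where one must carefully distribute the polynomial growth in $\|\bar q\|_{X_{\langle k+1\rangle}}$ and the $o_L(1)$ factors to avoid circularity with the quantity $\|q\|_{X_{\langle k\rangle}}^2$ being estimated.
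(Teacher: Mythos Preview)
Your overall strategy matches the paper's: differentiate $k+1$ times in $x$, test against $\p_x^{k+1}q$, extract coercivity from the Rayleigh and $\p_y^4$ terms, and treat $\Lambda$, $U$, and the forcing perturbatively. Two points in your bilaplacian step need correction.

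First, the second integration by parts in $y$ produces the boundary term $\p_x^{k+1}v_{yy}\,\p_x^{k+1}q_y|_{y=0}$, and this does \emph{not} vanish merely because $v|_{y=0}=v_y|_{y=0}=0$ and $q|_{y=0}=0$: those conditions say nothing about $v_{yy}|_{y=0}$. The paper works throughout with the $\theta>0$ regularization \eqref{sysb.beta}, where one has the additional boundary condition $q_y|_{y=0}=0$; this is what kills the term (and likewise the boundary contribution arising when you integrate $(\bar u_y\,\p_x^{k+1}q_y,\p_x^{k+1}q_{yy})$ by parts in $y$). You should invoke that condition explicitly.

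Second, the commutator $\p_x^j\bar u_{yy}\,\p_x^{k+1-j}q$ against $\p_x^{k+1}q_{yy}$ is not disposed of by a single integration by parts plus Hardy. For $j=0$ the paper uses the Oleinik bound $\|\bar u_{yy}\langle y\rangle/\bar u\|_\infty<\infty$ directly, but for $j\ge 1$ one no longer has this ratio control, and the paper instead localizes with the cutoff $\chi$: on $\{y\gtrsim 1\}$ place $\p_x^{k+1}q_{yy}$ in $L^2$ with the $\sqrt{\bar u}$ weight and put $\bar v_{yyy}^{(j-1)}$ in $L^\infty_xL^2_y$, while on $\{y\lesssim 1\}$ integrate by parts once in $y$ to trade $q_{yy}$ for $q_y$ and then estimate in $L^\infty_{loc}$. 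This splitting is where the polynomial dependence $p_{\langle k+1\rangle}$ genuinely enters.
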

\begin{proof} For this estimate, we work with the $k+1$ times $x$-differentiated version of (\ref{sysb.beta}), which we record below: 
\begin{align}
- \p_x^{k+1} \p_y \{ \bar{u}^2 q_y \} + v^{k+1}_{yyyy} + \p_x^{k+1} J_b(v) + \p_x^{k+1} U_b = \p_x^{k+2} \p_y g_1. 
\end{align} 

We will take inner product of the above equation against $\p_x^{k+1} q$ and then integrate over $x \in [0, x_0]$, where $0 < x_0 < L$. We start with the Rayleigh term: 
\begin{align} \n
- \int_0^{x_0}  ( &\p_x^{k+1} \p_{xy} \{ \bar{u}^2 q_y \}, \p_x^{k+1} q ) =  \int_0^{x_0} (\p_x^{k+2} \{ \bar{u}^2 q_y \}, \p_x^{k+1} q_y ) \\ \label{rara.1}
= & \sum_{j = 0}^{k+2} \sum_{j_1 + j_2 = j} \int_0^{x_0} ( \p_x^{j_1} \bar{u} \p_x^{j_2} \bar{u} \p_x^{k+2 - j} q_y, \p_x^{k+1} q_y ). 
\end{align}

\noindent  The $j = 0$ case corresponds to: 
\begin{align*}
\int_0^{x_0} \frac{\p_x}{2} (\bar{u}^2 \p_x^{k+1} q_y, \p_x^{k+1} q_y ) - \int_0^{x_0} (\bar{u} \bar{u}_{x} \p_x^{k+1} q_y, \p_x^{k+1} q_y ). 
\end{align*}

\noindent  By splitting into $1 \le j \le \frac{k+2}{2}$ and $\frac{k+2}{2} \le j \le k+2$ cases, the remaining terms in (\ref{rara.1}) can be majorized by: 
\begin{align*}
|(\ref{rara.1})[j \neq 0]| \lesssim & \Big\| \frac{\p_x^{\langle \frac{k+1}{2} \rangle} \bar{u}}{\bar{u}} \Big\|_\infty^2 \| \bar{u} \p_x^{\langle k + 1 \rangle} q_y \|^2 \\
& + \Big\| \frac{\p_x^{\langle \frac{k+2}{2} \rangle}\bar{u}}{\bar{u}} \Big\|_\infty \| \p_x^{\langle \frac{k+2}{2} \rangle} q_y \| \| \p_x^{k+2} \bar{u} \|_\infty \| \bar{u} \p_x^{k+1} q_y \| \\
\lesssim & o_L(1) p(\| q_s \|_{X_{\langle k \rangle}}) \| q \|_{X_{\langle k \rangle}}^2.
\end{align*}

\noindent  Summarizing: 
\begin{align*}
 \sup_{x_0 \le L} \int_0^{x_0} &(\p_x \p_x^k \p_{xy} \{ \bar{u}^2 q_y \} , \p_x \p_x^k q ) \\
 &\gtrsim  \sup |\bar{u} \p_x^k q_{xy}\|^2 - C(q_0) - o_L(1) p(\| q_s \|_{X_{\langle k \rangle}}) \| q \|_{X_{\langle k \rangle}}^2. 
\end{align*}

We now move to $\p_y^4$ term: 
\begin{align} \n
&\int_0^{x_0} (\p_x^{k+1} v_{yyyy} , \p_x^{k+1} q )  =- \int_0^{x_0} ( \p_x^{k+1} v_{yyy}, \p_x^{k+1} q_y ) \\ \n
& = \int_0^{x_0} ( \p_x^{k+1} v_{yy}, \p_x^{k+1} q_{yy} ) + \int_0^{x_0} \p_x^{k+1} v_{yy} \p_x^{k+1} q_y(0) \\ \label{sub1}
& = \int_0^{x_0} (\p_x^{k+1} [ \bar{u} q_{yy} + 2 \bar{u}_{y} q_y + \bar{u}_{yy} q ], \p_x^{k+1} q_{yy} ).
\end{align}

\noindent  We have above used that $q_y|_{y = 0} = 0$, which is due to the fact that $b > 0$. First, we will focus on (\ref{sub1}.1), which is: 
\begin{align*}
(\ref{sub1}.1) \gtrsim &\int_0^{x_0} ( \bar{u} \p_x^{k+1} q_{yy}, \p_x^{k+1} q_{yy} ) \\
& - \Big\| \frac{\p_x^{\langle k+1 \rangle} \bar{u}}{\bar{u}} \Big\|_\infty \| \sqrt{\bar{u}} \p_x^{\langle k \rangle} q_{yy} \| \| \sqrt{\bar{u}} \p_x^{k } q_{xyy} \| \\
\gtrsim & \| \sqrt{\bar{u}} \p_x^k q_{xyy} \|^2 - ( 1 + o_L(1) p_{k+1} ) \| q \|_{X_{k}} (1+ o_L(1) \| q \|_{X_{\langle k \rangle}}). 
\end{align*}

Next, for (\ref{sub1}.2): 
\begin{align*}
(\ref{sub1}.2) = &\int_0^{x_0} ( 2 \bar{u}_{y} \p_x^{k+1} q_y, \p_x^{k+1} q_{yy} ) + \sum_{1 \le j \le \frac{k+1}{2}} \int_0^{x_0} ( \p_x^j \bar{u}_{y} \p_x^{k+1 - j} q_y, \p_x^{k+1} q_{yy} ) \\
& + \sum_{\frac{k+1}{2} \le j \le k+1} \int_0^{x_0} ( \p_x^j \bar{u}_{y} \p_x^{k+1-j} q_y, \p_x^{k+1} q_{yy} ). 
\end{align*}

  First, integration by parts gives: 
\begin{align*}
(\ref{sub1}.2.1) = & - \int_0^{x_0} ( \bar{u}_{yy} \p_x^{k+1} q_y, \p_x^{k+1} q_y ) - \int_0^{x_0} \bar{u}_{y}(0) |\p_x^{k+1} q_y(0)|^2 \\
= & \bigO( \| \bar{u}_{yy} \|_\infty \| \p_x^{k} q_{xy} \|^2  ) - \int_0^{x_0} \bar{u}_{y}(0) |\p_x^{k+1} q_y(0)|^2 \\
= & o_L(1) \| q \|_{X_k}^2.
\end{align*}

\noindent  We again note that $q_y(0) = 0$ due to $b > 0$, so that the boundary term vanishes. We treat: 
\begin{align*}
(\ref{sub1}.2.2) = &- \int_0^{x_0} ( \p_x^{k+1} q_y, \p_x^j \bar{u}_{yy} \p_x^{k+1-j} q_y ) - \int_0^{x_0} ( \p_x^{k+1} q_y, \p_x^j \bar{u}_{y} \p_x^{k+1 -j} q_{yy} ) \\
& - \int_0^{x_0} \p_x^j \bar{u}_{y} \p_x^{k-j+1} q_y \p_x^{k+1} q_y(0) \\
=  &- \int_0^{x_0} ( \p_x^{k+1} q_y, \p_x^j \bar{u}_{yy} \p_x^{k+1-j} q_y ) - \int_0^{x_0} (  \p_x^{k+1} q_y, \p_x^j \bar{u}_{y} \p_x^{k+1 -j} q_{yy} )\\
\lesssim & \| \p_x^{\langle \frac{k+1}{2}\rangle} \bar{u}_{yy} \|_\infty \| \p_x^{\langle k \rangle} q_y \| \| \p_x^{k+1} q_y \| \\
& + \| \p_x^{\langle \frac{k+1}{2} \rangle} \bar{u}_{y} \|_\infty \| \p_x^{\langle k \rangle} q_{yy} \| \| \p_x^{k+1} q_{y} \| \\
\lesssim & o_L(1) p_{\langle k \rangle} ( \| q \|_{X_{\langle k \rangle}}^2 + 1 ).
\end{align*}

  A similar integration by parts produces 
\begin{align*}
(\ref{sub1}.2.3) \lesssim & \| \p_x^{\langle \frac{k+1}{2} \rangle} q_y \| \| \p_x^{\langle k + 1 \rangle} \bar{u}_{yy} \|_\infty \| \p_x^{k} q_{xy} \| \\
& + \| \p_x^{\langle \frac{k+1}{2} \rangle} q_{yy} \| \| \p_x^{\langle k+1 \rangle} \bar{u}_{y} \|_\infty \| \p_x^{k} q_{xy} \|  \\
\lesssim  & o_L(1) p_{\langle k + 1 \rangle} ( \| q \|_{X_{\langle k \rangle}}^2 + 1 ). 
\end{align*}

Next, we move to: 
\begin{align*}
(\ref{sub1}.3) = & \sum_{j = 0}^{k+1} \int_0^{x_0} ( \p_x^j \bar{u}_{yy} \p_x^{k+1 - j} q, \p_x^{k+1} q_{yy}).
\end{align*}

 We must split the above term into several cases. First, let us handle the $j = 0$ case for which (\ref{coe.2}) gives us the required bound: 
\begin{align*}
|(\ref{sub1}.3)[j=0]| \lesssim& \| \frac{1}{\bar{u}} \bar{u}_{yy} \langle y \rangle \|_\infty \| \p_x^{k} q_{xy} \| \| \sqrt{\bar{u}} \p_x^k q_{xyy} \| \\
\lesssim &o_L(1) \| q \|_{X_{k}}^2. 
\end{align*}

We now handle the case of $1 \le j \le k/2$, which requires a localization using $\chi$ as defined in (\ref{basic.cutoff}):
\begin{align*}
|(\ref{sub1}.3)[1-\chi]| \lesssim &\| \p_x^{\frac{k}{2}} \bar{v}_{yyy} \|_{L^\infty_x L^2_y} \| \p_x^{\langle k \rangle} q \|_{L^2_x L^\infty_y} \| q \|_{X_k} \\
\lesssim & o_L(1) p_{\langle k \rangle}  \| q \|_{X_{\langle k \rangle}}^2.
\end{align*}

  For the localized component, we integrate by parts in $y$: 
\begin{align} \n
(\ref{sub1}.3)[\chi] = & \sum_{j = 0}^{k+1} \int_{0}^{x_0} ( \p_x^k q_y \chi', \p_x^j \bar{u}_{yy} \p_x^{k+1-j}q ) +  \sum_{j = 0}^{k+1} \int_0^{x_0} ( \p_x^k q_y \chi, \p_x^j \bar{u}_{yyy} \p_x^{k+1-j} q ) \\ \label{fetty}
& +  \sum_{j = 0}^{k+1} \int_0^{x_0} \langle \p_x^k q_y \chi, \p_x^j \bar{u}_{yy} \p_x^{k+1-j} q_y \rangle \\ \n
\lesssim & \| \p_x^{k-1} q_{xy} \| \| \p_x^{\frac{k}{2}} \bar{v}_{yyy} \| \| \p_x^{\langle k \rangle} q \|_{L^\infty_{loc}}  + \| \p_x^{k-1} q_{xy} \| \| \p_x^{\frac{k}{2}} \bar{v}_{yyyy} \| \| \p_x^{\langle k \rangle} q \|_{L^\infty_{loc}} \\ \n
& + \| \p_x^{k-1} q_{xy} \| \| \p_x^{\frac{k}{2}} \bar{v}_{yyy} \|_\infty \| \p_x^{\langle k \rangle} q_{xy} \| \\  \n
\lesssim & o_L(1)p_{\langle k \rangle} \| q \|_{X_{\langle k \rangle}}^2.  
\end{align}

We now treat the case in which $k/2 \le j \le k$, which still requires localization 
\begin{align*}
|(\ref{sub1}.3)[\chi_{\ge 1}]| \lesssim  &\| \p_x^k q_{xyy} \sqrt{\bar{u}} \| \| \p_x^{\frac{k}{2}} q \|_{L^2_x L^\infty_y} \|\p_x^{\langle k - 1 \rangle} \bar{v}_{yyy} \|_{L^\infty_x L^2_y} \\
\lesssim & o_L(1) p_{\langle k \rangle} \| q \|_{X_{\langle k \rangle}}^2.
\end{align*}

Finally, we deal with the case when $y \lesssim 1$ for $k/2 \le j \le k$, which again requires integration by parts in $y$ as in (\ref{fetty}): 
\begin{align*}
|(\ref{fetty}[j \ge k/2])| \lesssim &\| \p_x^{k-1} q_{xy} \| \| \p_x^{\langle k - 1 \rangle} \bar{v}_{yyy} \| \| \p_x^{\frac{k}{2}} q \|_{L^\infty_{loc}} \\
& + \| \p_x^{k-1} q_{xy} \| \| \p_x^{\langle k - 1 \rangle} \bar{v}_{yyyy} \| \| \p_x^{\frac{k}{2}} q \|_{L^\infty_{loc}} \\
& + \| \p_x^{k-1} q_{xy} \| \| \p_x^{\langle k - 1 \rangle}\bar{v}_{yyy} \|_\infty \| \p_x^{\langle k/2 \rangle} q_{xy} \| \\
\lesssim & o_L(1)p_{\langle k \rangle}(1 + \| q \|_{X_{\langle k \rangle}}^2). 
\end{align*}

We now move to the $\Lambda$ terms:
\begin{align*}
\int_0^{x_0}( & \p_x^{k+1} \{ \bar{v}_{xyy}I_x[v_y] \}, \p_x^{k+1} q) \\
 \lesssim &\| \p_x^{\langle \frac{k}{2} \rangle} \bar{v}_{xyy} \langle y \rangle \|_\infty \| \p_x^{\langle k+1 \rangle} I_x[v_y] \| \| \p_x^{k+1} q_y \| \\
& + \| \p_x^{\langle k+1 \rangle} \bar{v}_{xyy}  \| \| \p_x^{\langle \frac{k+1}{2} \rangle} I_x[v_y] \langle y \rangle \|_\infty \| \p_x^{k+1} q_y \|. 
\end{align*}

\begin{align*}
\int_0^{x_0} ( & \p_x^j \bar{v}_{yy} \p_x^{k+1-j} v_y, \p_x^{k+1} q  ) \\
\lesssim & \| \p_x^{\langle \frac{k+1}{2} \rangle} \bar{v}_{yy} \langle y \rangle \|_\infty \| \p_x^{\langle k+1 \rangle} v_y \| \| \p_x^{k+1} q_y \| \\
& + \| \p_x^{\langle k + 1 \rangle} \bar{v}_{yy} \| \| \p_x^{\langle \frac{k+1}{2} \rangle} v_y \langle y \rangle \|_\infty \| \p_x^{k+1} q_y \|. 
\end{align*}

\begin{align*}
\int_0^{x_0} ( \p_x^j &\bar{v}_{x} \p_x^{k+1-j} I_x[\p_y^3 v], \p_x^{k+1} q ) \\
 = &- \int_0^{x_0} ( \p_x^j \bar{v}_{xy} \p_x^{k+1-j} I_x[\p_y^2 v], \p_x^{k+1} q ) \\
 & - \int_0^{x_0} ( \p_x^j \bar{v}_{x} \p_x^{k+1-j} I_x[\p_y^2 v], \p_x^{k+1} q_y ) \\
\lesssim & \| \p_x^{\langle \frac{k+1}{2} \rangle} \bar{v}_{xy} \langle y \rangle \|_\infty \| \p_x^{\langle k+1 \rangle} I_x[v_{yy}] \| \| \p_x^{k+1} q_y \| \\
& + \| \p_x^{\langle \frac{k+1}{2}\rangle} \bar{v}_{x} \|_\infty \| \p_x^{\langle k + 1 \rangle} I_x[v_{yy}] \| \| \p_x^{k+1} q_{y} \| \\
& + \| \p_x^{\langle \frac{k+1}{2} \rangle} I_x[v_{yy}] \langle y \rangle \|_\infty \| \p_x^{\langle k + 1 \rangle} \bar{v}_{xy} \| \| \p_x^{k+1} q_y\| \\
& + \| \p_x^{\langle k +1 \rangle} \bar{v}_{xy} \| \| \p_x^{\langle \frac{k+1}{2} \rangle} I_x[v_{yy}] \langle y \rangle \|_\infty \| \p_x^{k+1} q_{y} \|.
\end{align*}

\begin{align*}
\int_0^{x_0} &( \p_x^j \bar{v} \p_x^{k+1 - j} v_{yyy}, \p_x^{k+1} q ) \\
= & - \int_0^{x_0} ( \p_x^{k+1 - j} v_{yy} \p_x^j \bar{v}_{y}, \p_x^{k+1} q ) - \int_0^{x_0} ( \p_x^{k+1 - j} v_{yy} \p_x^j \bar{v}, \p_x^{k+1} q_y )  \\
 \lesssim &\| \p_x^{\langle \frac{k+1}{2} \rangle} \bar{v}_{y} \langle y \rangle \|_\infty \| \p_x^{\langle k + 1 \rangle} v_{yy} \| \| \p_x^{k+1} q_y \| \\
& + \| \p_x^{\langle \frac{k+1}{2} \rangle} \bar{v}_{} \|_\infty \| \p_x^{\langle k + 1 \rangle} v_{yy} \| \| \p_x^{k+1} q_{y} \| \\
& + \| \p_x^{\langle \frac{k+1}{2} \rangle} v_{yy} \langle y \rangle \|_\infty \| \p_x^{\langle k+1 \rangle} \bar{v}_{y} \| \| \p_x^{k+1} q_y \| \\
& + \| \p_x^{\langle \frac{k+1}{2} \rangle} \bar{v} \|_\infty \| \p_x^{\langle k + 1 \rangle} v_{yy} \| \| \p_x^{k} q_{xy} \|.
\end{align*}

\noindent  Summarizing the $\Lambda$ contributions: 
\begin{align*}
&\sup_{x_0 \le L} \int_0^{x_0} (\p_x \p_x^k \Lambda, \p_x \p_x^k q )  \lesssim o_L(1)p_{\langle k + 1 \rangle} (1 + \| q \|_{X_{\langle k \rangle}}^2 ).
\end{align*}

Finally, we have the $u^0$ contributions: 
\begin{align*}
& \int_0^{x_0} ( \p_x^{k+1} U, \p_x^{k} q_x )  = \int_0^{x_0} ( u^0 \p_x^{k+1} \bar{v}_{xyy} - u^0_{yy} \p_x^{k+1} \bar{v}_{x}, \p_x^{k} q_x ) \\
& \hspace{10 mm} \lesssim o_L(1) \| u^0, u^0_{yy} \cdot \langle y \rangle \|_\infty \| \p_x^{k+1} \bar{v}_{xyy} \| \| \p_x^{k} q_{xy} \| \\
& \hspace{10 mm} \lesssim o_L(1) p_{\langle k + 1 \rangle}. 
\end{align*}
\end{proof}

\begin{lemma}[$\p_x^k$ $\p_y^4$ Estimate]   Assume $v$ is a solution to (\ref{origPrLay}). Then the following estimate holds: 
\begin{align}  \label{solid.py4.cof}
\begin{aligned}
\| \p_x^k v_{yyyy} \|_{2,loc} \lesssim & \| q \|_{\mathcal{E}_{\langle k \rangle}} + o_L(1) p_{\langle k + 1 \rangle} (1 + \| q \|_{X_{\langle k \rangle}} ) \\
& + o_L(1) C(u^0) + \| \p_{xy} \p_x^k g_1 \|_{2,loc}.
\end{aligned}
\end{align}
\end{lemma}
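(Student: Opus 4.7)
The plan is to rearrange the vorticity form (\ref{origPrLay.beta}) to solve algebraically for $\p_y^4 v$, then apply $\p_x^k$:
\begin{align*}
\p_x^k v_{yyyy} = \p_x^{k+1}\p_y\{\bar u^2 q_y\} - \p_x^k \Lambda - \p_x^k U + \p_x^k \p_{xy} g_1.
\end{align*}
Taking the $\|\cdot\|_{2,loc}$ norm of both sides, I estimate each of the four pieces separately, exploiting that on the support of $\chi(y/10)$ one has $\bar u \lesssim 1$ (in particular $\bar u^2 \lesssim \sqrt{\bar u}$), so weighted quantities control unweighted localized ones.

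For the Rayleigh piece, Leibniz gives
\begin{align*}
\p_x^{k+1}\p_y\{\bar u^2 q_y\} \simeq \sum_{j=0}^{k+1}\binom{k+1}{j}\Big(\p_x^j\p_y(\bar u^2)\,\p_x^{k+1-j}q_y + \p_x^j(\bar u^2)\,\p_x^{k+1-j}q_{yy}\Big).
\end{align*}
The top-order contribution is $\bar u^2\,\p_x^{k+1} q_{yy} = \bar u^2\,\p_x^k q_{xyy}$, which on the support of the cut-off is controlled by $\|\sqrt{\bar u}\,\p_x^k q_{xyy}\|_{loc} \lesssim \|q\|_{\mathcal E_k} \le \|q\|_{\mathcal E_{\langle k\rangle}}$. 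Every other term ($j\ge 1$) carries at least one factor $\p_x^j\bar u$ or $\p_x^j\bar v_y$, which the preceding embedding lemma bounds uniformly by $p_{\langle k+1\rangle}$, paired with a $q_y$ or $q_{yy}$ derivative of order $\le k$ which yields an $o_L(1)(1+\|q\|_{X_{\langle k\rangle}})$ contribution.

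The $\Lambda$ and $U$ terms are handled in the same spirit as in the $\mathcal E_k$ energy estimate just completed: Leibniz-expand, place the highest-derivative factor in $L^2$ (using the embedding inequalities for $\|v_y^{(\ell)}\|,\|v_{yy}^{(\ell)}\|,\|v_{yyy}^{(\ell)}\|$) and the lower-derivative factor in $L^\infty$ or $L^\infty(\langle y \rangle^{-1})$ via the Hardy-type embeddings. No integration by parts is needed, so no boundary traces appear. The $U$ piece carries exactly one $u^0$ factor and so contributes $o_L(1)\,C(u^0)$. The forcing term passes directly to the right-hand side as $\|\p_{xy}\p_x^k g_1\|_{2,loc}$.

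The only delicate point is the top-order Rayleigh contribution: one needs $\bar u^2 \lesssim \sqrt{\bar u}$ on $\{y \le 20\}$, which follows from $\bar u \le 1$ there (ensured by taking $L$ small, together with the construction of the base profile). Away from the boundary the bound would require the exponentially weighted $\mathcal H_k$ norm, but because we only seek a localized estimate the $\mathcal E_k$ control is sufficient. Beyond this, the proof is bookkeeping of Leibniz expansions and is routine.
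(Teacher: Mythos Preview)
Your approach is essentially the same as the paper's: both isolate $\p_y^4 v$ from the vorticity equation, apply $\p_x^k$, and estimate each resulting term in $L^2_{loc}$, identifying the top-order Rayleigh piece $\bar u^2\,\p_x^k q_{xyy}$ as the one controlled by $\|q\|_{\mathcal E_k}$ and the remaining commutator, $\Lambda$, and $U$ terms as lower-order via the embedding inequalities. One small remark: you do not actually need $\bar u\le 1$ or $L$ small for the pointwise bound $\bar u^2\lesssim\sqrt{\bar u}$; boundedness of $\bar u$ (which holds globally from the construction) already gives $\bar u^2\lesssim\bar u\lesssim\sqrt{\bar u}$, so that paragraph can be trimmed.
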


\begin{proof} We apply $\p_x^k$ to the equation to obtain the following pointwise inequality:
\begin{align}
\begin{aligned} \label{sasak}
|v^{(k)}_{yyyy}| \lesssim &|\bar{u}^{j_1} \bar{u}^{j_2} q^{(k-j)}_{xyy}| + |\bar{u}_{x}^{j_1} \bar{u}_{y}^{j_2} q_y^{k-j}| + |\bar{u}^{j_1} \bar{u}_{xy}^{j_2} q_y^{k-j}| + |\bar{u}^{j_1} \bar{u}_{y}^{j_2} q_{xy}^{k-j}| \\
& + |\bar{u}^{j_1} \bar{u}_{x}^{j_2} q^{(k-j)}_{yy}| + |\bar{v}^j_{xyy} I_x[v^{(k-j)}_y]| + |\bar{v}^j_{yy}v^{(k-j)}_y| + |v^{j}_{sx} I_x[v^{(k-j)}_{yyy}]| \\
& + |\bar{v}^j v^{(k-j)}_{yyy} | + |v^{(k)}_{sxyy} u^0| + |v^{(k)}_{sx} u^0_{yy}| + |\p_{xy}g_1^k|.
\end{aligned}
\end{align}

Placing the terms on the right-hand side above in $L^2_{loc}$ gives the desired result: 
\begin{align*}
&\| (\ref{sasak}.1)\| \lesssim \| \bar{v}_{y}^{\langle k - 1 \rangle}, \bar{v}_{yy}^{\langle k - 1 \rangle} \|_\infty^2 \| \bar{u} q_{xyy}^{\langle k \rangle} \|, \\
&\| (\ref{sasak}.2) \| \lesssim \| \bar{v}_{y}^{\langle k - 1 \rangle} \|_\infty \| \bar{v}_{yy}^{\langle k - 1 \rangle} \|_\infty \| q^{\langle k \rangle}_y \| \\
&\| (\ref{sasak}.3) \| \lesssim \| \bar{v}_{y}^{\langle k - 1 \rangle} \|_\infty \| \bar{v}_{yy}^{\langle k - 1 \rangle} \|_\infty \| q^{\langle k \rangle}_y \| \\
&\| (\ref{sasak}.4) \| \lesssim \| \bar{v}^{\langle k - 1 \rangle}_{y} \|_\infty \| \bar{v}^{\langle k - 1 \rangle}_{yy} \|_\infty \| q^{\langle k \rangle}_{xy} \| \\
&\| (\ref{sasak}.5) \| \lesssim \| \bar{v}_{y}^{\langle k - 1 \rangle} \|_\infty \| \bar{v}^{\langle k \rangle}_{y} \|_\infty \| q^{\langle k \rangle}_{yy} \| \\
&\| (\ref{sasak}.6, 7) \| \lesssim \| \bar{v}^{\langle k + 1 \rangle}_{yy} \| \| v_y^{\langle k \rangle} \|_\infty \\
&\| (\ref{sasak}.8, 9) \| \lesssim \|\bar{v}^{\langle k + 1 \rangle} \|_\infty \| v^{\langle k \rangle}_{yyy} \| \\
&\| (\ref{sasak}.10, 11) \| \lesssim \| u^0, u^0_{yy} \langle y \rangle^2 \|_\infty \| \bar{v}^{k+1}_{yy} \|
\end{align*}
\end{proof}

We now move to a $\| \cdot \|_{\mathcal{H}_k}$ estimate, for which we first recall the definition in (\ref{norm.X.layer}).
\begin{lemma}[Weighted $\p_x^k H^4$]    Assume $q$ solves (\ref{origPrLay.beta}). Then the following estimate is valid:
\begin{align}
\begin{aligned}
\| q \|_{\mathcal{H}_k}^2 \lesssim & \| \p_x^k \p_{xy} g_1 \cdot w \chi \|^2 + C_k(q_0) + o_L(1) C(u^0) \\ \label{weight.H4.cof}
& + o_L(1) p_{\langle k + 1 \rangle}\Big(1+ \| q \|_{X_{\langle k \rangle}}^2 \Big).
\end{aligned}
\end{align}
\end{lemma}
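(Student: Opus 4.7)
The plan is to run a weighted energy argument parallel to the proof of (\ref{Ek})-(\ref{solid.py4.cof}), now using a multiplier tailored to the exponential weight $w=e^{Ny}$ and the far-field cutoff $(1-\chi)$. Three geometric facts will be exploited throughout: (i) on $\mathrm{supp}(1-\chi)$ one has $\bar u\gtrsim 1$, so $\sqrt{\bar u}$-weights are interchangeable with unweighted ones there; (ii) the Gaussian-type decay of the leading Prandtl profile and its derivatives guarantees that $w$-weighted quantities of $q$, $v$, $\bar u$ and $\bar v$ remain finite; and (iii) $(1-\chi)(0)=0$, so integrations by parts in $y$ will not produce $\{y=0\}$ boundary contributions.

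First I would obtain the $\|q^{(k)}_{xyy} w(1-\chi)\|$ piece of $\|q\|_{\mathcal H_k}$ by testing the $\p_x^{k+1}$-differentiated equation (\ref{origPrLay.beta}) against $q^{(k+1)}\,w^2(1-\chi)^2$ and integrating over $[0,x_0]\times\mathbb{R}_+$. Two integrations by parts in $y$ on the $\p_y^4 v$ term yield no boundary contributions and deliver the coercive bulk $\int_0^{x_0}\|\sqrt{\bar u}\,q^{(k+1)}_{yy} w(1-\chi)\|^2$, which is comparable to $\int_0^{x_0}\|q^{(k)}_{xyy} w(1-\chi)\|^2$ since $\bar u\gtrsim 1$ on $\mathrm{supp}(1-\chi)$. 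Commutator terms where $\p_y$ strikes $w^2(1-\chi)^2$ are either localized (hence absorbed into $\|q\|_{X_{\langle k \rangle}}$) or controlled by $|w_y|\lesssim |w|$ and closed by Young's inequality against the leading term. The Rayleigh contribution $-\p_x^{k+1}\p_{xy}\{\bar u^{2}q_y\}$ is expanded exactly as in (\ref{Ek}); its leading summand, after one integration in $x$, contributes a sup-in-$x$ boundary term $\sup_{x_0}\|\bar u\,q^{(k+1)}_y w(1-\chi)\|_{x=x_0}^2$, already contained in $\|q\|_{X_k}$ through $\sup\|\bar u\,q^{(k)}_{xy}\|$. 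The $\Lambda$ and $U$ contributions are handled verbatim with the bounds on $u^{0}$, $u^{0}_{yy}\langle y\rangle^{2}$ and the embedding estimates preceding (\ref{Ek}).

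Second, I would obtain $\|v^{(k)}_{yyyy} w(1-\chi)\|$ by invoking the pointwise identity (\ref{sasak}): multiplying by $w(1-\chi)$ and placing each summand in $L^{2}$ controls every term by either the Step-1 output $\|q^{(k)}_{xyy} w(1-\chi)\|$ (the contribution of $\bar u^{2}q^{(k)}_{xyy}$) or by the $L^{\infty}$/$L^{2}$ embedding estimates listed in the lemma just before (\ref{Ek}). Third, for the sup bound on $\|q^{(k)}_{yyy} w(1-\chi)\|_{x=x_0}$ I would differentiate in $x$:
\[
\tfrac{d}{dx}\int |q^{(k)}_{yyy}|^{2} w^{2}(1-\chi)^{2} = 2\int q^{(k)}_{yyy}\,q^{(k+1)}_{yyy}\,w^{2}(1-\chi)^{2};
\]
integration by parts in $y$ and the substitution $q^{(k)}_{yyyy}\sim v^{(k)}_{yyyy}/\bar u$ up to lower-order commutators on $\mathrm{supp}(1-\chi)$ reduces the majorant to $\|v^{(k)}_{yyyy} w(1-\chi)\|\cdot\|q^{(k)}_{xyy} w(1-\chi)\|$, both of which are supplied by Steps 1--2. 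Integration from $0$ to $x_{0}$ and absorbing the initial trace into $C_{k}(q_{0})$ closes Piece 1.

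The main obstacle I expect is the bookkeeping in the Rayleigh-type terms of Step 1: the coefficients $\p_x^{j_1}\bar u\,\p_x^{j_2}\bar u\,q^{(k+2-j)}_y$ must be split, as in the (\ref{Ek}) proof, into $j=\max\{j,k+2-j\}$ and $j=\min\{j,k+2-j\}$ subcases, and in the latter the $L^{\infty}$-norm $\|\p_x^{\langle k+1\rangle}\bar u\|_\infty$ is the source of the polynomial factor $p_{\langle k+1\rangle}$ in the final estimate. A secondary subtlety is verifying that commutators involving the exponential weight $w$, which carry no intrinsic $L$-smallness, are tamed by the factor $(1-\chi)$ (ensuring $\bar u\gtrsim 1$ and hence no loss of coercivity in the leading quadratic form) together with the Gaussian decay of $\bar u^{0}_{py}$ which renders all weighted norms of the profile bounded.
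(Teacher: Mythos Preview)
Your three-step plan is natural but Step 1 does not close, and the gap is precisely where you flag the weight commutators as routine. When you test the $(k+1)$-differentiated equation against $q^{(k+1)}w^{2}(1-\chi)^{2}$ and integrate by parts in $y$ on the Rayleigh term (exactly as in the derivation of (\ref{rara.1})), the weight produces an extra commutator
\[
\int_{0}^{x_{0}}\bigl(\partial_{x}^{k+2}\{\bar u^{2}q_{y}\},\,q^{(k+1)}\,\partial_{y}\{w^{2}(1-\chi)^{2}\}\bigr),
\]
whose leading piece is $2N\int_{0}^{x_{0}}(\bar u^{2}q^{(k+2)}_{y},\,q^{(k+1)}\,w^{2}(1-\chi)^{2})$. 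Here $q^{(k+2)}_{y}=q^{(k+1)}_{xy}=\partial_{x}^{k+2}q_{y}$ is a genuinely $(k+1)$-level quantity: it is not controlled by either of your coercive outputs $\sup_{x}\|\bar u\,q^{(k)}_{xy}w(1-\chi)\|$ or $\|\sqrt{\bar u}\,q^{(k)}_{xyy}w(1-\chi)\|$, nor by $\|q\|_{X_{\langle k\rangle}}$. Hardy in $y$ converts $q^{(k+1)}$ to $q^{(k+1)}_{y}$ with a gain of $1/N$, but the resulting bound $2\int\|q^{(k+1)}_{xy}w(1-\chi)\|_{y}\|q^{(k+1)}_{y}w(1-\chi)\|_{y}$ still contains $\|q^{(k)}_{xxy}w(1-\chi)\|$, which you cannot absorb. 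Attempts to integrate by parts in $x$ or $y$ lead back to the same obstruction or introduce $q^{(k+2)}$. Since the stated estimate requires the $\|q\|_{X_{\langle k\rangle}}^{2}$ contribution to carry an $o_{L}(1)$ prefactor (this is exactly what is absorbed in the proof of Proposition following (\ref{iprob})), your Step 1 cannot deliver the lemma as stated.

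The paper avoids this entirely by a different mechanism: it \emph{squares} the $k$-level equation pointwise in $x$, writing
\[
\bigl\|[\partial_{xy}\{\bar u^{2}q^{(k)}_{y}\}-v^{(k)}_{yyyy}]\,w(1-\chi)\bigr\|_{x=x_{0}}^{2}
=\|v^{(k)}_{yyyy}w(1-\chi)\|_{x=x_{0}}^{2}+\|\bar u^{2}q^{(k)}_{xyy}w(1-\chi)\|_{x=x_{0}}^{2}-\text{cross},
\]
so that both $v^{(k)}_{yyyy}$ and $q^{(k)}_{xyy}$ appear as \emph{separate} positive squares with no Rayleigh-weight interaction. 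The cross term $-2(v^{(k)}_{yyyy},\bar u^{2}q^{(k)}_{xyy}W)$ is then integrated once in $y$; its top-order piece becomes the exact $x$-derivative $\partial_{x}\|\bar u^{3/2}q^{(k)}_{yyy}w(1-\chi)\|^{2}$, which upon integration yields the $\sup_{x}$ control on $q^{(k)}_{yyy}$. The weight commutator from this single integration by parts pairs $q^{(k)}_{xyy}$ with $v^{(k)}_{yyy}$, and the latter \emph{does} carry $o_{L}(1)$ from the embedding lemma (line two of that lemma), so the estimate closes. Your Steps 2 and 3 are reasonable in isolation, but they rest on Step 1; replacing Step 1 by the squaring argument recovers all three pieces of $\|q\|_{\mathcal{H}_{k}}$ at once.
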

\begin{proof} We take $\p_x^k$ of equation (\ref{sysb.beta}), which produces: 
\begin{align}
\begin{aligned} \label{produce}
&- \p_{xy} \{ \bar{u}^2 q^{(k)}_y \} + v^{(k)}_{yyyy} + \p_{xy} \{ \sum_{j = 1}^k \sum_{j_1 + j_2 = j} c_{j_1, j_2, j} \p_x^{j_1}\bar{u} \p_x^{j_2} \bar{u}  q^{(k-j)}_y \} \\
& + \p_x^k \Lambda(v) + \p_x^k U = \p_x^k \p_{xy}g_1 
\end{aligned}
\end{align}

We start with the ``main terms", (\ref{produce}.1) and (\ref{produce}.2). We fix $x = x_0$, square the equation, take $L^2(x = x_0)$, and expand to produce the identity:
\begin{align} \n
&\|[\p_{xy} \{\bar{u}^2 q^{(k)}_y \} - v^{(k)}_{yyyy}] \cdot w \{1 - \chi\}\|_{x = x_0}^2 \\ \n
&= \| v^{(k)}_{yyyy} \{1 - \chi \} w \|_{x = x_0}^2 +  \| [ \bar{u}_{}^2 q^{(k)}_{xyy} + 2 \bar{u} \bar{u}_{x} q^{(k)}_{yy} + 2 \bar{u}_{y} \bar{u}_{x} q^{(k)}_y \\ \n
& \hspace{5 mm} + 2 \bar{u} \bar{u}_{xy} q^{(k)}_y + 2 \bar{u} \bar{u}_{y} q^{(k)}_{xy}  ] w \{1- \chi \} \|_{x = x_0}^2 - ( 2 v^{(k)}_{yyyy}, [ \bar{u}^2 q^{(k)}_{xyy} + 2 \bar{u} \bar{u}_{x} q^{(k)}_{yy} \\ \label{riseshinek}
& \hspace{5 mm} + 2 \bar{u}_{y} \bar{u}_{x} q^{(k)}_y + 2 \bar{u} \bar{u}_{xy} q^{(k)}_y + 2 \bar{u} \bar{u}_{y} q^{(k)}_{xy} ] w^2 \{1 - \chi\}^2)_{x = x_0} \\ \n
& \gtrsim  \| v^{(k)}_{yyyy} \{1 - \chi \} w \|_{x = x_0}^2 +  \| [ \bar{u}^2 q^{(k)}_{xyy}  \{1 - \chi \} w \|_{x = x_0}^2 - \sum_{l = 3}^{11} |(\ref{riseshinek}.l)|
\end{align}

\noindent  All terms are estimated in a straightforward manner except for (\ref{riseshinek}.7), so we begin with: 
\begin{align*}
&|(\ref{riseshinek}.3)| \lesssim \| \bar{u}_{x}  \|_\infty^2 \|q^{(k)}_{yy} \cdot w \|_{x = x_0}^2 \\
&|(\ref{riseshinek}.4)| \lesssim \| \bar{u}_{y} \|_\infty^2 \| \bar{v}_{y} \|_\infty^2 \|q^{(k)}_y w \|_{x = x_0}^2 \\
&|(\ref{riseshinek}.5)| \lesssim \| \bar{u} \bar{v}_{yy} \|_\infty^2 \|q^{(k)}_y w \|_{x = x_0}^2,  \\
&|(\ref{riseshinek}.6)| \lesssim \| \bar{u} \bar{u}_{y} w \|_\infty^2 \| q^{(k)}_{xy} \|_{x = x_0}^2, \\
& (\ref{riseshinek}.8)| \lesssim \| \bar{u}_{x} \|_\infty | v^{(k)}_{yyyy} w \{1 -\chi\} \|_{x = x_0} \| q^{(k)}_{yy} w \{1 -\chi\}\|_{x = x_0} \\
&(\ref{riseshinek}.9)| \lesssim \| \bar{u}_{y} \bar{v}_{y}  \|_\infty |q^{(k)}_y w \{1 -\chi\}\|_{x = x_0} \|v^{(k)}_{yyyy} w \{1 -\chi\}\|_{x = x_0}, \\
&(\ref{riseshinek}.10)| \lesssim \| \bar{u} \bar{v}_{yy} \|_\infty |q^{(k)}_y w  \{1 -\chi\}\|_{x = x_0} \|v^{(k)}_{yyyy}w \{1 -\chi\}\|_{x = x_0} \\
&(\ref{riseshinek}.11)| \lesssim \| \bar{u} \bar{u}_{y} w \|_\infty \| q^{(k)}_{xy} \|_{x = x_0} \|v^{(k)}_{yyyy} w \{1 -\chi\}\|_{x = x_0}.
\end{align*}

\noindent  Upon integrating in $x$, we may summarize the above estimates via: 
\begin{align*}
&|(\ref{riseshinek}.3)| + ... +  |(\ref{riseshinek}.6)| + |(\ref{riseshinek}.8)| +... + |(\ref{riseshinek}.11)| \\
&\lesssim o_L(1) p(\| \bar{q} \|_{X_{\langle 1 \rangle}}) (1 + \| q \|_{X_k} ). 
\end{align*}

We thus move to (\ref{riseshinek}.7) for which we integrate by parts once in $y$, expand: 
\begin{align*}
v^{(k)}_{yyy} := & \p_x^k \{ \bar{u} q \}_{yyy} = \p_x^k \{ \bar{u}_{yyy}q + 3 \bar{u}_{yy} q_y + 3 \bar{u}_y q_{yy} + \bar{u} q_{yyy} \} \\
= & \sum_{j = 0}^k c_j \p_x^j \bar{u}_{yyy} \p_x^{k-j} q + 3 c_j \p_x^j \bar{u}_{yy} \p_x^{k-j} q_y + 3 c_j \p_x^j \bar{u}_y \p_x^{k-j} q_{yy} + c_j \p_x^j \bar{u} \p_x^{k-j} q_{yyy}. 
\end{align*}

\noindent First, upon integrating by parts once in $y$ (ignoring commutator terms, which are dealt with in (\ref{riseshinekk})), let us highlight the main positive contribution from the last term above, for $j = 0$:
\begin{align*}
 &2(\bar{u} \p_x^k q_{yyy}, \bar{u}^2 q^{(k)}_{xyyy} \{1 - \chi \}^2 w^2)_{x = x_0} = (2 \bar{u}^3 q^{(k)}_{yyy}, q^{(k)}_{xyyy} \{1 - \chi\}^2 w^2)_{x = x_0} \\
 = & \p_x \| |\bar{u}|^{\frac{3}{2}} q^{(k)}_{yyy} \{1 - \chi \} w \|_{x =x_0}^2 - 3( \bar{u}^2 \bar{u}_x q^{(k)}_{yyy}, q^{(k)}_{yyy} \{1 - \chi \}^2 w^2)_{x = x_0}.
\end{align*} 

\noindent  Hence:
\begin{align} \n 
- 2( v^{(k)}_{yyyy}, &\bar{u}^2 q^{(k)}_{xyy} w^2 \{1 - \chi\}^2)_{x = x_0} \\ \n
= &\p_x \| q^{(k)}_{yyy} |\bar{u}|^{\frac{3}{2}} w \{1 - \chi\} \|_{x = x_0}^2 -  3(|q^{(k)}_{yyy}|^2,  \bar{u}^2 \bar{u}_{x} w^2 \{1 - \chi\}^2)_{x = x_0} \\ \n
& - ( q_{xyy}^k, \p_y \{ [ \bar{u}_{yyy}^j q^{(k-j)} + 3 \bar{u}_{yy}^j q_y^{k-j} + 3 \bar{u}_{y}^j q_{yy}^{k-j} ] \\  \n
& \times \bar{u}^2 w^2 \{1 - \chi\}^2 \} + ( 2 v^{(k)}_{yyy}, q^{(k)}_{xyy} \p_y \{ \bar{u}^2 w^2 \{1 - \chi\}^2 \})_{x = x_0} \\ \label{riseshinekk}
& + \sum_{j = 1}^k c_j (\p_x^j \bar{u} \p_x^{k-j} q_{yyy}, \bar{u}^2 q^{(k)}_{xyyy} w^2 \{1 - \chi \}^2)_{x = x_0}
\end{align}

First, we estimate: 
\begin{align*}
|(\ref{riseshinekk}.2)| \lesssim \| |\bar{u}|^{\frac{3}{2}} q^{(k)}_{yyy} w\{1- \chi\} \|_{x = x_0}^2
\end{align*}

Next, 
\begin{align*}
(\ref{riseshinekk}.3) = & - ( q^{(k)}_{xyy} \bar{u}^j_{yyyy},  q^{(k-j)} \bar{u}^2 w^2 \{1 - \chi\}^2)_{x = x_0} - ( q^{(k)}_{xyy} \bar{u}^j_{yyy}, q^{(k-j)} \bar{u} \bar{u}_{y} w^2 \{1 - \chi\}^2)_{x = x_0} \\
& - ( q^{(k)}_{xyy} \bar{u}^j_{yyy}, q^{(k-j)}_y \bar{u}^2 w^2 \{1 - \chi\}^2)_{x = x_0} - ( q^{(k)}_{xyy} \bar{u}^j_{yyy}, q^{(k-j)} \bar{u}^2 2 ww_{y} \{1 - \chi\}^2)_{x = x_0} \\
& - ( q^{(k)}_{xyy} \bar{u}^j_{yyy}, q^{(k-j)} \bar{u}^2 w^2 \{1 - \chi\} \chi')_{x = x_0}. 
\end{align*}

\noindent  We will estimate each term above with the help of the Prandtl identities, which follow from (\ref{Pr.leading}), for $\bar{u}$:
\begin{align}
\begin{aligned} \label{pink}
&|\bar{u}^j_{yyy}| \lesssim |\p_x^j \{ \bar{u} \bar{v}_{yy} + \bar{v} \bar{u}_{yy} \}|, \\
&|\bar{u}^j_{yyyy}| \lesssim |\p_x^j \{ \bar{u}_{y} \bar{v}_{yy} + \bar{u} \bar{v}_{yyy} + \bar{v}_{y} \bar{u}_{yy} + \bar{u} \bar{v} \bar{v}_{yy} + \bar{v}^2 \bar{u}_{yy} \}|
\end{aligned}
\end{align}

\noindent  Inserting this expansion into (\ref{riseshinekk}.3.1) gives: 
\begin{align*}
|(\ref{riseshinekk}.3.1.1)| \lesssim & | (q^{(k)}_{xyy}, q^{\langle k  \rangle} \bar{u}^2 w^2 \{1 - \chi\}^2 \bar{u}^{\langle k \rangle}_{y} \bar{v}^{\langle k \rangle}_{yy})_{x = x_0}| \\
\lesssim & \|q^{(k)}_{xyy} \{1 - \chi\} w\|_{x = x_0} \|q^{\langle k  \rangle}\|_\infty \|\bar{v}^{\langle k -1 \rangle}_{yy} \|_\infty \|\bar{v}^{\langle k \rangle}_{yy} w  \{1 - \chi\}\|_{x = x_0}, \\
|(\ref{riseshinekk}.3.1.2)| \lesssim & |( q^{(k)}_{xyy}, q^{\langle k  \rangle} \bar{u}^2 w^2 \{1 - \chi\}^2 \p_x^{\langle k \rangle} \bar{u} \p_x^{\langle k \rangle} \bar{v}_{yyy})| \\
\lesssim & \|q^{(k)}_{xyy} \{1 - \chi\} w\|_{x = x_0} \|q^{\langle k  \rangle}\|_\infty \|\p_x^{\langle k -1  \rangle} \bar{v}_{y} \|_\infty \|\p_x^{\langle k  \rangle} \bar{v}_{yyy} \{1 - \chi\} w\|_{x = x_0}, \\
|(\ref{riseshinekk}.3.1.3)| \lesssim &|( q^{(k)}_{xyy}, q^{\langle k  \rangle} \p_x^{\langle k \rangle} \bar{v}_{y} \p_x^{\langle k \rangle} \bar{u}_{yy} \{1 - \chi\}^2 w^2 \bar{u}^2)| \\
\lesssim & \|q^{(k)}_{xyy} \{1 - \chi\} w\|_{x = x_0} \|q^{\langle k  \rangle}\|_\infty \|\p_x^{\langle k - 1 \rangle} \bar{v}_{yyy} \langle y \rangle^2\|_\infty |\p_x^{\langle k \rangle} \bar{v}_{y}  w \{1 - \chi\}\|_{x = x_0}, \\
(\ref{riseshinekk}.3.1.4) \lesssim &|( q^{(k)}_{xyy}, q^{\langle k  \rangle} \bar{u}^2 w^2 \{1 - \chi\}^2 \p_x^{\langle k - 1 \rangle} \bar{v}_{y} \p_x^{\langle k \rangle} \bar{v} \p_x^{\langle k \rangle} \bar{v}_{yy})| \\
\lesssim & \|q^{(k)}_{xyy} \{1 - \chi\} w\|_{x = x_0} \|q^{\langle k  \rangle}\|_\infty \|\p_x^{\langle k \rangle}\bar{v}\|_\infty \|\p_x^{\langle k - 1 \rangle} \bar{v}_{y}\|_\infty \|\p_x^{\langle k \rangle} \bar{v}_{yy} ww \{1 - \chi\}\|_{x = x_0}, \\
|(\ref{riseshinekk}.3.1.5)| \lesssim & |( q^{(k)}_{xyy}, q^{\langle k \rangle} \p_x^{\langle k \rangle} \bar{v} \p_x^{\langle k \rangle}\bar{v} \p_x^{\langle k - 1 \rangle} \bar{v}_{yyy} \bar{u}^2 \{1 - \chi\}^2 w^2)| \\
\lesssim & \|q^{(k)}_{xyy} \{1 - \chi\} w\|_{x = x_0} \|q^{\langle k \rangle}\|_\infty \|\p_x^{\langle k \rangle} \bar{v}\|_\infty^2 \|\p_x^{\langle k - 1 \rangle}\bar{v}_{yyy} \{1 - \chi\} w \|_{x = x_0} 
\end{align*}

We now move to: 
\begin{align*}
|(\ref{riseshinekk}.3.2)| \lesssim & | (q^{(k)}_{xyy}, q^{\langle k \rangle} w^2 \bar{u} \bar{u}_{y} \{1 - \chi\}^2 [ \bar{u}^{\langle k \rangle} \bar{v}^{\langle k \rangle}_{yy} + v^{\langle k \rangle}_s \bar{u}^{\langle k \rangle}_{yy}] )| \\
\lesssim & \|q^{(k)}_{xyy} w \{1 - \chi\}\|_{x= x_0} \|q^{\langle k  \rangle}\|_\infty \Big[ \|\bar{v}^{\langle k - 1 \rangle}_{y}\|_\infty \|\bar{v}^{\langle k \rangle}_{yy} w \{1 - \chi\}\|_{x = x_0}  \\
& + \|\bar{v}^{\langle k \rangle}\|_\infty \|\bar{u}^{\langle k \rangle}_{yy} w \{1 - \chi\}\| \Big] \|_{x = x_0} \| \bar{u}_{y} \langle y \rangle \|_\infty \\
|(\ref{riseshinekk}.3.3)| \lesssim & |( q^{(k)}_{xyy}, q^{\langle k  \rangle}_y w^2 \bar{u}^2 \{1 - \chi\}^2 [ \bar{u}_{}^{\langle k \rangle} \bar{v}^{\langle k \rangle}_{yy} + \bar{v}^{\langle k \rangle} \bar{u}^{\langle k \rangle}_{yy} ] )| \\
\lesssim & \|q^{(k)}_{xyy} w \{1 - \chi\}\|_{x =x_0} \|q^{\langle k \rangle}_y w  \{1 - \chi\}\|_{x = x_0} \Big[ \| \bar{u}^{\langle k \rangle} \bar{v}^{\langle k \rangle}_{yy} \ \|_\infty + \| \bar{v}^{\langle k \rangle} \bar{u}^{\langle k \rangle}_{yy}  \|_\infty \Big] \\
|(\ref{riseshinekk}.3.4)| \lesssim & |(q^{(k)}_{xyy}, q^{\langle k \rangle} w^2 \{1 - \chi\}^2 \bar{u}^2 [\bar{u}^{\langle k \rangle} \bar{v}^{\langle k \rangle}_{yy} + \bar{v}^{\langle k \rangle} \bar{u}^{\langle k \rangle}_{yy} ])| \\
\lesssim & \|q^{(k)}_{xyy} w \{1 - \chi\}\|_{x = x_0} \|q^{\langle k  \rangle}\|_\infty \Big[ \|\bar{v}^{\langle k \rangle}_{yy} w \{1 - \chi\}\|_{x = x_0} \|\bar{v}^{\langle k - 1 \rangle}_{y} \|_\infty + \|\bar{v}^{\langle k \rangle}\|_\infty \times \\
&  \|\bar{v}^{\langle k - 1 \rangle}_{yyy} w \{1 - \chi\}\|_{x = x_0}  \Big] \\
|(\ref{riseshinekk}.3.5)| \lesssim & |( q^{(k)}_{xyy}, q^{\langle k \rangle} \{1 - \chi\} \{1 - \chi\}' [ \bar{u}^{\langle k \rangle} \bar{v}^{\langle k \rangle}_{yy} + \bar{v}^{\langle k \rangle} \bar{u}^{\langle k \rangle}_{yy} ])| \\
\lesssim & \|q^{(k)}_{xyy}\|_{loc} \|q^{\langle k  \rangle}\|_{\infty, loc} \Big[ \|\bar{v}_{y}^{\langle k - 1\rangle}\|_{\infty, loc} \|\bar{v}^{\langle k \rangle}_{yy}\|_{loc} \\
& + \|\bar{v}^{\langle k \rangle}\|_{loc} \|\bar{v}^{\langle k - 1 \rangle}_{yyy}\|_{\infty, loc} \Big].
\end{align*}
 
We now move to: 
\begin{align*}
|(\ref{riseshinekk}.4)| \lesssim & |( q^{(k)}_{xyy} \bar{u}^j_{yyy}, q^{(k-j)}_y \bar{u}^2 w^2 \{1 - \chi\}^2)| + |( q^{(k)}_{xyy} u^j_{syy}, q^{(k-j)}_{yy} \bar{u}^2 w^2 \{1 - \chi\}^2)| \\
& + |( q^{(k)}_{xyy}, \bar{u}^j_{yy} q^{(k-j)}_y \bar{u} \bar{u}_{y} w^2 \{1 - \chi\}^2)| + |(q^{(k)}_{xyy}, \bar{u}^j_{yy} q^{(k-j)}_y \bar{u}^2 ww_{y} \{1 - \chi\}^2)| \\
&  + |( q^{(k)}_{xyy}, \bar{u}^j_{yy} q^{(k-j)}_y \bar{u}^2 w^2 \{1 - \chi\} \{1 - \chi\}' )|
\end{align*}

\noindent  We proceed to estimate each of these terms, with the use of the identities (\ref{pink}):
\begin{align*}
|(\ref{riseshinekk}.4.1)| \lesssim & |( q^{(k)}_{xyy}, q^{\langle k \rangle}_y \bar{u}^2 w^2 \{1 - \chi\}^2 [ \bar{u}^{\langle k \rangle} \bar{v}^{\langle k \rangle}_{yy} + \bar{v}^{\langle k \rangle} \bar{u}^{\langle k \rangle}_{yy} ] )| \\
\lesssim & \|q^{(k)}_{xyy} w \{1 - \chi\}\|_{x = x_0} |q^{\langle k  \rangle}_y w\{1 - \chi\}\|_{x = x_0} \Big[ \| \bar{v}_{y}^{\langle k - 1 \rangle}\|_\infty \| \bar{v}^{\langle k - 1 \rangle}_{yy}  \|_\infty \\
& + \| \bar{v}^{\langle k \rangle} \|_\infty  \| \bar{v}_{yy}^{\langle k- 1\rangle} \|_\infty \Big] \\
|(\ref{riseshinekk}.4.2)| \lesssim & \|q^{(k)}_{xyy} w \{1 - \chi\}\|_{x = x_0} |q^{\langle k  \rangle} \{1 - \chi\} w \langle y \rangle^{-1}\|_{x = x_0} \| \bar{v}_{yyy}^{\langle k - 1 \rangle} \langle y \rangle \|_\infty \\
|(\ref{riseshinekk}.4.3)| \lesssim & \|q^{(k)}_{xyy} w \{1 - \chi\}\|_{x = x_0} |q^{\langle k \rangle} \{1 - \chi\} w \langle y \rangle^{-2}\|_{x = x_0} \| \bar{v}_{yyy}^{\langle k - 1 \rangle} \|_\infty \| \bar{u}_{y} \langle y \rangle^2 \|_\infty \\
|(\ref{riseshinekk}.4.4)| \lesssim & \|q^{(k)}_{xyy} w \{1 - \chi\}\|_{x = x_0} \| \bar{v}_{yyy}^{\langle k - 1 \rangle} \|_\infty |q_y^{\langle k  \rangle} w \{1 - \chi\}\|_{x = x_0} \\
|(\ref{riseshinekk}.4.5)| \lesssim & \|q^{(k)}_{xyy}\|_{x = x_0, loc} \|q_y^{\langle k  \rangle}\|_{x = x_0,loc} \| \bar{v}_{yyy}^{\langle k - 1 \rangle} \|_{\infty, loc}. 
\end{align*}

We now move to: 
\begin{align*}
(\ref{riseshinekk}.5) \lesssim & ( q^{(k)}_{xyy}, \bar{u}^j_{yy} q_{yy}^{k-j} \bar{u}^2 w^2 \{1 - \chi\}^2) + ( q^{(k)}_{xyy}, \bar{u}_{y}^j q_{yyy}^{k-j} \bar{u}^2 w^2 \{1 - \chi\}^2) \\
& + |( q^{(k)}_{xyy}, \bar{u}_{y}^j q_{yy}^{k-j} \bar{u} \bar{u}_{y} w^2 \{1 - \chi\}^2) + (q^{(k)}_{xyy}, \bar{u}_{y}^j q_{yy}^{k-j} \bar{u}^2 ww_{y} \{1 - \chi\}^2) \\
& + ( q^{(k)}_{xyy}, \bar{u}_{y}^j q_{yy}^{k-j} \bar{u}^2 w^2 \{1 - \chi\} \chi') \\
\lesssim & \|q^{(k)}_{xyy} w \{1 - \chi\}\| |q_{yy}^{\langle k \rangle} w \{1 - \chi\} \|_{x = x_0} \|\bar{v}^{\langle k - 1 \rangle}_{yy}  \|_\infty \\
& + \|q^{(k)}_{xyy} w \{1 - \chi\}\| |q_{yyy}^{\langle k \rangle} w \{1 - \chi\}\|_{x = x_0} \| \bar{v}^{\langle k- 1 \rangle}_{yy} \|_\infty \\
& + \|q^{(k)}_{xyy} w \{1 - \chi\}\|_{x = x_0} \|q_{yy}^{\langle k  \rangle} w \{1 - \chi\}\|_{x = x_0} \| \bar{v}_{yy}^{\langle k - 1 \rangle} \|_\infty \| \bar{u}_{y} \|_\infty \\
& + \|q^{(k)}_{xyy} w \{1 - \chi\}\|_{x = x_0} \|q_{yy}^{\langle k  \rangle} w  \{1 - \chi\}\|_{x = x_0} \| \bar{v}_{yy}^{\langle k - 1 \rangle}  \|_\infty  \\
& + \|q^{(k)}_{xyy}\|_{x = x_0,loc} \|q^{\langle k  \rangle} \|_{x = x_0,loc} \| \bar{v}_{yy}^{\langle k - 1 \rangle} \|_{\infty, loc}.
\end{align*}

Next, 
\begin{align*}
|(\ref{riseshinekk}.6)| \lesssim & |( v^{(k)}_{yyy}, q^{(k)}_{xyy} \{ \bar{u} \bar{u}_{y} w^2 \{1 - \chi\}^2 + \bar{u}^2 ww_{y} \{1 - \chi\}^2 + \bar{u}^2 w^2 \{1 - \chi\} \chi' \})| \\
\lesssim & \|q^{(k)}_{xyy} w \{1 - \chi\}\|_{x = x_0} \|v^{(k)}_{yyy} w \{1 - \chi\} \|_{x = x_0} \| \bar{u}_{y}\|_\infty \\
& + \|q^{(k)}_{xyy} w \{1 - \chi\}\|_{x = x_0} \|v^{(k)}_{yyy} w \{1 - \chi\}\|_{x = x_0} \\
& + \|q^{(k)}_{xyy} w|_{x = x_0,loc} \|v^{(k)}_{yyy} \{1 - \chi\}|_{x = x_0,loc}.
\end{align*}

To conclude, we have 
\begin{align*}
(\ref{riseshinekk}.7) = & - \sum_{j =1}^k (q^{(k)}_{xyy}, \p_y \{ \bar{u}^2 \p_x^j \bar{u} \p_x^{k-j} q_{yyy} w^2 \{1 - \chi \}^2  \})_{x = x_0} \\
\lesssim & \| q^{(k)}_{xyy}  w \{1 - \chi \}\|_{x = x_0} \| \bar{v}_{yy}^{\langle k - 1 \rangle} \|_\infty \| q^{\langle k - 1 \rangle}_{yyyy} \{1 - \chi \} w \|_{x = x_0},
\end{align*}

\noindent all of which are acceptable contributions due to the cut-off $\{1 - \chi \}$. 

  This now concludes our treatment (\ref{riseshinekk}) and consequently (\ref{riseshinek}). We now move to the remaining terms from (\ref{produce}), starting with the Rayleigh commutator term, (\ref{produce}.3):
\begin{align*}
&\|  \p_{xy} \{ \p_x^{j_1}\bar{u} \p_x^{j_2} \bar{u} \p_x^{k-j} q_y \} w \{1 - \chi\}(x) \|_{x = x_0}^2 \\
&\lesssim \| \Big[ \p_x^{\langle k + 1 \rangle}\bar{u}_{y} \p_x^{\langle k \rangle}\bar{u} \p_x^{\langle k -1  \rangle} q_y + \p_x^{\langle k +1 \rangle} \bar{u} \p_x^{\langle k \rangle}\bar{u}_{y} \p_x^{\langle k - 1 \rangle} q_y \\
& \hspace{7 mm} + \p_x^{\langle k \rangle} \bar{u} \p_x^{\langle k \rangle} \bar{u} \p_x^{\langle k -1 \rangle} q_{yy}  + \p_x^{\langle k \rangle} \bar{u} \p_x^{\langle k \rangle} \bar{u}_{y} \p_x^{\langle k \rangle} q_y \\
& \hspace{7 mm}  + |\p_x^{\langle k \rangle} \bar{u}|^2 \p_x^{\langle k \rangle} q_{yy} \Big]  w \{1 - \chi\}(x) \|_{x = x_0}^2 \\
&\lesssim  \|\p_x^{\langle k - 1 \rangle} \bar{v}_{y} \|_\infty \|\p_x^{\langle k \rangle} \bar{v}_{yy} w \{1 - \chi\} \|_\infty \|\p_x^{\langle k - 1 \rangle} q_y w \{1 - \chi\} \|_{x = x_0}^2 \\
& \hspace{7 mm} + \| \p_x^{\langle k - 1 \rangle} \bar{v}_{yy}  \|_\infty^2 \| \p_x^{\langle k \rangle} \bar{v}_{y} \|_\infty^2 \|\p_x^{\langle k - 1 \rangle} q_y w \{1 - \chi\} \|_{x = x_0}^2 \\
& \hspace{7 mm} + \| \p_x^{\langle k - 1 \rangle} \bar{v}_{y} \|_\infty^2 \|\p_x^{\langle k - 1 \rangle} q_{yy} w \{1 - \chi\}\|_{x = x_0}^2  \\
& \hspace{7 mm} + \| \p_x^{\langle k-1 \rangle} \bar{v}_{y} \|_\infty \| \p_x^{\langle k - 1 \rangle} \bar{v}_{yy} \|_\infty^2 \|\p_x^{\langle k \rangle} q_y w \{1 - \chi\} \|_{x = x_0}^2 \\
& \hspace{7 mm} + \| \p_x^{\langle k - 1 \rangle} \bar{v}_{y} \|_\infty^2 |\p_x^{\langle k \rangle} q_{yy} w \{1 - \chi\} \|_{x = x_0}^2 
\end{align*}

We now move to the $\Lambda$ terms: 
\begin{align*}
\p_x^k \Lambda = \sum_{j = 0}^k  \bar{v}^j_{xyy} I_x[v^{(k-j)}_y] + \bar{v}^j_{yy} v^{(k-j)}_{y} - \bar{v}^j_{x} I_x[v^{(k-j)}_{yyy}] - \bar{v}^j v^{(k-j)}_{yyy}
\end{align*}

We estimate directly:
\begin{align*}
&\| \bar{v}^j_{xyy} I_x[v^{(k-j)}_y]  w \{1 - \chi\}(x) \|_{x = x_0}^2 \lesssim \|\bar{v}^{\langle k \rangle}_{xyy} w  \{1 - \chi\}(x)\|_{x = x_0}^2 \| v_y^{\langle k  \rangle}   \|_\infty^2 \\
&\| \bar{v}^j_{yy} v^{(k-j)}_y  w \{1 - \chi\}(x) \|_{x = x_0}^2 \lesssim \|\bar{v}^{\langle k \rangle}_{xyy} w \{1 - \chi\}(x) \|_{x = x_0}^2 \| v_y^{\langle k  \rangle}   \|_\infty^2 \\
&\|  |\bar{v}^j_{x} I_x[v^{(k-j)}_{yyy} \{1 - \chi\} w (x) \|_{x = x_0}^2 \lesssim \| \bar{v}^{\langle k +1 \rangle} \|_\infty \| v^{\langle k \rangle}_{yyy} w \{1 - \chi\} (x) \|_{x = x_0}^2 \\
& \| \bar{v}^j |v^{(k-j)}_{yyy} \{1 - \chi\} w(x) \|_{x = x_0}^2  \lesssim \| \bar{v}^{\langle k +1 \rangle} \|_\infty \| v^{\langle k \rangle}_{yyy} w \{1 - \chi\} (x) \|_{x = x_0}^2.
\end{align*}

Upon integrating in $x$, the above terms are majorized by $o_L(1) p(\| \bar{q} \|_{X_{\langle k+ 1 \rangle}}) (1 + \| q \|_{X_k} )$. We now move to the $U(u^0)$ terms: 
\begin{align*}
\int |\p_x^k U(u^0)|^2 w^2 \{1 - \chi\}^2 \le & \int \Big[ |\bar{v}^k_{xyy}|^2 |u^0|^2 + |\bar{v}^k_{x}|^2 |u^0_{yy}|^2 \Big] w^2 \{1 - \chi\}^2 \\
\le & \|u^0 \|_\infty^2 \|\bar{v}^k_{xyy} w \{1 - \chi\}\|_{x = x_0}^2 + \|u^0_{yy} w \{1 - \chi\}\|^2 \|\bar{v}^k_{x}\|_\infty^2. 
\end{align*}

Integrating, the above is majorized by $C(u^0) o_L(1)  p(\| \bar{q} \|_{X_{\langle k +1 \rangle}})$. Similarly, the $g$ contributions are clearly estimated via $|\p_{xy} g_1^{k} \{1 - \chi\} w \|^2$.
\end{proof}

\begin{proposition}   For $k \ge 0$, and let $q$ solve (\ref{origPrLay.beta}). Then:
\begin{align} \label{iprob}
\| q \|_{X_{ k }} \lesssim C(q_0) + \| \p_x^k \p_{xy}g_1 w \|^2 + o_L(1) \| \p_x^k \p_x \p_{xy}g_1 \langle y \rangle \|^2 + o_L(1) C(u^0). 
\end{align}
\end{proposition}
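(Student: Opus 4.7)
The proposition follows by combining the three already-established building blocks: the $x$-energy estimate (\ref{Ek}), the localized fourth-order estimate (\ref{solid.py4.cof}), and the weighted $H^4$ estimate (\ref{weight.H4.cof}), and then closing by induction on $k$. The first step is to observe that the norm $\|q\|_{X_k}$ splits naturally into pieces that each of these three estimates controls: the pointwise-in-$x$ pieces $\sup_x \|\bar{u} q_{xy}^{(k)}\|$ together with the bulk piece $\|\sqrt{\bar{u}} q_{xyy}^{(k)}\|$ make up $\|q\|_{\mathcal{E}_k}$; the far-field weighted pieces $\sup_x \|q_{yyy}^{(k)} w\{1-\chi\}\|$, $\|q_{xyy}^{(k)} w\{1-\chi\}\|$, and $\|v_{yyyy}^{(k)} w\{1-\chi\}\|$ form $\|q\|_{\mathcal{H}_k}$; and the remaining localized pieces $\sup_x\|q_{yyy}^{(k)} w\chi\|$, $\|\sqrt{\bar u}q_{xyy}^{(k)}w\chi\|$, and $\|v_{yyyy}^{(k)} w\chi\|$ are controlled (using the boundedness of $w$ on $\{\chi \neq 0\}$) by $\|v_{yyyy}^{(k)}\|_{2,loc}$ and consequences thereof obtained by integrating in $y$ starting from the homogeneous boundary data $q_y^{(k)}|_{y=0}=0$ (valid for $\theta > 0$ as in (\ref{sysb})).

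\textbf{Base case $k=0$.} Add the three estimates (\ref{Ek}), (\ref{solid.py4.cof}), (\ref{weight.H4.cof}) with $k=0$. Every right-hand side contains an $o_L(1)\,p_{\langle 1\rangle}(1+\|q\|_{X_{\langle 0\rangle}}^2)$ contribution; the polynomial $p_{\langle 1\rangle}$ depends only on $\|\bar q\|_{X_{\langle 1\rangle}}$, which is a \emph{fixed} constant coming from the Oleinik solution $[\bar u^0_p,\bar v^0_p]$ (Theorem~\ref{thm.Oleinik}). Consequently $o_L(1)p_{\langle 1\rangle}$ is genuinely small for $L$ small, and we may absorb $o_L(1)\|q\|_{X_0}^2$ into the left-hand side. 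The remaining right-hand side consists of the boundary contribution $\|\bar u\, q_{xy}|_{x=0}\|^2$, which Lemma~\ref{lemma.compat.1} identifies with $C_0(q_0)$ (a data term), the $C(u^0)$ piece, and the forcing norm $\|\p_{xy}g_1\, w\|^2$ together with its $\langle y\rangle$-weighted $\p_x$-version.

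\textbf{Inductive step.} Assume (\ref{iprob}) has been established for $0,1,\dots,k-1$. Add the three estimates at level $k$. Split the polynomial factors $o_L(1) p_{\langle k+1\rangle}(1+\|q\|_{X_{\langle k\rangle}}^2)$ into a top-order piece $o_L(1)\|q\|_{X_k}^2$, absorbed on the left, and a lower-order piece $o_L(1)\|q\|_{X_{\langle k-1\rangle}}^2$, controlled by the inductive hypothesis (since $p_{\langle k+1\rangle}$ depends only on the fixed profile $\bar q$). The boundary term $\|\bar u\p_x^k q_{xy}|_{x=0}\|^2$ is again identified via Lemma~\ref{lemma.compat.1} with $C_k(q_0)$, using the explicit expressions $f_k(y)$ of (\ref{whois}) which are determined by the compatibility conditions on the prescribed data and the forcing. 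This produces (\ref{iprob}) at level $k$.

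\textbf{Main obstacle.} The delicate bookkeeping step is the near-$\{y=0\}$ portion of the $X_k$ norm. The weighted estimate (\ref{weight.H4.cof}) is designed only for $\{1-\chi\}$ (far-field), while the local estimate (\ref{solid.py4.cof}) gives $\|\p_x^k v_{yyyy}\|_{2,loc}$ but not directly $\|q_{yyy}^{(k)} w\chi\|$ or $\|\sqrt{\bar u} q_{xyy}^{(k)}w\chi\|$. One must first recover $\|q_{yyy}^{(k)}\|_{2,loc}$ by integrating $v_{yyyy}^{(k)}$ from $y=0$ (exploiting $q_y^{(k)}|_{y=0}=0$ when $\theta>0$) and then invoke the equation (\ref{sysb.beta}) together with the symbol-class expansions (\ref{expressionsk}) to interchange between $v$-derivatives and $q$-derivatives at the expense of commutators bounded by $p_{\langle k+1\rangle}$ terms. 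Since on $\mathrm{supp}(\chi)$ the weight $w$ is bounded, these local conversions cost only harmless multiplicative constants, and the induction closes.
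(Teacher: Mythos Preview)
Your argument is essentially the paper's own proof: it too adds (\ref{Ek}), (\ref{solid.py4.cof}) and (\ref{weight.H4.cof}), absorbs the $o_L(1)p_{\langle k+1\rangle}\|q\|_{X_k}^2$ contribution into the left, and invokes (\ref{whois}) for the boundary term $\|\bar u\,\p_x^k q_{xy}|_{x=0}\|$. One technical point the paper makes explicit and you should too: the fourth-order bound (\ref{solid.py4.cof}) carries $\|q\|_{\mathcal{E}_{\langle k\rangle}}$ on its right-hand side \emph{without} an $o_L(1)$ prefactor, so it must be added with a small multiplicative constant before the $\mathcal{E}_k$ piece can be absorbed into the $\mathcal{E}_k$ coming from (\ref{Ek}); simply ``adding the three estimates'' does not close.

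Your ``main obstacle'' paragraph is more careful than the paper, which dismisses the localized reconstruction with a ``can clearly be combined''. Note, though, that your proposed mechanism (integrate $v^{(k)}_{yyyy}$ in $y$ to recover $q^{(k)}_{yyy}$ near $y=0$) does not by itself give the $\sup_{x_0}$ control that the $X_k$ norm asks for, since $\|v^{(k)}_{yyyy}\|_{2,\mathrm{loc}}$ is a space--time norm; what actually does the job is that on $\mathrm{supp}\,\chi$ the weight $w$ is bounded, and on the complementary region the $\mathcal H_k$ estimate already provides the $\sup_{x_0}$ piece $\|q^{(k)}_{yyy}|\bar u|^{3/2}w\{1-\chi\}\|_{x=x_0}$ with $\bar u\gtrsim 1$ there. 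The paper's list of left-hand pieces in its proof in fact only records the far-field $q^{(k)}_{yyy}$ contribution, consistent with this reading.
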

\begin{proof} We add together (\ref{Ek}), a small multiple of (\ref{solid.py4.cof}) and (\ref{weight.H4.cof}). On the left-hand side, this produces $\sup [ |q^{(k)}_{yyy}w \{1-\chi\}|^2 + |\bar{u} q^{(k)}_{xy}|^2] + \| v^{(k)}_{yyyy} w \{1-\chi\} \|^2 + \| v^{(k)}_{yyyy} \|_{loc} + \| q^{(k)}_{xyy} w \{1-\chi\} \|^2 + \| q^{(k)}_{xyy} \sqrt{\bar{u}} \|^2$, which can clearly be combined to majorize $\| q^{(k)} \|_X$. On the right-hand side $\| \p_{xy}\p_x^k g_1 w \{1-\chi\} \|^2 + C(q_0) + o_L(1) C(u^0) + o_L(1) p(\| q_s \|_{X_{\langle k+ 1 \rangle}})(C(q_0) + \| q^{(k)} \|_X^2) + o(1) \| q^{(k)} \|_{\mathcal{E}} + o_L(1) \| \p_{xxy}\p_x^k g_1 \langle y \rangle \|^2 + |\bar{u} q_{xy}^k(0,\cdot)|^2$. Of these, the $o(1) \| q^{(k)} \|_{\mathcal{E}}$ term is absorbed to the left-hand side. The $o_L(1) p(\| \bar{q} \|_{X_{\langle k+1 \rangle}})\| q \|_X^2$ term is also absorbed to the left-hand side. Finally, the initial value $|\bar{u} q^{(k)}_{xy}(0,\cdot)|^2$ is obtained through (\ref{whois}).
\end{proof}

We can upgrade to higher $y$ regularity by using the equation. In this direction, we establish the following lemma: 
\begin{lemma} Let $q$ solve (\ref{origPrLay.beta}). Then the following inequality is valid: 
\begin{align}
\| \p_y^5 v \| + \| \p_y^6 v \| \lesssim \| q \|_{X_{\langle 1 \rangle}} + C(u^0). 
\end{align}
\end{lemma}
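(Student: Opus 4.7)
The strategy is to solve the equation (\ref{origPrLay.beta}) for $v_{yyyy}$ and then differentiate one or two more times in $y$ to read off $\p_y^5 v$ and $\p_y^6 v$. Writing the equation as
\begin{equation*}
v_{yyyy} = \p_{xy}\{\bar u^2 q_y\} - \Lambda(v) - U(u^0) + \p_{xy}g_1,
\end{equation*}
one $y$-derivative on the left exchanges for one $x$-derivative on the right, and the $X_1$ part of $\|q\|_{X_{\langle 1\rangle}}$ is tailor-made to control one $x$-derivative of every $X$-quantity; in particular $\|v_{xyyyy} w\| \lesssim \|q\|_{X_{\langle 1\rangle}}$.

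For $\|\p_y^5 v\|_{L^2}$, I apply $\p_y$ to the equation and expand by Leibniz. The highest-order term produced by $\p_y\p_{xy}\{\bar u^2 q_y\}$ is $\bar u^2 q_{xyyy}$; all remaining Leibniz terms pair a derivative of the profile $\bar u$ (bounded via Oleinik's estimates (\ref{coe.2}) combined with the Prandtl identity $\bar u_{yyy} = \bar u \bar v_{yy} - \bar v \bar u_{yy}$ to avoid losing vertical derivatives on $\bar u$) with a derivative of $q$ controlled by the embedding lemma that follows (\ref{norm.X.layer}). To estimate $\bar u^2 q_{xyyy}$ in $L^2$, I invoke the expansion
\begin{equation*}
\bar u q_{xyyy} = v_{xyyy} - \bar u_{xyyy}q - \bar u_{yyy}q_x - 3\bar u_{xyy}q_y - 3\bar u_{yy}q_{xy} - 3\bar u_{xy}q_{yy} - 3\bar u_y q_{xyy} - \bar u_x q_{yyy},
\end{equation*}
combined with the interpolation argument of (\ref{interp.3}) applied to $v_x$, which yields $\|v_{xyyy}\|_{L^2} \lesssim \|v_{xyy}\|_{L^2} + \|v_{xyyyy}w\|_{L^2} \lesssim \|q\|_{X_{\langle 1\rangle}}$. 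The $\p_y \Lambda$ piece requires at worst $I_x[v_{yyyy}]$ and $v_{yyyy}$, both controlled by $\|q\|_{X}$; the $\p_y U$ piece requires $\bar v_{xyyy}u^0$ and $\bar v_x u^0_{yyy}$, absorbed into $C(u^0)$. The forcing $\p_y\p_{xy}g_1$ is part of the prescribed data.

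For $\|\p_y^6 v\|_{L^2}$ I apply $\p_y^2$ to the equation. The new top-order term is $\bar u^2 q_{xyyyy}$, which I extract via
\begin{equation*}
v_{xyyyy} = \bar u q_{xyyyy} + 4\bar u_y q_{xyyy} + 6\bar u_{yy}q_{xyy} + 4\bar u_{yyy}q_{xy} + \bar u_{yyyy}q_x + (\textrm{terms with } \bar u_x, \bar u_{xy},\dots),
\end{equation*}
noting that $\|v_{xyyyy}w\| \lesssim \|q\|_{X_{\langle 1\rangle}}$ and $w \ge 1$, so $\|\bar u q_{xyyyy}\|_{L^2} \lesssim \|q\|_{X_{\langle 1\rangle}}$ modulo lower-order pieces already estimated, and hence $\|\bar u^2 q_{xyyyy}\|_{L^2} \le \|\bar u\|_\infty \|\bar u q_{xyyyy}\|_{L^2}$. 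The remaining Leibniz terms from $\p_y^2\p_{xy}\{\bar u^2 q_y\}$, $\p_y^2\Lambda$, and $\p_y^2 U$ are handled exactly as above: each profile derivative goes into $L^\infty$ and each $q$-derivative into $L^2$ via $\|q\|_{X_{\langle 1\rangle}}$, while $u^0_{yyyy}$ contributes only to $C(u^0)$.

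The main obstacle is purely combinatorial bookkeeping: one must distribute up to two $y$-derivatives across the quadratic expressions $\p_{xy}\{\bar u^2 q_y\}$ and $\Lambda(v)$, producing a proliferation of commutators, and verify that each is of the form profile-$L^\infty$ times a $q$-derivative already controlled by $\|q\|_{X_{\langle 1\rangle}}$. No new coercivity or energy identity is required: the equation is elliptic in $y$ at top order, and the lemma is a statement that integer vertical regularity can be traded for horizontal regularity through the equation itself.
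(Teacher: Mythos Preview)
Your proposal is correct but organizes the argument differently from the paper. You proceed in two sequential steps: first bound $\|\p_y^5 v\|$ by applying $\p_y$ to the equation and reducing the top-order piece $\bar u^2 q_{xyyy}$ to $v_{xyyy}$ (controlled by interpolating between $\|v_{xyy}\|$ and $\|v_{xyyyy}w\|\le\|q\|_{X_1}$), then feed that bound into the $\p_y^2\Lambda$ terms to close the $\p_y^6 v$ estimate. The paper instead attacks $\p_y^6 v$ in one shot: it starts from the quantity $\bar u\, v^{(1)}_{yyyy}$, commutes factors of $\bar u$ to produce $\p_{yy}\p_{xy}\{\bar u^2 q_y\}$ plus lower-order commutators, substitutes via the equation to generate $\p_y^6 v$, and then handles the dangerous $\bar v\,\p_y^5 v$ contribution inside $\p_{yy}\Lambda$ by the integration-by-parts interpolation
\[
\|\bar v\,\p_y^5 v\|^2 \lesssim o(1)\|\bar v\,\p_y^5 v\|^2 + o(1)\|\p_y^6 v\|^2 + \|\p_y^4 v\|^2,
\]
absorbing the small $\p_y^6 v$ piece to close. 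Your route avoids this absorption trick at the cost of the extra intermediate step and more Leibniz bookkeeping; the paper's route is more compact but relies on the specific interpolation identity. Both are legitimate, and your observation that the lemma is really elliptic-in-$y$ regularity traded for horizontal regularity is the correct conceptual summary.
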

\begin{proof} We begin with the following identity 
\begin{align} \n
\bar{u} v^{(1)}_{yyyy} = &\bar{u} \p_{yy} \p_{xy} \{ \bar{u} q_y \} \\ \n
= & \p_{yy} \{ \bar{u} \p_{xy} \{ \bar{u} q_y \} \} - \bar{u}_{yy} \p_{xy} \{ \bar{u} q_y \} - 2 \bar{u}_y \p_{xyy} \{ \bar{u} q_y \} \\ \n
= & \p_{yy} \p_{xy} \{ \bar{u}^2 q_y \} - \p_{yy} \{ \bar{u}_{xy} \bar{u} q_y \} - \p_{yy} \{ \bar{u}_x \p_y \{ \bar{u} q_y \} \} \\ \n
& - \p_{yy} \{ \bar{u}_y \p_x \{ \bar{u} q_y \} \} - \bar{u}_{yy} \p_{xy} \{ \bar{u} q_y \} - 2 \bar{u}_y \p_{xyy} \{ \bar{u} q_y \}  \\ \n
= & \p_{yy} \Big\{ v_{yyyy} + \Lambda(v) + U(u^0) - \p_{xy}g_1 \Big\} - \p_{yy} \{ \bar{u}_{xy} \bar{u} q_y \} \\ \n
& - \p_{yy} \{ \bar{u}_x \p_y \{ \bar{u} q_y \} \} - \p_{yy} \{ \bar{u}_y \p_x \{ \bar{u} q_y \} \}  - \bar{u}_{yy} \p_{xy} \{ \bar{u} q_y \} \\ \label{sist}
& - 2 \bar{u}_y \p_{xyy} \{ \bar{u} q_y \}. 
\end{align}

\noindent We rearrange the above to solve for $\p_y^6 v$. We thus estimate each of the other terms in (\ref{sist}). We clearly have 
\begin{align*}
\| \bar{u} v^{(1)}_{yyyy} \| + \| \bar{u} q_{yyy} \| + \| q^{(1)}_{yy} \| + \| q_{yyy}^{(1)} \| + \| \bar{u} q_{xy} \| \lesssim \| q \|_{X_{\langle 1 \rangle}}. 
\end{align*}

\noindent This accounts for all of the $q$ terms from (\ref{sist}),and since $u^0, g_1$ are arbitrarily regular, it remains to estimate $\p_{yy}\Lambda(v)$. An examination of the terms in $\Lambda(v)$ shows that we must estimate the latter two, higher order terms, as the former two will be controlled by $\| q \|_{X}$. 
\begin{align*} 
\|\p_{yy} \Lambda(v) \|  = & \| \p_{yy} \{ \bar{v}_{xyy} I_x[v_y] + \bar{v}_{yy} v_y - \bar{v}_x I_x[v_{yyy}] - \bar{v} v_{yyy} \} \| \\
\lesssim & \| q \|_{X} + \| \bar{v}_x I_x[\p_{y}^5 v] \| + \| \bar{v} \p_y^5 v \|  \\
\lesssim & \| q \|_X + o(1) \| \p_y^6 v \|. 
\end{align*}

\noindent Above, we have used the integration by parts inequality 
\begin{align} \n
\| \bar{v} \p_y^5 v \|^2 = & ( \bar{v} \p_y^5 v, \bar{v} \p_y^5 v) = - 2(\bar{v}_y \p_y^5 v, \bar{v} \p_y^4 v) - (\bar{v} \p_y^6 v, \bar{v} \p_y^4 v) \\ \n
\lesssim & \| \bar{v}_y \|_\infty \| \bar{v} \p_y^5 v \|  \p_y^4 v \| + \| \bar{v} \|_\infty^2 \| \p_y^6 v \| \| \p_y^4 v \| \\ \label{fifth}
\lesssim & o(1) \| \bar{v} \p_y^5 v \|^2 + o(1) \| \p_y^6 v \|^2 + \| \p_y^4 v \|^2. 
\end{align}

Summarizing, we have thus obtained 
\begin{align*}
\| \p_y^6 v \| \lesssim \| q \|_{X_{\langle 1 \rangle}} + o(1) \| \p_y^6 v \| + C(u^0),
\end{align*}

\noindent which proves the lemma upon pairing with (\ref{fifth}).
\end{proof}

It is clear that we can upgrade to higher $y$ regularity by iterating the above.

\subsection{Passing to the limit}

We must now pass to the limit as $\theta \downarrow 0$. 
\begin{proposition}   Let the forcing $g_1$ satisfy: 
\begin{align*}
\| \p_x^k \p_{xy} g_1 w \| + \| ( \langle y \rangle \p_x) \p_x^k \p_{xy}g_1 \| < \infty. 
\end{align*}

\noindent for all $k = 0,...,k_0$. Let the following initial data be prescribed:
\begin{align*}
q|_{x = 0} := f_0(y),
\end{align*}

\noindent  satisfying suitable compatibility conditions as detailed in (\ref{whois}) and the integral condition, (\ref{integral.cond}). Then there exists a unique solution to the equation (\ref{origPrLay.beta}) satisfying the estimate: 
\begin{align*}
\| q \|_{X_k} \lesssim 1 \text{ for } k \le k_0 - 3.
\end{align*}

\noindent  Moreover $q$ achieves the initial data $f_0$ at $\{x = 0\}$. The pair $u = u^0 - \int_0^x v_y, v = \bar{u} q$ satisfy the original Prandtl equation (\ref{origPrLay}).
\end{proposition}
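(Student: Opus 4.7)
The plan is to construct the solution $q$ to (\ref{origPrLay.beta}) as a weak limit of the approximations $q^{(\theta)}$ solving (\ref{sysb.beta}) as $\theta \downarrow 0$, leveraging the uniform $\theta$-independent bounds in (\ref{iprob}). The first step is to establish, for each fixed $\theta>0$, the existence and uniqueness of a smooth solution $q^{(\theta)}$ to (\ref{sysb.beta}). Since $\bar{u}^{(\theta)}=\bar{u}+\theta \geq \theta>0$ is uniformly non-degenerate, the Rayleigh-type operator $\p_{xy}\{|\bar{u}^{(\theta)}|^2 q_y\}$ is uniformly elliptic in $y$, and standard Galerkin or semi-discrete-in-$x$ schemes produce smooth solutions on $(0,L)\times\mathbb{R}_+$ with data $f_0^\theta$ chosen so that $f_0^\theta\to f_0$ smoothly as $\theta\downarrow 0$ (together with the corresponding shifted versions of the compatibility conditions from Lemma \ref{lemma.compat.1}). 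The regularity at each $\theta$ may be quite bad in $\theta$, but this is irrelevant since we only need existence here.

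Next, I apply the a priori bound (\ref{iprob}) to the family $\{q^{(\theta)}\}$. The key observation is that the left-hand side of (\ref{iprob}) uses the weight $\bar{u}$ (and not $\bar{u}^{(\theta)}$), so the bound $\|q^{(\theta)}\|_{X_k}\lesssim 1$ is genuinely uniform in $\theta$ for all $k\leq k_0-3$; the right-hand side controls come from the forcing hypothesis on $g_1$, the boundary data compatibility via (\ref{whois}), and $C(u^0)$ (which is also $\theta$-independent up to a harmless shift). I also use the upgraded $y$-regularity estimates $\|\p_y^5 v^{(\theta)}\|+\|\p_y^6 v^{(\theta)}\|\lesssim 1$ from the last lemma, iterated as needed to reach $k_0-3$ orders of weighted $y$-regularity. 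These bounds yield compactness of $q^{(\theta)}$ in suitable local Sobolev topologies via Banach-Alaoglu plus Aubin-Lions, giving a subsequence $q^{(\theta_n)}\rightharpoonup q$ weakly in $X_k$ and strongly in e.g. $L^2_{loc}$ of any finite $y$-interval for all derivatives up to sufficient order.

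With this compactness in hand, I pass to the limit in the equation (\ref{sysb.beta}). The linear terms pass trivially by weak convergence since their coefficients $\bar{u}^{(\theta)}\to\bar{u}$, $\bar{v}^{(\theta)}=\bar{v}$ converge smoothly. The nonlinear/quadratic interaction in $\Lambda_\theta$ contains products of weakly convergent functions with strongly convergent ones in local topology (due to the $I_x$ operator gaining a derivative in $x$), so these also pass. The initial condition $q^{(\theta)}|_{x=0}=f_0^\theta\to f_0$ is inherited in $L^2_{loc}$ thanks to the continuity in $x$ guaranteed by $\sup_x \|\bar{u}q^{(\theta)}_{xy}\|_{x=x_0}$ being controlled. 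The lower boundary conditions $q|_{y=0}=0$ and $q_y|_{y=0}=0$ (the latter follows from the boundary condition $u|_{y=0}=0$ and the homogenization) likewise pass. Uniqueness follows by applying the same estimate (\ref{iprob}) to the difference of two solutions, which satisfies a homogeneous version of (\ref{origPrLay.beta}) with zero initial data and zero forcing, so the difference vanishes.

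The main obstacle I anticipate is the passage to the limit in the degenerate Rayleigh term $\p_{xy}\{\bar{u}^2 q_y\}$: the cubic structure in $\bar{u}$ mixed with $q_y$ near $\{y=0\}$ requires enough strong local convergence of $q_y^{(\theta)}$ to justify the limit, but the natural energy control is of $\bar{u} q_y^{(\theta)}$ which is weaker near the boundary. This will be handled by invoking the upgraded estimate that $\| q_y^{(\theta)}\|_{L^2_{loc}} \lesssim \|q\|_{X_k}$ uniformly (which follows from the interpolation arguments already carried out for $q_{xy}$ and $q_{yy}$), combined with Aubin-Lions compactness that gives strong convergence of $q_y^{(\theta)}$ in $L^2_{loc}$. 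Finally, reconstructing $(u,v)$ via $u=u^0-\int_0^x v_y$, $v=\bar{u}q$ and verifying (\ref{origPrLay}) is a routine unwinding of the derivations in Subsection 4.1, using that by design the integrated-in-$x$ vorticity equation at $x=0$ is automatically satisfied thanks to our choice of $u^0$ from (\ref{wknd}), (\ref{integral.cond}).
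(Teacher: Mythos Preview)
Your proposal follows essentially the same strategy as the paper: Galerkin existence at each $\theta>0$, uniform bounds from (\ref{iprob}), compactness, and passage to the limit. Two technical points are handled more explicitly in the paper and deserve attention in your writeup. First, the compact embedding is obtained not via local Sobolev spaces but by introducing a slightly weaker exponential weight $\tilde{w}=e^{(N-)y}$ in the target norm $Y_l$; the gap between $w=e^{Ny}$ in $X$ and $\tilde{w}$ in $Y_l$ is precisely what gives $\tilde{X}_{k_0}\hookrightarrow\hookrightarrow Y_{k_0-2}$, and this yields strong (weighted, not just local) convergence, which is needed since coefficients like $\bar{v}$ do not decay. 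Second, you do not address the $U_\theta$ term: the quantity $u^{0,\theta}$ genuinely depends on $\theta$ through the ODE $L_{\bar{v}^{(\theta)}}u^{0,\theta}=\ldots$, and the paper separately verifies $\|\p_y^K(u^{0,\theta}-u^0)\langle y\rangle^M\|_\infty=o(\theta)$ by taking differences in that ODE before passing to the limit in the integrals (\ref{base2}). Also note the paper simply takes $f_0^{(\theta)}=f_0$ for all $\theta$ rather than a $\theta$-dependent approximation.
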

\begin{proof} First, for each $\theta > 0$, a standard Galerkin method produces global solutions to (\ref{sysb}), according to \cite{GN}. Second, we will denote the following notation: Let $f_0$ be the prescribed value for $q|_{x = 0}$. Take $f_0^{(\theta)} = f_0$ for all $\theta > 0$. Define now:
\begin{align*}
f_k^{(\theta)} := \p_x^k q^{(\theta)}|_{x = 0} \text{ for } b \ge 0 \text{ for } k = 1,...,k_0.
\end{align*}

\noindent  For $\theta > 0$, and $k = 1,...,k_0$, the functions $f_k^{(\theta)}$ are obtained by evaluating the equation (\ref{sysb.beta}) at $x = 0$, as the equation continues to hold up to the initial hypersurface, $\{x = 0\}$. The compatibility conditions used to produce (\ref{whois}) ensure that $|f_k^{(\theta)}|_{2}$ are uniformly bounded in $\theta$ for each $k$. Therefore, the constant $C(q_0)$ in (\ref{iprob}) is uniform in $b$, and we may obtain the following, uniform in $b$, estimate $\| q^{(\theta)}\|_{X_{\langle k_0 \rangle}} \lesssim 1$. 

It thus remains to pass to the limit in $X_{\langle k_0 \rangle}$ for some large $k_0$. It is more convenient to work with $L^2_{xy}$ norms, so we will define the following norm:
\begin{align*}
&\| q \|_{\tilde{X}} := \| \bar{u} q_{xy} \| + \| q_{yyy} w \{1-\chi\} \| + \| \sqrt{\bar{u}} q_{xyy} w \| + \| v_{yyyy} w \|, \\
&\| q \|_{\tilde{X}_k} := \| \p_x^k q \|_{\tilde{X}}.
\end{align*}

\noindent  Instead of passing to weak limits, we now want to translate the uniform bound above into strong convergence of lower order norms (up to passing to a further subsequence) using compactness. To do so, we will define the lower order norms: 
\begin{align*}
\| q \|_{Y_{l}} := \sum_{j = 1}^3 \| \p_x^l \p_y^{j} v \tilde{w} \|_{L^2} + \| \p_x^l v \langle y \rangle^{-1} \|_{L^2} \text{ for } \tilde{w} = e^{(N- )y}. 
\end{align*}

Due to the disparity of the weight $e^{(N-) y}$ in $Y_l$ norms with $w = e^{Ny}$ in the $X, \tilde{X}$ norms and the presence of $\p_y^4$ in $\tilde{X}$, standard compactness arguments show $\tilde{X}_{k_0} \hookrightarrow \hookrightarrow Y_{k_0 - 2}$. Thus, we have the strong convergence of a further $b$-subsequence: 
\begin{align} \label{strongYconv}
q^{(\theta)} \rightarrow q \text{ strongly in } Y_{k_0 - 2}. 
\end{align}

\noindent  Standard Sobolev embedding shows $Y_{k_0  - 2} \subset L^\infty$ for $k_0$ sufficiently large. This then implies uniform convergence of $\p_x^{\langle k_0 - 3 \rangle} \p_y^{\langle 2 \rangle} q^{(\theta)} \rightarrow \p_x^{\langle k_0 - 3 \rangle} \p_y^{\langle 2 \rangle} q$ in $\Omega$. 

We must check two things. First, the equation (\ref{origPrLay.beta}) is satisfied pointwise by $q$. To see this, testing (\ref{sysb.beta}) we obtain on the left-hand side: 
\begin{align}
\begin{aligned} \label{base2}
&( \p_x\{ \bar{u}^{(\theta)} q^{(\theta)}_y \}, \phi_y) + ( v^{(\theta)}_{yy}, \phi_{yy}) + ( \bar{v}_{xyy}^{(\theta)},I_x[v^{(\theta)}_y]) + ( \bar{v}_{yy}^{(\theta)}, v^{(\theta)}_y) \\
&+ ( I_x[v^{(\theta)}_{yy}], \p_y \{ \bar{v}_{x}^{(\theta)} \phi \}) + ( v^{(\theta)}_{yy}, \p_y \{ \bar{v}_{} \phi \}) - (v^{(\theta)}_{sxyy},u^{0,\theta}) + ( \bar{v}_{x}^{(\theta)}, u^{0,\theta}_{yy}). 
\end{aligned}
\end{align}

\noindent  It is obvious that we can pass to the limit in the first six integrals using the convergence in (\ref{strongYconv}). For the $u^{0,\theta}$ terms, we recall the equation (\ref{wknd}): 
\begin{align*}
L_{\bar{v}^{(\theta)}} u^{0,\theta} = \p_y g_1|_{x = 0} + \bar{u}^{\theta} \bar{V}^0_{yy} - \bar{u}_{yy}^{(\theta)} \bar{V}^0
\end{align*}

\noindent  This gives estimates $\| \p_y^K u^{0,\theta} \langle y \rangle^M \|_\infty \lesssim 1$ uniformly in $\theta$ by (\ref{base1}). Next, by considering differences, we may write: 
\begin{align*}
L_{\bar{v}_{}^{(\theta)}} \Big( u^{0,\theta} - u^{0} \Big) = \theta \bar{V}^0_{yy} - L_{\bar{v}_{}^{(\theta)} - \bar{v}_{}^{(0)} } u^{0}.
\end{align*}

\noindent  The right-hand side, when placed in $W^{k,\infty}(\langle y \rangle^M)$ is $o(\theta)$. Thus, again by (\ref{base1}) we obtain that $|\p_y^K \{ u^{0,\theta} - u^{0}\} \langle y \rangle^M| \lesssim o(\theta)$. From here, it is clear that we can pass to the limit as $\theta \rightarrow 0$ in the final two integrals of (\ref{base2}).

Second by Sobolev embedding: $\|\p_x^k q|_{x = 0} - f_k^{(\theta)}\|_\infty \rightarrow 0$. This implies for $k = 0$ that $q|_{x = 0}$ is the prescribed value of $f_0$. For values of $k \le k_0 - 3$, this implies that the equation (\ref{origPrLay.beta}) holds up to the initial hypersurface $\{x = 0\}$  $\p_x^k q|_{x = 0}$ can be computed by evaluating the equation at $x = 0$.  
\end{proof}

\subsection{Final Layer}

Recall $w_0 = \langle y \rangle\langle Y \rangle^m$ and the definition of $F_R, \p_x F_R$ from (\ref{forcingdefn}). 

\begin{lemma}   \label{propn.forcing} 
\begin{align}
\| F_R|_{x = 0} w_0 \| + \| \p_x F_R \frac{w_0}{\sqrt{\eps}} \|  \lesssim \sqrt{\eps}^{n-1-2N_0}. 
\end{align}
\end{lemma}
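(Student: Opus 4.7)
The plan is to proceed by direct substitution of the definitions of $\underbar{f}^{(n+1)}$ in (\ref{underbar.f}) and $\underbar{g}^{(n+1)}$ in (\ref{underbar.g}) into the definition $F_R = \eps^{-N_0}(\p_y \underbar{f}^{(n+1)} - \eps \p_x \underbar{g}^{(n+1)})$, and then bound each term individually using the regularity and decay estimates for the profiles $[u^i_p, v^i_p, u^i_e, v^i_e]$ established in Theorem \ref{thm.construct}. The key bookkeeping is that every term in $\underbar{f}^{(n+1)}, \underbar{g}^{(n+1)}$ carries a prefactor of at least $\sqrt{\eps}^{n-1}$, so that after dividing by $\eps^{N_0}$ and multiplying by $\sqrt{\eps}^{-1}$ (for the $\p_x F_R$ piece) one recovers the target rate $\sqrt{\eps}^{n-1-2N_0}$.

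First I would organize the terms of $\underbar{f}^{(n+1)}, \underbar{g}^{(n+1)}$ into three families: (a) the ``bulk'' Prandtl terms with prefactor $\sqrt{\eps}^n$ (the expressions inside the first bracket of (\ref{underbar.f}), and similarly for $\underbar{g}^{(n+1)}$); (b) the ``mixed'' Euler/Prandtl interaction terms of the form $\sqrt{\eps}^{n+k} u^n_{ex} \sum \sqrt{\eps}^{j-1} u^j_e$ etc., whose smallest prefactor is $\sqrt{\eps}^{n-1}$; and (c) the cutoff error $\sqrt{\eps}^n \mathcal{E}^{(n)}$ from (\ref{dan.1}). For the bulk family, I apply $\p_y$ (which does not cost a power of $\sqrt{\eps}$ on Prandtl profiles) and use the $L^\infty$ bounds on $\bar{u}, \bar{v}$ and $L^2(w_0)$ bounds on $u^n_p, v^n_p$ and their derivatives; the weight $w_0 = \langle y\rangle\langle Y\rangle^m$ is absorbed by the exponential decay of Prandtl profiles in $y$. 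For the mixed family, $\p_y$ acting on Euler terms (which are functions of $Y = y/\sqrt{\eps}$) produces an extra $\sqrt{\eps}$, which compensates for the lower prefactor $\sqrt{\eps}^{n-1}$; the $\langle Y\rangle^m$ weight is absorbed by the rapid decay of Euler profiles.

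The cutoff error $\mathcal{E}^{(n)}$ is the technically delicate piece. Each summand of (\ref{dan.1}) carries either a factor $\chi'(\sqrt{\eps}y), \chi''(\sqrt{\eps}y)$ or $(1-\chi(\sqrt{\eps}y))$, all of which are supported on $\{\sqrt{\eps}y \gtrsim 1\}$, i.e., $\{Y \gtrsim 1\}$. On this region the Prandtl profiles $u_p, v_p$ and their antiderivatives $\int_0^y u_p$ are exponentially small in $Y$, so each term is $\bigO(\eps^\infty)$ in $L^2(w_0)$ and contributes negligibly. One must only check that the compounded factors $\sqrt{\eps}\chi', \eps\chi''$, after being multiplied by the bounded antiderivative $\int_0^y u_p$, still yield the desired $\sqrt{\eps}^{n-1-2N_0}$ rate, which follows from the bounds on $u_p$ in Theorem \ref{thm.construct}. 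The $\p_x F_R$ estimate proceeds in parallel: $\p_x$ commutes with the scaling and does not cost any power of $\sqrt{\eps}$, so the worst term still scales like $\sqrt{\eps}^{n-1-2N_0}\cdot\sqrt{\eps}$ after division by $\sqrt{\eps}$.

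The main obstacle is purely combinatorial bookkeeping: there are on the order of twenty distinct terms in $\underbar{f}^{(n+1)}$ and ten in $\underbar{g}^{(n+1)}$, and one must verify the sharp $\sqrt{\eps}$ power on each, distinguishing Prandtl vs.\ Euler derivatives. The subtlest estimates are the ``boundary'' mixed terms such as $\eps^{-\frac{1}{2}}\{u^0_e - u^0_e(0)\}u^{n-1}_{px}$ hidden inside the definition of $f^{(i)}$ (Definition \ref{def.forcing}) which appear implicitly inside $\mathcal{E}^{(n)}$: here one uses $|u^0_e - u^0_e(0)|\lesssim Y\sqrt{\eps} y$ and the decay of $u^{n-1}_{px}$ to recover the $\sqrt{\eps}$ loss. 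Once these individual estimates are assembled and summed, the claimed bound follows by the triangle inequality.
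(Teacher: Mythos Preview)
Your proposal is correct and follows essentially the same approach as the paper: direct term-by-term estimation of $\underbar{f}^{(n+1)}$ and $\underbar{g}^{(n+1)}$ using the profile bounds of Theorem \ref{thm.construct}, with the cutoff error $\mathcal{E}^{(n)}$ handled via its support in $\{Y \gtrsim 1\}$ where Prandtl quantities are exponentially small. The paper streamlines the bookkeeping slightly by introducing three model functions $\varphi(y)$ (rapidly decaying Prandtl-type), $\omega(y)$ (supported in $y \le \eps^{-1/2}$), and $\phi(Y)$ (rapidly decaying Euler-type), then classifying every term as one of $\sqrt{\eps}^n[\sqrt{\eps}\varphi + \sqrt{\eps}\phi\omega]$, $\sqrt{\eps}^{n+1}\omega\phi$, or $\sqrt{\eps}^{n+1}\phi$; this collapses your twenty-term checklist into three archetypes, but the underlying estimates are identical to yours. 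One small correction: in your final paragraph you write $|u^0_e - u^0_e(0)| \lesssim Y\sqrt{\eps}y$, which should read $|u^0_e - u^0_e(0)| \lesssim Y = \sqrt{\eps}y$.
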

\begin{proof} We recall the specification of the forcing terms $\underbar{f}^{(n+1)}$ and $\underbar{g}^{(n+1)}$ given in (\ref{underbar.f}) - (\ref{underbar.g}). We are interested in the quantities $\| \p_y \underbar{f}^{(n+1)} - \eps \p_x \underbar{g}^{(n+1)} (0,\cdot)\|, \| \p_y \underbar{f}^{(n+1)} - \eps \p_x \underbar{g}^{(n+1)} w_0 (0,\cdot)\|$, and $\| \p_{xy} \underbar{f}^{(n+1)} - \eps \p_{xx} \underbar{g}^{(n+1)} \|, \| \p_{xy} \underbar{f}^{(n+1)} - \eps \p_{xx} \underbar{g}^{(n+1)} w_0\|$ and subsequently scale by $\eps^{-N_0}$. It is straightforward to see it suffices to compute the latter two quantities, as the $\{x = 0\}$ boundary terms follow in an identical manner. 

To do this, we will first extract the model behavior of many terms from $\underbar{f}^{(n+1)}$ and $\underbar{g}^{(n+1)}$. Define three functions: $\varphi$ which satisfies rapid decay: $|\nabla^K \varphi| \lesssim e^{-My}$ for arbitrary large $M$ and any $K \ge 0$, $\omega(y)$ which is supported in $y \in [0, \frac{1}{\sqrt{\eps}})$, and whose derivatives can be written as $\varphi$. Third, a function $\phi(Y)$ is a rapidly decaying function of $Y$. We then see: 
\begin{align*}
&(\ref{underbar.f}.\{1,...,7\}) = \sqrt{\eps}^n [\sqrt{\eps} \varphi(y) + \sqrt{\eps} \phi(Y) \omega(y)], \\
&(\ref{underbar.f}.8) = \sqrt{\eps}^{n+1} \omega(y) \phi(Y), \\
&(\ref{underbar.f}.\{9,...,14\}) = \sqrt{\eps}^{n+1} \phi(Y)
\end{align*}

We estimate immediately upon paying $\sqrt{\eps}^{-\frac{1}{2}}$ when changing from $L^2$ in $y$ coordinates to $L^2$ in $Y$ coordinates: 
\begin{align*}
&\| \p_y \{ (\ref{underbar.f}) \} (1+w_0) \| \lesssim \sqrt{\eps}^{n}.
\end{align*}

We now move to $\underbar{g}^{(n+1)}$. The first seven terms are supported in $Y \lesssim 1$ due to the presence of $u^n_p, v^n_p$ and therefore unaffected by the weight $\langle Y \rangle^m$. Upon changing variables to $L^2_Y$ and losing a factor of $\eps^{\frac{1}{4}}$ from the Jacobian, these terms are easily majorized by: 
\begin{align*}
\| \eps \p_x (\ref{underbar.g}).\{1, ..., 7\} \cdot w_0 \| \lesssim \sqrt{\eps}^{n+\frac{1}{2}} 
\end{align*}

The Euler terms in (\ref{underbar.g}) are of the form $\sqrt{\eps}^{n}\phi(Y)$ after applying $\p_x^2$, and therefore are majorized by: 
\begin{align*}
\| \eps \p_x (\ref{underbar.g}.\{8, ..., 13 \}) \| \lesssim \| \eps \sqrt{\eps}^n \phi(Y) \langle y \rangle \langle Y \rangle^m  \| \lesssim \sqrt{\eps}^n.
\end{align*}
\end{proof}

\section{Euler Layers} \label{appendix.Euler}

\subsection{Elliptic Estimates}

Our starting point is the system (\ref{des.eul.1}). Going to vorticity yields the system we will analyze:  
\begin{align} 
\begin{aligned} \label{eqn.vort}
&u^0_e \Delta v^i_e + u^0_{eYY} v^i_e = F^{(i)} := \p_Y f^i_{E,1} - \p_x f^i_{E,2}, \\
&v^i_e|_{Y = 0} = - v^{i-1}_p|_{y = 0}, \hspace{3 mm} v^i_e|_{x = 0,L} = V^i_{E, \{0, L\}}, \hspace{3 mm} u^i_e|_{x = 0} = U^i_{E,0}.
\end{aligned}
\end{align}

\noindent The data for $u^i_e|_{x = 0}$ is required because $u^i_e = u^i_e|_{x = 0} - \int_0^x v^i_{eY}$ will be recovered through the divergence free condition upon constructing $v^i_e$. 

We will quantify the decay rates as $Y \uparrow \infty$ for the quantities $V^i_{E,{0, L}}$ and $F^{(i)}$. 
\begin{definition} \label{def.wm1}   In the case of $i = 1$, define $w_{m_1} = Y^{m_1}$ if $v^1_{e}|_{x = 0} \sim Y^{-m_1}$ or $w_{m_1} = e^{m_1 Y}$ if $v^1_e|_{x = 0} \sim e^{-m_1 Y}$ as $Y \uparrow \infty$. This now fixes whether or not $w_{m}$ will refer to polynomial or exponential growth rates. For other layers, we will assume: 
\begin{align}
\begin{aligned}  \label{decay.rates.euler}
&V^i_{E,\{0, L\}} \sim w_{m_i}^{-1} \text{ for } m_i >> m_1 \\
&F^{(i)} \sim w_{l_i}^{-1} \text{ for some } l_i >> 0. 
\end{aligned}
\end{align}

Finally, let $m_i' := \min \{ m_i, l_i \}$. 
\end{definition}

Define: 
\begin{align}
S(x,Y) = (1 - \frac{x}{L}) \frac{V_{i,0}(Y)}{v^{i-1}_p(0,0)} v^{i-1}_p(x,0) + \frac{x}{L}\frac{V_{i,L}(Y)}{v^{i-1}_p(L,0)}v^{i-1}_p(x,0),
\end{align}

\noindent and consider the new unknown:
\begin{align*}
\bar{v} := v^i_e - S, 
\end{align*}

\noindent which satisfies the Dirichlet problem: 
\begin{align}
-u^0_e \Delta \bar{v} + u^0_{eYY} \bar{v} = F^{(i)} + \Delta S, \hspace{5 mm} \bar{v}|_{\p \Omega} = 0.
\end{align}

From here, we have for any $m < m_i' - n_0$ for some fixed $n_0$, perhaps large, 
\begin{align}
||v \cdot w_m||_{H^1} \lesssim 1.
\end{align}

To go to higher-order estimates, we must invoke that the data are well-prepared in the following sense: taking two $\p_Y^2$ to the system yields:
\begin{align} \label{corn.1}
&\p_Y^2 v^i_e(0,Y) = \p_Y^2 V_{i,0}(Y), \\ \label{corn.2}
&\p_Y^2 v^i_e(L,Y) = \p_Y^2 V_{i,L}(Y), \\ \label{corn.3}
&\p_Y^2 v^i_e(x,0) = \frac{1}{u^0_e(0)} \Big\{ v^{i-1}_{pxx}(x,0) + u^0_{eYY}(0) v^{i-1}_p(x,0) + F^{(i)}(x,0) \Big\}.
\end{align}

Our assumption on the data, which are compatibility conditions, ensure: 
\begin{align}
&\p_Y^2 V_{i,0}(0) = \frac{1}{u^0_e(0)} \Big\{ v^{i-1}_{pxx}(0,0) + u^0_{eYY}(0) v^{i-1}_p(0,0) + F^{(i)}(0,0) \Big\}, \\
&\p_Y^2 V_{i,0}(L) = \frac{1}{u^0_e(0)} \Big\{ v^{i-1}_{pxx}(L,0) + u^0_{eYY}(0) v^{i-1}_p(L,0) + F^{(i)}(L,0) \Big\}.
\end{align}

It is natural at this point to introduce the following definition:
\begin{definition}[Well-Prepared Boundary Data]    \label{def.well.prp} Consider the corner $(0,0)$. There exists a value of $\Big(\p_Y^2 v^i_e|_{Y = 0}\Big)|_{x = 0}$ which is obtained by evaluating (\ref{corn.3}) at $x = 0$. There exists a value of $\Big(\p_Y^2 v^i_e|_{x = 0} \Big)|_{Y = 0}$ which is obtained by evaluating (\ref{corn.1}) at $Y = 0$. These two values should coincide. The analogous statement should also hold for the corner $(L,0)$. In this case, we say that the boundary data are ``well-prepared to order 2". The data are ``well-prepared to order $2k$" if we can repeat the procedure for $\p_Y^{2k}$.
\end{definition}

We thus have the following system:
\begin{align} \n
-u^0_e \Delta v^1_{eYY} + u^0_{eYY} v^1_{eYY} &+ \p_Y^4 u^0_e v^1_e + 2 \p_Y^3 u^0_e v^1_{eY} \\ \label{eul.sys.1} &- 2 u^0_{eY} \Delta v^1_{eY} - u^0_{eYY} \Delta v^i_e = \p_{YY} F^{(i)}.
\end{align}

We can define another homogenization in the same way:
\begin{align}
S_{(2)}(x,Y) = (1 - \frac{x}{L}) \frac{V''_{i,0}(Y)}{\p_Y^2 v^i_{e}(x,0)} \p_Y^2 v^{i-1}_p(x,0) + \frac{x}{L}\frac{V''_{i,L}(Y)}{\p_Y^2 v^{i-1}_p(L,0)}\p_Y^2 v^{i-1}_p(x,0),
\end{align}

\noindent which is smooth and rapidly decaying by the assumption that the data are well-prepared. Let us consider the system for $\bar{v} := v^1_{eYY} - B_{(2)}$. The first step is to rewrite:
\begin{align} \n
v^i_{exx} &= -v^i_{eYY} + \frac{u^0_{eYY}}{u^0_e} v^i_e + F^i, \\
& = - \bar{v} + S_{(2)} + F^i.
\end{align}

We can now rewrite the system (\ref{eul.sys.1}) in terms of $\bar{v}$:
\begin{align} \n
-u^0_e \Delta \bar{v} &+ u^0_{eYY} \bar{v} + \p_Y^4 u^0_e v^1_e + 2\p_Y^3 u^0_e v^1_{eY} \\ \n
& -2u^0_{eY} [\p_Y\{ \bar{v} + S_2 \} + \p_Y \{ \bar{v} + S_2 + F^i \}] \\ 
& - u^0_{eYY} \Big[ F^i \Big] = u^0_e \Delta S_2 + u^0_{eYY} S_2 + \p_{YY}F^i.
\end{align}

Obtaining estimates for $\bar{v}$ yields for any $m < m_i' - n_0$: 
\begin{align}
||\bar{v} \cdot w_m||_{H^1} \lesssim 1.
\end{align}

Translating to the original unknown gives:
\begin{align} \label{H3eul.est.1}
||v^i_{eYY}, v^1_{eYYx}, v^i_{eYYY} \cdot w_M||_{L^2} \lesssim 1.
\end{align}

Using the equation and Hardy in $Y$, we can obtain: 
\begin{align} \label{H3eul.est}
||v^i_{exx}, v^i_{exxx}, v^1_{exY}, v^i_{exxY} \cdot w_M||_{L^2} \lesssim 1.
\end{align}

Thus, we have the full $H^3$ estimate. $u^1_e$ can be recovered through the divergence free condition:
\begin{align} \label{u1.rec}
u^i_e(x,Y) := u^i_e(0,Y) - \int_0^x \p_Y v^i_e(x', Y) \ud x'.
\end{align}

The compatibility conditions can be assumed to arbitrary order by iterating this process, and thus we can obtain:
\begin{proposition}   \label{L.e.constr} There exists a unique solution $v^i_e$ satisfying (\ref{eqn.vort}). With $u^i_e$ defined through (\ref{u1.rec}), the tuple $[u^i_e, v^i_e]$ satisfy the system (\ref{des.eul.1}). For any $k \ge 0$ and $M \le m_i' - n_0$ for some fixed value $n_0 > 0$: 
\begin{align}
||\{u^i_e, v^i_e \} w_M ||_{H^k} \le C_{k,M}.
\end{align}
\end{proposition}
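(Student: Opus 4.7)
The plan is to treat (\ref{eqn.vort}) as a weighted Dirichlet problem for the scalar unknown $v^i_e$, bootstrap regularity via successive $\p_Y^{2}$ differentiations that exploit the well-preparedness of the data in the sense of Definition \ref{def.well.prp}, and then recover $u^i_e$ through (\ref{u1.rec}). The existence, uniqueness and base energy bound have essentially already been sketched in the excerpt preceding the proposition; the main work is to promote this to bounds in weighted $H^k$ for arbitrary $k$.

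First I would homogenize the boundary values on $\{x=0\}$, $\{x=L\}$ and $\{Y=0\}$ by the linear interpolation $S(x,Y)$ introduced above, whose rapid decay in $Y$ is inherited from $V^i_{E,\{0,L\}} \sim w_{m_i}^{-1}$. The homogenized unknown $\bar v = v^i_e - S$ solves a uniformly elliptic equation (since $u^0_e \gtrsim 1$ and the zero-order coefficient $u^0_{eYY}$ is bounded and decaying) with zero Dirichlet data and an inhomogeneity $F^{(i)} + u^0_e \Delta S - u^0_{eYY} S$ that decays at rate $w_{m_i'}^{-1}$. Testing against $\bar v \cdot w_{M}^{2}$ with $M < m_i' - n_0$ produces the weighted $H^1$ bound via a standard Lax--Milgram argument; the weight commutator $[-u^0_e \Delta, w_M^2]$ is absorbed for $n_0$ sufficiently large because $|\p_Y w_M|/w_M$ is bounded. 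This yields existence, uniqueness and the $k=1$ estimate.

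For the higher-order bounds I would proceed inductively in pairs of $Y$-derivatives. Applying $\p_Y^{2}$ to (\ref{eqn.vort}) gives (\ref{eul.sys.1}), a new elliptic system for $v^i_{eYY}$ of the same structural form. The traces of $v^i_{eYY}$ on $\{x=0\}$ and $\{x=L\}$ are prescribed by $V''_{i,\{0,L\}}$ and the trace on $\{Y=0\}$ is dictated by (\ref{corn.3}); the well-preparedness assumption ensures that these traces match at the corners, so we may homogenize by a smooth corrector $S_{(2)}$ and run the same weighted energy estimate for $\bar v = v^i_{eYY} - S_{(2)}$. This delivers the $Y$-derivative estimates (\ref{H3eul.est.1}). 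The key trick at each step is that $x$-derivatives are recovered algebraically from the equation itself, writing $v^i_{exx} = -v^i_{eYY} + \frac{u^0_{eYY}}{u^0_e} v^i_e + F^{(i)}/u^0_e$; differentiating this identity in $x$ and $Y$ converts any mixed derivative into one involving only pure $Y$-derivatives plus lower-order terms, yielding (\ref{H3eul.est}). Iterating $\p_Y^{2k}$ under the assumption of well-preparedness to order $2k$ produces the full weighted $H^{2k+1}$ bound, and the $k$ odd case follows by interpolation or by one additional $\p_Y$.

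Finally, $u^i_e$ is defined by (\ref{u1.rec}). Its regularity follows from the prescribed initial data $U^i_{E,0}$ together with the $v^i_e$ bounds already established, since $\p_Y^j \p_x^\ell u^i_e$ for $\ell \ge 1$ reduces to $-\p_Y^{j+1}\p_x^{\ell-1} v^i_e$. The weighted decay of $u^i_e$ is inherited from $v^i_{eY}$ (and the assumed decay of $U^i_{E,0}$). That the resulting pair solves (\ref{des.eul.1}) is a direct check: the second momentum equation is the definition of $P^i_e$, and the first momentum equation follows from the vorticity equation (\ref{eqn.vort}) together with the divergence-free constraint built into (\ref{u1.rec}). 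The main obstacle in this program is bookkeeping rather than analysis: one must verify that well-preparedness to order $2k$ is compatible at \emph{every} stage of differentiation with the prescribed data on the three boundary pieces, and that the homogenization correctors $S_{(2j)}$ can be chosen smoothly and with the appropriate decay so that the inductive energy estimate closes uniformly. The requirement $M \le m_i' - n_0$ precisely quantifies the loss of weight incurred at each iteration through the weight commutators and through the homogenizing correctors.
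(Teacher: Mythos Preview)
Your proposal is correct and follows essentially the same approach as the paper: homogenize the boundary data via $S$, obtain the weighted $H^1$ bound by a Lax--Milgram/energy argument, then iterate $\p_Y^2$ using well-preparedness and the correctors $S_{(2j)}$ to gain pure $Y$-derivatives, recover $x$-derivatives algebraically from the equation, and finally define $u^i_e$ through (\ref{u1.rec}). The paper's own proof is in fact a one-line reference back to exactly this procedure, so your write-up is a faithful (and more detailed) expansion of it.
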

\begin{proof}
The existence follows from Lax-Milgram, whereas the estimates follow from continuing the procedure resulting in (\ref{H3eul.est.1}) - (\ref{H3eul.est}).
\end{proof}

\begin{corollary} \label{Cor.cheap.quotient}   Assume $m_i >> m_1$ for $i = 2,...,n$. Then: 
\begin{align} \label{cheap.quotient}
\| \{ u^i_e, v^i_e \} w_{\frac{m_1}{2}} \|_{H^k} \lesssim 1. 
\end{align}
\end{corollary}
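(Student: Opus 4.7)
\textbf{Proof proposal for Corollary~\ref{Cor.cheap.quotient}.} The plan is to argue by induction on the layer index $i$, relying on Proposition~\ref{L.e.constr} as a black box at each stage and only paying attention to how the decay rate $m_i'$ propagates from one layer to the next. The key observation is that the assumption $m_i \gg m_1$ for $i \ge 2$ controls only the decay of the boundary data, whereas the effective rate $m_i' = \min\{m_i, l_i\}$ is limited by the decay of the forcing $F^{(i)}$. Tracing through Definition~\ref{def.forcing}, every contribution to $F^{(i)}$ is either (a) a Prandtl quantity $\p_y^j \p_x^k u^{j}_p, v^{j}_p$ (which, once rewritten in $Y = \sqrt{\eps}y$ coordinates, decays at any polynomial/exponential rate, hence does not constrain $l_i$), (b) a product with $u^0_e - u^0_e(\infty)$ or its derivatives (again rapidly decaying by our standing shear assumption), or (c) products involving lower-order Euler correctors $v^j_e$ with $j < i$. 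Only the last class constrains $l_i$, and by induction these decay at a rate we explicitly track.

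Concretely, I would prove by induction on $i$ the stronger statement
\begin{equation*}
\| \{u^j_e, v^j_e\}\, w_{m_1 - j n_0} \|_{H^k} \le C_{j,k}, \qquad j = 1, \ldots, i,
\end{equation*}
where $n_0$ is the fixed loss constant from Proposition~\ref{L.e.constr}. The base case $i=1$ is immediate: provided $m_1 \ge 2n_0$ (which is within our control by choosing $m_1$ large), Proposition~\ref{L.e.constr} applied with $M = m_1 - n_0$ gives the bound for $v^1_e$, and $u^1_e$ follows from the recovery formula \eqref{u1.rec}. For the inductive step, the hypothesis combined with the structure of Definition~\ref{def.forcing} yields $F^{(i)} \sim w_{m_1 - (i-1)n_0}^{-1}$, i.e.\ $l_i \ge m_1 - (i-1)n_0$; since also $m_i \gg m_1$ by hypothesis, we obtain $m_i' \ge m_1 - (i-1)n_0$, and Proposition~\ref{L.e.constr} delivers the required bound at weight $w_{m_1 - i n_0}$.

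Since the number of layers $n$ is fixed and $i \le n$ throughout, choosing $m_1$ large enough that $m_1 - n\, n_0 \ge m_1/2$ converts the inductive bound into the claimed estimate $\| \{u^i_e, v^i_e\} w_{m_1/2} \|_{H^k} \lesssim 1$ for every $i$, upon using the trivial monotonicity $w_{m_1/2} \le w_{m_1 - in_0}$ pointwise (in both the polynomial and exponential regimes of Definition~\ref{def.wm1}, which was fixed once and for all by the behavior of $v^1_e|_{x=0}$).

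The only genuinely delicate point is verifying that the cross terms in $f^i_{E,1}, f^i_{E,2}$ which mix two Euler factors $v^{j_1}_e \cdot v^{j_2}_e$ with $j_1, j_2 < i$ do not degrade the decay rate beyond $m_1 - (i-1)n_0$; this is really the main obstacle. Here one uses that such products decay at the minimum of the two individual rates, which by the inductive hypothesis is at least $m_1 - (i-1)n_0$, and that the $H^k$ norm of a product is controlled by the product of $H^{k+s}$ norms for $s$ large, all of which are finite by induction. Provided $m_1$ is chosen large relative to $n$ and $n_0$ at the outset (a purely quantitative condition on the initial Euler datum $v^1_e|_{x=0}$), the argument closes.
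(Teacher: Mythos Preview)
Your proposal is correct and follows essentially the same route as the paper's proof: both argue by induction on the layer index, using Proposition~\ref{L.e.constr} as a black box and tracking how the effective decay rate $m_i' = \min\{m_i, l_i\}$ propagates. Your version is in fact more careful than the paper's brief sketch --- you make the progressive loss $m_1 - i n_0$ explicit, whereas the paper writes somewhat loosely that ``for $i \ge 2$, we can take the parameter $l_i = m_1 - n_0$'' and then absorbs the accumulated loss into the final condition $\tfrac{m_1}{2} \ll m_1 - 10 n_0$. One minor inaccuracy: the Euler forcing terms $f^i_{E,1}, f^i_{E,2}$ in Definition~\ref{def.forcing} consist purely of Euler--Euler interactions (the $u^{i-1}_p(x,\infty)$ terms vanish by the Prandtl boundary condition), so your cases (a) and (b) are not actually present there --- but this only simplifies the argument, and your case (c) is the operative one. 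Your remark about products of two Euler factors is also slightly overcautious: such products decay at the \emph{sum} of the individual rates, hence strictly better than the worst single factor, so the limiting terms are those with exactly one Euler factor (e.g.\ $\sqrt{\eps}\Delta u^{i-1}_e$).
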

\begin{proof} This follows from two points. First, for the $i = 1$ case, the forcing is absent and therefore the parameter $l_1$ can be taken arbitrarily large. In particular this implies that $m_1' = m_1$. Second, a subsequent application of the above proposition shows that the $i$-th layer quantities decay like $m_1 - n_0$. An examination of the forcing terms $f^i_{E,1}, f^i_{E,2}$ shows that these quantities decay as $w_{m_1 - n_0}^{-1}$. Thus, for $i \ge 2$, we can take the parameter $l_i = m_1 - n_0 = m_i'$. Therefore, if $m_1$ is sufficiently large, $\frac{m_1}{2} << m_1 - 10 n_0$.
\end{proof}

Recall the definition of $m_i$ from Definition \ref{def.wm1}. The main estimate here is: 
\begin{lemma} \label{Quotient.Lemma}   Let $v^i_e$ be a solution to (\ref{eqn.vort}). For any $m' < m_i$, $\|v^i_{ex}(0,\cdot) w_{m'} \|_\infty \lesssim 1$.
\end{lemma}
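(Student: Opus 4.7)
The plan is to obtain this pointwise trace bound directly from the high-regularity weighted Sobolev estimates of Proposition \ref{L.e.constr}, by combining the standard trace theorem at $\{x = 0\}$ with a one-dimensional Sobolev embedding in $Y$. Fix $m' < m_i$ and pick an intermediate $M$ with $m' < M \le m_i' - n_0$; this range is nonempty under the assumption (satisfied in all applications above) that the forcing decays at least as fast as the boundary data, so that $m_i' = \min\{m_i, l_i\} \ge m' + n_0$. Proposition \ref{L.e.constr} then provides, for every $k \ge 0$,
\[
\|v^i_e\, w_M\|_{H^k(\Omega)} \le C_{k,M}.
\]

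Taking $k = 3$ yields $v^i_{ex}\, w_M \in H^2(\Omega)$. The trace theorem $H^2(\Omega) \hookrightarrow H^{3/2}(\{x = 0\})$ gives
\[
\|(v^i_{ex}\, w_M)|_{x=0}\|_{H^{3/2}(\mathbb{R}_+)} \lesssim \|v^i_{ex}\, w_M\|_{H^2(\Omega)} \lesssim 1,
\]
and since $w_M$ depends only on $Y$ it commutes with restriction to $\{x = 0\}$. A one-dimensional Sobolev embedding $H^{3/2}(\mathbb{R}_+) \hookrightarrow L^\infty(\mathbb{R}_+)$ then produces $\|v^i_{ex}(0,\cdot)\, w_M\|_\infty \lesssim 1$, and since $w_{m'} \le w_M$ pointwise, this yields the desired bound $\|v^i_{ex}(0,\cdot)\, w_{m'}\|_\infty \lesssim 1$.

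The main obstacle is not a genuine analytic difficulty but rather a weight-bookkeeping point: one has to verify that multiplying the vorticity equation (\ref{eqn.vort}) by $w_M$ and re-running the elliptic iteration on $\tilde v := v^i_e\, w_M$ reproduces the estimates of Proposition \ref{L.e.constr}, with all commutators of the form $[\Delta, w_M]$ absorbed into controlled lower-order terms. This is automatic because in both the polynomial and exponential regimes one has $|\partial_Y^\ell w_M| \lesssim w_M$ for any fixed $\ell$, which is exactly how the iteration leading to (\ref{H3eul.est.1})--(\ref{H3eul.est}) and ultimately Proposition \ref{L.e.constr} is set up. No new energy estimate is required; the lemma follows from invoking that proposition at a sufficiently large differentiability index $k$ and then applying trace and one-dimensional Sobolev embedding.
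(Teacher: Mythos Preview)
Your approach via trace and Sobolev embedding is sound in principle, but it does not establish the full range $m' < m_i$ claimed in the lemma. The weighted $H^k$ estimates you invoke from Proposition~\ref{L.e.constr} are only available for $M \le m_i' - n_0$, where $n_0$ is a fixed (possibly large) loss incurred by the iterated elliptic bootstrap. Your ``pick an intermediate $M$ with $m' < M \le m_i' - n_0$'' step therefore requires $m' < m_i' - n_0$, which is strictly weaker than $m' < m_i$. The parenthetical assumption you add (that the forcing decays at least as fast as the boundary data so that $m_i' \ge m' + n_0$) is not part of the hypotheses and, more to the point, still cannot close the gap when $m'$ is taken close to $m_i$: since $m_i' \le m_i$ always, the interval $(m', m_i' - n_0]$ is empty once $m' > m_i - n_0$.

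The paper avoids this weight loss by abandoning energy/Sobolev methods in favor of a pointwise argument: it forms the quotient $q^\delta = \{1-\chi(Y/N)\}\bar v_e /(w_{m'}^{-1} + \delta)$, derives the elliptic equation it satisfies, and in the polynomial case applies the maximum principle with an explicit barrier (in the exponential case it runs a direct $H^k$ estimate on $q^\delta$). Because the comparison is pointwise and uses only that the data and forcing decay like $w_{m'}^{-1}$, there is no accumulated loss from repeated integrations by parts, and one obtains the sharp range $m' < m_i$. This sharpness is the entire point of the lemma; a Sobolev-embedding shortcut recovers only the weaker range already implicit in Proposition~\ref{L.e.constr} and Corollary~\ref{Cor.cheap.quotient}.
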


\begin{proof} We first homogenize $v^i_e$ by introducing $\bar{v}_e := v^i_e - b$. Recall the definition of $\chi$ in (\ref{basic.cutoff}). We will localize using $1 -\chi(\frac{Y}{N})$ for some large, fixed $N > 1$. A direct computation produces the following: 
\begin{align*}
\Delta ( \{ 1 - \chi(\frac{Y}{N}) \} \bar{v}_e) =& \{ 1 - \chi(\frac{Y}{N}) \} \frac{u^0_{eYY}}{u^0_e} v^1_e - \{ 1 - \chi(\frac{Y}{N}) \} \Delta b \\
& + 2 \p_Y \{ 1 - \chi(\frac{Y}{N}) \} \bar{v}_{eY} + \p_{YY} \{ 1 - \chi(\frac{Y}{N}) \} \bar{v}_e := R.
\end{align*}

Let $w = w_{m'}^{-1}$. Now we define the quotient $q^\delta = \frac{\{ 1 - \chi(\frac{Y}{N}) \} \bar{v}_e}{w(Y) + \delta}$, which satisfies: 
\begin{align} \label{difrn}
&\underbrace{\Delta q^\delta + 2 \frac{w_Y}{w + \delta} q^\delta_Y}_{:= \mathcal{T}_\delta} + \frac{w_{YY}}{w+\delta} q^\delta = \frac{R}{w+\delta}.
\end{align}

\noindent  \textit{Case 1: $w_{m_i}$ are polynomials in $Y$}

The following inequalities hold, independent of small $\delta$: 
\begin{align} \label{meeshka.1}
&|\frac{R}{w+\delta}| \lesssim 1, \\ \label{meeshka.2}
&|\frac{w_{YY}}{w+\delta}\{ 1 - \chi(\frac{Y}{N}) \}| \le o(1). 
\end{align}

\noindent The inequality, (\ref{meeshka.2}), holds because $|w_{YY}| \lesssim Y^{-2} |w|$ for polynomial decay, so by taking $N$ large, we can majorize the desired quantity by $o(1)$.  To apply the maximum principle to $q^\delta$, we introduce the following barrier, for $m$ large and fixed and for $f = f(x)$ satisfying $f''(x) < - 1$: 
\begin{align*}
q_-^\delta := q^\delta - f(mx) \Big[ \sup |\frac{R}{w+\delta}| + \sup|\frac{w_{YY}}{w} q^\delta| \Big], \\
q_+^\delta := q^\delta - f(mx) \Big[ \sup |\frac{R}{w+\delta}| + \sup|\frac{w_{YY}}{w} q^\delta| \Big]
\end{align*}

Immediate computations gives $\mathcal{T}_\delta[q_-^\delta] \ge 0$ and $\mathcal{T}_\delta[g_+^\delta] \le 0$. Applying the maximum principle to both $q_-^\delta, q_+^\delta$ gives: 
\begin{align*}
\| q^\delta \|_\infty \lesssim \sup|\frac{R}{w+\delta}| + \sup |\frac{w_{YY}}{w}q^\delta|. 
\end{align*}

Applying (\ref{meeshka.1}) and (\ref{meeshka.2}) gives: 
\begin{align*}
\| q^\delta \|_\infty \lesssim 1, 
\end{align*}

\noindent uniformly in $\delta > 0$. Due to the cutoff $\{1 - \chi(\frac{Y}{N})\}$, all quantities are supported away from $Y = 0$, we may differentiate the equation, (\ref{difrn}), in $Y$ to obtain the new system: 
\begin{align*}
\Delta q^\delta_Y + 2 \frac{w_Y}{w+\delta} q^\delta_{YY} + [\frac{w_{YY}}{w+\delta} + 2 \p_Y\{ \frac{w_Y}{w+\delta} \}] q^\delta_Y = \p_Y\{ \frac{R}{w+\delta} \} - \p_Y\{ \frac{w_{YY}}{w+\delta} \} q^\delta.
\end{align*}

Clearly, we may repeat the above argument for the unknown $q^\delta_Y$. Bootstrapping further to $q^{\delta}_{YY}$ and using the equation, we establish: 
\begin{align*}
\|q^\delta_{xx} \|_\infty \lesssim 1. 
\end{align*}

For each fixed $Y$, $q^\delta_x(x_\ast, y) = 0$ for some $x_\ast = x_\ast(Y) \in [0,L]$ since $q^\delta(0,Y) = q^\delta(L,Y) = 0$. Thus, using the Fundamental Theorem of Calculus $\| q_x^\delta \|_\infty \lesssim 1$. Finally, we use the pointwise in $Y$ equality: 
\begin{align*}
|q^\delta_x - q_x| = \delta | \frac{\chi \bar{v}_{ex}}{w(w+\delta)} | \rightarrow 0 \text{ as } \delta \downarrow 0, \text{ pw in } Y.
\end{align*} 

Thus, for each fixed $Y$, there exists a $\delta_\ast = \delta_\ast(Y) > 0$ such that for $0 < \delta < \delta_\ast(Y)$, $|q^\delta_x - q_x| \le 1/2$. Thus, $|q_x(Y)| \lesssim 1$. This is true for all $Y$. Thus, $\| q_x \|_\infty \lesssim 1$.

\noindent \textit{Case 2: $w_{m_i}$ are exponential in $Y$}

In this case, we start with (\ref{difrn}), and perform $H^k$ energy estimates. We replace (\ref{meeshka.1}) and (\ref{meeshka.2}) with: 
\begin{align}
\| \frac{R}{w+\delta} \langle Y \rangle^M \| < \infty \text{ for large } M. 
\end{align}

From here, straightforward energy estimates show $\| q^\delta \|_{H^k} \lesssim 1$ for any $k$. This is achieved by repeatedly differentiating in $Y$ and using that the cutoff $\{1 - \chi(\frac{Y}{N}) \}$ localizes away from the boundary $\{Y = 0\}$. We thus conclude $\| q^\delta_{xx} \|_{\infty} \lesssim \| q^\delta \|_{H^4} \lesssim 1$ using Sobolev embedding. The proof then concludes as in the polynomial case. 
\end{proof}

The following proposition summarizes the profile constructions from the Appendix: 
\begin{theorem} \label{thm.construct} Assume the shear flow $u^0_e(Y) \in C^\infty$, whose derivatives decay rapidly. Assume (\ref{OL.1}) regarding $\bar{u}^0_p|_{x = 0}$, and the conditions
\begin{align}  \label{compatibility.1.fin}
& \bar{v}^i_{pyyy}|_{x = 0}(0) = \p_x g_1|_{x = 0, y = 0}, \\ \label{compatibility.2.fin}
&\bar{v}^i_p|_{x = 0}''''(0) = \p_{xy}g_1|_{y =0}(x = 0), \\ \label{integral.cond}
&\bar{u}^0_{p y}|_{x = 0}(0) u^{i}_e|_{x = 0}(0) - \int_0^\infty \bar{u}^0_{p} e^{-\int_1^y \bar{v}^0_{p}} \{f^{(i)}(y) - r^{(i)}(y) \} \ud y  = 0,
\end{align}

\noindent where $r^{(i)}(y) := \bar{v}^i_p \bar{u}^0_{p y} - \bar{u}^0_{p} \bar{v}^i_{py}$. We assume also standard higher order versions of the parabolic compatibility conditions (\ref{compatibility.1.fin}), (\ref{compatibility.2.fin}). Let $v^i_e|_{x = 0}, v^i_e|_{x = L}, u^i_e|_{x = 0}$  be prescribed smooth and rapidly decaying Euler data. We assume on the data standard elliptic  compatibility conditions at the corners $(0,0)$ and $(L,0)$ obtained by evaluating the equation at the corners. In addition, assume 
\begin{align}
&v^1_e|_{x = 0} \sim Y^{-m_1} \text{ or } e^{-m_1 Y} \text{ for some } 0 < m_1 < \infty,\\
& \|\p_Y^k \{ v^i_e|_{x = 0} - v^i_e|_{x = L} \} \langle Y \rangle^M\|_\infty \lesssim L
\end{align}

\noindent Then all profiles in $[u_s, v_s]$ exist and are smooth on $\Omega$. The following estimates hold: 
\begin{align}
\begin{aligned} \label{prof.pick}
&\bar{u}^0_p > 0,\bar{u}^0_{py}|_{y = 0} > 0, \bar{u}^0_{p yy}|_{y = 0} = \bar{u}^0_{p yyy}|_{y = 0} = 0 \\
&\| \nabla^K \{ u^0_p, v^0_p\} e^{My} \|_\infty \lesssim 1 \text{ for any } K \ge 0, \\
&\|u^i_p \|_\infty + \| \nabla^K  u^i_p  e^{My} \|_\infty + \| \nabla^J v^i_p e^{My} \|_\infty \lesssim 1 \text{ for any } K \ge 1, M \ge 0, \\
&\| \nabla^K \{u^1_e, v^1_e\}  w_{m_1} \|_\infty \lesssim 1 \text{ for some fixed } m_1 > 1  \\
&\| \nabla^K \{u^i_e, v^i_e\} w_{m_i}\|_\infty \lesssim 1 \text{ for some fixed } m_i > 1,
\end{aligned}
\end{align}

\noindent where $w_{m_i} \sim e^{m_i Y}$ or $(1+Y)^{m_i}$. 

In addition the following estimate on the remainder forcing holds: 
\begin{align} \label{thm.force.maz}
\| F_R|_{x = 0} w_0 \| + \| \p_x F_R \frac{w_0}{\sqrt{\eps}} \|  \lesssim \sqrt{\eps}^{n-1-2N_0},
\end{align}
\noindent where $F_R$ has been defined in (\ref{forcingdefn}).
\end{theorem}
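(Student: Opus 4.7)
The plan is to proceed by strong induction on the layer index $i$, alternating the construction of Prandtl correctors $[u^i_p, v^i_p]$ and Euler correctors $[u^i_e, v^i_e]$, and then to read off the forcing bound (\ref{thm.force.maz}) from Lemma \ref{propn.forcing}. The base case $i=0$ is essentially supplied: the shear $u^0_e(Y)$ is prescribed, and Oleinik's Theorem \ref{thm.Oleinik} together with the assumption (\ref{OL.1}) yields a solution $[\bar{u}^0_p, \bar{v}^0_p]$ to (\ref{Pr.leading}) satisfying the sign conditions in the first line of (\ref{prof.pick}); the vanishing of $\bar{u}^0_{pyy}, \bar{u}^0_{pyyy}$ at $y=0$ follows by evaluating (\ref{Pr.leading}) and its $\partial_y$ at $\{y=0\}$. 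Exponential decay in $y$ at the Blasius scale, needed to promote Oleinik's estimates to the second line of (\ref{prof.pick}), comes from the higher-order compatibility/decay hypotheses on $\bar{U}^0_P$ and the linearized Prandtl estimates derived in Appendix \ref{appendix.prandtl}.

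For the inductive step at level $i \in \{1,\dots,n-1\}$, I would first construct the Euler corrector $[u^i_e, v^i_e]$ by solving the vorticity system (\ref{eqn.vort}) with forcing $F^{(i)}$ and boundary data determined from $v^{i-1}_p|_{y=0}$ and the prescribed $V^i_{E,\{0,L\}}, U^i_{E,0}$. Existence and $H^k$ regularity with polynomial/exponential weights $w_{m_i}$ follow directly from Proposition \ref{L.e.constr}; the well-prepared boundary data notion of Definition \ref{def.well.prp} is invoked here, and the corner compatibility hypothesis in the theorem statement ensures it is met to all orders. The decay of $\{u^i_e,v^i_e\}$ with weight $w_{m_i}$ is inherited from the decay of the forcing $F^{(i)}$ (which in turn decays like $w_{m_1-n_0}$ by Corollary \ref{Cor.cheap.quotient}) and of the inhomogeneous trace. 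Given $[u^i_e,v^i_e]$, I then construct the Prandtl corrector $[u^i_p, v^i_p]$ by solving the linearized Prandtl system (\ref{des.pr.1}) with forcing $f^{(i)}$ (or (\ref{defn.f1.special}) when $i=1$): Lemma \ref{lemma.compat.1} and the integral condition (\ref{integral.cond}) let me prescribe admissible initial data, and the weighted energy estimates of Appendix \ref{appendix.prandtl} (culminating in (\ref{iprob})) yield rapid $y$-decay of $u^i_p$ and its derivatives and of $\nabla^J v^i_p$ for $J\ge 1$, matching the third line of (\ref{prof.pick}). The fact that $\|u^i_p\|_\infty$ is bounded but not decaying, whereas higher derivatives decay, reflects the nonzero far-field trace $u^i_p(x,\infty)$ permitted by the linearized structure.

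At the final layer $i=n$, the boundary condition must be modified to $v^n_p|_{y=0}=0$, which is achieved by the cutoff construction in Section A.3: one first solves the uncut Prandtl problem via the same linearized analysis, then introduces the cutoff $\chi(\sqrt{\eps}y)$ in both $u_p$ and $v_p$. The commutator error $\mathcal{E}^{(n)}$ displayed in (\ref{dan.1}) is controlled because each factor produced by the cutoff carries a power of $\sqrt{\eps}$ and $u_p$ decays faster than any cutoff scale. The bound (\ref{thm.force.maz}) is then exactly the content of Lemma \ref{propn.forcing}: one groups the terms in $\underbar{f}^{(n+1)}$ and $\underbar{g}^{(n+1)}$ according to whether they are pure Prandtl (localized in $y$, with a gain of $\sqrt{\eps}$ per $y$-integration against the weight $w_0$), pure Euler (localized in $Y$ and thus unaffected by $\langle y\rangle\langle Y\rangle^m$), or mixed, and one applies the decay estimates of Proposition \ref{L.e.constr} and the Appendix \ref{appendix.prandtl} estimates to each class.

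The main obstacle I anticipate is verifying that the compatibility conditions and the integral condition (\ref{integral.cond}) can be imposed simultaneously and consistently at every induction level: at level $i$, the admissible $\bar{v}^i_p|_{x=0}$ must satisfy parabolic compatibility with the already-constructed $u^0_p,\bar{v}^0_p, v^i_e$, and it must also satisfy (\ref{integral.cond}) with $f^{(i)}, r^{(i)}$ built from lower-order profiles. This is a finite codimension constraint on the infinite-dimensional space of admissible initial data, and one must argue (as in the remark after (\ref{integral.cond})) that the data can be modified, e.g., by adjusting a single scalar parameter in $\bar{v}^i_p|_{x=0}$, to meet the integral condition without destroying the pointwise compatibility at $y=0$. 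A secondary technical point is controlling how the decay index $m_i$ degrades with $i$: since each Euler layer loses $n_0$ powers when passing through Proposition \ref{L.e.constr} applied to a forcing built from the previous layer, one must start with $m_1$ large enough that $m_n$ still exceeds the integer needed for the weight $w_0 = \langle y\rangle\langle Y\rangle^m$ in (\ref{thm.force.maz}); with $n=4$ fixed this is a finite budget that is easily met by choosing $m_1$ sufficiently large in the hypothesis.
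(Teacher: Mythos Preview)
Your proposal is correct and follows essentially the same approach as the paper: the theorem is stated in the paper as a summary of the constructions carried out in Appendices \ref{appendix.derive}--\ref{appendix.Euler}, and your inductive scheme (Oleinik for the base case, Proposition \ref{L.e.constr} for each Euler layer, the linearized Prandtl estimates culminating in (\ref{iprob}) for each Prandtl layer, the cutoff construction for $i=n$, and Lemma \ref{propn.forcing} for the forcing bound) matches exactly what those appendices do. Your two anticipated obstacles are also the ones the paper addresses: the consistency of the compatibility and integral conditions is handled by the remark following (\ref{integral.cond}), and the degradation of the decay index is handled by Corollary \ref{Cor.cheap.quotient}.
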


\noindent \textbf{Acknowledgements:} This research is supported in part by NSF grants DMS-1611695, DMS-1810868, Chinese
NSF grant 10828103, BICMR, as well as a Simon Fellowship. Sameer Iyer was also supported in part by NSF grant DMS 1802940.

\def\bibindent{3.5em}

\end{document}